\DeclareRobustCommand{\SkipTocEntry}[4]{}
\def\@tocline#1#2#3#4#5#6#7{\relax
  \ifnum #1>\c@tocdepth % then omit
  \else
    \par \addpenalty\@secpenalty\addvspace{#2}%
    \begingroup \hyphenpenalty\@M
    \@ifempty{#4}{%
      \@tempdima\csname r@tocindent\number#1\endcsname\relax
    }{%
      \@tempdima#4\relax
    }%
    \parindent\z@ \leftskip#3\relax \advance\leftskip\@tempdima\relax
    \rightskip\@pnumwidth plus4em \parfillskip-\@pnumwidth
    #5\leavevmode\hskip-\@tempdima
      \ifcase #1
       \or\or \hskip 1em \or \hskip 2em \else \hskip 3em \fi%
      #6\nobreak\relax
    \dotfill\hbox to\@pnumwidth{\@tocpagenum{#7}}\par
    \nobreak
    \endgroup
  \fi}
\def\subsubsection{\@startsection{subsubsection}{3}%
  \z@{.5\linespacing\@plus.7\linespacing}{-.5em}%
  {\normalfont\bfseries}}
\theoremstyle{definition}
\newtheorem{defn}{Definition}[section]
\newtheorem{question}[defn]{Question}
\theoremstyle{plain}
\newtheorem{thm}[defn]{Theorem}
\newtheorem{lem}[defn]{Lemma}
\newtheorem{prop}[defn]{Proposition}
\newtheorem{cor}[defn]{Corollary}
\newtheorem{conj}[defn]{Conjecture}
\newtheorem{assuInt}{Assumption}[section]
\def\D{\ensuremath{\mathbb{D}}}
\def\F{\ensuremath{\mathbb{F}}}
\def\P{\ensuremath{\mathbb{P}}}
\def\R{\ensuremath{\mathbb{R}}}
\def\Z{\ensuremath{\mathbb{Z}}}
\def\FF{\ensuremath{\mathcal F}}
\def\HH{\ensuremath{\mathcal H}}
\def\II{\ensuremath{\mathcal I}}
\def\OO{\ensuremath{\mathcal O}}
\def\TT{\ensuremath{\mathcal T}}
\def\ch{\mathop{\mathrm{ch}}\nolimits}
\def\Coh{\mathop{\mathrm{Coh}}\nolimits}
\def\Db{\mathop{\mathrm{D}^{\mathrm{b}}}\nolimits}
\def\dim{\mathop{\mathrm{dim}}\nolimits}
\def\ext{\mathop{\mathrm{ext}}\nolimits}
\def\Ext{\mathop{\mathrm{Ext}}\nolimits}
\def\Hom{\mathop{\mathrm{Hom}}\nolimits}
\def\RlHom{\mathop{\mathbf{R}\mathcal Hom}\nolimits}
\def\RHom{\mathop{\mathbf{R}\mathrm{Hom}}\nolimits}
\def\mod{\mathop{\mathrm{mod}}\nolimits}
\def\min{\mathop{\mathrm{min}}\nolimits}
\def\into{\ensuremath{\hookrightarrow}}
\def\onto{\ensuremath{\twoheadrightarrow}}
\begin{document}

\title{Derived categories and the genus of space curves}

\author{Emanuele Macr\`i}
\address{Northeastern University, Department of Mathematics, 360 Huntington Avenue, Boston, MA 02115-5000, USA}
\email{e.macri@northeastern.edu}
\urladdr{https://web.northeastern.edu/emacri/}

\author{Benjamin Schmidt}
\address{The University of Texas at Austin, Department of Mathematics, 2515 Speedway, RLM 8.100, Austin, TX 78712, USA}
\email{schmidt@math.utexas.edu}
\urladdr{https://sites.google.com/site/benjaminschmidtmath/}

\keywords{Space Curves, Stability Conditions, Derived Categories, Classical Algebraic Geometry}

\subjclass[2010]{14H50 (Primary); 14F05, 14J30, 18E30 (Secondary)}

\begin{abstract}
We generalize a classical result about the genus of curves in projective space by Gruson and Peskine to principally polarized abelian threefolds of Picard rank one. The proof is based on wall-crossing techniques for ideal sheaves of curves in the derived category. In the process, we obtain bounds for Chern characters of other stable objects such as rank two sheaves. The argument gives a proof for projective space as well. In this case these techniques also indicate an approach for a conjecture by Hartshorne and Hirschowitz and we prove first steps towards it.
\end{abstract}

\maketitle

\tableofcontents

% Must Quote: Eisenbud-Harris, Hartshorne-Hirshowitz, Gruson-Peskine, Harris, Chiantini-Ciliberto-di Gennaro, Ellia,...

%%%%%%%%%%%%%%%%%%%%%%%%%%%%%%%%%%%%%%%%%%%%%
%%%%%%%%%%%%%%%%%%%%%%%%%%%%%%%%%%%%%%%%%%%%%
%%%%%%%%%%%%%%%%%%%%%%%%%%%%%%%%%%%%%%%%%%%%%
\section{Introduction}
\label{sec:intro}

A celebrated result in the theory of space curves is the following (\cite{Hal82:genus_space_curves, GP78:genre_courbesI, Har80:joe_space_curves}).

\begin{thm}[Gruson--Peskine, Harris]
\label{thm:GP}
Let $d,k>0$ and $g\geq0$ be integers.
Let $C\subset \P^3$ be an integral curve of degree $d$ and arithmetic genus $g$.
Assume:
\begin{itemize}
\item $H^0(\P^3,I_C(k-1))=0$, and
\item $d>k(k-1)$.
\end{itemize}
Then
\[
g \leq \frac{d^2}{2k} + \frac{1}{2}d(k-4) + 1 - \varepsilon,
\]
for
\[
\varepsilon = \frac{1}{2}f\left(k-f-1+\frac{f}{k}\right),
\]
where $d \equiv -f (\mod k)$ and $0 \leq f < k$.
\end{thm}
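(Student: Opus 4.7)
The plan is to deduce the inequality by wall-crossing in tilt stability on $\P^3$, applied to the ideal sheaf $I_C$ whose Chern character is $\ch(I_C) = (1, 0, -d, 2d + g - 1)$. Since bounding $g$ is the same as bounding $\ch_3(I_C)$, the genus estimate will be extracted from a general bound on the third Chern character of a tilt-stable object with prescribed rank, $\ch_1$, and $\ch_2$.

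The first step is to translate the hypotheses into the language of stability. The vanishing $H^0(I_C(k-1)) = 0$ is equivalent to $\Hom(\OO(1-k), I_C) = 0$, and in the tilted heart $\Coh^\beta(\P^3)$ for $\beta$ slightly less than $-(k-1)$, this rules out $\OO(1-k)$ as a subobject of $I_C$. I would then argue that $I_C$ is $\nu_{\alpha,\beta}$-semistable along (and just above) a specific numerical wall $W$ in the $(\alpha,\beta)$-half plane, whose radius is determined by the data $(d,k)$. The condition $d > k(k-1)$ is precisely what guarantees that such a wall exists inside the relevant chamber and that it is the wall governing the extremal bound; smaller values of $d$ would put the wall on the wrong side of the vanishing locus of $\nu_{\alpha,\beta}$.

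The second step is the wall analysis. On $W$ there is a destabilizing short exact sequence $0 \to A \to I_C \to B \to 0$ in $\Coh^\beta(\P^3)$. By the tilt-slope equality on $W$ together with the vanishing above, one checks that neither $A$ nor $B$ can have rank one of the form $\OO(-j)$ for $j \leq k-1$, forcing at least one of them to be a rank two tilt-semistable object. Applying the classical Bogomolov--Gieseker inequality, together with the cubic refinement for tilt-semistable objects on $\P^3$, one obtains upper bounds on $\ch_3(A)$ and $\ch_3(B)$ in terms of their lower Chern characters; summing these yields an upper bound on $\ch_3(I_C) = \ch_3(A) + \ch_3(B)$, and hence on $g$. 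Maximizing over all admissible destabilizers of the allowed numerical type produces, after a short calculation, the leading term $\tfrac{d^2}{2k} + \tfrac{d(k-4)}{2} + 1$.

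Finally, the correction $\varepsilon$ comes from integrality. The Chern characters of $A$ and $B$ lie in $H^{2*}(\P^3,\Z)$; writing $d \equiv -f \pmod k$ with $0 \leq f < k$, the optimum of the continuous bound is generally not integral, and the nearest admissible configuration is exactly offset by $\tfrac{1}{2}f(k-f-1+f/k)$. The main obstacle is the first step: proving that $I_C$ cannot destabilize at any earlier wall between the large-volume chamber (where it is Gieseker-stable) and the wall $W$. This plays the role of liaison theory in the classical proof of Gruson--Peskine, and it is precisely here that the vanishing $H^0(I_C(k-1)) = 0$ must be used in an essential, non-numerical way. A secondary delicate point is the bookkeeping required to identify the extremal destabilizer and to verify that the resulting discrete optimization indeed produces the specific closed form of $\varepsilon$ in the statement.
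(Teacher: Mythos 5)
Your overall strategy is indeed the paper's: reduce to bounding $\ch_3(\II_C)$, use $Q_{\alpha,\beta}(\II_C)\geq 0$ to force a destabilizing wall, and bound the Chern characters of the two factors. But two of your structural claims are wrong in ways that would derail the proof. First, the wall analysis: you assert that excluding rank-one subobjects $\OO(-j)$ with $j\leq k-1$ forces a rank-two destabilizer. It does not. The vanishing $H^0(\II_C(k-1))=0$ only rules out $j\leq k-1$; the subobjects $\OO(-j)$ with $j\geq k$ remain, and far from being a degenerate case they are the \emph{extremal} one: the bound $\tfrac{d^2}{2k}+\tfrac{d(k-4)}{2}+1-\varepsilon$ is realized at the wall where the subobject is $\OO(-k)$ and the quotient is a rank-zero object supported on a degree-$k$ surface. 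Rank-two destabilizers do occur and must be treated (this is the content of Lemmas \ref{lem:rank2_wall_1} and \ref{lem:rank2_wall_2}), but they are shown to give strictly smaller values of $\ch_3$. The role of $d>k(k-1)$ and of Lemma \ref{lem:higherRankBound} is to cap the rank of the destabilizer at two and to make $\tilde{E}(d,\cdot)$ decreasing (Lemma \ref{lem:genus_bound_decreases}); it does not single out one numerical wall, and the argument is a contradiction ranging over \emph{all} possible first walls, so the "main obstacle" you identify (proving stability down to a prescribed wall $W$) is not actually a step of the proof.

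Second, and more seriously, the bounds on $\ch_3(A)$ and $\ch_3(B)$ cannot be extracted from the Bogomolov inequality plus a single application of the cubic inequality $Q_{\alpha,\beta}\geq 0$: as the paper notes, that direct approach gives non-optimal bounds as soon as the discriminant is large. What is required is a hierarchy of sharp $\ch_3$-bounds for tilt-semistable objects of each numerical type occurring as a factor --- rank $\pm1$ with $\ch_1=0$ (Proposition \ref{prop:maximum_ch3_rank1}), rank $0$ (Theorem \ref{thm:rank_zero_bound}), and rank $2$ with $\ch_1\in\{0,-1\}$ after twisting (Theorem \ref{thm:rank_2_first_cases}) --- each of which is itself proved by a wall-crossing induction on the discriminant. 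This recursive layer is the bulk of the work and is entirely absent from your outline. Relatedly, the error term $\varepsilon=\tfrac12 f(k-f-1+\tfrac fk)$ does not come from integrality of the Chern characters at a rank-two wall; it is produced by the discrete optimization over $\ch_2$ of the rank-zero quotient $\II_C/\OO(-k)$ inside Theorem \ref{thm:rank_zero_bound}.
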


For example, if $k=1$ this says that the largest genus for a fixed degree is given by that of a plane curve, i.e., $g\leq \frac{(d-1)(d-2)}{2}$. For $k=2$ it corresponds to Castelnuovo's inequality for non-planar curves \cite[(IV, 6.4)]{Har77:algebraic_geometry}. The first goal of this article is to prove a version of this theorem for other threefolds by using the theory of stability in the derived category. The second goal is to attack a conjecture by Hartshorne and Hirschowitz for $d \leq k(k-1)$ in the case of $\P^3$ with similar techniques.

\subsubsection*{Tilt Stability}

Given a curve $C\subset\P^3$, there are two exact sequences in the category of coherent sheaves associated to its ideal sheaf $I_C$. For a non-zero section of $H^0(\P^3,I_C(h))$, we can simply consider the associated sequence
\[
0 \to \OO_{\P^3}(-h) \to I_C \to F \to 0.
\]
Otherwise, for a non-zero section of $H^2(\P^3,I_C(m-4)) = \Ext^1(\II_C, \OO_{\P^3}(-m))$, we can consider the corresponding extension
\[
0 \to \OO_{\P^3}(-m) \to E \to I_C \to 0.
\]
The genus of $C$ can be bounded, by bounding the Chern characters of both $E$ and $F$. Just using the first exact sequence is not enough to conclude the proof of Theorem \ref{thm:GP}, since the bound therein is not decreasing for $h$ large. The key observation in our approach is that $E$ can also be thought of as a subobject of the ideal sheaf $I_C$, but in a different abelian category. Using a notion of stability on these categories, and by just taking the first factor of the Harder-Narasimhan filtration of $I_C$ with respect to this stability, we can select a canonical sequence among all these.

More generally, let $X$ be a smooth projective threefold. We are going to use the notion of \emph{tilt-stability}. It is reviewed in Section \ref{sec:background}. This is a weak stability condition in the bounded derived category of coherent sheaves on $X$, which was introduced in \cite{BMT14:stability_threefolds} (based on Bridgeland stability on surfaces \cite{Bri08:stability_k3,AB13:k_trivial}). It can be thought of as a generalization of the classical notion of slope stability for sheaves on surfaces. If we fix an ample divisor $H$ on $X$, it roughly amounts to replacing the category of coherent sheaves with the heart of a bounded t-structure $\Coh^\beta(X)$ in the bounded derived category $\Db(X)$ and the classical slope with a new slope function $\nu_{\alpha,\beta}$. Everything depends on two real parameters $\alpha, \beta \in \R$, $\alpha > 0$. The starting point for us is that, for $\alpha \gg 0$ and $\beta < 0$, and for any curve $C \subset X$, the ideal sheaf $\II_C$ is $\nu_{\alpha,\beta}$-stable (see Lemma \ref{lem:large_volume_limit} for details). The key idea is to study variation of stability for $\II_C$ with to respect to $\alpha$ and $\beta$.

%%%%%%%%%%%%%%%%%%%%%%%%%%%%%%%%%%%%%%%%%%%%%
%%%%%%%%%%%%%%%%%%%%%%%%%%%%%%%%%%%%%%%%%%%%%
%%%%%%%%%%%%%%%%%%%%%%%%%%%%%%%%%%%%%%%%%%%%%

\subsubsection*{The main theorem}

Let $X$ be a smooth projective threefold of Picard rank one, i.e., its N\'eron-Severi group is generated by the class of a single ample divisor $H$. For a subvariety $Y \subset X$ of dimension $n= 1, 2$, we define its \emph{degree} as $H^n\cdot Y/H^3$. We also define an extension of the remainder term in Theorem \ref{thm:GP} as follows. For a rational number $d\in \frac{1}{2}\Z$ and an integer $k \geq 1$, we set
\[
\varepsilon(d, k) = \frac{1}{2}f\left(k-f-1+\frac{f}{k}\right) + \varepsilon(d, 1) \text{ for }
\varepsilon(d, 1) = \begin{cases}
\frac{1}{24} &\text{if $d \notin \Z$} \\
0 &\text{if $d \in \Z$,}
\end{cases}
\]
where $d \equiv -f (\mod k)$ and $0 \leq f < k$, $f\in\frac{1}{2}\Z$.

\begin{thm}[See Theorem \ref{thm:genus_bound_large_degree}]
\label{thm:genus_bound_large_degree_intro}
Assume $X$ satisfies Assumptions \ref{assu:div_curves}, \ref{assu:classic_bg}, \ref{assu:bmt}. Let $k \in \Z_{>0}$ and $d \in \frac{1}{2}\Z_{>0}$, and let $C \subset X$ be an integral curve of degree $d$. Further, assume
\begin{itemize}
\item $H^0(X, I_C((k-1)H')=0$ for any divisor $H'$ in the same numerical class as $H$, and
\item $d > k(k-1)$.
\end{itemize}
Then
\[
\frac{\ch_3(\II_C)}{H^3} \leq E(d,k) := \frac{d^2}{2k} + \frac{dk}{2} - \varepsilon(d,k).
\]
\end{thm}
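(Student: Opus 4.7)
The plan is to place $I_C$ in the framework of tilt-stability and bound $\ch_3(I_C)$ via wall-crossing. By Lemma \ref{lem:large_volume_limit}, $I_C$ is $\nu_{\alpha,\beta}$-stable for $\alpha \gg 0$ and $\beta < 0$. As one moves $(\alpha,\beta)$ across the upper half-plane, either $I_C$ remains stable on a well-chosen path, or it is destabilized at a numerical wall, producing a short exact sequence
\[
0 \to E \to I_C \to Q \to 0
\]
in $\Coh^\beta(X)$ with $\nu_{\alpha,\beta}(E) = \nu_{\alpha,\beta}(I_C) = \nu_{\alpha,\beta}(Q)$ on the wall. The two extensions displayed in the introduction --- one arising from a section of $H^0(I_C(h))$ and one from a class in $\Ext^1(I_C,\OO(-m))$ --- correspond precisely to the rank-one and rank-two cases of such a destabilizing $E$, now unified as subobjects in the tilted heart.

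The next step is to use the hypothesis $H^0(X, I_C((k-1)H'))=0$ to constrain the destabilizer: any $E \hookrightarrow I_C$ in $\Coh^\beta(X)$ whose twisted $H$-slope is too large would force a non-zero section of $I_C((k-1)H')$ and contradict the assumption. Combined with the numerical inequality $d > k(k-1)$, which guarantees that the natural path from the large-volume limit genuinely meets the relevant region of the $(\alpha,\beta)$-plane, this restricts the admissible Chern character of $E$ to an explicit discrete set. The extremal destabilizer has rank one with $\ch_1 = -kH$, i.e.\ $E \cong \OO_X(-kH)$, with $Q$ then a torsion object supported on a surface of class $kH$.

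After identifying the possible walls, the Bogomolov--Gieseker-type inequality for tilt-semistable objects (Assumption \ref{assu:bmt}) applied to both $E$ and $Q$, together with the classical Bogomolov--Gieseker inequality from Assumption \ref{assu:classic_bg} and the identity $\ch_3(I_C) = \ch_3(E) + \ch_3(Q)$, yields an upper bound on $\ch_3(I_C)$ in terms of the numerical data of $E$. Optimizing this upper bound over the allowed discrete Chern characters for $E$ produces the main term $d^2/(2k) + dk/2$; the residue $f \equiv -d \pmod{k}$ (measuring how far the optimal wall deviates from a lattice point of admissible characters) accounts for the correction $\varepsilon(d,k)$, with the extra $\tfrac{1}{24}$ contribution in the half-integer case arising from the fact that $Q$ is then forced to have half-integer discriminant.

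The main obstacle is the careful analysis of the wall: one must verify that destabilizing subobjects with numerical data outside the extremal range either violate the BMT inequality, contradict the hypothesis $H^0=0$, or lead to a strictly weaker (hence acceptable) bound. A secondary difficulty is the case in which no wall exists on the relevant path; then the BMT inequality is applied directly to $I_C$ at a judiciously chosen point $(\alpha_0,\beta_0)$. The extraction of the precise correction $\varepsilon(d,k)$ from the lattice optimization is where the delicate combinatorial bookkeeping lives, and handling it cleanly --- especially reconciling the integer and half-integer cases of $f$ --- is the core technical step of the argument.
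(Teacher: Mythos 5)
Your high-level skeleton matches the paper's: assume the bound fails, use Assumption \ref{assu:bmt} to force a destabilizing wall for $\II_C$, restrict the destabilizer using $H^0(\II_C((k-1)H'))=0$ and Lemma \ref{lem:higherRankBound}, and observe that the extremal wall is the rank-one one given by $\OO(-kH)$ (whose rank-zero quotient indeed reproduces $E(d,k)$ exactly). You also correctly note that integrality and the vanishing of sections pin down the admissible $\ch_1$ of a rank-one destabilizer.

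The genuine gap is in your central step: you claim that applying $Q_{\alpha,\beta}\geq 0$ (Assumption \ref{assu:bmt}) to $E$ and $Q$ on the wall, together with $\Delta\geq 0$, ``yields an upper bound on $\ch_3(I_C)$'' that optimizes to $d^2/2k+dk/2$. This is false for large $d$: the paper explicitly notes that applying the $Q_{\alpha,\beta}$-inequality directly gives non-optimal bounds once the discriminant is large, which is exactly the regime of interest. What is actually needed is a recursive wall-crossing hierarchy: a sharp $\ch_3$-bound for rank-zero tilt-semistable objects of arbitrary class $(0,c,\ast,\ast)$ (Theorem \ref{thm:rank_zero_bound}, itself proved by wall-crossing using the rank-one bound of Proposition \ref{prop:maximum_ch3_rank1}, which in turn is proved by induction on $d$), plus sharp bounds for rank-two tilt-semistable objects with $\ch_1\in\{-1,0\}$ after twisting (Theorem \ref{thm:rank_2_first_cases}, proved by induction on $\Delta$). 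These are the inputs that bound $\ch_3(E)$ and $\ch_3(Q)$ at each candidate wall; without them the optimization you describe cannot be carried out. Relatedly, you assert the extremal destabilizer ``is'' $\OO_X(-kH)$, but a rank-one torsion-free destabilizer is only a line bundle because the destabilizing subobject of $\II_C$ is reflexive (Lemma \ref{lem:sub_is_reflexive}, which is where integrality of $C$ enters) — this needs to be proved, not assumed. Finally, the $\tfrac{1}{24}$ in $\varepsilon(d,1)$ is merely a rounding correction coming from $\ch_3\in\tfrac16 H^3\cdot\Z$, not a consequence of a half-integer discriminant of $Q$; and the rank-two walls, even though non-extremal, still require the full analysis of Lemmas \ref{lem:rank2_wall_1} and \ref{lem:rank2_wall_2} to rule out — ``strictly weaker hence acceptable'' is the conclusion of that analysis, not a substitute for it.
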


For example, in characteristic $0$ the assumptions of Theorem \ref{thm:genus_bound_large_degree_intro} are satisfied in the case of $\P^3$, principally polarized abelian threefolds of Picard rank one, and index $2$ Fano threefolds of Picard rank one with degree one or two. In fact, the case of $\P^3$ is independent of the characteristic of the field, and Theorem \ref{thm:GP} holds in that case. We note that already the case $k = 1$ strengthens a conjecture by Debarre \cite[Section 5]{Deb94:degree_curves} and a result by Pareschi--Popa \cite[Theorem B]{PP08:Castelnuovo_theory} for the special case of principally polarized abelian threefolds of Picard rank one (see \cite{LN16:theta_regularity_curves} for results on general polarized abelian varieties).
The precise assumptions are the following.

\begin{assuInt}
\label{assu:div_curves}
The N\'eron-Severi group is generated by the class of an ample divisor $H$. Moreover, the Chern character of any $E \in \Coh(X)$ satisfies $\ch_2(E) \in \tfrac{1}{2} H^2 \cdot \Z$ and $\ch_3(E) \in \tfrac{1}{6} H^3 \cdot \Z$.
\end{assuInt}

Without the part about $\ch_2(E)$ in Assumption \ref{assu:div_curves} curves of small degree pose issues. Some bounds can be proved without this assumption, but they do not seem optimal. However, the Picard rank one assumption is more important in our argument, since otherwise determining the tilt stability of ideal sheaves becomes substantially more involved. The part about $\ch_3(E)$ is for computational reasons to bound the Chern characters of rank two sheaves.

\begin{assuInt}
\label{assu:classic_bg}
Any slope semistable sheaf $E \in \Coh(X)$ satisfies
\[
\Delta(E) := \frac{(H^2 \cdot \ch_1(E))^2 - 2(H^3 \cdot \ch_0(E))(H \cdot \ch_2(E))}{(H^3)^2} \geq 0.
\]
\end{assuInt}

Assumption \ref{assu:classic_bg} is well known to be true in characteristic $0$, being a consequence of the classical Bogomolov inequality (\cite{Rei78:bogomolov,Bog78:inequality,Gie79:bogomolov}). In positive characteristic it is only sometimes satisfied, for example for $\P^3$ and abelian threefolds (see \cite{Lang04:positive_char} for more details). 

For any $\beta \in \R$ and $E \in \Db(X)$ we define the twisted Chern character $\ch^{\beta}(E) := \ch(E) \cdot e^{-\beta H}$. Note that for $\beta \in \Z$ this is simply saying $\ch^{\beta}(E) = \ch(E(-\beta H))$.

\begin{assuInt}
\label{assu:bmt}
For any $\nu_{\alpha,\beta}$-semistable object $E \in \Coh^{\beta}(X)$ the inequality
\[
Q_{\alpha, \beta}(E) := \alpha^2 \Delta(E) + \frac{4(H \cdot \ch_2^{\beta}(E))^2}{(H^3)^2} - \frac{6 (H^2\cdot \ch_1^{\beta}(E)) \ch_3^{\beta}(E)}{(H^3)^2} \geq 0
\]
holds.
\end{assuInt}

Assumption \ref{assu:bmt} is the crucial ingredient in the proof. It roughly tells us that the ideal sheaf of a curve of large genus has to be destabilized at a certain point, and it allows us to reduce the number of possible walls. This assumption is part of a more general conjecture in \cite{BMT14:stability_threefolds,BMS16:abelian_threefolds} for characteristic $0$. The case of $\P^3$ was shown in \cite{Mac14:conjecture_p3} and the proof actually works in any characteristic. The smooth quadric hypersurface in $\P^4$ was done in \cite{Sch14:conjecture_quadric}. Later, both of these were generalized to Fano threefolds of Picard rank one in \cite{Li15:conjecture_fano_threefold}. Moreover, the case of abelian threefolds were handled independently in \cite{MP15:conjecture_abelian_threefoldsI,MP16:conjecture_abelian_threefoldsII} and \cite{BMS16:abelian_threefolds}, and Calabi-Yau threefolds of abelian type in \cite{BMS16:abelian_threefolds}. Most recently, it was shown in \cite{Kos17:stability_products} for the case of $\P^2 \times E$, $\P^1 \times \P^1 \times E$, and $\P^1 \times A$, where $E$ is an arbitrary elliptic curve, and $A$ is an arbitrary abelian surface. For higher Picard rank it is known to be false in general. Counterexamples were given in \cite{Sch17:counterexample, Kos17:stability_products, MS17:counterexample_blow_up}.
In general, a relation between Assumpion \ref{assu:bmt} and Castelnuovo theory for projective curves (\cite{Har82:curves_projective_space,CCDG93:genus_projective_curves}) was already observed in \cite{BMT14:stability_threefolds,Tra14:genus_projective_curves_ci}.

%%%%%%%%%%%%%%%%%%%%%%%%%%%%%%%%%%%%%%%%%%%%%
%%%%%%%%%%%%%%%%%%%%%%%%%%%%%%%%%%%%%%%%%%%%%
%%%%%%%%%%%%%%%%%%%%%%%%%%%%%%%%%%%%%%%%%%%%%

\subsubsection*{Strategy of the proof}
\label{subsec:strategy}

The general idea of the proof of Theorem \ref{thm:genus_bound_large_degree_intro} is to study potential walls in tilt stability in the $(\alpha, \beta)$-plane for the ideal sheaf of a curve $C \subset X$, namely codimension one loci at which stability changes. By the Hirzebruch-Riemann-Roch Theorem bounding $\ch_3(\II_C)$ is equivalent to bounding the genus. As mentioned previously, Assumption \ref{assu:bmt} implies that there has to be at least one wall. For each wall there is a semistable subobject $E$ and a semistable quotient $G$. Bounding the third Chern character for $E$ and $G$ induces a bound for $\ch_3(\II_C)$. In order to bound the Chern characters of $E$ and $G$, we study tilt stability for these objects. It turns out that $\Delta(E), \Delta(G) < \Delta(\II_C)$ and since these numbers are non-negative integers, this process has to terminate.

To unify notation among different $X$, we set 
\[
H \cdot \ch(E) := \left(\frac{H^3 \cdot \ch_0(E)}{H^3}, \frac{H^2 \cdot \ch_1(E)}{H^3}, \frac{H \cdot \ch_2(E)}{H^3} ,\frac{\ch_3(E)}{H^3}\right).
\]

The condition $d > k(k-1)$ implies that this process only requires the study of three types of objects which are handled in the next three statements.

\begin{prop}[See Proposition \ref{prop:maximum_ch3_rank1}]
\label{prop:maximum_ch3_rank1_intro}
Let $E \in \Coh^{\beta}(X)$ be a $\nu_{\alpha, \beta}$-semistable object for some $(\alpha, \beta)$ with either $H \cdot \ch(E) = (1,0,-d,e)$ or $H \cdot \ch(E) = (-1,0,d,e)$. Then
\[
e \leq \frac{d(d+1)}{2} - \varepsilon(d,1) = E(d, 1).
\]
\end{prop}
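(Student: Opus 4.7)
The plan is to combine the inequality $Q_{\alpha, \beta}(E) \geq 0$ from Assumption~\ref{assu:bmt} with a wall-crossing analysis in tilt stability. Derived duality $E \mapsto \RHom(E, \OO_X)[1]$ interchanges the two rank cases (together with $\beta \mapsto -\beta$ on the tilt-stability side), so I reduce to $H \cdot \ch(E) = (1, 0, -d, e)$ with $d \in \tfrac{1}{2}\Z_{\geq 0}$. A direct computation of the twisted Chern character $\ch^{\beta}(E) = \ch(E) \cdot e^{-\beta H}$ and substitution into $Q$ yields
\[
Q_{\alpha, \beta}(E) = 2d\,\alpha^2 + 2d\,\beta^2 + 4d^2 + 6\beta\, e.
\]
For $\beta < 0$ and $E$ tilt-semistable at $(\alpha, \beta)$, this forces $e \leq (2d\alpha^2 + 2d\beta^2 + 4d^2)/(-6\beta)$. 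The right-hand side is minimized at $(\alpha, \beta) = (0, -\sqrt{2d})$, taking the value $\tfrac{2\sqrt{2}}{3}\,d^{3/2}$, which is at most $d(d+1)/2 - \varepsilon(d, 1)$ for every admissible $d \geq 1/2$ (with equality precisely at $d = 1/2$).

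Consequently, if $E$ is tilt-semistable in a chamber whose closure contains $(0, -\sqrt{2d})$, the conclusion follows by applying $Q \geq 0$ and taking the limit $\alpha \to 0^+$. In general, walls of $E$ may obstruct reaching the optimal point; the strategy is then to start at the given semistable $(\alpha_0, \beta_0)$ and trace the wall-crossings encountered en route. Each wall $W$ produces a short exact sequence $0 \to A \to E \to B \to 0$ in $\Coh^{\beta}(X)$ of tilt-semistable objects with $\nu_{\alpha, \beta}(A) = \nu_{\alpha, \beta}(B)$ on $W$, and the Bogomolov-type inequality for tilt stability yields $\Delta(A) + \Delta(B) \leq \Delta(E) = 2d$. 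Under Assumptions~\ref{assu:div_curves} and~\ref{assu:classic_bg}, both $\Delta(A)$ and $\Delta(B)$ are non-negative half-integers, so only finitely many numerical types of $(A, B)$ are possible. The extremal case is $A \cong \OO_X(-H)$ with $B$ of character $(0, 1, -(d+\tfrac{1}{2}), e+\tfrac{1}{6})$, mirroring the destabilization of an ideal sheaf of a plane curve. Bounding $\ch_3(B)$ via $Q_{\alpha, \beta}(B) \geq 0$ on an appropriate wall for $B$ (or equivalently, via tilt-stability on a hyperplane section $Y \in |H|$) and summing with $\ch_3(\OO_X(-H))/H^3 = -1/6$ gives the claimed bound.

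The main obstacle is the enumeration of admissible destabilizing pairs $(A, B)$ and the verification that each yields $\ch_3(E)/H^3 \leq d(d+1)/2 - \varepsilon(d, 1)$, without circularity in the induction. The correction $\varepsilon(d, 1)$ reflects the integrality $\ch_3 \in \tfrac{1}{6}H^3\Z$ from Assumption~\ref{assu:div_curves}, which sharpens the continuous $Q$-bound exactly when $d$ is a half-integer but not an integer.
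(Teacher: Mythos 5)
Your setup is the same as the paper's: reduce to $H\cdot\ch(E)=(1,0,-d,e)$ by derived duality (Proposition \ref{prop:tilt_derived_dual}), compute $Q_{\alpha,\beta}(E)=2d\alpha^2+2d\beta^2+4d^2+6\beta e$ (this computation is correct, and your observation that the unconstrained optimum $\tfrac{2\sqrt2}{3}d^{3/2}$ sits below $E(d,1)$ with equality at $d=\tfrac12$ is also correct), and then observe that if $e$ exceeds the claimed bound the semidisk $Q_{\alpha,\beta}(E)<0$ is nonempty, so $\II_C$-type objects must be destabilized along a wall enclosing it. Up to this point you have reproduced the paper's strategy.

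However, the proof stops exactly where the real work begins, and you say so yourself (``the main obstacle is the enumeration of admissible destabilizing pairs $(A,B)$ and the verification\dots''). Concretely, four steps are missing. (1) You must show the destabilizing subobject has rank one; the paper does this by checking $\rho_Q^2-\tfrac{d}{4}>\tfrac{9}{16}(d-1)^2\ge 0$ and invoking Lemma \ref{lem:higherRankBound} -- the inequality $\Delta(A)+\Delta(B)\le\Delta(E)$ alone does not bound the ranks, and $\Delta\in\Z$ here rather than $\tfrac12\Z$. (2) You must pin down $\ch_1$ of the subobject: the paper compares the wall's center with $s(E,\OO(-2H))$ to force $H^2\cdot\ch_1(F)/H^3=-1$. (3) The rank-one subobject is \emph{not} in general $\OO_X(-H)$; it is numerically $I_Z(-H)$ with $H\cdot\ch_2$ ranging over $0\le y<\tfrac{d-1}{4}$, so bounding its $\ch_3$ requires the proposition itself for smaller $d$ -- i.e.\ an induction on $d$ with base cases $d=0,\tfrac12$, which you never set up (this is precisely the ``circularity'' you flag but do not resolve). (4) The rank-zero quotient needs its own statement (the paper's Lemma \ref{lem:bound_torsion_on_plane}), whose proof again requires a wall-exclusion argument via Lemma \ref{lem:higherRankBound}, not just an application of $Q\ge 0$ at the given point; and the final contradiction rests on the elementary but essential optimization that $y^2+y-dy\le 0$ on the admissible range of $y$, with maximum at $y=0$. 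Without these steps the assertion that ``the extremal case is $A\cong\OO_X(-H)$'' is unsupported, and the proof is incomplete.
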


If $E$ is an ideal sheaf of a curve, then Proposition \ref{prop:maximum_ch3_rank1_intro} is the $k = 1$ version of Theorem \ref{thm:genus_bound_large_degree_intro}. Using derived duals (see Proposition \ref{prop:tilt_derived_dual} for details) it is only necessary to prove the case of positive rank.

\begin{thm}[See Theorem \ref{thm:rank_zero_bound}]
\label{thm:rank_zero_bound_intro}
Let $E \in \Coh^{\beta}(X)$ be a $\nu_{\alpha, \beta}$-semistable object for some $(\alpha, \beta)$ with $H \cdot \ch(E) = (0,c,d,e)$, where $c > 0$. Then 
\begin{equation*}
e \leq \frac{c^3}{24} + \frac{d^2}{2c} - \varepsilon\left(d + \frac{c^2}{2}, c\right).
\end{equation*}
\end{thm}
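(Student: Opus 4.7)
The plan is to work along the vertical line $\beta = \beta_0 := d/c$, where $\nu_{\alpha,\beta_0}(E) = 0$ for every $\alpha > 0$ (since $\ch_0(E) = 0$ and $H \cdot \ch_2^{\beta_0}(E)/H^3 = d - \beta_0 c = 0$). The BMT inequality $Q_{\alpha,\beta_0}(E) \geq 0$ of Assumption \ref{assu:bmt} collapses to
\[
\alpha^2 c^2 + 3d^2 - 6ce \geq 0,
\]
equivalently $e \leq d^2/(2c) + \alpha^2 c / 6$, whenever $E$ is $\nu_{\alpha, \beta_0}$-semistable. If this holds for arbitrarily small $\alpha > 0$, the limit $\alpha \to 0$ yields $e \leq d^2/(2c)$; the elementary inequality $(c-1)^3 \geq 0$ (after maximizing $f(c-f-1+f/c)$ over $0 \leq f < c$) implies $\varepsilon(d + c^2/2, c) \leq c^3/24$, so the stronger bound $e \leq d^2/(2c)$ already implies the theorem in this case.

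Otherwise, along $\beta = \beta_0$ there is a largest $\alpha_0 > 0$ at which $E$ is strictly $\nu_{\alpha_0, \beta_0}$-semistable, giving a short exact sequence $0 \to F \to E \to G \to 0$ in $\Coh^{\beta_0}(X)$ with $F, G$ both $\nu_{\alpha_0, \beta_0}$-semistable of slope $0$. Because $F$ and $G$ have numerical classes not proportional to that of $E$, the standard wall-crossing inequality $\Delta(F) + \Delta(G) \leq \Delta(E) = c^2$ is strict, so $\Delta(F), \Delta(G) < c^2$. Since $\Delta \geq 0$ by Assumption \ref{assu:classic_bg} (extended to tilt-semistable objects), $\Delta$ is a valid induction parameter.

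To close the induction one classifies the possible Chern characters of $F$ and $G$ using the slope-zero relation $H \cdot \ch_2^{\beta_0}(F)/H^3 = (\alpha_0^2/2) \ch_0(F)$, the sum condition $\ch(F) + \ch(G) = \ch(E)$, and $\Delta \geq 0$. The analysis splits by $r := \ch_0(F)$: for $r = 0$, both $F$ and $G$ have the same shape as $E$ with smaller $\Delta$, so the theorem itself applies inductively; for $|r| = 1$, an appropriate twist brings $F$ into the range of Proposition \ref{prop:maximum_ch3_rank1_intro}; and for $|r| \geq 2$, applying Assumption \ref{assu:bmt} to $F$ at $(\alpha_0, \beta_0)$ directly yields a bound on $\ch_3^{\beta_0}(F)$ exactly as in the unobstructed case above (and similarly for $G$). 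Summing the bounds for $F$ and $G$ produces an inequality of the desired shape on $e$.

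The main difficulty I anticipate is not a single calculation but the combinatorial verification that the correction $\varepsilon(d + c^2/2, c)$ is sharp for every allowed decomposition. Since $\varepsilon$ depends on the residue of $d + c^2/2$ modulo $c$, one must optimize over all legal $(\ch(F), \ch(G))$ — constrained by Bogomolov and by the integrality in Assumption \ref{assu:div_curves} — and show the sum of the inductive $\varepsilon$-contributions is always at least $\varepsilon(d + c^2/2, c)$. The hardest case should be when $\ch_0(F) = 0$ and the split $(c_F, d_F) + (c_G, d_G) = (c, d)$ is tuned so that the residues modulo $c_F$ and $c_G$ combine into exactly the residue of $d + c^2/2$ modulo $c$; here the arithmetic of the factor $f(c-f-1+f/c)$ is precisely what forces the claimed optimum.
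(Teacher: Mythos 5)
Your setup is sound: the line $\beta_0=d/c$ is exactly where $\nu_{\alpha,\beta_0}(E)=0$, the computation $Q_{\alpha,\beta_0}(E)=\alpha^2c^2+3d^2-6ce$ is correct, and the observation that $\varepsilon(d+\tfrac{c^2}{2},c)\le\tfrac{c^3}{24}$ (via $(c-1)^3\ge 0$) does dispose of the case where $E$ remains semistable as $\alpha\to 0$. The wall analysis, however, has two genuine problems. First, you have misplaced the difficulty: a subobject with $\ch_0(F)=0$ can never induce a wall, because for two rank-zero classes the equality $\nu_{\alpha,\beta}(F)=\nu_{\alpha,\beta}(E)$ reduces to $d_F/c_F=d/c$, which is independent of $(\alpha,\beta)$ and so holds either everywhere or nowhere (this is precisely how the paper dismisses rank zero in Lemma \ref{lem:complicated_bounds_rank0}). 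The case you single out as the hardest --- a tuned rank-zero split of $(c,d)$ --- is therefore vacuous, and the optimization you defer to it is aimed at the wrong target. Second, the branch $|\ch_0(F)|\ge 2$ does not close as sketched: applying Assumption \ref{assu:bmt} to $F$ at the single point $(\alpha_0,\beta_0)$ only bounds $\ch_3^{\beta_0}(F)$ in terms of $\alpha_0$ and $\ch_1^{\beta_0}(F)$, and summing the two such bounds does not yield $\tfrac{c^3}{24}+\tfrac{d^2}{2c}-\varepsilon$; this is exactly the ``non-optimal bound'' phenomenon the paper warns about. The paper sidesteps this case entirely by arguing by contradiction: if $e$ exceeds the claimed bound, the region $Q_{\alpha,\beta}(E)<0$ is large enough that any destabilizing wall has radius squared exceeding $\tfrac{c^2-3c+3}{4}\ge\tfrac{c^2}{16}$, so Lemma \ref{lem:higherRankBound} forces the positive-rank side to have rank exactly one. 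Your framing via the largest wall on $\beta=\beta_0$, which does not assume the bound fails, leaves small high-rank walls alive and unhandled.

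Once rank one is forced, essentially the entire content of the theorem is the quantitative step you leave blank: $\ch_1(F)$ is pinned to one of two integers by $0\le H^2\cdot\ch_1^{\beta}(F)\le H^2\cdot\ch_1^{\beta}(E)$ along the wall; $H\cdot\ch_2(F)$ is bounded above by $\Delta(F),\Delta(G)\ge 0$ and below by the requirement that the wall lie outside $Q_{\alpha,\beta}(E)<0$; Proposition \ref{prop:maximum_ch3_rank1} is applied to both the rank-one subobject and the rank-$(-1)$ quotient; and one then verifies that the resulting upper bound for $e$, as a function of $y=H\cdot\ch_2(F)/H^3$, is increasing on the allowed interval and equals exactly $\tfrac{c^3}{24}+\tfrac{d^2}{2c}-\tilde{\varepsilon}(d+\tfrac{c^2}{2},c)$ at the right endpoint. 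None of your inductive machinery (induction on $\Delta$, strictness of $\Delta(F)+\Delta(G)\le\Delta(E)$) is needed here; and, as an aside, your justification of that strictness is off --- by Theorem \ref{thm:Bertram}(v) equality fails exactly when $H\cdot\ch_{\le 2}(G)\ne 0$, which for an actual wall follows from $\ch_1^{\beta}(G)>0$, not from non-proportionality of the classes.
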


The case $c = 1$ for Theorem \ref{thm:rank_zero_bound_intro} was proved for $\P^3$ in \cite[Lemma 5.4]{Sch15:stability_threefolds}.

\begin{thm}[See Theorem \ref{thm:rank_2_first_cases}]
\label{thm:rank_2_first_cases_intro}
Let $E \in \Coh^{\beta}(X)$ be a $\nu_{\alpha, \beta}$-semistable object for some $(\alpha, \beta)$ with $H \cdot \ch(E) = (2,c,d,e)$. 
\begin{enumerate}[(i)]
\item If $c = -1$, then $d \leq 0$ and 
\[
e \leq \frac{d^2}{2} - d + \frac{5}{24} - \varepsilon\left(d - \frac{1}{2},1\right).
\]
\item If $c = 0$, then $d \leq 0$.
\begin{enumerate}[(a)]
\item If $d = 0$, then $e \leq 0$.
\item If $d = -\tfrac{1}{2}$, then $e \leq \tfrac{1}{6}$.
\item If $d \leq -1$, then
\[
e \leq \frac{d^2}{2} + \frac{5}{24} - \varepsilon\left(d + \frac{1}{2},1\right).
\]
\end{enumerate}
\end{enumerate}
\end{thm}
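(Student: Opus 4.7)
The bound $d \leq 0$ in both parts is immediate from the Bogomolov inequality satisfied by any $\nu_{\alpha,\beta}$-semistable object: with $H\cdot \ch(E) = (2, c, d, e)$ we have $\Delta(E) = c^2 - 4d \geq 0$, so $d \leq c^2/4$, and using Assumption \ref{assu:div_curves} (which forces $d \in \tfrac{1}{2}\Z$) we conclude $d \leq 0$ in both cases (i) and (ii).

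For the bound on $e$, the overall plan is wall-crossing in the $(\alpha, \beta)$-plane in the same spirit as the proofs of Proposition \ref{prop:maximum_ch3_rank1_intro} and Theorem \ref{thm:rank_zero_bound_intro}. Starting from a point $(\alpha_0, \beta_0)$ at which $E$ is $\nu$-semistable, I would move along a suitable path and ask whether $E$ remains semistable. If it does, the boundary of the tilt-semistability region gives a sharp inequality via Assumption \ref{assu:bmt}. If it does not, $E$ fits into a short exact sequence $0 \to F \to E \to G \to 0$ in $\Coh^{\beta}(X)$ with $F$ and $G$ $\nu$-semistable along the destabilizing wall, ranks summing to $2$, and $\Delta(F) + \Delta(G) \leq \Delta(E)$. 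The numerical types of the factors are then constrained to a finite list by the Bogomolov inequalities $\Delta(F), \Delta(G) \geq 0$ together with the wall equation $\nu_{\alpha,\beta}(F) = \nu_{\alpha,\beta}(E)$; for each type one controls $\ch_3(F)$ and $\ch_3(G)$ via Proposition \ref{prop:maximum_ch3_rank1_intro} (rank $\pm 1$ factors) or Theorem \ref{thm:rank_zero_bound_intro} (rank $0$ factors), and then combines these to bound $e$.

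Case (ii)(a) is by far the cleanest. Here $\Delta(E) = 0$, so any destabilizing factor would itself satisfy $\Delta = 0$, which collapses the wall analysis; $E$ remains $\nu$-semistable on the whole admissible region and one only needs to plug $H \cdot \ch(E) = (2, 0, 0, e)$ directly into $Q_{\alpha, \beta}(E) \geq 0$ at any $\beta < 0$, obtaining essentially $\beta e \geq 0$ and hence $e \leq 0$. Cases (ii)(b) and (ii)(c) are similar but have a small number of genuine wall types that must be examined individually. Case (i) is the broadest: for each value of $d \leq 0$ one enumerates wall types, and the sharpness constants $5/24$ together with the $\varepsilon(d \mp \tfrac{1}{2}, 1)$ correction should emerge by tracking whether the line-bundle-like factor that shows up in the destabilizing sequence has integral or half-integral $\ch_2$ (this is precisely what $\varepsilon(\cdot,1)$ records in Proposition \ref{prop:maximum_ch3_rank1_intro}).

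The main obstacle in carrying this out is the wall enumeration itself. Even though $\Delta(F), \Delta(G) < \Delta(E)$ in the non-proportional case forces finitely many numerical types, the walls involve both rank $\pm 1$ line-bundle-like subobjects and rank $0$ torsion-like subobjects, and in case (i) one must carefully track subobjects with negative first Chern class which shift the wall into different regions of the $(\alpha,\beta)$-plane. The bookkeeping needed to match the precise bounds with the correct $\varepsilon$ corrections, rather than obtaining loose bounds with larger constants, is where the argument becomes most intricate.
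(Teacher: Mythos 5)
Your opening step is correct: $\Delta(E)=c^2-4d\ge 0$ together with $d\in\tfrac12\Z$ gives $d\le 0$ in both cases, exactly as in the paper. Your treatment of case (ii)(a) also matches the paper's Lemma \ref{lem:c_0_induction_start1}: with $\Delta(E)=0$ there is no semicircular wall, and $Q_{\alpha,\beta}(E)\ge 0$ at any $\beta<0$ gives $e\le 0$. The general wall-crossing plan (destabilizing sequence, Bogomolov constraints on the factors, wall outside the region $Q_{\alpha,\beta}(E)<0$, then feeding in the rank $\pm1$ and rank $0$ bounds) is also the paper's plan.

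However, there is a genuine gap in your list of tools for bounding $\ch_3$ of the factors. You propose to control the factors only via Proposition \ref{prop:maximum_ch3_rank1} (rank $\pm1$) and Theorem \ref{thm:rank_zero_bound} (rank $0$). But walls where the destabilizing \emph{subobject $F$ has rank two} (and the quotient has rank zero) genuinely occur and are the hardest part of the paper's proof: in case (i) one must deal with $H\cdot\ch_1(F)=-2$, and in case (ii)(c) with $H\cdot\ch_1(F)=-1$, and neither of your two cited results applies to such an $F$. The paper closes this loop by running an induction on $\Delta(E)$: since $\Delta(F)<\Delta(E)$ across an actual wall, the theorem itself, applied to a suitable twist $F(H)$ of smaller discriminant, supplies the bound on $\ch_3(F)$ (with Lemmas \ref{lem:c_0_induction_start1} and \ref{lem:c_0_induction_start2} as the base of the induction). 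Without building this induction into the statement you are proving, your argument cannot be completed. A second, smaller omission: ``ranks summing to $2$'' does not by itself exclude a subobject of rank $\ge 3$ (with a quotient of negative rank); the paper needs Lemma \ref{lem:higherRankBound} together with the explicit comparison $\rho_Q^2>\Delta(E)/12$ (Lemma \ref{lem:no_rank_3_walls}) to restrict to subobjects of rank at most two, and this comparison uses the assumed failure of the bound on $e$. Both points need to be added before the enumeration of walls you describe can even begin.
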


If $X = \P^3$ and $c = -1$, Theorem \ref{thm:rank_2_first_cases_intro} implies the corresponding case of Theorem \ref{thm:hartshorne_reflexive_bounds} by Hartshorne and Hirschowitz even without the reflexiveness hypothesis. The case $c = 0$ gives a weaker bound here. For just $\P^3$ we could get the stronger bound by a more careful analysis, but it turns out to be wrong for more general threefolds.

All of these statements, including Theorem \ref{thm:genus_bound_large_degree_intro}, are proved with the following strategy. Let $E$ be the object for which we want to bound $\ch_3(E)$. We start by proving the statement for small values of $\Delta(E)$ using $Q_{\alpha, \beta}(E) \geq 0$ whenever $E$ is $\nu_{\alpha, \beta}$-semistable. For larger values of $\Delta(E)$ this strategy provides non-optimal bounds (see \cite{BMT14:stability_threefolds, Mac14:conjecture_p3, Sun16:genus_bounds, Sun16:genus_boundsII}). Instead we study wall-crossing via the following steps. Assume that $\ch_3(E)$ is larger than expected. As explained before, we can assume that $\ch_0(E) \geq 0$.

\begin{enumerate}[(i)]
\item Besides implying the existence of a destabilizing wall, the inequality $Q_{\alpha, \beta}(E) \geq 0$ gives a bound on the rank of the destabilizing subobject (see Lemma \ref{lem:higherRankBound} for details). For example, for ideal sheaves of curves satisfying the assumptions of Theorem \ref{thm:genus_bound_large_degree_intro} the subobject can only be of rank one or two. Let
\[
0 \to F \to E \to G \to 0
\]
be the destabilizing sequence. The argument is always symmetric in $F$ and $G$ and without loss of generality, we can assume $\ch_0(F) \geq 1$.
\item Using the fact that $\ch_1^{\beta}(F) > 0$ and $\ch_1^{\beta}(G) > 0$ for any $\beta$ along the wall, we obtain a lower and upper bound on $\ch_1(F)$.
\item The Bogomolov inequality $\Delta(F) \geq 0$ yields an upper bound on $\ch_2(F)$. The inequality $\Delta(G) \geq 0$ yields another bound on $\ch_2(F)$ (lower or upper bound depending on the rank of $G$). Moreover, the fact that a wall cannot lie in the area $Q_{\alpha, \beta}(E) < 0$ leads to a lower bound on $\ch_2(F)$. Overall, this reduces the problem to finitely many walls.
\item Next, we use some previously obtained bounds for $\ch_3(F)$ and $\ch_3(G)$ to bound $\ch_3(E)$.
\item In general, the walls are linearly ordered. The last step is to check that the previous bound is decreasing with this ordering, and the largest wall still provides a contradiction.
\end{enumerate}

We prove the statements in the following order. First, the case $c = 1$ in Theorem \ref{thm:rank_zero_bound_intro} is proved via $Q_{\alpha, \beta}(E) \geq 0$ (see Lemma \ref{lem:bound_torsion_on_plane} for details). Next, we prove Proposition \ref{prop:maximum_ch3_rank1_intro}. It turns out that the subobjects are also of rank one, for which we use induction, and the  bounds on the quotients follow from the $c = 1$ case in Theorem \ref{thm:rank_zero_bound_intro}. After that, we use Proposition \ref{prop:maximum_ch3_rank1_intro} on both subobjects and quotients, to prove Theorem \ref{thm:rank_zero_bound_intro}. All of the previous statements are used to prove Theorem \ref{thm:rank_2_first_cases_intro} with an induction on $\Delta(E)$. Finally, Theorem \ref{thm:genus_bound_large_degree_intro} can be proved using the same steps again.

%%%%%%%%%%%%%%%%%%%%%%%%%%%%%%%%%%%%%%%%%%%%%
%%%%%%%%%%%%%%%%%%%%%%%%%%%%%%%%%%%%%%%%%%%%%
%%%%%%%%%%%%%%%%%%%%%%%%%%%%%%%%%%%%%%%%%%%%%

\subsubsection*{Hartshorne--Hirschowitz Conjecture}
\label{subsec:HHConjecture}

Coming back to the case of projective space, our aim is to improve our techniques towards a possible approach to the Hartshorne--Hirschowitz Conjecture, namely to the case in which $d\leq k(k-1)$. Let us first recall the statement of the conjecture (see \cite{Har87:space_curvesII,HH88:genus_bound,Har88:stable_reflexive_3}).

For given integers $d$ and $k$, let $G(d,k)$ be the maximal genus of an integral curve $C \subset \P^3$ with degree $d$ such that $C$ is not contained in a surface of degree smaller than $k$. It is easy to check that $d \geq \frac{1}{6}(k^2 + 4k + 6)$.
For $d > k(k-1)$, $G(d,k)$ is given by Theorem \ref{thm:GP}, since a curve with genus $G(d,k)$ always exists under those assumptions.
If
\[
\frac{1}{6}(k^2 + 4k + 6) \leq d < \frac{1}{3}(k^2 + 4k + 6),
\]
then it is not hard to find a bound from above for $G(d,k)$, but currently it is still not known in full generality if this bound is sharp (see \cite{Har87:space_curvesII,BBEMR97:maximum_genus_rangeA} for results in this direction). We are interested in the remaining case
\begin{equation}\label{eq:RangeB}
\frac{1}{3}(k^2 + 4k + 6) \leq d \leq k(k-1).
\end{equation}
Note that this case only makes sense for $k \geq 5$. We first introduce another error term as follows. For any integer $c \in \Z$, let
\[
\delta (c) :=  \begin{cases}
3 &\text{if $c=1,3$}\\
1 &\text{if $c \equiv 2 \ (\mod 3)$} \\
0 &\text{otherwise}.
\end{cases}
\]
Then, for any integers $k\geq 5$ and $f \in [k-1, 2k - 5]$, we define integers
\begin{align*}
A(k,f) &:= \frac{1}{3}(k^2 - kf + f^2 - 2k + 7f + 12 + \delta(2k-f-6)), \\
B(k,f) &:= \frac{1}{3}(k^2 - kf + f^2  + 6f + 11 + \delta(2k-f-7)).
\end{align*}

A straightforward computation shows that $A(k,f)$ is an increasing function for $f \in [k-1, 2k - 5]$ and that it partitions our range of $d$ in \eqref{eq:RangeB}: $A(k,k-1) = \lceil\frac{1}{3}(k^2 + 4k + 6)\rceil$ and $A(k,2k-5)=k(k-1)+1$. Moreover, we have $A(k,f) < B(k,f) \leq A(k, f+1)$.

\begin{conj}[Hartshorne--Hirschowitz, see Conjecture \ref{conj:hartshorne_rangeB}]
\label{conj:hartshorne_rangeB_intro}
Let $d,k>0$ be integers.
Assume that $A(k,f) \leq d < A(k,f+1)$ for some $f \in [k-1,2k-6]$. Then
\[
G(d,k) = d(k-1) + 1 - \binom{k+2}{3} + \binom{f-k+4}{3} + h(d),
\]
where
\[
h(d) = \begin{cases}
0 & \text{if } A(k,f) \leq d \leq B(k,f) \\
\frac{1}{2}(d-B(k,f))(d-B(k,f) + 1) & \text{if } B(k,f) \leq d < A(k,f+1)).
\end{cases}
\]
\end{conj}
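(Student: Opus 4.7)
The plan is to extend the wall-crossing strategy that proved Theorem \ref{thm:genus_bound_large_degree_intro} into the remaining range \eqref{eq:RangeB}. The basic setup is unchanged: for $X=\P^3$ the ideal sheaf $\II_C$ is $\nu_{\alpha,\beta}$-stable for $\alpha\gg 0$ and $\beta<0$, Assumption \ref{assu:bmt} holds by \cite{Mac14:conjecture_p3} and so forces a destabilizing wall whenever $\ch_3(\II_C)$ exceeds the target bound, and the vanishing $H^0(\P^3,\II_C(k-1))=0$ confines walls to the strip $\beta\in[-(k-1),0)$. What is new is that $d\leq k(k-1)$ no longer forces the destabilizer to be of small rank, and the target $G(d,k)$ itself is lower than the Gruson--Peskine value by the corrections $\binom{f-k+4}{3}+h(d)$. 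Heuristically, these corrections reflect the expectation that the extremal curves lie on a surface $S\subset\P^3$ of degree exactly $k$, and are linked on $S$ to a specific residual curve $C'$.

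The first concrete step is to enumerate, for each interval $d\in[A(k,f),A(k,f+1))$, the finitely many numerical candidates $(\rk,\ch_1,\ch_2)$ for a destabilizing subobject $F\subset\II_C$ and quotient $G$. As in steps (ii)--(iii) of the proof of Theorem \ref{thm:genus_bound_large_degree_intro}, the constraints are the Bogomolov inequalities $\Delta(F),\Delta(G)\geq 0$, the positivity $\ch_1^{\beta}(F),\ch_1^{\beta}(G)>0$ along the wall, the constraint $Q_{\alpha,\beta}(\II_C)\geq 0$, and the hypothesis $H^0(\II_C(k-1))=0$. For each combinatorial type of wall I would try to match it with a geometric construction; I expect that the walls indexed by $f$ correspond to short exact sequences of the form $0\to\II_{C'/S}(-k)\to\II_C\to G\to 0$, where $S$ has degree $k$ and the rank of the subobject grows with $f$.

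Granted a classification of walls, the second step is to bound $\ch_3$ across each one. The rank $-1,0,1,2$ bounds in Proposition \ref{prop:maximum_ch3_rank1_intro} and Theorems \ref{thm:rank_zero_bound_intro}, \ref{thm:rank_2_first_cases_intro} already suffice for several walls; for the remaining ones I would try to establish analogous sharp bounds for tilt-semistable objects of rank $3$ (and possibly higher) by running the induction on $\Delta$ one more time, feeding in the lower-rank bounds already in hand. Summing the contributions from subobject and quotient across the largest admissible wall should then produce the piecewise expression $d(k-1)+1-\binom{k+2}{3}+\binom{f-k+4}{3}+h(d)$, with the quadratic tail $h(d)$ emerging from the sub-interval in which the residual curve $C'$ is forced to acquire additional incidence conditions.

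The main obstacle will be twofold. First, in contrast with the range $d>k(k-1)$, the bound obtained from $Q_{\alpha,\beta}\geq 0$ for higher-rank tilt-semistable objects is known to be strictly suboptimal (see \cite{Sch15:stability_threefolds,Sun16:genus_bounds}), so sharpness cannot be extracted from Assumption \ref{assu:bmt} alone; some genuinely $\P^3$-specific input, perhaps Castelnuovo--Mumford regularity of curves on surfaces of degree $k$ or additional inequalities for rank $3$ sheaves, will have to be fed into the wall-crossing loop. Second, sharpness requires realizing the extremal Chern character by an actual integral curve, and matching the combinatorial pieces $\binom{f-k+4}{3}$ and $h(d)$ to the geometry of curves on a degree-$k$ surface is essentially the geometric content of the conjecture; this is where I expect the really hard work to lie, and where the present paper only takes partial steps.
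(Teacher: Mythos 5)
The statement you are trying to prove is Conjecture \ref{conj:hartshorne_rangeB_intro}: the paper does not prove it, and explicitly records that it is open except for the values $f=k-1,k$ and $f\geq 2k-10$ established elsewhere, and for the partial results of Theorem \ref{thm:largeWallRangeB_intro} and Proposition \ref{prop:special_case_2k10}. Your text is accordingly not a proof but a research plan, and the plan essentially restates the strategy the paper itself proposes via Questions \ref{ques:largeWallRangeB} and \ref{ques:decreasing}; the genuinely hard steps are left as ``I would try to,'' and those are exactly the steps that are open. Concretely: (1) In the range $\tfrac13(k^2+4k+6)\leq d\leq k(k-1)$ the inequality $Q_{\alpha,\beta}(\II_C)\geq 0$ does \emph{not} confine destabilizers to small rank for walls below $W(\II_C,\OO(-f-4)[1])$ --- Lemma \ref{lem:higherRankBound} only bounds the rank in terms of the radius of the wall, and small walls admit subobjects of arbitrarily large rank. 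The paper states this is ``the most serious obstacle'': sharp $\ch_3$-bounds for tilt-semistable reflexive sheaves of rank $\geq 3$ are not available, and there is no evidence that ``running the induction on $\Delta$ one more time'' closes up, since already in rank two the paper must import the full Hartshorne classification of reflexive sheaves (Theorem \ref{thm:hartshorne_reflexive_bounds}), a deep characteristic-zero input that does not follow from Assumption \ref{assu:bmt}. (2) Your proposed identification of the relevant wall with a sequence $0\to\II_{C'/S}(-k)\to\II_C\to G\to 0$ does not match the paper's analysis: the decisive wall is $W(\II_C,\OO(-f-4)[1])$, where the subobject is a rank-two reflexive sheaf $E$ sitting in $0\to\OO(-f-4)\to E\to\II_C\to 0$, and the correction term $\binom{f-k+4}{3}$ is extracted not from liaison on a degree-$k$ surface but from the cohomological estimate $h^2(G(k-f-6))=h^0(\II_{C'}(f-k+2))\leq\binom{f-k+4}{3}$ via the derived dual (Lemma \ref{lem:derived_dual_rank_minusone} and Corollary \ref{cor:sections_ideal_bound}).

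Two smaller corrections. Existence of curves attaining $G(d,k)$ is already known by \cite{HH88:genus_bound}, so the ``sharpness'' half you defer to at the end is not part of what remains to be proved; only the upper bound is open. And the vanishing $H^0(\II_C(k-1))=0$ does not confine walls to the strip $\beta\in[-(k-1),0)$; it only rules out line subbundles $\OO(-h)$ with $h\leq k-1$, which is weaker and is used in that form in Lemma \ref{lem:rangeB_rank_bound}. If you want to make progress on the conjecture rather than restate the program, the concrete open target is Question \ref{ques:decreasing}: show that for walls strictly below $W(\II_C,\OO(-f-4)[1])$ the maximal $\ch_3$ is decreasing in the size of the wall, which requires new bounds for higher-rank destabilizers.
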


By \cite{HH88:genus_bound} it is known that there exist such curves with genus $G(d,k)$ as in the conjecture. Therefore, one only has to prove that every curve satisfies this bound. This is known for a few values of $f$: the cases $f=k-1,k$ were proved in \cite{Har88:stable_reflexive_3}, while the case $f=2k-6$ is in \cite{GP82:postulation_courbes_gauches}, $f=2k-7$ in \cite{Ell91:genre_courbes_gauches}, $f=2k-8,2k-9$ in \cite{ES92:sections_planes_genre}, and $f=2k-10$ in \cite{Str93:plane_sections}.

This conjecture is partially based on the fact that this bound is obtained for curves with an extension
\[
0 \to \OO_{\P^3}(-f-4) \to E \to \II_C \to 0
\]
by bounding the third Chern character of the reflexive sheaf $E$. This sequence constitutes a potential wall in tilt stability for $\II_C$, because in our abelian category this corresponds to an exact sequence
\[
0 \to E \to \II_C \to \OO_{\P^3}(-f-4)[1] \to 0.
\]
It turns out that our approach requires to study walls above or below this wall with slightly different methods, and therefore, we suggest the following two questions. We need one extra bit of notation (see Theorem \ref{thm:Bertram} for a more in detail description of walls and their possible shapes). Given two elements $E, F \in \Db(\P^3)$, let $W(E,F)$ be the locus in the $(\alpha,\beta)$-plane where $E$ and $F$ have the same $\nu_{\alpha,\beta}$-slope. In the cases we will be interested in these loci are semicircles with center on the $\beta$-axis.

\begin{question}
\label{ques:largeWallRangeB}
Assume the hypothesis of Conjecture \ref{conj:hartshorne_rangeB_intro}. Let $C$ be an integral curve of genus $g$ and degree $d$ such that $H^0(\II_C(k - 1)) = 0$. If $\II_C$ is destabilized in tilt stability above or at the numerical wall $W(\II_C, \OO(-f-4)[1])$, does $g \leq G(d,k)$ hold?
\end{question}

Our second main result is an affirmative answer to this question in a smaller range.

\begin{thm}[See Theorem \ref{thm:largeWallRangeB}]
\label{thm:largeWallRangeB_intro}
Question \ref{ques:largeWallRangeB} has an affirmative answer if $A(k, f) \leq d \leq B(k,f)$, and the base field has characteristic $0$.
\end{thm}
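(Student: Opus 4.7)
The plan is to adapt the five-step wall-crossing strategy used in the proof of Theorem \ref{thm:genus_bound_large_degree_intro} to the setting of $\P^3$ in characteristic $0$, where Assumption \ref{assu:bmt} is available by \cite{Mac14:conjecture_p3}. Write the destabilizing sequence along the wall $W$ as
\[
0 \to F \to \II_C \to G \to 0
\]
in the tilt heart $\Coh^\beta(\P^3)$ for some $\beta$ on $W$. The hypothesis forces the (semicircular) wall $W$ to lie on or above the reference wall $W(\II_C, \OO(-f-4)[1])$, which translates into explicit upper and lower bounds on the radius and center of $W$ in the $(\alpha,\beta)$-plane.

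First I would use $Q_{\alpha,\beta}(\II_C)\geq 0$ together with Lemma \ref{lem:higherRankBound} to cut down the rank of $F$ to a small list; in the relevant range \eqref{eq:RangeB} I expect $\rk(F)\in\{1,2\}$ to be the only possibilities, and then the vanishing $H^0(\II_C(k-1))=0$ rules out every rank-one $F$ whose $\ch_1$ exceeds $-(k-1)H$. The combined use of $\ch_1^\beta(F)>0$, $\ch_1^\beta(G)>0$ along $W$ and the position of $W$ relative to $W(\II_C, \OO(-f-4)[1])$ pins $\ch_1(F)$ down to an explicit short list, essentially $-(f+4)H$ or small variants of it.

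Second, for each admissible $\ch_1(F)$, the pair of Bogomolov inequalities $\Delta(F),\Delta(G)\geq 0$ together with the constraint that $W$ does not enter the locus $Q_{\alpha,\beta}(\II_C)<0$ bound $\ch_2(F)$ both above and below, producing finitely many numerical candidates for $\ch(F)$ and $\ch(G)$. On each candidate I would apply the $\ch_3$-bounds already established in the paper, namely Proposition \ref{prop:maximum_ch3_rank1_intro} for rank-one factors, Theorem \ref{thm:rank_zero_bound_intro} for rank-zero quotients, and Theorem \ref{thm:rank_2_first_cases_intro} for rank-two factors, to get an inequality of the form
\[
\ch_3(\II_C) \leq \Phi(\ch(F))
\]
for an explicit piecewise-polynomial function $\Phi$. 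Translating through Hirzebruch-Riemann-Roch yields an upper bound on $g$ that I would then compare to the conjectural value $G(d,k)$ with $h(d)=0$.

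The main obstacle will be the last step: showing that the maximum of $\Phi$ over the (finite but numerically awkward) candidate list is attained at the ``expected'' wall coming from the extension $0\to\OO(-f-4)\to E\to\II_C\to 0$ and that the maximum equals $G(d,k)$ on the nose. In the Gruson--Peskine range this reduction was achieved by a monotonicity argument in the wall parameter, but here the interval $A(k,f)\le d\le B(k,f)$ together with the $\delta$-correction in the definition of $A,B$ indicates that the case distinction on $d\pmod 3$ and on parity must propagate through the optimization. I expect the restriction $d\leq B(k,f)$ to be exactly what is needed to keep the rank-two destabilizers from producing a genus strictly larger than $G(d,k)$; beyond $B(k,f)$ the same analysis should still work but with a nonzero $h(d)$ correction, which is why the statement is phrased only up to $B(k,f)$. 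Verifying this sharp numerical matching, and confirming that no ``hidden'' wall candidate escapes the Bogomolov/$Q$-bounds, is where the technical weight of the proof lies.
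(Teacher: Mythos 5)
Your setup is sound as far as it goes: the reduction to a destabilizing sequence, the use of $Q_{\alpha,\beta}(\II_C)\geq 0$ and Lemma \ref{lem:higherRankBound} to force the subobject to have rank at most two, and the elimination of rank-one subobjects via $H^0(\II_C(k-1))=0$ all match Lemmas \ref{lem:f_minus_four_wall}--\ref{lem:rangeB_large_ch1}. But the step you flag as the ``main obstacle'' is a genuine gap, and it cannot be closed by the route you propose. A purely numerical optimization of $\ch_3(\II_C)$ over candidate Chern characters, using Proposition \ref{prop:maximum_ch3_rank1_intro}, Theorem \ref{thm:rank_zero_bound_intro} and Theorem \ref{thm:rank_2_first_cases_intro}, does not reach the conjectural value in this range: for $A(k,f)\leq d\leq B(k,f)$ the target is $\chi(\II_C(k-1))\leq\binom{f-k+4}{3}$, which is not a function of the Chern characters of the destabilizing factors alone, and the rank-two bounds of Theorem \ref{thm:rank_2_first_cases_intro} are strictly weaker than what is needed (the paper notes explicitly that the sharper $c=0$ bound fails on general threefolds, so it cannot follow from the numerical axioms you are using). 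Moreover, your toolkit is characteristic-free on $\P^3$, whereas the statement genuinely uses characteristic $0$; this is a sign that an essential non-numerical input is missing.

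The paper's proof supplies two ingredients absent from your plan. First, it invokes Hartshorne's characteristic-zero results on rank-two reflexive sheaves (Theorem \ref{thm:hartshorne_reflexive_bounds}) applied to the twisted subobject $E(k-1)$: this both pins down $\ch_{\leq 2}$ of the destabilizer (forcing the quotient to be $G(-f-5)$ with $\ch_1(G)=0$ and $\ch_2(G)\geq A(k,f+1)-d$) and yields the vanishing $h^2(E(k-1))=0$. Second, and decisively, the endgame is cohomological rather than numerical: from $h^0(E(k-1))=h^2(E(k-1))=0$ one gets $\chi(\II_C(k-1))\leq h^2(G(k-f-6))$, which Serre duality and Proposition \ref{prop:tilt_derived_dual} convert into $h^0(\II_{C'}(f-k+2))$ for an auxiliary curve $C'$ of degree $\geq f-k+3$ arising as the derived dual of $G$; the elementary plane-section bound $h^0(\II_{C'}(l))\leq\binom{l+2}{3}$ for $l<\deg C'$ (Corollary \ref{cor:sections_ideal_bound}) then gives exactly $\binom{f-k+4}{3}$. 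No monotonicity-in-the-wall argument of the Gruson--Peskine type is available here; without the passage from Chern characters to actual cohomology groups of an auxiliary ideal sheaf, your optimization will terminate at a bound strictly larger than $G(d,k)$.
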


A full proof of the conjecture also requires to study walls below $W(\II_C, \OO(-f-4)[1])$. We suggest the following approach.

\begin{question}
\label{ques:decreasing}
Assume the hypothesis of Conjecture \ref{conj:hartshorne_rangeB_intro}, and let $C$ be destabilized below the wall $W(\II_C, \OO(-f-4)[1])$. Is the maximal possible genus of $C$ decreasing with the size of the wall?
\end{question}

All arguments in Section \ref{sec:classical_bounds} suggest that the maximum $\ch_3$ for semistable objects is decreasing with the size of the wall even beyond ideal sheaves. The most serious obstacle for studying this question is the fact that in general destabilizing subobjects can be reflexive sheaves of high rank. Beyond rank two results are scarce. Another problem is that we would need to consider more general bounds for not necessarily integral curves, but in our setting this is probably more approachable. In any case, a positive answer to both Question \ref{ques:largeWallRangeB} and Question \ref{ques:decreasing} would indeed prove Conjecture \ref{conj:hartshorne_rangeB_intro}, since if $C$ does not satisfy the conjecture, then $I_C$ will be destabilized at a certain point.

In order to handle rank two objects in the proof of Theorem \ref{thm:largeWallRangeB_intro} we need the following result (see \cite[Theorem 0.1]{Har82:stable_reflexive_2}, \cite{GP82:postulation_courbes_gauches}, \cite[Theorem 3.2, 3.3]{Har87:space_curvesII}, \cite{HH88:genus_bound}, and \cite[Theorem 1.1]{Har88:stable_reflexive_3}).

\begin{thm}
\label{thm:hartshorne_reflexive_bounds}
Assume that the base field has characteristic $0$.
Let $E \in \Coh(\P^3)$ be a rank two reflexive sheaf with $\ch(E) = (2,c,d,e)$, $c \geq -1$, and $H^0(E) = 0$. Then $d \leq \tfrac{1}{6} c^2 - \tfrac{2}{3} c - 1 - \frac{\delta(c)}{3}$. Moreover, 
\begin{enumerate}
\item if $\tfrac{1}{6}c^2 - c - \tfrac{8}{3} - \tfrac{\delta(c-1)}{3} \leq d \leq \tfrac{1}{6} c^2 - \tfrac{2}{3}c - 1 - \tfrac{\delta(c)}{3}$, then $h^2(E) = 0$ and
\[
e \leq -\frac{11c}{6} - 2d - 2,
\]
\item if $d \leq \tfrac{1}{6}c^2 - c - \tfrac{8}{3} - \tfrac{\delta(c-1)}{3}$, then
\[
h^2(E) \leq \frac{(c^2 - 6c - 6d - 2\delta(c-1) - 10)(c^2 - 6c - 6d - 2\delta(c-1) - 16)}{72}
\]
and
\[
e \leq \frac{c^4}{72} - \frac{c^3}{6} + \frac{5c^2}{36} + \frac{c}{3} - \frac{c^2d}{6} + cd + \frac{d^2}{2} + \frac{d}{6} + \frac{2}{9} - \frac{\delta(c-1)}{18}(c^2 - 6c - 6d - \delta(c-1) - 13).
\]
\end{enumerate}
Furthermore, these bounds are strict in the sense that there are rank two stable reflexive sheaves $E$ with $H^0(E) = 0$ reaching them in all cases.
\end{thm}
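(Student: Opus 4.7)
The statement is essentially a translation into Chern-character notation of classical bounds on rank two normalized stable reflexive sheaves on $\P^3$ from \cite{Har82:stable_reflexive_2,Har87:space_curvesII,Har88:stable_reflexive_3,HH88:genus_bound,GP82:postulation_courbes_gauches}. The plan is to perform the usual normalization reduction, invoke the relevant bounds from the cited works, and organize the correction term $\delta(c)$ by case analysis.

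\emph{Normalization and semistability.} Given $E$ with $\ch(E) = (2,c,d,e)$, $c \geq -1$, and $H^0(E) = 0$, set $n := \lfloor(c+1)/2\rfloor \geq 0$ and $E' := E(-n)$, so that $c_1(E') = c - 2n \in \{-1,0\}$. Since $E$ is torsion-free, multiplication by a nonzero section of $\OO(k)$ gives an injection $E(-k) \hookrightarrow E$ for every $k \geq 0$, and hence $H^0(E(-k)) \hookrightarrow H^0(E) = 0$. In particular $H^0(E') = 0$, so $E'$ is a normalized rank two reflexive sheaf with no global sections, hence $\mu$-stable. The same vanishing $H^0(E(-k)) = 0$ for all $k \geq 0$ shows that $E$ itself is $\mu$-semistable, and Serre duality applied to $E^\vee(-4)$ then forces $H^3(E) = 0$ as well.

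\emph{Bounds from the normalized setting.} The hypothesis $H^0(E) = 0$ translates via $E = E'(n)$ into the vanishing $H^0(E'(n)) = 0$, which is substantially stronger than $H^0(E') = 0$ when $c \geq 0$ and which drives the refined bound on $c_2(E')$ proved in \cite[Theorem 0.1]{Har82:stable_reflexive_2} and sharpened in the extremal-case analysis of \cite{Har87:space_curvesII,Har88:stable_reflexive_3}. Substituting back via $\ch(E) = \ch(E') \cdot e^{nH}$, i.e.\ $d = \ch_2(E') + n\,c_1(E') + n^2$, produces the stated bound on $d$, and the function $\delta(c)$ records the exceptional Chern-class profiles in Hartshorne's classification. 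For the bound on $e$, Hirzebruch--Riemann--Roch gives $\chi(E) = e + 2d + \tfrac{11c}{6} + 2$, and $h^0(E) = h^3(E) = 0$ yields $h^2(E) \geq \chi(E)$. In case (1), the vanishing theorem of \cite{Har87:space_curvesII,Har88:stable_reflexive_3} gives $h^2(E) = 0$, so $\chi(E) \leq 0$, which is precisely the stated linear bound $e \leq -\tfrac{11c}{6} - 2d - 2$. In case (2), the quadratic upper bound on $h^2(E)$ from \cite{HH88:genus_bound}, combined with $h^2(E) \geq \chi(E)$, translates directly into the stated quadratic upper bound on $e$. Strictness in both cases is witnessed by the explicit extremal constructions in the cited papers.

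\emph{Main obstacle.} The most delicate point is the bookkeeping of $\delta(c)$, which has a mod-$3$ structure whereas the normalization twist is governed by the parity of $c$; the cleanest route is to verify the final bound residue class by residue class modulo $6$, matching against Hartshorne's list of extremal $(c_1(E'), c_2(E'))$ profiles (for example $c_1(E') = -1$, $c_2(E') = 2$). A secondary point is that several Hartshorne bounds are formulated assuming strict $\mu$-stability of $E$ rather than merely $H^0(E)=0$; for $c \geq 2$ these hypotheses differ, and one must check, by inspecting the proofs of the cited results, that $\mu$-semistability of $E$ together with $H^0(E'(n)) = 0$ (both of which we have established) is enough to run the arguments.
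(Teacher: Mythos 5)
The paper does not prove this theorem at all --- it is quoted directly from the cited works of Gruson--Peskine, Hartshorne, and Hartshorne--Hirschowitz --- so your reduction-to-the-literature is exactly the intended reading, and your translation bookkeeping (normalization, the fact that $H^0(E)=0$ with $c\geq -1$ already forces $\mu$-stability, the vanishing $h^3(E)=0$ via $E^\vee\cong E(-c)$ and Serre duality, and the Riemann--Roch identity $\chi(E)=e+2d+\tfrac{11c}{6}+2$ converting the cited $h^2$ bounds into the stated bounds on $e$) checks out arithmetically against the stated formulas, including the $\delta(c-1)$ correction term. This is essentially the same approach as the paper.
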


A more detailed tilt stability version of Theorem \ref{thm:hartshorne_reflexive_bounds} is surely necessary to answer Questions \ref{ques:largeWallRangeB} and \ref{ques:decreasing} in general.
 
Finally, we illustrate our approach in one example. We prove Conjecture \ref{conj:hartshorne_rangeB_intro} in the case $d = A(k, 2k - 11)$ when $k \geq 31$ in Proposition \ref{prop:special_case_2k10}. For a fixed $k$ this is the largest degree for which the conjecture is unknown.  We have no doubt that a slightly more careful analysis would also handle the cases $k < 31$.

%%%%%%%%%%%%%%%%%%%%%%%%%%%%%%%%%%%%%%%%%%%%%
%%%%%%%%%%%%%%%%%%%%%%%%%%%%%%%%%%%%%%%%%%%%%
%%%%%%%%%%%%%%%%%%%%%%%%%%%%%%%%%%%%%%%%%%%%%

\subsubsection*{Acknowledgements}
We would like to thank Roman Bezrukavnikov for originally proposing this question during a talk by the first author, and Arend Bayer, Izzet Coskun, Patricio Gallardo, Mart\'i Lahoz, Luigi Lombardi, C\'esar Lozano Huerta, John Ottem, Giuseppe Pareschi, Paolo Stellari, and Xiaolei Zhao for very useful discussions. We also thank the referee for useful suggestions. The authors were partially supported by the NSF grant DMS-1523496 and by the NSF FRG-grant DMS-1664215. Parts of the paper were written while the first author was holding a Poincar\'e Chair from the Institut Henri Poincar\'e and the Clay Mathematics Institute.
The authors would also like to acknowledge the following institutions: Institut Henri Poincar\'e, Northeastern University, and University of Texas.

%%%%%%%%%%%%%%%%%%%%%%%%%%%%%%%%%%%%%%%%%%%%%
%%%%%%%%%%%%%%%%%%%%%%%%%%%%%%%%%%%%%%%%%%%%%
%%%%%%%%%%%%%%%%%%%%%%%%%%%%%%%%%%%%%%%%%%%%%

\subsubsection*{Notation}

\begin{center}
  \begin{tabular}{ r l }
    $X$ & smooth projective threefold over an algebraically closed field $\F$ \\
    $H$ & fixed ample divisor on $X$ \\
    $\Db(X)$ & bounded derived category of coherent sheaves on $X$ \\
    $\HH^{i}(E)$ & the $i$-th cohomology group of a complex $E \in \Db(X)$ \\
    $H^i(E)$ & the $i$-th sheaf cohomology group of a complex $E \in \Db(X)$ \\
    $\D(\cdot)$ & the derived dual $\RlHom(\cdot, \OO_X)[1]$ \\
    $\ch(E)$ & Chern character of an object $E \in \Db(X)$  \\
    $\ch_{\leq l}(E)$ & $(\ch_0(E), \ldots, \ch_l(E))$ \\
    $H \cdot \ch(E)$ & $\left(\frac{H^3 \cdot \ch_0(E)}{H^3}, \frac{H^2 \cdot \ch_1(E)}{H^3}, \frac{H \cdot \ch_2(E)}{H^3}, \frac{\ch_3(E)}{H^3}\right)$ \\
    $H \cdot \ch_{\leq l}(E)$ & $\left(\frac{H^3 \cdot \ch_0(E)}{H^3}, \ldots, \frac{H^{3-l} \cdot \ch_l(E)}{H^3}\right)$
  \end{tabular}
\end{center}

%%%%%%%%%%%%%%%%%%%%%%%%%%%%%%%%%%%%%%%%%%%%%
%%%%%%%%%%%%%%%%%%%%%%%%%%%%%%%%%%%%%%%%%%%%%
%%%%%%%%%%%%%%%%%%%%%%%%%%%%%%%%%%%%%%%%%%%%%
\section{Background on stability conditions}
\label{sec:background}

In \cite{BMT14:stability_threefolds} the notion of tilt stability was introduced as an auxiliary notion in between slope stability and a conjectural construction of Bridgeland stability on threefolds. It turns out to be useful in its own right as pointed out, for example, in \cite{Sch15:stability_threefolds,Xia16:twisted_cubics}. In this section, we give a quick introduction of tilt stability and its basic properties. We will restrict to the case of Picard rank one, even though the theory can be developed more generally.

%%%%%%%%%%%%%%%%%%%%%%%%%%%%%%%%%%%%%%%%%%%%%
%%%%%%%%%%%%%%%%%%%%%%%%%%%%%%%%%%%%%%%%%%%%%
%%%%%%%%%%%%%%%%%%%%%%%%%%%%%%%%%%%%%%%%%%%%%
\subsection{Definition}
\label{subsec:definition}

Let $X$ be a smooth projective threefold over an algebraically closed field $\F$. The first assumption we will make in this article is to restrict its possible divisors and curves.

\begingroup
\def\thetheorem{\ref{assu:div_curves}}
\begin{assuInt}
The N\'eron-Severi group is generated by the class of an ample divisor $H$, i.e., $N^1(X) = \Z \cdot H$. Moreover, the Chern character of any sheaf $E \in \Coh(X)$ satisfies $\ch_2(E) \in \tfrac{1}{2} H^2 \cdot \Z$ and $\ch_3(E) \in \tfrac{1}{6} H^3 \cdot \Z$.
\end{assuInt}
%\addtocounter{defn}{-1}
\endgroup

%\begin{assu}
%\label{assu:div_curves???}
%The N\'eron-Severi group is generated by a single ample divisor $H$, i.e. $N^1(X) = \Z \cdot H$. Moreover, the Chern character of any sheaf $E \in \Coh(X)$ satisfies $\ch_2(E) \in \tfrac{1}{2} H^2 \cdot \Z$ and $\ch_3(E) \in \tfrac{1}{6} H^3 \cdot \Z$.
%\end{assu}

This assumption is not needed for the results in this preliminary section, but it will be important for the remainder of the article. It holds in particular for $\P^3$, for principally polarized abelian threefolds of Picard rank one, and for Fano threefolds of Picard rank one, index two, and degree one or two.

The classical \emph{slope} for a coherent sheaf $E \in \Coh(X)$ is defined as
\[
\mu(E) := \frac{H^2\cdot \ch_1(E)}{H^3 \cdot \ch_0(E)},
\]
where division by zero is interpreted as $+\infty$. As usual a coherent sheaf $E$ is called \emph{slope (semi)stable} if for any non trivial proper subsheaf $F \subset E$ the inequality $\mu(F) < (\leq) \mu(E/F)$ holds. We will assume that the following assertion holds. In characteristic zero this is nothing but a consequence of the classical Bogomolov inequality (\cite{Rei78:bogomolov,Bog78:inequality,Gie79:bogomolov}).
In positive characteristic it holds, for example, in  $\P^3$ and abelian threefolds (\cite{MR82:homogeneous_bundles_char_p,Lang04:positive_char})\footnote{In \cite{Lang04:positive_char}, a general Bogomolov inequality is proved over any field, by adding an extra term to the inequality in Assumption \ref{assu:classic_bg}. However, this is generally not enough to define tilt-stability.}.

\begingroup
\def\thetheorem{\ref{assu:classic_bg}}
\begin{assuInt}
%\label{assu:classic_bg}
Any slope semistable sheaf $E \in \Coh(X)$ satisfies
\[
\Delta(E) := \frac{(H^2 \cdot \ch_1(E))^2 - 2(H^3 \cdot \ch_0(E))(H \cdot \ch_2(E))}{(H^3)^2} \geq 0.
\]
\end{assuInt}
%\addtocounter{defn}{-1}
\endgroup

Note that, by using Assumption \ref{assu:div_curves}, $\Delta(E) \in \Z$. Let $\beta$ be an arbitrary real number. Then the twisted Chern character $\ch^{\beta}$ is defined to be $e^{-\beta H} \cdot \ch$. Explicitly:
\begin{align*}
\ch^{\beta}_0 &= \ch_0, \ \ch^{\beta}_1 = \ch_1 - \beta H \cdot \ch_0, \ \ch^{\beta}_2 = \ch_2 - \beta H \cdot \ch_1 + \frac{\beta^2}{2} H^2 \cdot \ch_0,\\
\ch^{\beta}_3 &= \ch_3 - \beta H \cdot \ch_2 + \frac{\beta^2}{2} H^2 \cdot \ch_1 - \frac{\beta^3}{6} H^3 \cdot \ch_0.
\end{align*}

The process of tilting is used to construct a new heart of a bounded t-structure. For more information on the general theory of tilting we refer to \cite{HRS96:tilting,BvdB03:functors}.
A torsion pair is defined by
\begin{align*}
\TT_{\beta} &:= \{E \in \Coh(X) : \text{any quotient $E \onto G$ satisfies $\mu(G) > \beta$} \}, \\
\FF_{\beta} &:=  \{E \in \Coh(X) : \text{any subsheaf $F \subset E$ satisfies $\mu(F) \leq \beta$} \}.
\end{align*}
The heart of a bounded t-structure is given as the extension closure $\Coh^{\beta}(X) := \langle \FF_{\beta}[1],\TT_{\beta} \rangle$. Let $\alpha > 0$ be a positive real number. The tilt slope is defined as
\[
\nu_{\alpha, \beta} := \frac{H \cdot \ch^{\beta}_2 - \frac{\alpha^2}{2} H^3
\cdot \ch^{\beta}_0}{H^2 \cdot \ch^{\beta}_1}.
\]
Similarly as before, an object $E \in \Coh^{\beta}(X)$ is called \emph{tilt-(semi)stable} (or \emph{$\nu_{\alpha,\beta}$-(semi)stable}) if for any non trivial proper subobject $F \subset E$ the inequality $\nu_{\alpha, \beta}(F) < (\leq) \nu_{\alpha, \beta}(E/F)$ holds.
Assumption \ref{assu:classic_bg} implies that this notion of stability is well-defined and that it shares many properties with slope-stability for sheaves (\cite{BMT14:stability_threefolds}); in particular, Harder-Narasimhan filtrations exist and stability is open when varying $(\alpha,\beta)$.
%Semistability could have as well been defined by the inequality $\nu_{\alpha, \beta}(F) \leq \nu_{\alpha, \beta}(E)$, but the difference is crucial for defining stability.

%%%%%%%%%%%%%%%%%%%%%%%%%%%%%%%%%%%%%%%%%%%%%
%%%%%%%%%%%%%%%%%%%%%%%%%%%%%%%%%%%%%%%%%%%%%
%%%%%%%%%%%%%%%%%%%%%%%%%%%%%%%%%%%%%%%%%%%%%
\subsection{Walls and inequalities}
\label{subsec:walls}

A version of the classical Bogomolov inequality also holds in tilt stability assuming that it holds for slope semistable sheaves.

\begin{thm}[{Bogomolov inequality for tilt stability, \cite[Corollary 7.3.2]{BMT14:stability_threefolds}}]
Assume that Assumption \ref{assu:classic_bg} holds.
Then, any $\nu_{\alpha, \beta}$-semistable object $E \in \Coh^{\beta}(X)$ satisfies $\Delta(E) \geq 0$.
\end{thm}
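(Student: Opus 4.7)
The plan is to reduce this Bogomolov-type inequality for tilt-semistable objects to its classical slope-stability counterpart, Assumption \ref{assu:classic_bg}, by exploiting the fact that $\Delta(E)$ depends only on $\ch_{\leq 2}(E)$ and is invariant under the twist $E \mapsto E\otimes\OO(-\beta H)$. In particular, $\Delta$ is a discrete invariant that is constant in $(\alpha,\beta)$, so it can be tracked across walls in the tilt stability manifold.

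First, I would analyze the cohomological structure of $E\in\Coh^\beta(X)$. By construction of the tilted heart $\langle\FF_\beta[1],\TT_\beta\rangle$, only $\HH^{-1}(E)\in\FF_\beta$ and $\HH^0(E)\in\TT_\beta$ can be nonzero, and they sit in the short exact sequence
\[
0 \to \HH^{-1}(E)[1] \to E \to \HH^0(E) \to 0
\]
in $\Coh^\beta(X)$. If $E$ is $\nu_{\alpha,\beta}$-semistable with both pieces nonzero, semistability forces equality of their tilt slopes; combined with the non-negativity of $H^2\cdot\ch_1^\beta$ on both the torsion part and on shifts of the torsion-free part (dictated by the torsion pair), a short case analysis reduces the problem to the situation in which $E$ is a single slope-semistable sheaf in $\TT_\beta$, the case $E\in\FF_\beta[1]$ being handled symmetrically by the shift $E\mapsto E[1]$.

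Next, since $\Delta$ is independent of $(\alpha,\beta)$, I would move $\alpha$ along the vertical ray with fixed $\beta$. For $\alpha\gg 0$, tilt-semistable objects of positive rank in $\Coh^\beta(X)$ are precisely the classical slope-semistable sheaves (the large-volume limit built into the definition of $\nu_{\alpha,\beta}$), where Assumption \ref{assu:classic_bg} delivers $\Delta(E)\geq 0$ directly. If $E$ does not remain tilt-semistable as $\alpha$ grows, there is a first wall $(\alpha_1,\beta)$ producing a short exact sequence $0\to F\to E\to G\to 0$ in $\Coh^\beta(X)$ with $F,G$ both $\nu_{\alpha_1,\beta}$-semistable of the same slope. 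A direct computation using the slope equality and the additivity of Chern characters yields an inequality of the form
\[
\Delta(E) \;\geq\; \Delta(F) + \Delta(G),
\]
with the missing cross-term controlled by the positivity of $H^2\cdot\ch_1^\beta(F)$ and $H^2\cdot\ch_1^\beta(G)$ forced by membership in $\Coh^\beta(X)$.

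The main obstacle is making this wall-crossing induction well-founded. One needs that at every wall both $\Delta(F)$ and $\Delta(G)$ are strictly smaller than $\Delta(E)$, or that the resulting objects are simpler in some measurable sense, so that after finitely many steps one lands in the slope-semistable regime where Assumption \ref{assu:classic_bg} closes the argument. This follows from the discreteness of $\Delta$ (a non-negative integer under Assumption \ref{assu:div_curves} on the base case) together with the finiteness of walls of a fixed numerical type. Combining the large-volume base case with this induction on $\Delta$ establishes $\Delta(E)\geq 0$ for every $\nu_{\alpha,\beta}$-semistable $E$, as claimed.
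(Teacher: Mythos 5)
The paper does not prove this statement at all --- it is imported verbatim from \cite[Corollary 7.3.2]{BMT14:stability_threefolds} --- so your proposal must be judged against the standard argument there and in \cite[Theorem 3.5]{BMS16:abelian_threefolds}. Your overall plan (pass to $\alpha\gg0$, classify semistable objects there, and come back down by a wall-crossing induction using $\Delta(E)\geq\Delta(F)+\Delta(G)$) is the right one, but the induction as you set it up is circular. You propose to induct on $\Delta(E)$, justified by ``the discreteness of $\Delta$ (a non-negative integer\dots)''; the non-negativity of $\Delta$ is exactly the statement being proved, so it cannot serve as a well-founded induction parameter, and in any case the wall-crossing inequality only yields $\Delta(F)\leq\Delta(E)$, not strict decrease. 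The correct discrete invariant is $H^2\cdot\ch_1^{\beta}(E)$ for rational $\beta$: it is non-negative by the definition of $\Coh^{\beta}(X)$, lies in a discrete set, and strictly decreases for every Jordan--H\"older factor at a wall, since a factor with $\ch_1^{\beta}=0$ has slope $+\infty$ and cannot share the finite slope of $E$ along the wall. Irrational $\beta$ then requires a separate limiting argument, which you omit entirely.

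Two further steps would fail as written. First, the cross-term in $\Delta(E)\geq\Delta(F)+\Delta(G)$ is \emph{not} controlled by the positivity of $H^2\cdot\ch_1^{\beta}(F)$ and $H^2\cdot\ch_1^{\beta}(G)$ alone: $\Delta$ is a quadratic form of signature $(2,1)$, and the bilinear term $\Delta(F,G)$ can be negative for two classes of equal tilt slope unless one already knows $\Delta(F)\geq0$ and $\Delta(G)\geq0$. (This is precisely how Theorem \ref{thm:Bertram}(v) is proved in \cite[Appendix A]{BMS16:abelian_threefolds}, where the Bogomolov inequality is already available.) So the inequality you invoke must itself be fed by the inductive hypothesis, which again forces the induction to be carried by $H^2\cdot\ch_1^{\beta}$ rather than by $\Delta$. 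Second, your opening reduction to ``a single slope-semistable sheaf in $\TT_{\beta}$'' is false: semistability of $E$ only gives $\nu_{\alpha,\beta}(\HH^{-1}(E)[1])\leq\nu_{\alpha,\beta}(\HH^{0}(E))$, not equality, and even in the large-volume limit case (iii) of Lemma \ref{lem:large_volume_limit} produces genuine two-term complexes with both cohomologies nonzero. That case must be handled by the direct computation $\Delta(E)=\Delta(\HH^{-1}(E))+2\,(H^{3}\cdot\ch_0(\HH^{-1}(E)))(H\cdot\ch_2(\HH^{0}(E)))/(H^{3})^{2}\geq0$, using that $\HH^{0}(E)$ is supported in dimension at most one; likewise the pure torsion case (ii) needs $\Delta=(H^{2}\cdot\ch_1)^{2}/(H^{3})^{2}\geq0$. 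These are small checks, but they are where Assumption \ref{assu:classic_bg} actually enters, and your sketch skips them.
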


The following inequality involving the third Chern character was conjectured in \cite{BMT14:stability_threefolds} and was brought into the following form in \cite{BMS16:abelian_threefolds}. 

\begingroup
\def\thetheorem{\ref{assu:bmt}}
\begin{assuInt}
%\label{assu:bmt}
For any $\nu_{\alpha,\beta}$-semistable object $E \in \Coh^{\beta}(X)$ the inequality
\[
Q_{\alpha, \beta}(E) := \alpha^2 \Delta(E) + \frac{4(H \cdot \ch_2^{\beta}(E))^2}{(H^3)^2} - \frac{6 (H^2\cdot \ch_1^{\beta}(E)) \ch_3^{\beta}(E)}{(H^3)^2} \geq 0
\]
holds.
\end{assuInt}
%\addtocounter{defn}{-1}
\endgroup

In our setting of Picard rank one this is known to hold in characteristic zero\footnote{Some of the arguments in the known proofs do generalize directly to positive characteristic. For example, in $\P^3$ Assumption \ref{assu:bmt} holds over any field.} for both Fano threefolds (\cite{Mac14:conjecture_p3,Sch14:conjecture_quadric,Li15:conjecture_fano_threefold}) and abelian threefolds (\cite{MP15:conjecture_abelian_threefoldsI,MP16:conjecture_abelian_threefoldsII,BMS16:abelian_threefolds}).

Let $\Lambda \subset \Z \oplus \Z \oplus \tfrac{1}{2} \Z$ be the image of the map $H \cdot \ch_{\leq 2}$. Notice that $\nu_{\alpha, \beta}$ factors through $H \cdot \ch_{\leq 2}$. Varying $(\alpha, \beta)$ changes the set of stable objects. A \textit{numerical wall} in tilt stability with respect to a class $v \in \Lambda$ is a non trivial proper subset $W$ of the upper half plane given by an equation of the form $\nu_{\alpha, \beta}(v) = \nu_{\alpha, \beta}(w)$ for another class $w \in \Lambda$. We will usually write $W = W(v, w)$.

A subset $S$ of a numerical wall $W$ is called an \textit{actual wall} if the set of semistable objects with class $v$ changes at $S$. The structure of walls in tilt stability is rather simple. Part (i) - (iv) is usually called Bertram's Nested Wall Theorem and appeared in \cite{Mac14:nested_wall_theorem}, while part (v) and (vi) can be found in \cite[Appendix A]{BMS16:abelian_threefolds}.

\begin{thm}[Structure Theorem for Walls in Tilt Stability]
\label{thm:Bertram}
Let $v \in \Lambda$ be a fixed class. All numerical walls in the following statements are with respect to $v$.
\begin{enumerate}[(i)]
  \item Numerical walls in tilt stability are either semicircles with center on the $\beta$-axis or rays parallel to the $\alpha$-axis. Moreover, a semicircular wall with radius $\rho$ and center $s$ satisfies
\[
\frac{(H^3 \cdot \ch_0(v))^2}{(H^3)^2} \rho^2 + \Delta(v) = \frac{(H^3 \cdot \ch_0(v) s - H^2 \cdot \ch_1(v))^2}{(H^3)^2}.
\]
 If $v_0 \neq 0$, there is exactly one numerical vertical wall given by $\beta = v_1/v_0$. If $v_0 = 0$, there is no actual vertical wall.
   \item The curve $\nu_{\alpha, \beta}(v) = 0$ is given by a hyperbola, which may be degenerate. Moreover, this hyperbola intersects all semicircular walls at their top point.
  \item If two numerical walls given by classes $w,u \in \Lambda$ intersect, then $v$, $w$ and $u$ are linearly dependent. In particular, the two walls are completely identical.
  \item If a numerical wall has a single point at which it is an actual wall, then all of it is an actual wall.
  \item If there is an actual wall numerically defined by an exact sequence of tilt semistable objects $0 \to F \to E \to G \to 0$ such that $H \cdot \ch_{\leq 2}(E) = v$, then 
  \[\Delta(F) + \Delta(G) \leq \Delta(E).\]
  Moreover, equality holds if and only if $H \cdot \ch_{\leq 2}(G) = 0$.
  \item If $\Delta(E) = 0$, then $E$ can only be destabilized at the unique numerical vertical wall. In particular, line bundles, respectively their shifts by one, are tilt semistable everywhere.
\end{enumerate}
\end{thm}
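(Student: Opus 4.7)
Parts (i) and (ii) will be handled by direct algebraic manipulation. Setting the two slopes equal and clearing denominators, the equation $\nu_{\alpha,\beta}(v) = \nu_{\alpha,\beta}(w)$ reduces, after collecting terms (and noticing that the pure $\beta^3$, $\alpha^2\beta$, and $v_1w_1$ contributions cancel), to the conic
\[
(\alpha^2 + \beta^2)(v_0 w_1 - v_1 w_0) + 2\beta(v_2 w_0 - v_0 w_2) + 2(v_1 w_2 - v_2 w_1) = 0,
\]
where I abbreviate $v_i = (H^{3-i}\cdot\ch_i(v))/H^3$. When $v_0 w_1 - v_1 w_0 \neq 0$, completing the square produces a circle centered on the $\beta$-axis, and matching coefficients against the claimed wall formula in (i) is a straightforward check. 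When $v_0 w_1 - v_1 w_0 = 0$, the equation is linear in $\beta$, giving a vertical wall; if $v_0 \neq 0$, substituting $w_1 = v_1 w_0/v_0$ and simplifying yields $\beta = v_1/v_0$, while if $v_0 = 0$ the slope function of $v$ degenerates so no actual vertical wall can arise. For part (ii), the equation $\nu_{\alpha,\beta}(v)=0$ expands to $v_2 - \beta v_1 + \frac{\beta^2-\alpha^2}{2}v_0 = 0$, which defines a (possibly degenerate) hyperbola; a direct substitution of $(\beta,\alpha) = (s,\rho)$ using the wall formula of (i) confirms the apex claim.

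For (iii), if two distinct numerical walls $W(v,w)$ and $W(v,u)$ share a common point, then by (i) both are semicircles centered on the $\beta$-axis (or both coincide with the unique vertical wall), and two such semicircles in the upper half plane either coincide or meet in at most two points symmetric in $\alpha$. A shared interior point therefore forces the two semicircles to be equal, and reading off the coefficients of the wall equation forces $v$, $w$, $u$ to be linearly dependent in $\Lambda\otimes\R$. For (iv), I would use openness of $\nu_{\alpha,\beta}$-stability: if $0 \to F \to E \to G \to 0$ is a destabilizing sequence at one point of the numerical wall $W$, then by openness $F$ and $G$ remain tilt-semistable in a neighborhood, and by the numerical definition of $W$ the slope equality $\nu_{\alpha,\beta}(F) = \nu_{\alpha,\beta}(E) = \nu_{\alpha,\beta}(G)$ persists along the entire wall, so the same sequence continues to witness destabilization throughout $W$.

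For (v) and (vi), let $b$ denote the symmetric bilinear form polarizing the discriminant, so that $\Delta(v) = b(v,v)$ and $\Delta(E) = \Delta(F) + \Delta(G) + 2b(F,G)$. Part (v) is then equivalent to $b(F,G) \geq 0$, with equality iff $H\cdot\ch_{\leq 2}(G) = 0$. The form $b$ has signature $(2,1)$ on $\Lambda\otimes\R$, and the slope equality $\nu_{\alpha,\beta}(F) = \nu_{\alpha,\beta}(G)$ together with the positivity conditions $H^2\cdot\ch_1^\beta(F), H^2\cdot\ch_1^\beta(G) > 0$ forces the pair $(F,G)$ to span a rank-two subspace on which $b$ is positive semidefinite; combined with $\Delta(F),\Delta(G) \geq 0$, a Hodge-index style argument yields the required inequality and pins down the equality case. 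Part (vi) follows immediately: if $\Delta(E) = 0$, then $\Delta(F) + \Delta(G) \leq 0$ together with $\Delta(F),\Delta(G) \geq 0$ forces $\Delta(F) = \Delta(G) = 0$ and $H\cdot\ch_{\leq 2}(G) = 0$, which is incompatible with $G$ being a nonzero tilt-semistable object at a point of a semicircular wall; the only possible site of destabilization is therefore the unique vertical wall, and in particular $\OO_X(n)$ and $\OO_X(n)[1]$ are tilt-semistable everywhere.

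The most delicate step is the signature analysis in (v): beyond the basic inequality $b(F,G) \geq 0$ one must characterize exactly when equality holds, which requires careful bookkeeping of the slope-equality constraint and handling the edge cases $\ch_0(F) = 0$ or $\ch_0(G) = 0$ where the wall geometry degenerates into a vertical line and the Hodge-index argument must be adapted.
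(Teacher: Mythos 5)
The paper itself does not prove this theorem: parts (i)--(iv) are quoted from Maciocia's nested wall theorem \cite{Mac14:nested_wall_theorem} and parts (v)--(vi) from \cite[Appendix A]{BMS16:abelian_threefolds}, so the relevant comparison is with those sources. Your expansion of $\nu_{\alpha,\beta}(v)=\nu_{\alpha,\beta}(w)$ in (i)--(ii) is correct and is exactly the standard computation, and your polarization-of-$\Delta$ / signature $(2,1)$ argument for (v)--(vi) is precisely the strategy of \cite[Appendix A]{BMS16:abelian_threefolds}.

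There is, however, a genuine gap in your argument for (iii). Two distinct circles centered on the $\beta$-axis can perfectly well meet in exactly one point of the open upper half plane (their two intersection points are $(\pm\alpha_0,\beta_0)$, and only one has $\alpha_0>0$), so the step ``a shared interior point therefore forces the two semicircles to be equal'' does not follow from the elementary circle geometry you invoke; you also have the logical order reversed. The correct route deduces linear dependence \emph{first}: at a common point $(\alpha_0,\beta_0)$ one has $\nu_{\alpha_0,\beta_0}(v)=\nu_{\alpha_0,\beta_0}(w)=\nu_{\alpha_0,\beta_0}(u)$, which says that the images of $v,w,u$ under the linear map $x\mapsto\left(H\cdot\ch_2^{\beta_0}(x)-\tfrac{\alpha_0^2}{2}H^3\cdot\ch_0(x),\ H^2\cdot\ch_1^{\beta_0}(x)\right)$ all lie on a single line through the origin in $\R^2$; since this map has rank two for $\alpha_0>0$, the preimage of that line is a two-dimensional subspace containing $v$, $w$, $u$, whence the dependence, and only then does one conclude $W(v,w)=W(v,u)$. (Alternatively you could repair your argument using your own part (ii): the requirement that the apex lie on the hyperbola $\nu_{\alpha,\beta}(v)=0$ determines a unique circle centered on the $\beta$-axis through a given point, but you do not say this.) Your treatment of (iv) is also too quick: openness of semistability only propagates the destabilizing sequence to a neighbourhood inside the numerical wall, and extending it to the entire wall requires an additional open-and-closed argument (or local finiteness of actual walls combined with (iii)) to rule out $F$ or $G$ itself being destabilized, or leaving the heart $\Coh^{\beta}(X)$, elsewhere along $W$.
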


If $W = W(v,w)$ is a semicircular wall in tilt stability for two numerical classes $v, w \in \Lambda$, then we denote its \emph{radius} by $\rho_W = \rho(v,w)$ and its \emph{center} on the $\beta$-axis by $s_W = s(v,w)$. The structure of the locus $Q_{\alpha, \beta}(E) = 0$ fits right into this picture; indeed, a straightforward computation shows the following.

\begin{lem}
Let $E \in \Db(X)$. The equation $Q_{\alpha, \beta}(E) = 0$ is equivalent to
\[
\nu_{\alpha, \beta}(E) = \nu_{\alpha, \beta}\left(\frac{H^2 \cdot \ch_1(E)}{H^3}, \frac{2H \cdot \ch_2(E)}{H^3}, \frac{3\ch_3(E)}{H^3}\right).
\]
In particular, $Q_{\alpha, \beta}(E) = 0$ describes a numerical wall in tilt stability.
\end{lem}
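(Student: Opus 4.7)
The plan is to reduce the lemma to an algebraic identity on twisted Chern characters. Writing $r = H^3\ch_0(E)/H^3$, $A = H^2\ch_1^\beta(E)/H^3$, $B = H\ch_2^\beta(E)/H^3$, $C = \ch_3^\beta(E)/H^3$, and using that $\Delta(E)$ is a $\beta$-invariant so that $\Delta(E) = A^2 - 2rB$, the quantity of interest becomes
\[
Q_{\alpha,\beta}(E) = \alpha^2(A^2 - 2rB) + 4B^2 - 6AC.
\]
Denote by $w$ the formal class with $H\cdot\ch_{\leq 2}(w) = (c_1, 2c_2, 3c_3)$, where $c_i = H^{3-i}\cdot\ch_i(E)/H^3$. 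Since $\nu_{\alpha,\beta}$ depends only on $H\cdot\ch_{\leq 2}$, it makes sense to evaluate it on $w$; this will be the class whose slope equality with $E$ cuts out the locus $Q_{\alpha,\beta}(E) = 0$.

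The first step I would carry out is to express the twisted characters of $w$ in terms of those of $E$. A direct computation using the formulas $c_1^\beta(w) = 2c_2 - \beta c_1$ and $c_2^\beta(w) = 3c_3 - 2\beta c_2 + \tfrac{\beta^2}{2}c_1$ shows
\[
c_1^\beta(w) = 2B + \beta A, \qquad c_2^\beta(w) = 3C + \beta B,
\]
where the $\beta$-powers cancel cleanly thanks to the polynomial identity $e^{-\beta H}\cdot e^{\beta H} = 1$. These two identities are the only content that has to be checked beyond routine algebra.

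Next, I would cross-multiply the equation $\nu_{\alpha,\beta}(E) = \nu_{\alpha,\beta}(w)$, i.e.,
\[
\bigl(B - \tfrac{\alpha^2}{2}r\bigr)\bigl(2B + \beta A\bigr) = A\bigl(3C + \beta B - \tfrac{\alpha^2}{2}c_1\bigr),
\]
and expand. The mixed terms $\beta AB$ cancel, and the remaining terms combine as
\[
2B^2 - 3AC + \tfrac{\alpha^2}{2}A(c_1 - \beta r) - \alpha^2 rB = 2B^2 - 3AC + \tfrac{\alpha^2}{2}(A^2 - 2rB) = \tfrac{1}{2}Q_{\alpha,\beta}(E),
\]
where the first simplification uses $A = c_1 - \beta r$. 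Thus the two conditions are equivalent (provided $A$ and $c_1^\beta(w)$ are nonzero, which is the generic case at such loci).

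Finally, since $\nu_{\alpha,\beta}$ factors through $H\cdot\ch_{\leq 2}$, the equation $\nu_{\alpha,\beta}(E) = \nu_{\alpha,\beta}(w)$ is, by definition, a numerical wall with respect to $H\cdot\ch_{\leq 2}(E)$ and the class $w \in \Lambda \otimes \mathbb{Q}$ (or $\mathbb{R}$), giving the conclusion. There is no real obstacle here; the only mildly delicate point is to organize the algebra so that the $\beta$-dependence disappears, which is precisely what the identities $c_i^\beta(w) = (i+1)c_{i+1}^\beta(E) + \beta c_i^\beta(E)$ for $i = 1,2$ accomplish.
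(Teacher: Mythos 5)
Your computation is correct and is exactly the ``straightforward computation'' the paper alludes to without writing out: the key identities $c_1^{\beta}(w)=2B+\beta A$ and $c_2^{\beta}(w)=3C+\beta B$ check out, and cross-multiplying the slope equality does reduce to $\tfrac12 Q_{\alpha,\beta}(E)=0$. The caveats you flag (nonvanishing denominators, $w$ only lying in $\Lambda\otimes\Q$) are harmless and consistent with how the paper itself uses the statement.
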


%%%%%%%%%%%%%%%%%%%%%%%%%%%%%%%%%%%%%%%%%%%%%
%%%%%%%%%%%%%%%%%%%%%%%%%%%%%%%%%%%%%%%%%%%%%
%%%%%%%%%%%%%%%%%%%%%%%%%%%%%%%%%%%%%%%%%%%%%
\subsection{Further properties}
\label{subsec:further_properties}

We will need the following modification of \cite[Proposition 8.3]{CH16:ample_cone_plane}. It is a highly convenient tool to control the rank of destabilizing subobjects.

\begin{lem}
\label{lem:higherRankBound}
Assume that a tilt semistable object $E$ is destabilized by either a subobject $F \into E$ or a quotient $E \onto F$ in $\Coh^{\beta}(X)$ inducing a non empty semicircular wall $W$. Assume further that $\ch_0(F) > \ch_0(E) \geq 0$. Then the inequality
\[
\rho_W^2 \leq \frac{\Delta(E)}{4 \ch_0(F) (\ch_0(F) - \ch_0(E))}
\]
holds.
\end{lem}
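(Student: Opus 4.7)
The plan is to exploit the fact that both factors of the destabilizing sequence are tilt-semistable along $W$, combine the Bogomolov inequality applied to each with the wall equations from Theorem \ref{thm:Bertram}(i), and use that every object of $\Coh^\beta(X)$ has non-negative $H^2 \cdot \ch_1^\beta$ to pin down signs. After relabeling if necessary, I may assume a short exact sequence $0 \to F \to E \to G \to 0$ in $\Coh^\beta(X)$ along $W$ with $F$ and $G$ both tilt-semistable; the statement I am after depends only on $\ch_0(F)$ and $\Delta(E)$, so the sub versus quotient case is handled symmetrically. Write $r = \ch_0(E)$ and $r_F = \ch_0(F)$; by hypothesis $r_F > 0$ and $r_G := r - r_F < 0$. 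Set $c_v = H^2 \cdot \ch_1(v)/H^3$ for $v \in \{E, F, G\}$, and let $s = s_W$, $\rho = \rho_W$.

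Theorem \ref{thm:Bertram}(i), applied to $E$, $F$, and $G$, reads
\[
(r s - c_E)^2 = r^2 \rho^2 + \Delta(E), \qquad A^2 = r_F^2 \rho^2 + \Delta(F), \qquad (A')^2 = r_G^2 \rho^2 + \Delta(G),
\]
where $A := r_F s - c_F$ and $A' := r_G s - c_G$, so that $A + A' = rs - c_E$. The Bogomolov inequality for tilt stability gives $\Delta(F), \Delta(G) \geq 0$, hence $|A| \geq r_F \rho$ and $|A'| \geq |r_G| \rho = (r_F - r)\rho$.

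To conclude, I need the sign assertion $A, A' \leq 0$. Since $W$ lies in the open upper half-plane and $F, G \in \Coh^\beta(X)$ along the wall, and any object of $\Coh^\beta(X)$ satisfies $H^2 \cdot \ch_1^\beta \geq 0$, letting $\beta \to s$ along $W$ yields $c_F - s r_F \geq 0$ and $c_G - s r_G \geq 0$, i.e., $A, A' \leq 0$. With $A$ and $A'$ of the same sign, $|A + A'| = |A| + |A'| \geq (2 r_F - r)\rho$; squaring and substituting the wall equation for $E$ gives $r^2 \rho^2 + \Delta(E) \geq (2 r_F - r)^2 \rho^2$, which rearranges to $\Delta(E) \geq 4 r_F (r_F - r) \rho^2$, the claimed bound.

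The main point to verify with care is the sign claim: one needs $F, G$ to remain in the heart $\Coh^\beta$ as $\beta$ approaches $s$ along $W$, so that the inequality $H^2 \cdot \ch_1^\beta \geq 0$ persists in the limit. This follows from continuity of tilt-stability along the wall and the construction of $\Coh^\beta(X)$, but is the one technical input that needs justification; everything else is a short algebraic manipulation of the three wall equations.
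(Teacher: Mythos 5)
Your proof is correct, and the final algebra matches the paper's conclusion exactly; but the route is a genuinely different packaging of the key estimate, so a comparison is worthwhile. The paper's own proof never touches the wall equations for $F$ and $G$ and never invokes the Bogomolov inequality for them: it only uses the wall equation for $E$ together with the heart condition $0 \leq H^2\cdot\ch_1^{\beta}(F) \leq H^2\cdot\ch_1^{\beta}(E)$, exploited at the two extreme values $\beta = s_W \pm \rho_W$ of the wall (minimizing the left-hand side at $s_W - \rho_W$ and maximizing the right-hand side at $s_W + \rho_W$, which is legitimate because $\ch_1(F)$ does not depend on $\beta$). You instead derive the same two intermediate inequalities, $c_F/r_F \geq s_W + \rho_W$ and the analogous one for $G$, from the combination ``wall equation for $F$ (resp.\ $G$) $+$ tilt-Bogomolov $\Delta \geq 0$ $+$ sign of $\ch_1^{s_W}$ at the top of the wall''; these packages are logically equivalent, so both arguments rest on the same technical input you correctly flag, namely that $F$ and $G$ remain in $\Coh^{\beta}(X)$ for all $\beta$ in the range of the wall (for you only $\beta = s_W$ is needed, which is a point of $W$, so no limit is required). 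The trade-off is that your version requires $F$ and $G$ to be tilt-\emph{semistable} along $W$ in order to quote the Bogomolov inequality for tilt stability, whereas the paper's argument works for an arbitrary subobject $F \into E$ in the heart; this is harmless in all applications (one may always pass to a semistable destabilizer, as in Theorem \ref{thm:Bertram}(v)), but the paper's route is both more elementary --- it uses only the definition of $\Coh^{\beta}(X)$ and the single wall equation for $E$ --- and marginally more general. Your reduction of the quotient case to the subobject case by symmetry, and the verification that the argument survives $\ch_0(E) = 0$ (where the wall equation for $E$ degenerates to $\Delta(E) = c_E^2$ but the inequality $(A+A')^2 \geq 4\ch_0(F)^2\rho_W^2$ still gives the claim), are both sound.
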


\begin{proof}
For all $(\alpha, \beta) \in W$ we have the inequalities $H^2 \cdot \ch_1^{\beta}(E) \geq H^2 \cdot \ch_1^{\beta}(F) \geq 0$. This can be rewritten as
\[
H^2 \cdot \ch_1(E) + \beta (H^3 \cdot \ch_0(F) - H^3 \cdot \ch_0(E)) \geq H^2 \cdot \ch_1(F) \geq \beta H^3 \cdot \ch_0(F).
\]
Since $H^2 \cdot \ch_1(F)$ is independent of $\beta$ we can maximize the right hand side and minimize the left hand side individually in the full range of $\beta$ between $s_W - \rho_W$ and $s_W + \rho_W$. By our assumptions this leads to
\[
H^2 \cdot \ch_1(E) + (s_W - \rho_W) (H^3 \cdot \ch_0(F) - H^3 \cdot \ch_0(E)) \geq (s_W + \rho_W) H^3 \cdot \ch_0(F).
\]
By rearranging the terms and squaring we get
\[
(2H^3 \cdot \ch_0(F) - H^3 \cdot \ch_0(E))^2 \rho_W^2 \leq (H^2 \cdot \ch_1(E) - H^3 \cdot \ch_0(E) s_W)^2 = (H^3 \cdot \ch_0(E))^2 \rho_W^2 + (H^3)^2 \Delta(E).
\]
The claim follows by simply solving for $\rho_W^2$.
\end{proof}

Objects that are stable for $\alpha \gg 0$ are closely related to slope semistable objects.

\begin{lem}[{\cite[Lemma 2.7]{BMS16:abelian_threefolds}}]
\label{lem:large_volume_limit}
If $E \in \Coh^{\beta}(X)$ is $\nu_{\alpha, \beta}$-semistable
for all $\alpha \gg 0$, then it satisfies one of the following conditions:
\begin{enumerate}[(i)]
\item $\HH^{-1}(E)=0$ and $\HH^0(E)$ is a torsion-free slope semistable sheaf,
\item $\HH^{-1}(E)=0$ and $\HH^0(E)$ is a torsion sheaf, or
\item $\HH^{-1}(E)$ is a torsion-free slope semistable sheaf, and $\HH^0(E)$ is either $0$ or a torsion sheaf supported in dimension less than or equal to one.
\end{enumerate}
Conversely, assume that $E \in \Coh(X)$ is a torsion-free slope stable sheaf and $\beta < \mu(E)$. Then $E \in \Coh^{\beta}(X)$ is $\nu_{\alpha,\beta}$-stable for $\alpha \gg 0$.
\end{lem}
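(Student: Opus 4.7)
The plan is to exploit the asymptotic behavior of $\nu_{\alpha,\beta}$ as $\alpha \to \infty$ together with the cohomological decomposition inherent to the heart $\Coh^{\beta}(X) = \langle \FF_{\beta}[1], \TT_{\beta}\rangle$. For any object $F$ with $\ch_0(F) \neq 0$ and $H^2 \cdot \ch_1^{\beta}(F) > 0$ one has
\[
\nu_{\alpha,\beta}(F) = -\frac{\alpha^2 H^3 \cdot \ch_0(F)}{2\, H^2 \cdot \ch_1^{\beta}(F)} + O(1),
\]
so the sign of $\ch_0(F)$ dictates whether the slope tends to $-\infty$ or $+\infty$. When $\ch_0(F) = 0$ the slope stays bounded, and when additionally $H^2 \cdot \ch_1(F) = 0$ (support in dimension $\leq 1$) it is formally $+\infty$. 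All of the case analysis will reduce to comparing these leading behaviors.

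For the forward direction, I would examine the canonical sequence
\[
0 \to \HH^{-1}(E)[1] \to E \to \HH^0(E) \to 0
\]
in $\Coh^{\beta}(X)$. If $\HH^{-1}(E) \neq 0$, the subobject has negative $\ch_0$ and non-negative $H^2 \cdot \ch_1^{\beta}$, hence $\nu_{\alpha,\beta}(\HH^{-1}(E)[1])$ tends to (or equals) $+\infty$; semistability of $E$ then forces $\nu_{\alpha,\beta}(\HH^0(E))$ to also tend to $+\infty$, which requires $\HH^0(E)$ to be zero or torsion supported in dimension $\leq 1$, yielding case (iii). If instead $\HH^{-1}(E) = 0$, then $E \in \TT_{\beta}$ is a sheaf: a nontrivial torsion subsheaf of a mixed $E$ would have bounded or $+\infty$ tilt slope while $\nu_{\alpha,\beta}(E) \to -\infty$, contradicting semistability, so $E$ is either pure torsion (case (ii)) or torsion-free (case (i)).

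Within cases (i) and (iii) slope semistability of the surviving sheaf must still be checked. In case (i), if the first HN factor $F'$ of $\HH^0(E)$ satisfied $\mu(F') > \mu(\HH^0(E))$, it would be slope semistable of slope $> \beta$, hence in $\TT_{\beta}$, yielding $F' \hookrightarrow E$ in $\Coh^{\beta}(X)$; the leading-order formula gives $\nu_{\alpha,\beta}(F') > \nu_{\alpha,\beta}(E)$ for $\alpha \gg 0$, contradicting semistability. In case (iii), any destabilizing subsheaf $F' \subset \HH^{-1}(E)$ automatically lies in $\FF_{\beta}$ (subsheaves of $\FF_{\beta}$-objects remain in $\FF_{\beta}$), and the shift $F'[1]$ provides a $\Coh^{\beta}$-subobject of $E$ with too large a tilt slope.

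For the converse, take $E$ torsion-free slope stable with $\beta < \mu(E)$, so $E \in \TT_{\beta}$. For any proper nonzero subobject $F \hookrightarrow E$ in $\Coh^{\beta}(X)$ with quotient $G$, the long exact sequence together with $\HH^{-1}(E) = 0$ gives $\HH^{-1}(F) = 0$, and a short argument (any nonzero $\Coh$-kernel of $F \to E$ would lie in $\TT_{\beta}$ and contradict $F \hookrightarrow E$ being mono in $\Coh^{\beta}$) shows $F \subset E$ as a subsheaf with $G = E/F$, both in $\TT_{\beta}$. Then $\rk(F) = 0$ is impossible, $0 < \rk(F) < \rk(E)$ contradicts slope stability via the leading-order implication $\mu(F) < \mu(E) \Rightarrow \nu_{\alpha,\beta}(F) < \nu_{\alpha,\beta}(E)$, and $\rk(F) = \rk(E)$ with $G$ of two-dimensional support still gives $\mu(F) < \mu(E)$. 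The delicate case is $\rk(F) = \rk(E)$ with $G$ supported in dimension $\leq 1$, where a direct subleading computation yields
\[
\nu_{\alpha,\beta}(F) - \nu_{\alpha,\beta}(E) = -\frac{H \cdot \ch_2(G)}{H^2 \cdot \ch_1^{\beta}(F)},
\]
strictly negative unless $G$ is zero-dimensional. This last zero-dimensional possibility is the main obstacle for strict stability; semistability follows unconditionally, and strict stability holds once one excludes zero-dimensional quotients of $E$ (automatic, for instance, when $E$ is reflexive).
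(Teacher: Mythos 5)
The paper does not prove this lemma; it is quoted verbatim from \cite[Lemma 2.7]{BMS16:abelian_threefolds}, so there is no internal proof to compare against. Your overall strategy (leading-order analysis of $\nu_{\alpha,\beta}$ as $\alpha\to\infty$ applied to the cohomology sequence $0\to\HH^{-1}(E)[1]\to E\to\HH^0(E)\to 0$) is the standard one and most of the forward direction is sound. But there is one genuine error and a few gaps.

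The error is your final conclusion in the converse. With the paper's definition of (semi)stability --- $\nu_{\alpha,\beta}(F)<(\leq)\,\nu_{\alpha,\beta}(E/F)$ for every nontrivial proper subobject, with the slope of an object having $H^2\cdot\ch_1^{\beta}=0$ taken to be $+\infty$ --- a quotient $G=E/F$ supported in dimension zero satisfies $\nu_{\alpha,\beta}(G)=+\infty>\nu_{\alpha,\beta}(F)$, so it never violates \emph{strict} stability. Your claim that strict stability ``holds once one excludes zero-dimensional quotients of $E$'' would make the lemma false exactly where the paper needs it: ideal sheaves $\II_C$ of curves always admit zero-dimensional quotients, yet the whole wall-crossing setup of Sections \ref{sec:classical_bounds}--\ref{subsec:rk1} rests on $\II_C$ being $\nu_{\alpha,\beta}$-\emph{stable} (not merely semistable) for $\alpha\gg 0$. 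The computation $\nu(F)-\nu(E)=-H\cdot\ch_2(G)/H^2\cdot\ch_1^{\beta}(F)$ is correct, but comparing $F$ with $E$ rather than with $E/F$ is the wrong test here.

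Secondary gaps: (a) In case (iii), for an arbitrary destabilizing subsheaf $F'\subset\HH^{-1}(E)$ the shift $F'[1]$ is a subobject of $\HH^{-1}(E)[1]$ in $\Coh^{\beta}(X)$ only if the cokernel $\HH^{-1}(E)/F'$ lies in $\FF_{\beta}$, which is not automatic; you should take $F'$ to be the maximal destabilizing subsheaf, which is saturated and whose quotient has $\mu_{\max}<\mu(F')\leq\beta$. You also need to say explicitly that when both slopes tend to $+\infty$ the comparison is between the $\alpha^2$-coefficients $\tfrac{1}{2(\beta-\mu)}$, which is increasing in $\mu$. (b) In the converse, the parenthetical argument that a nonzero $\Coh$-kernel of $F\to E$ ``would lie in $\TT_{\beta}$'' is backwards: the long exact sequence shows this kernel is $\HH^{-1}(G)\in\FF_{\beta}$, so it does not give a contradiction with monicity in $\Coh^{\beta}$, and $\HH^{-1}(G)=0$ is not automatic. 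The slope comparison can be salvaged without it, since $\mu(\HH^{-1}(G))\leq\beta<\mu(E)$ only pushes $\mu(F)$ further below $\mu(E)$, but as written the reduction is unjustified. (c) The threshold ``$\alpha\gg 0$'' in your converse depends on the individual subobject $F$; to get a uniform bound one needs the locally finite wall-and-chamber structure (Theorem \ref{thm:Bertram}) or a boundedness argument on the classes $\ch_{\leq 2}(F)$, which you do not address.
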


Instead of directly using the usual derived dual, we define
\begin{align*}
\D: \Db(\P^3) &\to \Db(\P^3), \\
E &\mapsto \RlHom(E, \OO)[1].
\end{align*}

\begin{prop}[{\cite[Proposition 5.1.3]{BMT14:stability_threefolds}}]
\label{prop:tilt_derived_dual}
Assume $E \in \Coh^{\beta}(X)$ is $\nu_{\alpha, \beta}$-semistable with $\nu_{\alpha, \beta}(E) \neq \infty$. Then there is a $\nu_{\alpha, -\beta}$-semistable object $\tilde{E} \in \Coh^{-\beta}(X)$ and a sheaf $T$ supported in dimension $0$ together with a triangle
\[
\tilde{E} \to \D(E) \to T[-1] \to \tilde{E}[1].
\]
\end{prop}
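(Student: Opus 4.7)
The plan is to analyze the derived dual $\D(E) = \RlHom(E,\OO_X)[1]$ piece by piece using the t-structure $\Coh^\beta(X)$, and then to carve off the obstruction concentrated in dimension zero. Since $E$ lies in the heart $\Coh^\beta(X)$, it sits in a triangle $\HH^{-1}(E)[1]\to E \to \HH^0(E)\to \HH^{-1}(E)[2]$, where $A:=\HH^{-1}(E)\in\FF_\beta$ is torsion-free and $B:=\HH^0(E)\in\TT_\beta$ is an ordinary coherent sheaf. Applying the contravariant functor $\D$ gives a triangle
\[
\D(B) \to \D(E) \to \RlHom(A,\OO_X) \to \D(B)[1].
\]
First I would read off the cohomology sheaves of each corner using the standard facts on a smooth threefold: $\lExt^i(A,\OO_X)$ vanishes for $i\geq 3$, $\lExt^0(A,\OO_X)=A^\vee$ is reflexive, $\lExt^1(A,\OO_X)$ is supported in dimension $\leq 1$, and $\lExt^2(A,\OO_X)$ is supported in dimension $0$; for the torsion-free part of $B$ one gets the analogous vanishings, while any pure torsion subsheaf of $B$ contributes only to higher $\lExt$'s, hence to the top cohomology of the dual.

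Next I would check that, thanks to the assumption $\nu_{\alpha,\beta}(E)\neq\infty$, the object $E$ has no torsion subsheaf supported in dimension $\leq 1$ (such a subsheaf would lie in $\TT_\beta\subset\Coh^\beta(X)$ and force $\ch_1^\beta=0$, contradicting semistability together with $\nu_{\alpha,\beta}(E)<\infty$). Combining this with the previous computation one finds that $\D(E)$ has cohomology sheaves concentrated in degrees $-1,0,1$, and that the degree $+1$ cohomology sheaf is supported in dimension $0$; call this sheaf $T$. Truncating gives an exact triangle
\[
\tilde E \to \D(E) \to T[-1] \to \tilde E[1],
\]
with $\tilde E\in \Db(X)$ having cohomology only in degrees $-1,0$.

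Then I would verify that $\tilde E\in\Coh^{-\beta}(X)$ by checking the torsion pair conditions on its cohomology sheaves. The point is that dualization interchanges slopes in the sense that $\mu(F^\vee)=-\mu(F)$ for torsion-free $F$, so the duals of subsheaves of $A\in\FF_\beta$ have slope $\geq -\beta$ (placing them in $\TT_{-\beta}$), while the duals of torsion-free quotients of $B\in\TT_\beta$ have slope $<-\beta$ (placing them in $\FF_{-\beta}$); the dim-$0$ and dim-$1$ contributions are absorbed into $T$ or already in $\TT_{-\beta}$. Finally, I would verify $\nu_{\alpha,-\beta}$-semistability of $\tilde E$ by contradiction: any destabilizing short exact sequence $0\to F\to \tilde E\to G\to 0$ in $\Coh^{-\beta}(X)$ combines with the triangle above (noting that $T[-1]$ contributes only to the dimension-$0$ Chern character $\ch_3$, which does not affect the $\nu$-slope) to produce a destabilizing sequence for $\D(E)$, and applying $\D$ again and using that $\D\circ\D$ is the identity up to cohomological shift would transport this back to a destabilizing sequence for $E$ in $\Coh^\beta(X)$, giving a contradiction.

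The main obstacle will be the bookkeeping in the second step: making precise that the only contribution to $\HH^1(\D(E))$ comes from $\lExt^2$ and $\lExt^3$ groups of torsion-free pieces together with duals of dim-$0$ torsion in $B$, and checking that these are all supported in dimension $0$. The assumption $\nu_{\alpha,\beta}(E)\neq\infty$ is what prevents dim-$1$ torsion from polluting $\HH^1(\D(E))$. Once this is cleanly organized, the semistability step is a direct application of the contravariant duality between $\Coh^\beta(X)$ and $\Coh^{-\beta}(X)$ modulo dimension-zero sheaves.
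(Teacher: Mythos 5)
The paper does not prove this proposition; it is quoted verbatim from \cite[Proposition 5.1.3]{BMT14:stability_threefolds}, so there is no internal proof to compare against. Judged on its own terms, your architecture (split $E$ into $A=\HH^{-1}(E)$ and $B=\HH^0(E)$, compute local Ext sheaves, truncate off the top cohomology, check the torsion pair for $\Coh^{-\beta}(X)$, and transport destabilizing sequences through $\D$) is the right one and matches the cited proof. But there is a genuine gap at the step you rely on most: the claim that semistability together with $\nu_{\alpha,\beta}(E)\neq\infty$ forces $\HH^0(E)$ to have no torsion supported in dimension $\leq 1$ is false. What semistability gives you is $\Hom(T',E)=0$ for every sheaf $T'$ supported in dimension $\leq 1$ (the image of such a map would be a subobject of $E$ with $H^2\cdot\ch_1^{\beta}=0$, hence slope $+\infty$). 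That is a statement about morphisms to the two-term complex $E$, not about subsheaves of $\HH^0(E)$: a map $T'\to\HH^0(E)$ need not lift to $E$, the obstruction living in $\Ext^2(T',\HH^{-1}(E))$. Indeed this paper's own wall-crossing analysis produces semistable objects of finite slope whose $\HH^0$ is \emph{entirely} supported in dimension $\leq 1$ --- the rank $-1$ quotients $G$ in Lemma \ref{lem:derived_dual_rank_minusone} and in the destabilizing sequences $0\to E\to\II_C\to G\to 0$ have $\HH^0(G)$ a torsion sheaf of dimension $\leq 1$ by Lemma \ref{lem:large_volume_limit}(iii).

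This matters because without that (false) torsion-freeness your sheaf-by-sheaf bookkeeping cannot close: $\lExt^2(B,\OO_X)$ genuinely contributes one-dimensional pieces to $\HH^1(\D(E))$, and $\lExt^3(B,\OO_X)$ and $\lExt^2(A,\OO_X)$ contribute to $\HH^2(\D(E))$, which must vanish for the claimed triangle to exist. These unwanted pieces are killed only by the differentials of the spectral sequence $\lExt^p(\HH^{-q}(E),\OO_X)\Rightarrow\lExt^{p+q}(E,\OO_X)$, i.e.\ by the interaction between $A$ and $B$ encoded in the gluing class of the complex $E$; the way to control them is to work with the whole complex, converting $\Hom(T',E)=0$ into the needed vanishing of $\lExt^3(E,\OO_X)$ and the zero-dimensionality of $\lExt^2(E,\OO_X)$ via Serre/local duality at points (e.g.\ $\Hom(\OO_x,E)=0$ for all $x$ forces $\lExt^3(E,\OO_X)=0$). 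This duality step is the actual technical content of the cited proof and is absent from your sketch; the rest of your outline (torsion-pair membership of the truncation, and the transport of destabilizing sequences using $\D\circ\D\simeq\id$) is essentially correct once that is in place.
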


% It turns out that modifying an object in dimension zero does not affect tilt stability.

% \begin{lem}
% \label{lem:stable_after_ch3_change}
% Let $0 \to F \to E \to \OO_Z \to 0$ be an exact sequence in $\Coh^{\beta}(X)$, where $Z \subset \P^3$ is a zero dimensional subscheme. Then $E$ is tilt semistable if and only if $F$ is tilt semistable. \footnote{\color{red} I forgot, where this is used. We should check at the end, whether we use this.}
% \end{lem}
% \begin{proof}
% Assume that $E$ is $\nu_{\alpha, \beta}$-semistable. If $A \into F$, then $\nu_{\alpha, \beta}(A) \leq \nu_{\alpha, \beta}(E) = \nu_{\alpha, \beta}(F)$.

% Assume the other way around that $F$ is $\nu_{\alpha, \beta}$-semistable and let $A \into E$. Let $\OO_{Z'}$ be the image of the composition $A \into E \onto \OO_Z$ and $B$ its kernel. Then $B$ injects into $F$ implying
% \[
% \nu_{\alpha, \beta}(A) = \nu_{\alpha, \beta}(B) \leq \nu_{\alpha, \beta}(F) = \nu_{\alpha, \beta}(E). \qedhere
% \]
% \end{proof}

Finally, the following elementary result will be used several times.

\begin{lem}
\label{lem:stable_minimal_ch1}
Let $\beta_0 = \frac{p}{q} \in \mathbb{Q}$, with $p,q$ coprime. Let $E \in \Coh^{\beta_0}(X)$ be such that either $\ch_1^{\beta_0}(E) = \frac{1}{q} \cdot H$ or $\ch_1^{\beta_0}(E) = 0$. Then, $E$ does not have any wall on the ray $\beta = \beta_0$ unless it is the unique vertical wall. More precisely, for all $\alpha_1, \alpha_2 > 0$, $E$ is $\nu_{\alpha_1, \beta_0}$-(semi)stable if and only if it is $\nu_{\alpha_2, \beta_0}$-(semi)stable.
\end{lem}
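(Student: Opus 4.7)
The plan is to argue by contradiction: suppose $E$ has an actual wall at some $(\alpha_0, \beta_0)$ with $\alpha_0 > 0$ that is not the unique numerical vertical wall. Then this wall comes from a non-trivial exact sequence $0 \to F \to E \to G \to 0$ in $\Coh^{\beta_0}(X)$ with $F$ and $G$ both $\nu_{\alpha_0, \beta_0}$-semistable and with $\nu_{\alpha_0, \beta_0}(F) = \nu_{\alpha_0, \beta_0}(E) = \nu_{\alpha_0, \beta_0}(G)$. Everything else will flow from an integrality computation on $H^2 \cdot \ch_1^{\beta_0}$.

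The central observation is that, under Assumption \ref{assu:div_curves}, any $A \in \Coh^{\beta_0}(X)$ has $\ch_0(A) \in \Z$ and $\ch_1(A) \in \Z \cdot H$. Writing $\ch_0(A) = r_A$ and $\ch_1(A) = n_A H$, a direct computation gives
\[
H^2 \cdot \ch_1^{\beta_0}(A) = \frac{H^3}{q}\bigl(n_A q - p\, r_A\bigr).
\]
Since $A \in \Coh^{\beta_0}(X)$ forces $H^2 \cdot \ch_1^{\beta_0}(A) \geq 0$ directly from the torsion pair $(\TT_{\beta_0}, \FF_{\beta_0})$, the integer $m_A := n_A q - p\, r_A$ is non-negative, and equals zero exactly when $\ch_1^{\beta_0}(A) = 0$. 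Applying this to $F$, $E$, and $G$ and using additivity yields $m_F + m_G = m_E$, while the hypothesis on $E$ forces $m_E \in \{0,1\}$.

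In the case $\ch_1^{\beta_0}(E) = \tfrac{1}{q} H$ (that is, $m_E = 1$), non-negativity and integrality force one of $m_F$, $m_G$ to vanish; without loss of generality $m_F = 0$. But then $\nu_{\alpha_0, \beta_0}(F) = +\infty$ whereas $\nu_{\alpha_0, \beta_0}(E)$ is finite, contradicting the wall equation. Hence no such wall exists; openness of (semi)stability along the connected ray then gives the "more precisely" statement. In the case $\ch_1^{\beta_0}(E) = 0$ (that is, $m_E = 0$), one has $m_F = m_G = 0$, so $\nu_{\alpha, \beta_0}(E) = \infty$ for all $\alpha > 0$ and the ray $\beta = \beta_0$ coincides with the unique numerical vertical wall $\beta = \mu(E)$ of Theorem \ref{thm:Bertram}(i); any (semi)destabilizing subobject is inherited from the $\alpha$-independent heart $\Coh^{\beta_0}(X)$, so (semi)stability of $E$ along the ray is again $\alpha$-independent.

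The main obstacle is not a deep one but rather the bookkeeping of the two regimes in parallel, in particular recognizing that the "finite slope" condition at a semicircular wall rules out a sub/quotient with $\ch_1^{\beta_0} = 0$, while the vertical-wall regime is exactly the one in which all three classes have $\ch_1^{\beta_0} = 0$. Once the integrality formula for $m_A$ is in hand, both cases reduce to elementary arithmetic.
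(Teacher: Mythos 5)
Your proof is correct, and it is exactly the standard argument this lemma rests on (the paper states it without proof as an elementary fact): under Assumption \ref{assu:div_curves} the quantity $q\,H^2\cdot\ch_1^{\beta_0}(A)/H^3$ is a non-negative integer for every $A\in\Coh^{\beta_0}(X)$, it is additive on short exact sequences, and it equals $0$ or $1$ for $E$, so in any destabilizing sequence one of $F$, $G$ has $\ch_1^{\beta_0}=0$ and hence constant slope $+\infty$, making the comparison $\nu_{\alpha,\beta_0}(F)$ versus $\nu_{\alpha,\beta_0}(E/F)$ independent of $\alpha$. The only cosmetic point is that the two subcases $m_F=0$ and $m_G=0$ are not interchanged by a genuine symmetry (one gives a subobject that destabilizes for every $\alpha$, the other a subobject that never does), but both yield the same conclusion that no slope crossing can occur along the ray, so the ``without loss of generality'' is harmless.
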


%%%%%%%%%%%%%%%%%%%%%%%%%%%%%%%%%%%%%%%%%%%%%
%%%%%%%%%%%%%%%%%%%%%%%%%%%%%%%%%%%%%%%%%%%%%
%%%%%%%%%%%%%%%%%%%%%%%%%%%%%%%%%%%%%%%%%%%%%
\section{Classical bounds beyond projective space}
\label{sec:classical_bounds}

The main goal of this section is to prove Theorem \ref{thm:genus_bound_large_degree} below.
Let $X$ be a smooth projective variety over an algebraically closed field $\F$ for which Assumptions \ref{assu:div_curves}, \ref{assu:classic_bg}, and \ref{assu:bmt} hold.
The examples to keep in mind is $X$ being either $\P^3$ or a principally polarized abelian threefold in characteristic zero.
We denote by $H$ the ample generator of $\mathrm{NS}(X)$, e.g., the hyperplane class for $\P^3$ or a $\Theta$-divisor for an abelian threefold.

The \emph{degree} of a hypersurface $Y \subset X$ is defined as $k = (H^2 \cdot Y)/H^3$. If $C \subset X$ is a one-dimensional closed subscheme, we define its \emph{degree} as $d = (H \cdot C)/H^3$. Note that $k \in \Z$, and $d \in \tfrac{1}{2} \Z$. If $X = \P^3$, we even have $d \in \Z$. The \emph{arithmetic genus} of $C$ is defined by $g = 1 - \chi(\OO_C)$. By the Hirzebruch-Riemann-Roch Theorem, we know
\[
g = 1 - \chi(\OO_C) = 1 + \frac{K_X \cdot C}{2} - \ch_3(\OO_C) = 1 + \frac{K_X \cdot C}{2} + \ch_3(\II_C).
\]
Therefore, bounding $g$ is equivalent to bounding $\ch_3(\II_C)$. The following error terms for any $d \in \tfrac{1}{2} \Z$ will occur in this and subsequent statements:
\[
\varepsilon(d, 1) = \begin{cases}
\frac{1}{24} &\text{if $d \notin \Z$,} \\
0 &\text{if $d \in \Z$,}
\end{cases}
\]
and for $k \geq 1$
\begin{align*}
\tilde{\varepsilon}(d,k) &= \frac{1}{2}f\left(k-f-1+\frac{f}{k}\right), \\
\varepsilon(d, k) &= \tilde{\varepsilon}(d,k) + \varepsilon(d, 1),
\end{align*}
where $d \equiv -f (\mod k)$ and $0 \leq f < k$. The inclusion of $\varepsilon(d,1)$ is related to the fact that Assumption \ref{assu:div_curves} says $\ch_3(E) \in \tfrac{1}{6}H^3$ for any object $E \in \Db(X)$. It simply constitutes a rounding term, all statements can be equivalently stated with $\tilde{\varepsilon}(d,k)$, and we will do so throughout the proofs for simplification.

\begin{thm}
\label{thm:genus_bound_large_degree}
Let $k \in \Z_{>0}$ and $d \in \frac{1}{2}\Z_{>0}$, and let $C \subset X$ be an integral curve of degree $d$. Further, assume
\begin{itemize}
\item $H^0(X, I_C((k-1)H')=0$ for any divisor $H'$ in the same numerical class as $H$, and
\item $d > k(k-1)$.
\end{itemize}
Then
\[
\frac{\ch_3(\II_C)}{H^3} \leq E(d,k) := \frac{d^2}{2k} + \frac{dk}{2} - \varepsilon(d,k).
\]
\end{thm}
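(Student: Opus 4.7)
The plan is to apply the five-step wall-crossing recipe outlined in Section \ref{subsec:strategy} directly to $E = \II_C$, using Proposition \ref{prop:maximum_ch3_rank1_intro}, Theorem \ref{thm:rank_zero_bound_intro}, and Theorem \ref{thm:rank_2_first_cases_intro} as the essential $\ch_3$-input for the destabilizing factors. With $H \cdot \ch(\II_C) = (1, 0, -d, e)$, I would argue by contradiction: suppose $e > E(d,k)$. By Lemma \ref{lem:large_volume_limit} the ideal sheaf is $\nu_{\alpha,\beta}$-stable for $\alpha \gg 0$, while Assumption \ref{assu:bmt} forbids $Q_{\alpha, \beta}(\II_C) < 0$ on the tilt-semistable locus; the hypothesis $e > E(d, k)$ makes the semicircle $\{Q_{\alpha,\beta}(\II_C) = 0\}$ large enough to force $\II_C$ to be destabilized along a strictly enclosing semicircular wall $W$. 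Let
\[
0 \to F \to \II_C \to G \to 0
\]
be a destabilizing sequence of tilt-semistable objects along the largest such $W$.

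The first block of steps constrains the numerical invariants of $F$ and $G$. Lemma \ref{lem:higherRankBound}, applied to both $F$ and $G$ and, if necessary, to their derived duals via Proposition \ref{prop:tilt_derived_dual}, combines with the lower bound on $\rho_W$ coming from the containment of the $Q$-semicircle and the hypothesis $d > k(k-1)$ to restrict $\ch_0(F) \in \{1, 2\}$ and $\ch_0(G) \in \{-1, 0, 1\}$. The positivity $\ch_1^{\beta}(F), \ch_1^{\beta}(G) > 0$ along $W$, combined with the vanishing $H^0(X, \II_C((k-1)H')) = 0$ which rules out line-bundle subobjects $\OO(-jH) \hookrightarrow \II_C$ with $j \leq k-1$, restricts $\ch_1(F)$ to a finite range. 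Finally, $\Delta(F), \Delta(G) \geq 0$ together with Theorem \ref{thm:Bertram}(v) bound $\ch_2(F)$ on both sides, leaving only finitely many numerical possibilities for $W$.

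For each such possibility, Proposition \ref{prop:maximum_ch3_rank1_intro}, Theorem \ref{thm:rank_zero_bound_intro}, or Theorem \ref{thm:rank_2_first_cases_intro}, whichever matches the rank of $F$ and $G$, yields upper bounds on $\ch_3(F)$ and $\ch_3(G)$ which I would sum to get $e \leq B(W)$ for an explicit function $B$ of the wall invariants. The walls are linearly ordered by size by Theorem \ref{thm:Bertram}(iii), and the outermost admissible wall is expected to be $W(\II_C, \OO(-kH))$, witnessed by the extremal Gruson--Peskine configuration of a curve lying on a surface of degree exactly $k$. A direct computation at this wall, optimized over the residue $f$ defined by $d \equiv -f \pmod k$, should give $B(W) = E(d,k)$, contradicting $e > E(d,k)$.

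The main obstacle will be step (v), the monotonicity $B(W_1) \geq B(W_2)$ when $\rho_{W_1} \geq \rho_{W_2}$. Since $\tilde\varepsilon(d,k) = \tfrac{1}{2}f(k - f - 1 + f/k)$ is non-monotone in the residue $f$, the comparison must be performed case by case according to the rank of $F$ (hence to which of the three theorems is invoked) and to the residue forced by the integrality constraints of Assumption \ref{assu:div_curves}. A secondary difficulty is the rank-$2$ subobject case $\ch_0(F) = 2$, where Theorem \ref{thm:rank_2_first_cases_intro} provides weaker bounds than Proposition \ref{prop:maximum_ch3_rank1_intro}; ruling this case out or absorbing it into the estimate is where the hypothesis $d > k(k-1)$ is used in a genuinely quantitative way, beyond its coarse role in bounding $\ch_0(F)$.
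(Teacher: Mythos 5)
Your outline is indeed the paper's strategy: contradiction via $Q_{\alpha,\beta}(\II_C)\geq 0$, restriction of the destabilizing subobject $F$ to rank $1$ or $2$ via Lemma \ref{lem:higherRankBound}, $\ch_1(F)$ and $\ch_2(F)$ pinned down by positivity of $\ch_1^{\beta}$ along the wall, by $\Delta\geq 0$, and by the wall lying outside the $Q$-semidisk, and then the earlier $\ch_3$ bounds applied to subobject and quotient. But there is a gap you do not flag: you use the vanishing $H^0(\II_C((k-1)H'))=0$ to exclude line-bundle subobjects $\OO(-jH)$ with $j\leq k-1$, which presupposes that a rank-one destabilizing subobject \emph{is} a line bundle. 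A rank-one subobject of $\II_C$ in $\Coh^{\beta}(X)$ is a priori only a torsion-free sheaf, and the vanishing hypothesis does not directly exclude, say, twisted ideal sheaves. The paper proves (Lemma \ref{lem:sub_is_reflexive}) that any destabilizing subobject of $\II_C$ is a reflexive sheaf --- hence a line bundle in rank one --- and this is the only point where the integrality of $C$ enters. Your proposal never invokes integrality, so this step is genuinely missing.

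The two difficulties you defer are also where essentially all of the work lies, and the paper does not resolve them quite by the mechanism you describe. For a rank-one subobject $\OO(xH)$ the quotient has class $(0,-x,\ast,\ast)$, Theorem \ref{thm:rank_zero_bound} gives $e\leq\tilde{E}(d,-x)$ directly, and the ``monotonicity along the ordering of walls'' reduces to the arithmetic statement that $\tilde{E}(d,h)$ is decreasing in $h$ while $d>h(h-1)$ (Lemma \ref{lem:genus_bound_decreases}); the hypothesis $d>(-x)(-x-1)$ needed to invoke it comes from an upper bound $-x\leq\mu_0$ extracted from the $Q$-semidisk, where $\mu_0$ is the integer with $\mu_0(\mu_0-1)<d\leq\mu_0(\mu_0+1)$. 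For a rank-two subobject the wall is not ``absorbed into a monotone estimate'' at all: the $Q$-semidisk forces $H^2\cdot\ch_1(F)/H^3$ to be one of $-2\mu_0$, $-2\mu_0+1$, $-2\mu_0-1$ (Lemma \ref{lem:complicated_bounds_rank1}), and Lemmas \ref{lem:rank2_wall_1} and \ref{lem:rank2_wall_2} show case by case --- applying Theorem \ref{thm:rank_2_first_cases} to a suitable twist of $F$ and Proposition \ref{prop:maximum_ch3_rank1} to the quotient, and optimizing an explicit parabola in $\ch_2(F)$ over the admissible range --- that each such wall already forces $e\leq\tilde{E}(d,\mu_0)$, a contradiction. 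These computations occupy most of Section \ref{sec:classical_bounds} and are not supplied by your outline.
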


We will also denote 
\[
\tilde{E}(d, k) = \frac{d^2}{2k} + \frac{dk}{2} - \tilde{\varepsilon}(d,k).
\]

%%%%%%%%%%%%%%%%%%%%%%%%%%%%%%%%%%%%%%%%%%%%%
%%%%%%%%%%%%%%%%%%%%%%%%%%%%%%%%%%%%%%%%%%%%%
%%%%%%%%%%%%%%%%%%%%%%%%%%%%%%%%%%%%%%%%%%%%%
\subsection{Stable rank one objects}\label{subsec:rk1}

Recall that for any curve $C \subset \P^3$ of degree $d$ and genus $g$ the inequality
\[
g \leq \frac{(d-1)(d-2)}{2}
\]
holds. We will prove the following generalization to tilt semistable objects on $X$ whose Chern character is that of an ideal sheaf. This corresponds to the $k = 1$ version of Theorem \ref{thm:genus_bound_large_degree}.

\begin{prop}
\label{prop:maximum_ch3_rank1}
Let $E \in \Coh^{\beta}(X)$ be a $\nu_{\alpha, \beta}$-semistable object for some $(\alpha, \beta)$ with either $H \cdot \ch(E) = (1,0,-d,e)$ or $H \cdot \ch(E) = (-1,0,d,e)$. Then
\[
e \leq \frac{d(d+1)}{2} - \varepsilon(d,1) = E(d, 1).
\]
\end{prop}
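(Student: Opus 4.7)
The plan is to proceed by strong induction on $d$. By Proposition \ref{prop:tilt_derived_dual}, the derived dual transforms the case $H\cdot\ch(E) = (-1,0,d,e)$ into one with Chern character $(1,0,-d,e)$ plus a zero-dimensional correction whose contribution to $\ch_3$ only sharpens the bound, so one reduces to $H\cdot\ch(E) = (1,0,-d,e)$, for which $\Delta(E) = 2d$. A direct expansion of $\ch^\beta$ yields
\[
Q_{\alpha,\beta}(E) = 2d(\alpha^2 + \beta^2) + 6\beta e + 4d^2.
\]
For small $d$ the inequality $Q_{\alpha,\beta}(E)\geq 0$ at a single point of semistability, combined with Lemma \ref{lem:large_volume_limit} to control the large-$\alpha$ behavior, already gives the bound and establishes the base case. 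More importantly, if $e > E(d,1)$, then the locus $\{Q_{\alpha,\beta}(E)<0\}$ is a nonempty region in the upper half plane, and Assumption \ref{assu:bmt} forbids $E$ from being tilt-semistable there.

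For the inductive step, suppose for contradiction that $e>E(d,1)$. Since $E$ is $\nu_{\alpha,\beta}$-semistable at some point but cannot be semistable inside the $\{Q<0\}$ region, there must be an actual wall $W$ destabilizing it along the way, with a short exact sequence
\[
0 \to F \to E \to G \to 0
\]
in $\Coh^\beta(X)$ and $F, G$ both tilt-semistable along $W$. Theorem \ref{thm:Bertram}(v) gives $\Delta(F)+\Delta(G)\leq \Delta(E)=2d$, so both factors have strictly smaller discriminant than $E$. One then analyzes the possible ranks of $F$: Lemma \ref{lem:higherRankBound}, applied to whichever of $F$ or $G$ has rank $r \geq 2$, forces $\rho_W^2 \leq d/(2r(r-1))$, and this small-radius estimate combined with the wall equation from Theorem \ref{thm:Bertram}(i) and the Bogomolov inequality excludes the cases $\ch_0(F) \geq 2$ and $\ch_0(G)\geq 2$. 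The surviving possibilities are $\ch_0(F) \in \{0, 1\}$, i.e., one factor has rank one and the other rank zero. The inductive hypothesis bounds $\ch_3$ of the rank-one factor, while the $c = 1$ case of Theorem \ref{thm:rank_zero_bound_intro} (proved separately via $Q\geq 0$ in Lemma \ref{lem:bound_torsion_on_plane}) bounds $\ch_3$ of the rank-zero factor. Summing yields $e \leq E(d,1)$, contradicting the assumption.

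The hard part will be verifying sharpness: the two inductive bounds, summed, must match $\frac{d(d+1)}{2}-\varepsilon(d,1)$ exactly rather than exceed it. Since walls for a fixed class are nested by Theorem \ref{thm:Bertram}(iii), this reduces to a monotonicity check along the one-parameter family of destabilizing walls, with the outermost wall producing the worst bound; one must confirm that the summed bound is monotone decreasing as the wall shrinks. A secondary subtlety lies in the rounding term $\varepsilon(d,1) = \tfrac{1}{24}$ for $d \notin \Z$, which originates from combining Assumption \ref{assu:div_curves} (forcing $\ch_3/H^3 \in \tfrac{1}{6}\Z$) with the half-integrality of $d$, and must be tracked through both inductive inputs so that the integrality of $\ch_3(F)/H^3$ and $\ch_3(G)/H^3$ transmits the correct $\tfrac{1}{24}$ correction to the sum.
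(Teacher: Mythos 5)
Your outline follows the same route as the paper: reduce to positive rank by the derived dual, induct on $d$, use $Q_{\alpha,\beta}\geq 0$ for the base cases, force a destabilizing wall when $e>E(d,1)$, kill ranks $\geq 2$ via Lemma \ref{lem:higherRankBound} against the radius of the $Q<0$ semidisk, and feed the rank-one and rank-zero factors into the induction and Lemma \ref{lem:bound_torsion_on_plane} respectively. Your computation of $Q_{\alpha,\beta}(E)$ and the bound $\rho_W^2\leq d/(2r(r-1))$ are both correct.

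There is, however, a genuine gap at the point where you invoke ``the $c=1$ case'' for the rank-zero factor. Once you know $\ch_0(F)=1$, the quotient $G$ has rank zero and $\ch_1(G)=-\ch_1(F)=cH$ with $c\geq 1$, but nothing you have said rules out $c\geq 2$. If $c\geq 2$, Lemma \ref{lem:bound_torsion_on_plane} does not apply, and appealing to the general rank-zero bound (Theorem \ref{thm:rank_zero_bound}) would be circular, since its proof uses the present proposition on both factors. The paper closes this by a wall-location argument: the destabilizing wall must contain the semidisk $Q_{\alpha,\beta}(E)<0$, and a comparison of centers shows it is strictly larger than the numerical wall $W(E,\OO(-2H))$; since $F$ must lie in $\Coh^{\beta}(X)$ at every $\beta$ swept out by the wall, whose right endpoint then exceeds $-2$, one gets $\ch_1(F)=-H$, i.e.\ $c=1$. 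You need this step (it is also what produces the window $0\leq y<\tfrac{d-1}{4}$ for $\ch_2$ of the twisted subobject). Separately, the ``hard part'' you defer is real but routine: after substituting $z\leq \tfrac{y(y+1)}{2}$ and the Lemma \ref{lem:bound_torsion_on_plane} bound, the contradiction reduces to showing $y^2+y-dy\leq 0$ on $0\leq y<\tfrac{d-1}{4}$, a single parabola estimate rather than a monotonicity check over nested walls.
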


Note that the condition $\ch_1(E) = 0$ is no real restriction, since it can always be achieved by tensoring with an appropriate line bundle. In order to prove the proposition, we will first need to deal with torsion sheaves supported on a hypersurface of class $H$. The following lemma generalizes \cite[Lemma 5.4]{Sch15:stability_threefolds} from $\P^3$ to $X$.

\begin{lem}
\label{lem:bound_torsion_on_plane}
Let $E \in \Coh^{\beta}(X)$ be a $\nu_{\alpha, \beta}$-semistable object with $H \cdot \ch(E) = (0,1,d,e)$. Then 
\[
e \leq \frac{1}{24} + \frac{d^2}{2} - \varepsilon\left(d + \frac{1}{2}, 1\right).
\]
\end{lem}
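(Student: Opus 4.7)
The plan is to apply Assumption~\ref{assu:bmt} after observing that the Chern data forces $Q_{\alpha, \beta}(E)$ to take a particularly simple form and that all actual walls for $E$ are concentric semicircles of small radius. A direct substitution using $\ch_0(E) = 0$ and $H^2 \cdot \ch_1(E)/H^3 = 1$ gives $\Delta(E) = 1$ and
\[
Q_{\alpha, \beta}(E) = \alpha^2 + (\beta - d)^2 + 3d^2 - 6e,
\]
so at any point where $E$ is $\nu_{\alpha,\beta}$-semistable one has $e \leq \tfrac{d^2}{2} + \tfrac{1}{6}\bigl(\alpha^2 + (\beta-d)^2\bigr)$. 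The task thus reduces to exhibiting a semistable point close to $(\alpha, \beta) = (0, d)$.

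To analyze the wall structure, write $\nu_{\alpha,\beta}(E) = \nu_{\alpha,\beta}(F)$ for a destabilizing subobject $F$. Since $\nu_{\alpha,\beta}(E) = d - \beta$ is independent of $\alpha$, a routine computation shows that every actual semicircular wall for $E$ is centered at $(\alpha, \beta) = (0, d)$. Destabilizers with $\ch_0 = 0$ only yield the $(\alpha, \beta)$-independent numerical condition $d_F/c_F = d$, which either holds identically (producing only strict semistability, not an actual wall) or never; so actual walls arise from destabilizers of nonzero rank. Replacing $F$ by $E/F$ if needed to ensure $\ch_0(F) \geq 1$, Lemma~\ref{lem:higherRankBound} gives
\[
\rho_W^2 \leq \frac{\Delta(E)}{4\ch_0(F)(\ch_0(F) - \ch_0(E))} = \frac{1}{4\ch_0(F)^2} \leq \frac{1}{4}.
\]

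Consequently, the chamber of semistability containing the hypothesized semistable point is either the innermost chamber (extending to $(0, d)$) or is bounded below by a wall of radius at most $1/2$; in either case its closure contains a point with $\alpha^2 + (\beta - d)^2 \leq 1/4$. Since semistability is a closed condition, $Q_{\alpha, \beta}(E) \geq 0$ holds there, yielding
\[
e \leq \frac{d^2}{2} + \frac{1}{24}.
\]
To upgrade this to the sharp bound, invoke Assumption~\ref{assu:div_curves}: $e = \ch_3(E)/H^3 \in \tfrac{1}{6}\Z$. If $d \in \Z$, then $\tfrac{d^2}{2} \in \tfrac{1}{6}\Z$ and $\tfrac{1}{24} < \tfrac{1}{6}$, forcing $e \leq \tfrac{d^2}{2}$; if instead $d = k + \tfrac{1}{2}$ for some $k \in \Z$, a direct check shows $\tfrac{d^2}{2} + \tfrac{1}{24} = \tfrac{k(k+1)}{2} + \tfrac{1}{6} \in \tfrac{1}{6}\Z$, so the bound is attained. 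Both cases coincide with the stated $\tfrac{1}{24} + \tfrac{d^2}{2} - \varepsilon(d + \tfrac{1}{2}, 1)$.

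The main obstacle will be the chamber analysis: regardless of where the given semistable point sits, one must verify that the closure of its chamber reaches within distance $1/2$ of $(0, d)$. The concentric nested structure of walls centered at $(0, d)$ makes this visually transparent, but some care is needed to handle the degenerate rank-zero numerical walls and confirm they do not obstruct the argument; once this geometric observation is in place, the explicit formula for $Q_{\alpha, \beta}(E)$ closes the bound directly.
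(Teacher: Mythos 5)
Your proof is correct and follows essentially the same route as the paper's: reduce to $e \leq \tfrac{d^2}{2}+\tfrac{1}{24}$, note that $Q_{\alpha,\beta}(E)\geq 0$ on the semidisk of radius $\tfrac12$ about $(0,d)$ gives exactly this, rule out rank-zero destabilizers as producing no actual wall, and use Lemma \ref{lem:higherRankBound} with $\Delta(E)=1$ to confine all actual walls to that semidisk. The only differences are cosmetic --- you make explicit the formula for $Q_{\alpha,\beta}(E)$, the concentricity of the walls, and the integrality step behind the rounding term, all of which the paper leaves implicit.
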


\begin{proof}
As previously, the term 
\[
\varepsilon\left(d + \frac{1}{2}, 1\right)
\]
is simply a rounding term, and it is enough to show
\[
e \leq \frac{1}{24} + \frac{d^2}{2}.
\] 
% Note that 
% \[
% H \cdot \ch^d(E) = \left(0,1,0,e-\frac{d^2}{2}\right).
% \]
For any $(\alpha, \beta)$ in the semidisk
\[
\alpha^2 + (\beta - d)^2 \leq \frac{1}{4},
\]
the inequality $Q_{\alpha, \beta}(E) \geq 0$ implies the claim. We are done if we can show that there is no wall outside this semidisk. Lemma \ref{lem:higherRankBound} implies that if a wall has radius squared larger than $\tfrac{1}{4}\Delta(E) = \tfrac{1}{4}$, it must be induced by a rank zero subobject. But such a subobject destabilizes $E$ either for all $(\alpha, \beta)$ or none.
% and that the claim is equivalent to
% \[
% Q_{\frac{1}{2}, d}(E) \geq 0.
% \]

% We know that $E$ is $\nu_{\alpha_0, \beta_0}$-semistable for some $(\alpha_0, \beta_0)$. We can assume $\beta_0 = d$, because all numerical walls for $E$ lie on the vertical line $\beta = d$. If $E$ is semistable for all $\alpha > 0$, we are done. Assume that $E$ is destabilized for some $\alpha > 0$ by a subobject $F \into E$. Let $H \cdot \ch^d(F) = (r,x,y,z)$. We have $r \neq 0$, since otherwise $F$ would destabilize $E$ either everywhere or nowhere. The following argument is completely numerical and works the same way if $F$ was the quotient instead of subobject. Therefore, we can assume that $r > 0$ by using the quotient instead if necessary. By the definition of $\Coh^d(X)$ and the fact that we are not dealing with a vertical wall, we know $0 < x < 1$. This implies $x = \tfrac{1}{2}$, which is only possible if $d \notin \Z$. Assumption \ref{assu:classic_bg} implies $y \leq \tfrac{1}{8r}$. We can compute
% \begin{align*}
% 2y - \alpha^2 r = \nu_{\alpha, d}(F) &= \nu_{\alpha, d}(E) = 0, \\
% 0 < \alpha^2 &= \frac{2y}{r}.
% \end{align*}
% Therefore, we must have $y > 0$. By Assumption \ref{assu:div_curves} we have $2d \in \Z$, which implies $8y \in \Z$. The only way for this wall to exist is to have $r = 1$ and $y = \tfrac{1}{8}$. This wall occurs for $\alpha = \tfrac{1}{2}$.
\end{proof}

\begin{proof}[Proof of Proposition \ref{prop:maximum_ch3_rank1}]
Assume that $\ch_0(E) = -1$. By Proposition \ref{prop:tilt_derived_dual}, there is a sheaf $T$ supported in dimension $0$ and a $\nu_{\alpha, - \beta}$-semistable object $\tilde{E}$ together with a distinguished triangle
\[
T \to \tilde{E} \to \D(E) \to T[1].
\]
We have $\ch(\tilde{E}) = (1, 0, - \ch_2(E), \ch_3(E) + \ch_3(T))$. Thus, it is enough to deal with the $\ch_0(E) = 1$ case. The proof of that case is by induction on $d$. If $d = 0$, then $Q_{\alpha, \beta}(E) \geq 0$ is equivalent to $e \leq 0$. Let $d = \tfrac{1}{2}$. Then we have $\ch^{-1}(E) = (1,1,0,e - \tfrac{1}{3})$. All semicircular walls intersect the vertical line $\beta = -1$. Since there is no vertical wall at $\beta = -1$, this implies that $E$ has to be stable for $\beta = -1$ regardless of $\alpha$. Therefore, we can use $Q_{0,-1}(E) \geq 0$ to obtain
\[
e \leq \frac{1}{3}.
\]
Let $d \geq 1$ and assume for a contradiction that
\[
e > \frac{d(d+1)}{2}.
\]
Note again that the reason that we can ignore $\varepsilon(d,1)$ is the fact that it is just a rounding term. We write
\[
s_Q := -\frac{3e}{2d}.
\]
Assumption \ref{assu:bmt} implies 
\[
\alpha^2 + \left(\beta - s_Q \right)^2 \geq \frac{9e^2 - 8d^3}{4d^2} =: \rho^2_Q.
\]
This equation describes the complement of a semidisk and there are no stable objects inside. Therefore, a potentially stable object must become strictly semistable at some larger semicircular wall. We will show that $E$ can only be destabilized by either a subobject $F \into E$ or quotient $E \onto F$ with $\ch_0(F) = 1$. Let $\ch_0(F) = r \geq 1$ and $H^2 \cdot \ch_1(F) = xH^3$. We have
\[
\rho^2_Q - \frac{d}{4} = \frac{9e^2 - 8d^3}{4d^2} - \frac{d}{4} > \frac{9}{16}(d - 1)^2 \geq 0
\]
and can deduce $r = 1$ from Lemma \ref{lem:higherRankBound}.

The center of the numerical wall between $E$ and $\OO(-2H)$ is given by
\[
s(E, \OO(-2H)) =  -\frac{d}{2} - 1.
\]
This compares to $s_Q$ as follows:
\[
s(E, \OO(-2H)) - s_Q > \frac{d-1}{4} \geq 0.
\]
Therefore, all walls are bigger than the numerical wall with $\OO(-2H)$, i.e., $x = -1$. To simplify notation, we will write
\[
H \cdot \ch(F) = (1, 0, -y, z) \cdot (H \cdot \ch(\OO(-H))) = \left(1, -1, -y + \frac{1}{2}, y + z - \frac{1}{6} \right).
\]
The center of the wall is given by
\[
s(E, F) = y - d - \frac{1}{2}.
\]
In order to be outside the semidisk with no semistable objects, the inequality
\[
y - d - \frac{1}{2} \leq -\frac{3e}{2d} < -\frac{3}{4}(d+1)
\]
needs to hold. This implies
\[
0 \leq y < \frac{d-1}{4} \leq \frac{d-1}{2}.
\]
By induction we know $z \leq \tfrac{y(y+1)}{2}$. Let $G$ be the quotient $E/F$, respectively the subobject of the map $E \onto F$. We have
\[
H \cdot \ch(G) = \left(0, 1, y - d - \frac{1}{2}, e - z - y + \frac{1}{6} \right).
\]
Lemma \ref{lem:bound_torsion_on_plane} implies
\begin{align*}
0 &\leq \frac{1}{24} + \frac{(y - d - \frac{1}{2})^2}{2} - \left(e - z - y + \frac{1}{6} \right) = \frac{(d-y)^2}{2} + \frac{d+y}{2} + z - e  \\
&< \frac{(d-y)^2}{2} + \frac{d+y}{2} + \frac{y(y+1)}{2} - \frac{d(d+1)}{2} = y^2 + y - dy \leq 0.
\end{align*}
The final inequality is obtained as follows: In $y$ the parabola $y^2 + y - dy$ has a minimum for 
\[
y_0 = \frac{d-1}{2} > y. 
\]
Henceforth, the maximum occurs for $y = 0$.
\end{proof}

\subsection{Stable rank zero objects}\label{subsec:rk0}

In this section, we will prove the following bound for rank zero objects. It is a generalization of Lemma \ref{lem:bound_torsion_on_plane} beyond objects supported on divisors numerically equivalent to $H$.

\begin{thm}
\label{thm:rank_zero_bound}
Let $E \in \Coh^{\beta}(X)$ be a $\nu_{\alpha, \beta}$-semistable object with $H \cdot \ch(E) = (0,c,d,e)$, where $c > 0$. Then 
\begin{equation}
\label{eq:rank_zero_inequality}
e \leq \frac{c^3}{24} + \frac{d^2}{2c} - \varepsilon\left(d + \frac{c^2}{2}, c\right).
\end{equation}
\end{thm}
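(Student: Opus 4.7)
The argument follows the general wall-crossing scheme outlined in the introduction: locate a destabilizing wall for $E$, bound the Chern characters of its two factors via Proposition~\ref{prop:maximum_ch3_rank1}, and conclude. Throughout I work with the truncated bound
\[
\tilde E(d,c) := \frac{c^3}{24} + \frac{d^2}{2c} - \tilde\varepsilon(d+c^2/2, c),
\]
and recover the additional $\varepsilon(\cdot,1) \in \{0,\,1/24\}$ rounding at the end from the integrality $\ch_3 \in \tfrac{1}{6}H^3\cdot\Z$, exactly as in the proof of Proposition~\ref{prop:maximum_ch3_rank1}. The base case $c = 1$ is Lemma~\ref{lem:bound_torsion_on_plane}, so I may assume $c \geq 2$.

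Suppose for contradiction that $e > \tilde E(d,c)$. A direct calculation with the twisted Chern character shows that $Q_{\alpha,\beta}(E) \geq 0$ is equivalent to
\[
\alpha^2 + (\beta - d/c)^2 \geq \rho_Q^2, \qquad \rho_Q^2 = \frac{6e}{c} - \frac{3d^2}{c^2},
\]
i.e., to $(\alpha,\beta)$ lying outside a semidisk centered at $(0, d/c)$. Combined with the uniform bound $\tilde\varepsilon(d+c^2/2, c) \leq c(c-1)/8$ (attained at $f = c/2$), the assumption on $e$ yields $\rho_Q^2 > (c^2 - 3c + 3)/4$. By Theorem~\ref{thm:Bertram}(i), every numerical wall for $E$ is a semicircle concentric with this $Q$-disk. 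Since $E$ is semistable at some $(\alpha_0, \beta_0)$ but unstable inside the $Q$-disk, there is an actual destabilizing wall $W$ with $\rho_W \geq \rho_Q$; denote the corresponding short exact sequence in $\Coh^\beta(X)$ on $W$ by $0 \to F \to E \to G \to 0$ with $H\cdot\ch(F) = (r,x,y,z)$.

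If $r = 0$, then necessarily $H^2 \cdot \ch_1(F) > 0$ (otherwise $\nu_{\alpha,\beta}(F) = \infty$ destabilizes $E$ everywhere), and the slope difference $\nu_{\alpha,\beta}(F) - \nu_{\alpha,\beta}(E) = y/x - d/c$ is independent of $(\alpha,\beta)$; such an $F$ therefore either destabilizes $E$ everywhere (contradicting the hypothesis) or produces only trivial strict semistability, not an actual wall. So $r \neq 0$; swapping $F$ with $G$ if needed, assume $r \geq 1$. Lemma~\ref{lem:higherRankBound} gives $\rho_W \leq c/(2r)$, and combined with $\rho_W \geq \rho_Q > \sqrt{c^2 - 3c + 3}/2$ the inequality $(c-2)^2 \geq 0$ forces $r = 1$. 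Normalizing $F$ by the twist $F \otimes \OO(-xH)$ (permissible since $x \in \Z$), Proposition~\ref{prop:maximum_ch3_rank1} bounds the shifted $\ch_3$ of $F$; applied directly to the rank-$(-1)$ object $G$ after the analogous normalization, it bounds $e - z$. Substituting the resulting inequalities into the wall equation $\rho_W^2 = 2y - 2dx/c + d^2/c^2$ together with $\rho_W \geq \rho_Q$, an elementary but somewhat involved algebraic manipulation yields $e \leq \tilde E(d,c)$, contradicting the starting assumption.

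The main obstacle is this last calculation: verifying that for every admissible $(x, y) \in \Z \times \tfrac{1}{2}\Z$ satisfying Bogomolov $\Delta(F), \Delta(G) \geq 0$, the positivity $0 \leq H^2 \cdot \ch_1^\beta(F) \leq cH^3$ on the wall, and $\rho_W \geq \rho_Q$, the sum of the Proposition~\ref{prop:maximum_ch3_rank1} bounds on $z$ and $e - z$ never exceeds $\tilde E(d,c)$, with the term $\tilde\varepsilon$ tracked carefully through the half-integer congruences of $x^2/2 - y$ (and the analogous quantity for $G$) modulo $1$. As in the proof of Proposition~\ref{prop:maximum_ch3_rank1}, the cleanest way to organize the check is to focus on the largest admissible wall, which gives the worst $e$-bound, and verify that even there the resulting bound is still strictly below what was assumed.
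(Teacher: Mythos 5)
Your setup reproduces the paper's argument faithfully up to the point where the destabilizing subobject is shown to have rank one: the semidisk $Q_{\alpha,\beta}(E)\le 0$ centered at $(0,d/c)$, the lower bound $\rho_Q^2>(c^2-3c+3)/4$ via $\tilde\varepsilon\le c(c-1)/8$, the exclusion of rank-zero destabilizers, the use of Lemma \ref{lem:higherRankBound} to force $r=1$, and the application of Proposition \ref{prop:maximum_ch3_rank1} to the suitably twisted rank-$1$ and rank-$(-1)$ factors are exactly the steps of Lemma \ref{lem:complicated_bounds_rank0} (i)--(iv), (viii), (ix). However, the step you defer as ``an elementary but somewhat involved algebraic manipulation'' is the actual content of the theorem, and it does not close without two further inputs you have not supplied. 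First, you need the constraint
\[
\frac{d}{c}+\frac{c}{2}-\frac{3}{4}\;\le\; x\;\le\;\frac{d}{c}+\frac{c}{2}+\frac{3}{4},
\]
obtained by evaluating $0\le H^2\cdot\ch_1^{\beta}(F)\le H^2\cdot\ch_1^{\beta}(E)$ at the two endpoints $\beta=s_W\pm\rho_W$ of the wall; this pins $x$ to exactly the two values $\tfrac{d}{c}+\tfrac{c}{2}+\tfrac{f}{c}$ and $\tfrac{d}{c}+\tfrac{c}{2}+\tfrac{f}{c}-1$, and it is only for these two values that the residue $f$ enters and the error term $\tilde\varepsilon(d+c^2/2,c)$ emerges. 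Without this restriction the sum of the two rank-one bounds, viewed as a function of $(x,y)$ subject only to Bogomolov and $\rho_W\ge\rho_Q$, is not bounded by $\tilde E(d,c)$.

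Second, your suggestion to ``focus on the largest admissible wall, which gives the worst $e$-bound'' is precisely the assertion that needs proof. The combined bound $g_{c,d,x}(y)$ is a parabola in $y$ with a minimum at $y_0=\tfrac{c^2}{4}-\tfrac{cx}{2}+\tfrac{x^2}{2}-\tfrac{c}{2}+\tfrac{d}{2}+\tfrac{1}{2}$, so one must verify that $y_0$ lies below the lower bound on $y$ imposed by $\rho_W\ge\rho_Q$ (itself a parabola-in-$f$ estimate that only works for $c\ge 3$), so that the maximum over the admissible range is attained at the Bogomolov endpoint $y=\min\{x^2/2,\ (c-x)^2/2+d\}$; one then checks that the bound at this endpoint equals exactly $\tilde E(d,c)$, noting that the binding endpoint is $(c-x)^2/2+d$ for one admissible value of $x$ and $x^2/2$ for the other. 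The case $c=2$ escapes the monotonicity argument entirely and must be handled separately by showing that the admissible $y$-interval degenerates to the single endpoint. None of these verifications is conceptually hard, but they are where the theorem is actually proved; as written, your proposal is an accurate outline of the paper's strategy rather than a complete proof.
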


As in the previous section, we will replace $\varepsilon$ by $\tilde{\varepsilon}$ and $E$ by $\tilde{E}$ in the proof since the difference is just a rounding term that comes for free at the end. The case $c = 1$ was already shown in Lemma \ref{lem:bound_torsion_on_plane}. Throughout this section, we will prove the theorem, assume its notation, and $c \geq 2$.
\begin{lem}
\label{lem:complicated_bounds_rank0}
Assume that (\ref{eq:rank_zero_inequality}) does not hold.
\begin{enumerate}[(i)]
\item
We can bound
\label{item:bound_epsilon}
\[
\tilde{\varepsilon}\left(d + \frac{c^2}{2},c\right) \leq \frac{c^2-c}{8}.
\]
\item
\label{item:bg_circle}
The radius $\rho_Q$ of the semidisk $Q_{\alpha, \beta}(E) \leq 0$ is given by
\[
\rho^2_Q = \frac{6ce - 3d^2}{c^2} > \frac{c^2}{4} - \frac{6\tilde{\varepsilon}\left(d + \frac{c^2}{2},c\right)}{c}\geq \frac{c^2 - 3c + 3}{4} > \left( \frac{2c-3}{4} \right)^2.
\]
\item 
\label{item:not_barbeta_stable}
The object $E$ is destabilized along a wall $W$ induced by $0 \to F \to E \to G \to 0$ or $0 \to G \to E \to F \to 0$, where $F$ has positive rank. Let $H \cdot \ch(F) = (r,x,y,z)$.
\item 
We have $r = 1$.
\item
\label{item:bound_y}
There are inequalities
\[
\frac{c^2}{8} + \frac{dx}{c} - \frac{d^2}{2c^2} - \frac{3f}{2} + \frac{3f^2}{2c} + \frac{3f}{2c} -\frac{3f^2}{2c^2} < y \leq \min \left\{ \frac{x^2}{2}, \frac{(c-x)^2}{2} + d \right\}.
\]
\item 
\label{item:bound_on_x}
The inequalities
\[
\frac{d}{c} + \frac{c}{2} + \frac{3}{4} \geq x \geq \frac{d}{c} + \frac{c}{2} - \frac{3}{4}
\]
hold, i.e., either $x = \tfrac{d}{c} + \tfrac{c}{2} + \frac{f}{c}$ or $x = \tfrac{d}{c} + \tfrac{c}{2} + \frac{f}{c} - 1$.
\item
\label{item:special_case_c_2}
If $c = 2$, then
\[
y = \min \left\{ \frac{x^2}{2}, \frac{(2-x)^2}{2} + d \right\}.
\]
\item
\label{item:ch_3_sub}
We have
\[
z \leq \frac{x^4}{8} - \frac{x^3}{3} - \frac{x^2y}{2} + \frac{x^2}{4} + xy + \frac{y^2}{2} - \frac{y}{2}.
\]
\item We have
\begin{align*}
\label{item:bound_e}
e \leq &\frac{c^4}{8} - \frac{c^3x}{2} + \frac{3c^2x^2}{4} - \frac{cx^3}{2} + \frac{x^4}{4} - \frac{c^3}{3} + \frac{c^2d}{2} + c^2x - cdx - cx^2 + \frac{dx^2}{2} - \frac{c^2y}{2} \\ &+ cxy - x^2y + \frac{c^2}{4} - cd + \frac{d^2}{2} - \frac{cx}{2} + dx + \frac{x^2}{2} + cy - dy + y^2 + \frac{d}{2} - y.
\end{align*}
\end{enumerate}
\end{lem}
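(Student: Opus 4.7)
The plan proceeds in stages, using Assumption \ref{assu:bmt} and the wall structure from Theorem \ref{thm:Bertram}. Part (i) is an elementary bound on $\tfrac{1}{2}f(c-f-1+f/c)$ for $f \in \{0, \tfrac12, \dots, c-1\}$, whose maximum is at most $(c^2-c)/8$. For (ii), compute $Q_{\alpha,\beta}(E)$ directly: with $H\cdot\ch(E) = (0,c,d,e)$, the locus $Q_{\alpha,\beta}(E) \leq 0$ is the semidisk of center $(0, d/c)$ and radius squared $(6ce - 3d^2)/c^2$; substituting the assumed failure of \eqref{eq:rank_zero_inequality} and combining with (i) yields the chain, concluding with the identity $(c^2 - 3c + 3)/4 - ((2c-3)/4)^2 = 3/16 > 0$. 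For (iii), observe that all numerical walls for $v = H\cdot\ch_{\le 2}(E)$ and the $Q$-locus are concentric semicircles with center $\beta = d/c$ (direct from $\nu_{\alpha,\beta}(v) = \nu_{\alpha,\beta}(w)$ when $v_0 = 0$). Since the interior of the $Q$-semidisk contains no $\nu_{\alpha,\beta}$-semistable object of class $E$, while $(\alpha_0,\beta_0)$ lies outside it, wall-crossing forces a destabilization at an actual wall $W$ with $\rho_W \geq \rho_Q$; a rank zero subobject $F$ would make $\nu_{\alpha,\beta}(F)$ and $\nu_{\alpha,\beta}(E)$ both independent of $\alpha$, giving either global destabilization or no wall at all, so $F$ must have positive rank. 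For (iv), Lemma \ref{lem:higherRankBound} gives $\rho_W^2 \leq c^2/(4r^2)$; combined with the strict bound in (ii), any $r \geq 2$ forces $(c-2)^2 < 0$, a contradiction, hence $r = 1$.

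For (v), the upper bounds on $y$ are the Bogomolov inequalities $\Delta(F) \geq 0$ and $\Delta(G) \geq 0$, where $H\cdot\ch(G) = (-1, c-x, d-y, e-z)$. The lower bound uses the explicit form $\rho_W^2 = d^2/c^2 + 2(y - dx/c)$ (derived from $\nu_{\alpha,\beta}(F) = \nu_{\alpha,\beta}(E)$) together with $\rho_W^2 \geq \rho_Q^2$ from (ii), after expanding $\tilde{\varepsilon}(d+c^2/2, c) = \tfrac{1}{2}f(c-f-1+f/c)$. For (vi), the conditions $H^2\ch_1^\beta(F), H^2\ch_1^\beta(G) > 0$ along the wall (required so that $F, G \in \Coh^\beta(X)$) give $d/c + \rho_W < x$ and $d/c - \rho_W > x - c$; combined with $\rho_W > (2c-3)/4$ these force $|x - d/c - c/2| < 3/4$, and integrality of $x$ (from $\Pic(X) = \Z\cdot H$) together with the observation that $d/c + c/2 + f/c \in \Z$ pinpoints the two listed candidates. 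For (vii), a direct case analysis when $c = 2$ (checking $f \in \{0, 1\}$ and each candidate $x$) shows that the lower bound of (v) matches exactly the $\min$ of the two upper bounds, and combined with the half-integrality of $y$ from Assumption \ref{assu:div_curves}, this yields the claimed equality in every surviving case.

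For (viii), twist $F$ by $\OO(-xH)$: the character becomes $(1, 0, y - x^2/2, z - xy + x^3/3)$ with $d' := x^2/2 - y \geq 0$ by (v). Twisting preserves tilt semistability, so Proposition \ref{prop:maximum_ch3_rank1} in its positive-rank form gives $z - xy + x^3/3 \leq d'(d'+1)/2$, which rearranges to the claim. For (ix), similarly twist $G$ by $\OO((c-x)H)$ to get a tilt semistable object of character $(-1, 0, d - y + (c-x)^2/2, e - z + (c-x)(d-y) + (c-x)^3/3)$; the $\ch_2$-coordinate is nonnegative by the other upper bound in (v), so Proposition \ref{prop:maximum_ch3_rank1} in its negative-rank form yields an upper bound for $e - z$. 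Substituting the bound on $z$ from (viii) and expanding $(c-x)^4/8$, $(c-x)^3/3$, $(c-x)^2(d-y)/2$, and $(c-x)(d-y)$ exactly, one verifies that the resulting expression collapses precisely to the formula in (ix). The main obstacle is the case analysis in (vii): the delicate matching of the strict lower bound with the $\min$ of the upper bounds for $c = 2$, combined with the half-integrality of $y$, requires considering each pair $(f, x)$ individually. A secondary difficulty is the lengthy but mechanical polynomial bookkeeping in (ix), where many cross-terms from the twist of $G$ must combine cleanly with the bound from (viii) to assemble into the stated polynomial without remainder.
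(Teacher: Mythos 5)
Your proposal is correct and follows essentially the same route as the paper's proof: the $Q$-semidisk computation, the rank bound via Lemma \ref{lem:higherRankBound}, the discriminant and radius comparisons for $y$ and $x$, and the reduction of (viii)--(ix) to Proposition \ref{prop:maximum_ch3_rank1} after twisting (a step the paper leaves implicit but which you expand correctly). The only imprecision is in part (vii): since $d \in \tfrac{1}{2}\Z$ the residue $f$ ranges over $\{0,\tfrac12,1,\tfrac32\}$ rather than $\{0,1\}$, and the lower bound does not literally equal the $\min$ of the upper bounds — rather the gap is $-f(f-1)/4$ resp.\ $-(f-1)(f-2)/4 < \tfrac12$ for all such $f$, after which the half-integrality of $y$ forces the stated equality, exactly as your last clause indicates.
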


\begin{proof}
\begin{enumerate}[(i)]
\item The function $\tilde{\varepsilon}(d + \tfrac{c^2}{2},c)$ has a maximum for $f = \tfrac{1}{2}c$ and in that case 
\[
\tilde{\varepsilon}(d + \tfrac{c^2}{2},c) = \frac{c^2-c}{8}.
\]
\item The semidisk $Q_{\alpha, \beta}(E) \leq 0$ is given by
\[
\alpha^2 + \left(\beta - \frac{d}{c}\right)^2  \leq \frac{6e - 3d^2}{c^2}.
\]
The fact that (\ref{eq:rank_zero_inequality}) does not hold and part (\ref{item:bound_epsilon}) lead to the following inequalities
\[
\frac{6ce - 3d^2}{c^2} > \frac{c^2}{4} - \frac{6\tilde{\varepsilon}\left(d + \frac{c^2}{2},c\right)}{c}\geq \frac{c^2 - 3c + 3}{4} > \left( \frac{2c-3}{4} \right)^2.
\]
\item By part (\ref{item:bg_circle}) the region $Q_{\alpha, \beta}(E) < 0$ is non empty and there has to be a wall for $E$.
\item When $E$ gets destabilized there is either a subobject or a quotient $F$ with non-negative rank. If $\ch_0(F) \geq 2$, then by Lemma \ref{lem:higherRankBound} the radius $\rho_W$ must satisfy
\[
\rho_W^2 \leq \frac{\Delta(E)}{4 \ch_0(F)^2} \leq \frac{c^2}{16} \leq \frac{c^2}{4} - \frac{3c}{4}  + \frac{3}{4}.
\]
But by part (\ref{item:bg_circle}) the wall $W$ is too small to exist. Assume that $\ch_0(F) = 0$. Then the equation $\nu_{\alpha, \beta}(F) = \nu_{\alpha, \beta}(E)$ is independent of $(\alpha, \beta)$ and cannot induce a wall.
\item Solving the two inequalities $\Delta(F), \Delta(G) \geq 0$ for y leads to
\[
y \leq \min \left\{ \frac{x^2}{2}, \frac{(c-x)^2}{2} + d \right\}.
\]
We know that $W$ must be outside the semidisk $Q_{\alpha, \beta}(E) < 0$. The radius of $W$ can be computed as
\[
\rho_W^2 = \frac{2c^2y + d^2 - 2cdx}{c^2} \geq \rho^2_Q \geq \frac{c^2}{4} - \frac{6\tilde{\varepsilon}(d + \tfrac{c^2}{2},c)}{c}.
\]
Solving the inequality for $y$ leads to the claimed lower bound for $y$.
\item We know that
\[
\rho_W \geq \frac{2c-3}{4}.
\]
and $s_W = \tfrac{d}{c}$. By construction of $\Coh^{\beta}(X)$, we have $0 \leq \frac{H^2\cdot\ch_1^{\beta}(F)}{H^3} \leq c$ for all $\beta$ appearing along the wall $W$. Rearranging the terms leads to
\[
c + \beta \geq x \geq \beta.
\]
Since the middle term is independent of $\beta$, we can vary $\beta$ independently on the left and right to get
\[
\frac{d}{c} + \frac{c}{2} + \frac{3}{4} \geq c + s_W - \rho_W \geq x \geq s_W + \rho_W \geq \frac{d}{c} + \frac{c}{2} - \frac{3}{4}.
\]
\item Assume $c = 2$. If $x = \tfrac{d}{c} + \tfrac{c}{2} + \tfrac{f}{c}$. Then
\[
\frac{(c-x)^2}{2} + d - \left(\frac{c^2}{8} + \frac{dx}{c} - \frac{d^2}{2c^2} - \frac{3f}{2} + \frac{3f^2}{2c} + \frac{3f}{2c} -\frac{3f^2}{2c^2} \right) = -\frac{f(f-1)}{4} < \frac{1}{2}.
\]
If $x = \tfrac{d}{c} + \tfrac{c}{2} + \tfrac{f}{c} - 1$. Then
\[
\frac{x^2}{2} - \left(\frac{c^2}{8} + \frac{dx}{c} - \frac{d^2}{2c^2} - \frac{3f}{2} + \frac{3f^2}{2c} + \frac{3f}{2c} -\frac{3f^2}{2c^2} \right) = -\frac{(f-1)(f-2)}{4} < \frac{1}{2}.
\]
In both cases, we can conclude by part (\ref{item:bound_y}).
\item Since $F$ has rank $1$, we can use Proposition \ref{prop:maximum_ch3_rank1} to bound $z$ as claimed.
\item Since $G$ has rank $-1$, Proposition \ref{prop:maximum_ch3_rank1} applies, and together with (\ref{item:ch_3_sub}) the last claim follows. \qedhere
\end{enumerate}
\end{proof}

\begin{proof}[Proof of Theorem \ref{thm:rank_zero_bound}]
We need to maximize the function
\begin{align*}
g_{c,d,x}(y) := &\frac{c^4}{8} - \frac{c^3x}{2} + \frac{3c^2x^2}{4} - \frac{cx^3}{2} + \frac{x^4}{4} - \frac{c^3}{3} + \frac{c^2d}{2} + c^2x - cdx - cx^2 + \frac{dx^2}{2} - \frac{c^2y}{2} \\ &+ cxy - x^2y + \frac{c^2}{4} - cd + \frac{d^2}{2} - \frac{cx}{2} + dx + \frac{x^2}{2} + cy - dy + y^2 + \frac{d}{2} - y
\end{align*}
under the numerical constraints imposed on $c,d,x,y$ by Lemma \ref{lem:complicated_bounds_rank0}. The argument works as follows: We will show that $g_{c,d,x}(y)$ is increasing in $y$ and therefore, reduce to 
\[
y = \min \left\{ \frac{x^2}{2}, \frac{(c-x)^2}{2} + d \right\}.
\]
Note that we can assume $d \geq 3$, since for $d = 2$ we established that this is the only possible value for $y$. The function $g_{c,d,x}(y)$ is a parabola in $y$ with minimum at
\[
y_0 = \frac{c^2}{4} - \frac{cx}{2} + \frac{x^2}{2} - \frac{c}{2} + \frac{d}{2} + \frac{1}{2}.
\]
The proof will proceed individually for each of the two possible values for $x$.
\begin{enumerate}
\item Assume that $x = \tfrac{d}{c} + \tfrac{c}{2} + \frac{f}{c}$. Then
\begin{align*}
y - y_0 &> \left(\frac{c^2}{8} + \frac{dx}{c} - \frac{d^2}{2c^2} - \frac{3f}{2} + \frac{3f^2}{2c} + \frac{3f}{2c} -\frac{3f^2}{2c^2}\right) - \left(\frac{c^2}{4} - \frac{cx}{2} + \frac{x^2}{2} - \frac{c}{2} + \frac{d}{2} + \frac{1}{2} \right) \\
&= \frac{c}{2} - \frac{3f}{2} + \frac{3f^2}{2c} + \frac{3f}{2c} - \frac{2f^2}{c^2} - \frac{1}{2} =: h_c(f).
\end{align*}
Note that $h_c(f)$ is a parabola in $f$ with minimum at
\[
f_0 = \frac{3c^2 - 3c}{6c - 8}.
\]
For $c \geq 3$ we get 
\[
h_c(f_0) = \frac{(3c - 7)(c - 1)}{24c - 32} > 0,
\]
and thus, $g_{c,d,x}(y)$ is increasing under our restrictions on $c,d,x,y$. Finally
\[
g_{c,d,x}\left( \frac{(c-x)^2}{2} + d \right) = \frac{c^3}{24} + \frac{d^2}{2c} - \tilde{\varepsilon}\left(d + \frac{c^2}{2}, c \right).
\]
\item Assume that $x = \tfrac{d}{c} + \tfrac{c}{2} + \frac{f}{c} - 1$. Then
\begin{align*}
y - y_0 &> \left(\frac{c^2}{8} + \frac{dx}{c} - \frac{d^2}{2c^2} - \frac{3f}{2} + \frac{3f^2}{2c} + \frac{3f}{2c} -\frac{3f^2}{2c^2}\right) - \left(\frac{c^2}{4} - \frac{cx}{2} + \frac{x^2}{2} - \frac{c}{2} + \frac{d}{2} + \frac{1}{2} \right) \\
&= \frac{c}{2} - \frac{3f}{2} + \frac{3f^2}{2c} + \frac{5f}{2c} - \frac{2f^2}{c^2} - 1 =: h_c(f).
\end{align*}
Note that $h_c(f)$ is a parabola in $f$ with minimum at
\[
f_0 = \frac{3c^2 - 5c}{6c - 8}.
\]
For $c \geq 3$ we get 
\[
h_c(f_0) =  \frac{(3c - 7)(c - 1)}{24c - 32} > 0,
\]
and thus, $g_{c,d,x}(y)$ is increasing under our restrictions on $c,d,x,y$. Finally
\[
g_{c,d,x}\left( \frac{x^2}{2} \right) = \frac{c^3}{24} + \frac{d^2}{2c} - \tilde{\varepsilon}\left(d + \frac{c^2}{2}, c \right). \qedhere
\]
\end{enumerate}
\end{proof}

\subsection{Stable rank two objects}\label{subsec:rk2}

In this section we prove the following bound for tilt stable rank two objects.

\begin{thm}
\label{thm:rank_2_first_cases}
Let $E \in \Coh(X)$ be a $\nu_{\alpha, \beta}$-semistable rank two object  for some $(\alpha, \beta)$ with $H \cdot \ch(E) = (2,c,d,e)$. 
\begin{enumerate}[(i)]
\item If $c = -1$, then $d \leq 0$ and 
\[
e \leq \frac{d^2}{2} - d + \frac{5}{24} - \varepsilon\left(d + \frac{1}{2},1\right).
\]
\item If $c = 0$, then $d \leq 0$.
\begin{enumerate}[(a)]
\item If $d = 0$, then $e \leq 0$.
\item If $d = -\tfrac{1}{2}$, then $e \leq \tfrac{1}{6}$.
\item If $d \leq -1$, then
\[
e \leq \frac{d^2}{2} + \frac{5}{24} - \varepsilon\left(d + \frac{1}{2},1\right).
\]
\end{enumerate}
\end{enumerate}
\end{thm}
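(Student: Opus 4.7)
The Bogomolov inequality $\Delta(E) \geq 0$ (Assumption \ref{assu:classic_bg}) immediately gives $d \leq 0$ in both cases, since $\Delta(E) = 1 - 4d$ for $c = -1$ and $\Delta(E) = -4d$ for $c = 0$, together with $d \in \tfrac{1}{2}\Z$ (Assumption \ref{assu:div_curves}). The plan is to bound $e$ by induction on $\Delta(E)$, treating the two values of $c$ in parallel and following the five-step strategy outlined in the introduction, exactly as in the proofs of Proposition \ref{prop:maximum_ch3_rank1} and Theorem \ref{thm:rank_zero_bound}.

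For the small-$\Delta(E)$ base cases, I would apply $Q_{\alpha, \beta}(E) \geq 0$ (Assumption \ref{assu:bmt}) directly. In case (ii)(a) one has $\Delta(E) = 0$, so by Theorem \ref{thm:Bertram}(vi) the object $E$ is tilt semistable at every $(\alpha, \beta)$ outside the unique vertical wall; since $\mu(E) = 0$ forces $\beta < 0$ for $E$ to lie in $\Coh^{\beta}(X)$, a direct computation reduces $Q_{\alpha, \beta}(E)$ to $12\beta e$, yielding $e \leq 0$. In case (ii)(b) the analogous expansion gives $Q_{\alpha, \beta}(E) = 2\alpha^2 + 1 + 2\beta^2 + 12\beta e$; optimizing over admissible $(\alpha, \beta)$ produces $e \leq 1/\sqrt{18}$, which combined with the integrality $\ch_3(E) \in \tfrac{1}{6} H^3 \cdot \Z$ from Assumption \ref{assu:div_curves} refines to $e \leq 1/6$. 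For case (i) with $d \in \{0, -1/2\}$ and for the smallest subcases of (ii)(c), evaluating $Q_{\alpha, \beta}(E)$ along the vertical line $\beta = -1$ (where $\ch_1^{-1}(E)$ is normalized) simplifies the inequality to the claimed bound.

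For the inductive step (larger $|d|$ in cases (i) and (ii)(c)), I would assume the claim fails. Then $\{Q_{\alpha, \beta}(E) < 0\}$ is a nonempty semidisk and Assumption \ref{assu:bmt}, combined with the nested structure of walls (Theorem \ref{thm:Bertram}), forces a destabilizing semicircular wall $W$ with a short exact sequence $0 \to F \to E \to G \to 0$ in $\Coh^{\beta}(X)$ whose radius exceeds that of the $Q < 0$ region. By Lemma \ref{lem:higherRankBound}, walls with $\ch_0(F) \geq 3$ are too small to escape $\{Q < 0\}$; after possibly exchanging $F$ and $G$, we may therefore assume $\ch_0(F) \in \{0, 1, 2\}$. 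For each of these ranks the third Chern character of $F$ and the corresponding factor are bounded: rank $0$ by Theorem \ref{thm:rank_zero_bound}, rank $1$ (resp. $-1$) by Proposition \ref{prop:maximum_ch3_rank1}, and rank $\pm 2$ by the inductive hypothesis, which applies because $\Delta(F) + \Delta(G) \leq \Delta(E)$ by Theorem \ref{thm:Bertram}(v) and both discriminants are nonnegative integers. The pair $(\ch_1(F), \ch_2(F))$ is further constrained by $\Delta(F), \Delta(G) \geq 0$ and by the condition that $W$ lie outside $\{Q < 0\}$, so only finitely many candidate walls remain, which I would enumerate in the style of Lemma \ref{lem:complicated_bounds_rank0}. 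Summing the bounds on $\ch_3(F)$ and $\ch_3(G)$ yields a bound on $\ch_3(E)$.

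The main obstacle will be the final monotonicity step: the combined bound on $\ch_3(E)$ depends on the position of the wall through $\ch_1(F)$, and one must show it is still strictly below the claimed target even at the largest admissible wall. The case $\ch_0(F) = 2$ is particularly delicate, because then $F$ itself requires the inductive hypothesis and must be paired with the rank-$0$ quotient bound from Theorem \ref{thm:rank_zero_bound}; handling this cleanly needs a careful parametrization of the wall in terms of $\ch_1(F)$. Finally, the rounding terms governed by $\varepsilon(d + 1/2, 1)$ must be tracked through each estimate to recover the sharp $5/24$ constant, which is precisely where the integrality $\ch_3(E) \in \tfrac{1}{6}H^3 \cdot \Z$ from Assumption \ref{assu:div_curves} is used.
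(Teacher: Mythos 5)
Your proposal is correct and follows essentially the same route as the paper: $d\leq 0$ from $\Delta(E)\geq 0$ plus half-integrality, induction on $\Delta(E)$ with base cases settled by $Q_{\alpha,\beta}(E)\geq 0$ and the integrality $\ch_3\in\tfrac16 H^3\cdot\Z$, reduction to destabilizing subobjects of rank at most two via Lemma \ref{lem:higherRankBound}, bounds on the factors from Proposition \ref{prop:maximum_ch3_rank1}, the rank-zero results, and the inductive hypothesis, and finally optimization of a parabola in $\ch_2(F)$ over the finitely many admissible walls. The one substantive point you gloss over is in case (ii)(b): before "optimizing over admissible $(\alpha,\beta)$" one must actually prove that $E$ remains semistable at some point of the relevant small semidisk, which the paper does by ruling out any wall crossing the line $\beta=-1$ using the constraint $\ch_2\in\tfrac12 H^2\cdot\Z$ of Assumption \ref{assu:div_curves} together with $\Delta(F)\geq 0$.
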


Note that 
\[
\frac{d^2}{2} + \frac{5}{24} - \varepsilon\left(d + \frac{1}{2},1\right) \in \frac{1}{6} \Z
\]
for any $d \in \tfrac{1}{2} \Z$. Again, we will be able to ignore $\varepsilon\left(d + \frac{1}{2},1\right)$ as it is simply a rounding term. 

The bounds on $d$ are a consequence of the Bogomolov inequality. The bounds on $e$ will be proved via an induction on the discriminant $\Delta(E)$. We start with two lemmas for cases of low discriminant.

\begin{lem}
\label{lem:c_0_induction_start1}
If $E \in \Coh^{\beta}(X)$ is a tilt semistable object with $H \cdot \ch(E) = \left(2, 0, 0, e\right)$, then $e \leq 0$.
\end{lem}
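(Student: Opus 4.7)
The plan is to exploit the fact that $E$ has vanishing discriminant, push $E$ to a region of the $(\alpha,\beta)$-plane where it is still tilt semistable, and evaluate Assumption~\ref{assu:bmt} at a point where the resulting inequality becomes purely linear in $e$.

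First, I would observe that $\Delta(E) = (0 - 2\cdot 2\cdot 0)/(H^3)^2 = 0$, so Theorem~\ref{thm:Bertram}(vi) applies: $E$ can only be destabilized along the unique numerical vertical wall, which for the class $(2,0,0,e)$ sits at $\beta = 0$. In particular, at any $\beta \ne 0$ where $E$ lies in $\Coh^{\beta}(X)$ it will automatically be $\nu_{\alpha,\beta}$-semistable for every $\alpha > 0$.

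Next, I would identify $E$ as an honest sheaf, in order to pin down which $\beta$ are admissible. Since $E$ is $\nu_{\alpha,\beta}$-semistable for all $\alpha \gg 0$, Lemma~\ref{lem:large_volume_limit} leaves three alternatives, and $\ch_0(E) = 2 > 0$ rules out cases (ii) and (iii). Hence $E$ is a torsion-free slope semistable sheaf with $\mu(E) = 0$, so $E \in \TT_{\beta} \subset \Coh^{\beta}(X)$ for every $\beta < 0$, and by part~(vi) above it is $\nu_{\alpha,\beta}$-semistable throughout the half-plane $\beta < 0$.

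Finally, a direct expansion of $\ch^{\beta}(E)$ gives $H^2\cdot\ch_1^{\beta}(E)/H^3 = -2\beta$, $H\cdot\ch_2^{\beta}(E)/H^3 = \beta^2$, and $\ch_3^{\beta}(E)/H^3 = e - \beta^3/3$, so Assumption~\ref{assu:bmt} reduces to
\[
Q_{\alpha,\beta}(E) = 4\beta^4 - 6(-2\beta)\bigl(e - \tfrac{\beta^3}{3}\bigr) = 12\beta e \geq 0.
\]
Applied at any $\beta < 0$, this forces $e \leq 0$. The only delicate point is the bookkeeping that guarantees $E$ genuinely represents an object of $\Coh^{\beta}(X)$ for some $\beta < 0$; this is exactly where the large volume limit is needed, and once it is in place the cubic and quartic $\beta$-terms in $Q$ cancel, leaving the desired linear inequality.
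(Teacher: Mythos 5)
Your proof is correct and ultimately rests on the same computation as the paper's: $\Delta(E)=0$ and $Q_{\alpha,\beta}(E)=4\beta^4-6(-2\beta)(e-\tfrac{\beta^3}{3})=12\beta e$, so $Q_{\alpha,\beta}(E)\geq 0$ together with $\beta<0$ forces $e\leq 0$. The detour through Theorem \ref{thm:Bertram}(vi) and Lemma \ref{lem:large_volume_limit} is not needed: since $\ch_0(E)=2>0$ and an object of $\Coh^{\beta}(X)$ of positive rank with $H^2\cdot\ch_1^{\beta}(E)=-2\beta H^3$ can only exist for $\beta<0$, the hypothesis already places $E$ at a point with $\beta<0$, and Assumption \ref{assu:bmt} can be applied there directly.
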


\begin{proof}
Since $E \in \Coh^{\beta}(X)$, we must have $\beta < 0$. The inequality on $e$ is then equivalent to $Q_{\alpha, \beta}(E) \geq 0$.
\end{proof}

\begin{lem}
\label{lem:c_0_induction_start2}
If $E \in \Coh^{\beta}(X)$ is a tilt semistable object with $H \cdot \ch(E) = \left(2, 0, -\tfrac{1}{2}, e\right)$, then $e \leq \tfrac{1}{6}$.
\end{lem}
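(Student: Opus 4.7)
The proof proceeds by applying the inequality $Q_{\alpha,\beta}(E) \geq 0$ from Assumption \ref{assu:bmt} at an advantageous point, combined with the integrality $\ch_{3}(E)/H^{3} \in \tfrac{1}{6}\Z$ from Assumption \ref{assu:div_curves}. A direct computation using $H\cdot\ch(E) = (2,0,-1/2,e)$ gives
\[
Q_{\alpha,\beta}(E) = 2\alpha^{2} + 2\beta^{2} + 1 + 12\beta\, e.
\]
Provided $E$ is $\nu_{\alpha,\beta}$-semistable at $(\varepsilon,-1)$ for every sufficiently small $\varepsilon > 0$, rearranging gives $e \leq (3 + 2\varepsilon^{2})/12$; letting $\varepsilon \to 0^{+}$ yields $e \leq 1/4$. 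Since $1/4$ lies strictly between the two consecutive values $1/6$ and $1/3$ of $\tfrac{1}{6}\Z$, integrality then forces $e \leq 1/6$.

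The real content is to guarantee that such a point of semistability exists, which requires classifying the walls of $E$. Since $\Delta(E) = 2$, Theorem \ref{thm:Bertram}(v) gives $\Delta(F) + \Delta(G) \leq 2$ for any destabilizing sequence $0 \to F \to E \to G \to 0$, and Lemma \ref{lem:higherRankBound} bounds the radius of any wall whose sub or quotient has rank $\geq 3$. Combined with the Bogomolov inequality $\Delta(F), \Delta(G) \geq 0$, these restrictions are so tight that only a short list of numerical classes $H\cdot\ch(F) = (r,x,y,z)$ survives; checking these case by case, one finds that the unique non-degenerate numerical wall for $E$ is the semicircle centered at $s_{W} = -3/4$ with radius $\rho_{W} = 1/4$, induced by $(r,x,y) = (-1,1,-1/2)$ (equivalently, by the quotient class $(3,-1,0)$).

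This wall divides $\{\alpha > 0,\ \beta < 0\}$ into two chambers, and $E$ is semistable in the one containing its hypothesized point of stability. If that is the outer chamber, then $(\varepsilon,-1)$ lies in it for every $\varepsilon > 0$ and the argument above gives $e \leq 1/4$. If instead it is the inner chamber, whose closure is contained in $\{\beta \in [-1,-1/2],\ \alpha \leq 1/4\}$, then $E$ is semistable at $(\varepsilon,-3/4)$ for every $\varepsilon \in (0,1/4)$, and the analogous computation reads $e \leq (17/8 + 2\varepsilon^{2})/9 \to 17/72$. Both bounds are strictly below $1/3$, so integrality yields $e \leq 1/6$ in either case. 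The step I expect to be the main obstacle is the wall classification in the middle paragraph, since with $\Delta(E) = 2$ the relevant inequalities are tight and each small-rank candidate must be ruled out or identified with the single genuine wall by a direct calculation; once this is in place the $Q$-bound and the integrality step are routine.
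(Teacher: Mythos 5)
Your computation $Q_{\alpha,\beta}(E)=2\alpha^{2}+2\beta^{2}+1+12\beta e$ is correct, and your endgame (evaluate $Q_{\alpha,\beta}(E)\geq 0$ at a point with $\beta<0$, get $e\leq \tfrac14$ or $e\leq\tfrac{17}{72}$, then use $e\in\tfrac16\Z$ from Assumption \ref{assu:div_curves}) is exactly the paper's. Where you diverge is in how you produce a point of semistability: you propose a complete classification of potential walls for the class $(2,0,-\tfrac12)$, concluding that the only candidate is the semicircle $\alpha^{2}+(\beta+\tfrac34)^{2}=\tfrac1{16}$. That claim is in fact true --- the constraints $\Delta(F),\Delta(G)\geq 0$, $\Delta(F)+\Delta(G)\leq\Delta(E)=2$ (strict, by Theorem \ref{thm:Bertram}(v)), the rank bound of Lemma \ref{lem:higherRankBound}, and $0<\ch_1^{\beta}(F)<\ch_1^{\beta}(E)$ along the wall do force $H\cdot\ch_{\leq 2}(F)$ into the lattice spanned by $(2,0,-\tfrac12)$ and $(3,-1,0)$ --- but as written you assert it (``one finds'') rather than prove it, and ranks up to about $7$ each leave a nonempty interval of candidate $\ch_2$'s that must be checked against half-integrality, so this is genuine work that still needs to be written out (you should also cover the degenerate case where the only point of semistability lies on the wall itself, where $Q\geq0$ still gives $e\leq\tfrac14$). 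The paper sidesteps the classification entirely: the closed semidisk $\alpha^{2}+(\beta+\tfrac34)^{2}\leq\tfrac1{16}$ --- precisely your wall, on whose boundary $Q\geq0$ reads exactly $e\leq\tfrac14$ --- touches the line $\beta=-1$, so any wall excluding $E$ from all of it must cross $\beta=-1$ at positive $\alpha$; writing $H\cdot\ch_{\leq 2}^{-1}(F)=(r,x,y)$ one gets $x=1$, and then $\nu_{\alpha,-1}(F)=\nu_{\alpha,-1}(E)$ forces $y=\tfrac14$ when $r=1$ (impossible since $\ch_2^{-1}(F)\in\tfrac12 H^{2}\cdot\Z$) and $\Delta(F)<0$ when $r\geq2$. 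So your route is viable and reaches the right answer, but the paper's integrality trick along the single ray $\beta=-1$ buys the same conclusion without enumerating walls; adopting it would close the gap you yourself flagged.
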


\begin{proof}
If $E$ is semistable for some $\alpha > 0$, $\beta \in \R$ inside the closed semidisk
\[
\alpha^2 + \left(\beta + \frac{3}{4} \right)^2 \leq \frac{1}{16},
\]
then $Q_{\alpha, \beta}(E) \geq 0$ implies $e \leq \tfrac{1}{4}$. By Assumption \ref{assu:div_curves}, this means $e \leq \tfrac{1}{6}$.

Next, we will show that $E$ has to be semistable for some point inside this closed semidisk. If not, then $E$ is destabilized by a semistable subobject $F$ along the vertical line $\beta = -1$. If $\ch_0(F) \leq 0$, then we work with the quotient $E/F$ instead. Therefore, we can assume $\ch_0(F) \geq 1$. We can compute
\[
H \cdot \ch_{\leq 2}^{-1}(E) = \left(2,2,\frac{1}{2} \right).
\]
Let $H \cdot \ch_{\leq 2}^{-1}(F) = (r,x,y)$. Then the definition of $\Coh^{\beta}(X)$ and the fact that we are not dealing with a vertical wall implies $0 < x < 2$, i.e., $x = 1$. At the wall, we have
\[
y - \frac{r}{2} \alpha^2 = \nu_{\alpha, -1}(F) = \nu_{\alpha, -1}(E) = \frac{1}{4} - \frac{1}{2}\alpha^2.
\]
If $r = 1$, then this implies $y = \tfrac{1}{4}$ in contradiction to Assumption \ref{assu:div_curves}. If $r \geq 2$, we get $y > \tfrac{1}{4}$. However, that implies $\Delta(F) = 1 - 2ry \leq 1 - 4y < 0$, a contradiction.
\end{proof}

\begin{lem}
\label{lem:no_rank_3_walls}
Let $E \in \Coh^{\beta}(X)$ be a tilt semistable object with $H \cdot \ch(E) = (2,c,d,e)$. Assume that either
\begin{enumerate}[(i)]
\item $c = -1$, $d \leq 0$, and $e \geq \tfrac{d^2}{2} - d + \tfrac{1}{3}$, or
\item $c = 0$, $d \leq -1$, and $e \geq \tfrac{d^2}{2} + \tfrac{1}{3}$.
\end{enumerate}
Then $E$ is destabilized along a semicircular wall induced by an exact sequence $0 \to F \to E \to G \to 0$, where $F$ and $G$ have rank at most $2$.
\end{lem}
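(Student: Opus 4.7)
The plan is to use Assumption \ref{assu:bmt} to force the existence of a destabilizing wall and then apply Lemma \ref{lem:higherRankBound} to bound the ranks of its factors. First I would compute the numerical Q-wall $W_Q = \{Q_{\alpha,\beta}(E) = 0\}$ explicitly. By the Lemma at the end of Section \ref{subsec:walls}, $W_Q$ is a numerical wall, and a direct computation shows it is a semicircle centered on the $\beta$-axis. For case (i) this yields
\[
s_Q \;=\; -\frac{d+6e}{1-4d}\,, \qquad \rho_Q^{\,2} \;=\; \frac{36e^2 + 36de - 3d^2 + 16d^3 - 6e}{(1-4d)^2},
\]
and for case (ii) (where $c=0$) one computes
\[
s_Q \;=\; \frac{3e}{2d}\,, \qquad \rho_Q^{\,2} \;=\; \frac{9e^2 + 4d^3}{4d^2}.
\]
Substituting the hypothesized lower bounds on $e$ shows that $\rho_Q^{\,2}>0$, so the Q-semidisk has non-empty interior; by Assumption \ref{assu:bmt} no $\nu_{\alpha,\beta}$-semistable object of class $\ch_{\le 2}(E)$ can live there. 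Since $E$ is tilt semistable somewhere, there must be an actual destabilizing wall $W$ for $E$.

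Next I would locate $W$ relative to $W_Q$. By Theorem \ref{thm:Bertram}(iii), all numerical walls for $\ch_{\le 2}(E)$ are either identical or disjoint, hence nested as semicircles. Since $E$ is semistable on one side of $W$ and the interior of $W_Q$ contains no semistable object, the Q-semidisk must be contained in the closed disk cut out by $W$; thus $\rho_W \ge \rho_Q$. The wall $W$ cannot be the unique numerical vertical wall at $\beta = c/2$: the explicit formulas above show that in both cases the Q-semidisk lies strictly on one side of the line $\beta = c/2$, so a vertical wall cannot separate this semidisk from the rest of the upper half-plane.

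To conclude, write the destabilizing sequence in $\Coh^{\beta}(X)$ as $0 \to F \to E \to G \to 0$, with $F, G$ both tilt semistable along $W$. Suppose for contradiction that $\ch_0(F) \ge 3$; the case $\ch_0(G) \ge 3$ is symmetric (apply Lemma \ref{lem:higherRankBound} to the quotient). Since $\ch_0(E) = 2 \ge 0$, Lemma \ref{lem:higherRankBound} gives
\[
\rho_W^{\,2} \;\le\; \frac{\Delta(E)}{4\,\ch_0(F)\bigl(\ch_0(F)-2\bigr)} \;\le\; \frac{\Delta(E)}{12}.
\]
Combined with $\rho_W \ge \rho_Q$ this would force $\rho_Q^{\,2} \le \Delta(E)/12$, which I would contradict by a direct computation with the explicit formulas. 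In case (ii), recalling $\Delta(E) = -4d$, this reduces to
\[
\rho_Q^{\,2} - \frac{\Delta(E)}{12} \;=\; \frac{27e^2 + 16d^3}{12d^2} \;>\; 0
\]
for $d \le -1$ and $e \ge d^2/2 + 1/3$, which one checks by evaluating at the lower bound $e = d^2/2+1/3$ and noting that $27(d^2/2+1/3)^2 + 16d^3$ is positive for all $d \le -1$; case (i) is analogous after a similar but lengthier substitution.

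The main obstacle is precisely that last algebraic verification: showing $\rho_Q^{\,2} > \Delta(E)/12$ amounts to a polynomial inequality in $d$ on an unbounded range, and while each case is elementary, one must track the signs carefully (in case (i) the factor $1-4d$ is positive, but the coefficients in the numerator of $\rho_Q^{\,2}$ mix signs). Once this is in hand, the conclusion that both $\ch_0(F) \le 2$ and $\ch_0(G) \le 2$ is immediate, and combined with Step 2 gives the desired semicircular destabilization.
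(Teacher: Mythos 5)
Your proposal is correct and follows essentially the same route as the paper: show $\rho_Q^2 > \Delta(E)/12$ so that the $Q$-semidisk is non-empty (forcing a wall) and, via Lemma \ref{lem:higherRankBound} and the nesting of walls, no factor can have rank $\geq 3$. The case-(i) verification you defer does go through: one finds $\rho_Q^2 - \Delta(E)/12 \geq \frac{108d^4 + 40d^3 + 24d^2 - 60d + 23}{12(4d-1)^2} > 0$ for $d \leq 0$ after substituting $e = \tfrac{d^2}{2} - d + \tfrac13$.
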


\begin{proof}
\begin{enumerate}[(i)]
\item Assume $c = -1$. Then the radius $\rho_Q$ of the semidisk $Q_{\alpha, \beta}(E) \leq 0$ satisfies
\begin{align*}
\rho_Q^2 - \frac{\Delta(E)}{12} &= \frac{16d^3 - 3d^2 + 36de + 36e^2 - 6e}{(4d - 1)^2} + \frac{d}{3} - \frac{1}{12} \\
&\geq \frac{108d^4 + 40d^3 + 24d^2 - 60d + 23}{12(4d - 1)^2} > 0.
\end{align*}
By Lemma \ref{lem:higherRankBound} any destabilizing subobjects or quotients must have rank smaller than or equal to two.
\item Assume $c = 0$. Then the radius $\rho_Q$ of the semidisk $Q_{\alpha, \beta}(E) \leq 0$ satisfies
\begin{align*}
\rho_Q^2 - \frac{\Delta(E)}{12} &= \frac{4d^3 + 9e^2}{4d^2} + \frac{d}{3} \\
&\geq \frac{27d^4 + 64d^3 + 36d^2 + 12}{48d^2} > 0.
\end{align*}
By Lemma \ref{lem:higherRankBound} any destabilizing subobjects or quotients must have rank smaller than or equal to two. \qedhere
\end{enumerate}
\end{proof}

\begin{proof}[Proof of Theorem \ref{thm:rank_2_first_cases}]
If $c = -1$, then $\Delta(E) = 1 - 4d \geq 0$. Since $d \in \tfrac{1}{2} \Z$, we get $d \leq 0$. If $c = 0$, then $\Delta(E) = -4d \geq 0$ implies $d \leq 0$. We will prove the bounds on $e$ simultaneously in both cases via induction on $\Delta(E)$. Lemmas \ref{lem:c_0_induction_start1} and \ref{lem:c_0_induction_start2} 
%, and \ref{lem:c_minus1_induction_start1}, \ref{lem:c_0_induction_start3}, and \ref{lem:c_minus1_induction_start2}
provide the start of the induction. For a contradiction assume that the upper bounds on $e$ claimed in the theorem do not hold.

The strategy of the proof is to show that there is no wall outside the semidisk $Q_{\alpha, \beta}(E) < 0$ and therefore, there is no wall for such an object. By Lemma \ref{lem:no_rank_3_walls} we know that $E$ is destabilized along a semicircular wall $W$ induced by a subobject $F \into E$ of rank smaller than or equal to two. By replacing $F$ with the quotient $E/F$ if necessary, we can assume that $F$ has rank $1$ or $2$. Let $H \cdot \ch(F) = (r,x,y,z)$. Note that we have $\Delta(F) < \Delta(E)$, and we intend to use the induction hypothesis on $F$ in case $r = 2$.

\begin{enumerate}[(i)]
\item Assume $c = -1$, $d \leq 0$, and $e \geq \tfrac{d^2}{2} - d + \tfrac{1}{3}$.
\begin{enumerate}[(a)]
\item Assume $F$ has rank one. Then
\[
Q_{0, -\frac{3}{2}}(E) = 4d^2 - 12d - 12e + \frac{9}{4} \leq -2d^2 - \frac{7}{4} < 0.
\]
This implies
\[
x + \frac{3}{2} = \frac{H^2 \cdot \ch^{-\frac{3}{2}}_1(F)}{H^3} > 0.
\]
Since $x \geq 0$ implies the wall to be on the wrong side of the vertical wall, we must have $x = -1$. The Bogomolov inequality $\Delta(F) \geq 0$ implies $y \leq \tfrac{1}{2}$. We need a second bound of $y$ from below. We have $s(E, F) = d - 2y$. Moreover, the center of the semidisk $Q_{\alpha, \beta}(E) < 0$ is given by
\[
s_Q = \frac{d + 6e}{4d - 1}.
\]
Since no wall can be inside this semidisk, we must have
\begin{align*}
0 &\leq s_Q - s(E, F) = \frac{d + 6e}{4d - 1} - d + 2y \\
&\leq \frac{d^2 - 8dy + 4d + 2y - 2}{1 - 4d}.
\end{align*}
This implies,
\[
y \geq \frac{d^2 + 4d - 2}{8d - 2}.
\]
Applying Proposition \ref{prop:maximum_ch3_rank1} to $F$ implies
\[
z \leq \frac{y^2}{2} - 2y + \frac{17}{24}.
\]
We can apply the same proposition to the quotient $E/F$ to obtain
\begin{align*}
e &\leq \frac{d^2}{2} - dy + \frac{y^2}{2} - \frac{d}{2} + \frac{y}{2} + z \\
&\leq \frac{d^2}{2} - dy + y^2 - \frac{d}{2} - \frac{3y}{2} + \frac{17}{24} =: \varphi_d(y).
\end{align*}
We have to maximize this expression in $y$ which defines a parabola with minimum. Therefore, the maximum has to occur on the boundary. We can compute
\[
\varphi_d\left(\frac{1}{2}\right) = \frac{d^2}{2} - d + \frac{5}{24}.
\]
Moreover,
\[
\frac{d^2}{2} - d + \frac{1}{3} - \varphi_d\left( \frac{d^2 + 4d - 2}{8d - 2} \right) = \frac{14d^4 + 4d^3 + 2d^2 - 12d + 1}{8(4d - 1)^2} > 0.
\]
\item Assume $F$ has rank two. As in the rank one case, we get
\[
x + 3 = \frac{H^2 \cdot \ch_1^{-\frac{3}{2}}(F)}{H^3} > 0,
\]
i.e., $x \geq -2$. If $x = -1$, we are dealing with the vertical wall, and if $x \geq 0$, then the wall is on the wrong side of the vertical wall. Overall, we must have $x = -2$. The Bogomolov inequality says $y \leq 1$. We need to bound $y$ from below. We have $s(E, F) = d - y$. Moreover, the center of the semidisk $Q_{\alpha, \beta}(E) < 0$ is given by
\[
s_Q = \frac{d + 6e}{4d - 1}.
\]
Since no wall can be inside this semidisk, we must have
\begin{align*}
0 &\leq s_Q - s(E, F) = \frac{d + 6e}{4d - 1} - d + y \\
&\leq \frac{d^2 - 4dy + 4d + y - 2}{1 - 4d}.
\end{align*}
This implies,
\[
y \geq \frac{d^2 + 4d - 2}{4d - 1}.
\]
In particular, for $d = 0$, we have $1 \geq y \geq 2$, and such a wall simply cannot exist. Therefore, we can assume $d \leq -\tfrac{1}{2}$. Applying Lemma \ref{lem:bound_torsion_on_plane} to the quotient $E/F$ leads to
\[
e \leq \frac{(d-y)^2}{2} + z + \frac{1}{24}.
\]
The next step is to apply induction to $F(1)$. If $y = 1$, then $H\cdot\ch_3(F(1)) = z + \tfrac{1}{3} \leq 0$, i.e., $z \leq -\tfrac{1}{3}$. This leads to
\[
e \leq \frac{d^2}{2} - d + \frac{5}{24}.
\]
If $y = \tfrac{1}{2}$, then $\ch_3(F(1)) = z - \tfrac{1}{6} \leq \tfrac{1}{6}$, i.e., $z \leq \tfrac{1}{3}$. This leads to
\[
e \leq \frac{d^2}{2} - \frac{d}{2} + \frac{1}{2} < \frac{d^2}{2} - d + \frac{1}{3}.
\]
Assume $y \leq 0$. Then we have
\[
\frac{d^2 + 4d - 2}{4d - 1} \leq y \leq 0.
\]
This implies $d \leq -\tfrac{9}{2}$. We can apply induction to $F(1)$ to get 
\[
z \leq \frac{y^2}{2} - 2y + \frac{11}{8}.
\]
Therefore,
\[
e \leq \frac{d^2}{2} - dy + y^2 - 2y + \frac{17}{12} =: \varphi_d(y).
\]
This expression defines a parabola with minimum in $y$. Therefore its maximum will occur on the boundary. We have
\[
\frac{d^2}{2} - d + \frac{1}{3} - \varphi_d(0) \geq -d - \frac{13}{12} > 0.
\]
On the other boundary point, we have
\[
\frac{d^2}{2} - d + \frac{1}{3} - \varphi_d\left( \frac{d^2 + 4d - 2}{4d - 1} \right) = \frac{36d^4 - 12d^3 - 40d^2 + 20d - 13}{12(4d - 1)^2} > 0.
\]
\end{enumerate}
\item Assume $c = 0$, $d \leq -1$, and $e \geq \tfrac{1}{2}d^2 + \tfrac{1}{3}$.
\begin{enumerate}[(a)]
\item Assume that $F$ has rank one. Then
\[
Q_{0,-1}(E) = 4d^2 - 4d - 12e \leq -2d^2 - 4d - 4 < 0.
\]
Thus, $W(F,E)$ must contain a point $(\alpha, -1)$ and we have 
\[
0 < \frac{H^2 \cdot \ch^{-1}_1(F)}{H^3} = x + 1,
\]
i.e., $x > -1$. However, if $x \geq 0$, then we are either dealing with the vertical wall or a wall to the right of the vertical wall. Therefore, the wall cannot exist.
\item Assume that $F$ has rank two. Then as in the rank one case, we have $Q_{0,-1}(E) < 0$, and thus, 
\[
0 < \frac{H^2 \cdot \ch^{-1}_1(F)}{H^3} = x + 2,
\]
i.e., $x > -2$. Again, $x \geq 0$ implies that we do not deal with a wall to the left of the vertical wall. Overall, we must have $x = -1$. The inequality $\Delta(F) \geq 0$ implies $y \leq 0$. We need to bound $y$ from below. We have $s(E, F) = d - y$. Moreover, the center of the semidisk $Q_{\alpha, \beta}(E) < 0$ is given by
\[
s_Q = \frac{3e}{2d}.
\]
Since no wall can be inside this semidisk, we must have
\[
0 \leq s_Q - s(E, F) \leq -\frac{d^2 - 4dy - 2}{4d}.
\]
This implies
\[
y \geq \frac{d^2 - 2}{4d}.
\]
We can apply Lemma \ref{lem:bound_torsion_on_plane} to the quotient $E/F$ to get
\[
e \leq \frac{(d-y)^2}{2} + z + \frac{1}{24}.
\]
The next step is to apply induction to $F$ to get
% If $y = 0$, then $z \leq \tfrac{1}{6}$. Then
% \[
% e \leq \frac{d^2}{2} + \frac{5}{24}.
% \]
% Assume $y \leq -\tfrac{1}{2}$. Then we have
% \[
% 0 \geq y + \frac{1}{2} \geq \frac{d^2 - 2}{4d} + \frac{1}{2}.
% \]
% This implies $d \leq -3$. By using induction on $F$, we know 
\[
z \leq \frac{y^2}{2} - y + \frac{5}{24}
\]
which implies
\begin{align*}
e &\leq  \frac{(d-y)^2}{2} + z + \frac{1}{24} \leq \frac{d^2}{2} - dy + y^2 - y + \frac{1}{4} =: \varphi_d(y).
\end{align*}
This expression defines a parabola in $y$ with minimum. Therefore, the maximum will occur on a boundary point. We get
\[
\frac{d^2}{2} + \frac{1}{3} - \varphi_d(0) \geq \frac{1}{12} > 0.
\]
Moreover,
\[
\frac{d^2}{2} + \frac{1}{3} - \varphi_d\left(\frac{d^2 - 2}{4d}\right) = \frac{9d^4 + 12d^3 - 8d^2 - 24d - 12}{48d^2} > 0. \qedhere
\]
\end{enumerate}
\end{enumerate}
\end{proof}

%%%%%%%%%%%%%%%%%%%%%%%%%%%%%%%%%%%%%%%%%%%%%
%%%%%%%%%%%%%%%%%%%%%%%%%%%%%%%%%%%%%%%%%%%%%
%%%%%%%%%%%%%%%%%%%%%%%%%%%%%%%%%%%%%%%%%%%%%
\subsection{Bounding the arithmetic genus of integral curves}

We will now prove Theorem \ref{thm:genus_bound_large_degree} in a series of lemmas. The proof will be by contradiction. We already dealt with the case $k = 1$ in Proposition \ref{prop:maximum_ch3_rank1}. Therefore, we will assume $k \geq 2$ in this proof.

\begin{lem}
\label{lem:second_error_useless}
Under the assumptions of Theorem \ref{thm:genus_bound_large_degree} the inequality $e > E(d,k)$ implies $e \geq \tilde{E}(d,k)+ \tfrac{1}{8} > \tilde{E}(d,k)$. In particular, Theorem \ref{thm:genus_bound_large_degree} requires only to prove $e \leq \tilde{E}(d,k)$.
\end{lem}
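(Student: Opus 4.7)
The plan is to reduce the lemma to a density argument in $\tfrac{1}{6}\Z$. By definition, $E(d,k) = \tilde{E}(d,k) - \varepsilon(d,1)$ with $\varepsilon(d,1) \in \{0,\tfrac{1}{24}\}$, and Assumption \ref{assu:div_curves} forces $e := \ch_3(\II_C)/H^3 \in \tfrac{1}{6}\Z$. The conclusion $e \geq \tilde{E}(d,k)+\tfrac{1}{8}$ under the hypothesis $e > \tilde{E}(d,k) - \varepsilon(d,1)$ is therefore equivalent to showing that the open interval $\bigl(\tilde{E}(d,k)-\varepsilon(d,1),\,\tilde{E}(d,k)+\tfrac{1}{8}\bigr)$ meets $\tfrac{1}{6}\Z$ only possibly at points $\geq \tilde{E}(d,k)+\tfrac{1}{8}$. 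Since consecutive points of $\tfrac{1}{6}\Z$ are spaced by $\tfrac{1}{6} = \tfrac{1}{24}+\tfrac{1}{8}$, this is a pure question of pinning down $\tilde{E}(d,k)$ modulo $\tfrac{1}{6}$, and I plan to handle the two values of $\varepsilon(d,1)$ separately.

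The easy case is $d \in \Z$, where $f \in \Z$ and $\varepsilon(d,1)=0$. Writing $d = mk - f$ with $m \in \Z$ and expanding,
\[
\tilde{E}(d,k) = \tfrac{d^2-f^2}{2k} + \tfrac{(d-f)k}{2} + \tfrac{f(f+1)}{2} = \tfrac{mk(m+k)}{2} - f(m+k) + \tfrac{f(f+1)}{2},
\]
which is an integer since both $mk(m+k)$ and $f(f+1)$ are always even. Hence $e > \tilde{E}(d,k)$ with $e \in \tfrac{1}{6}\Z$ immediately forces $e \geq \tilde{E}(d,k) + \tfrac{1}{6} > \tilde{E}(d,k) + \tfrac{1}{8}$.

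The substantive case is $d \in \tfrac{1}{2}\Z\setminus\Z$, where I aim to prove the congruence $24\tilde{E}(d,k) \equiv 1 \pmod 4$, equivalent to $\tilde{E}(d,k) \equiv \tfrac{1}{24} \pmod{\tfrac{1}{6}}$. Once this is in hand, $\tilde{E}(d,k) - \tfrac{1}{24}$ and $\tilde{E}(d,k) + \tfrac{1}{8}$ are consecutive elements of $\tfrac{1}{6}\Z$, so the strict inequality $e > \tilde{E}(d,k) - \tfrac{1}{24}$ for $e \in \tfrac{1}{6}\Z$ upgrades to $e \geq \tilde{E}(d,k) + \tfrac{1}{8}$ as required. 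To prove the congruence, I again write $d = mk - f$, now with $2f$ an odd integer, and substitute to obtain
\[
24\tilde{E}(d,k) = 12mk(m+k) - 24fm - 24fk + 12f^2 + 12f.
\]
The first three summands are visibly divisible by $4$, while $12f^2 + 12f = 3(2f)^2 + 6(2f) \equiv 3 \cdot 1 + 2 \equiv 1 \pmod 4$, using $(2f)^2 \equiv 1 \pmod 8$ and $6\cdot\mathrm{odd}\equiv 2 \pmod 4$.

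The only genuinely delicate step is this final $\pmod 4$ computation; the rest is definition-chasing and tracking denominators. I do not expect any real conceptual obstacle beyond that careful bookkeeping.
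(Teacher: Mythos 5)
Your proof is correct and follows essentially the same route as the paper: both arguments rest on the observations that $e \in \tfrac{1}{6}\Z$, that $E(d,k) \in \tfrac{1}{6}\Z$, and that $\varepsilon(d,1) + \tfrac{1}{8} \leq \tfrac{1}{6}$. The only difference is that the paper asserts $E(d,k) \in \tfrac{1}{6}\Z$ as a fact, whereas you actually verify it (integrality of $\tilde{E}(d,k)$ for $d \in \Z$, and the congruence $24\tilde{E}(d,k) \equiv 1 \pmod 4$ for $d \in \tfrac{1}{2}\Z \setminus \Z$), which is a welcome addition rather than a deviation.
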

\begin{proof}
We know $e \in \tfrac{1}{6} \Z$. The statement now follows from the fact that $E(d,k) \in \tfrac{1}{6} \Z$ and $\varepsilon(d,1) + \tfrac{1}{8} \leq \tfrac{1}{6}$.
\end{proof}

\begin{lem}
\label{lem:genus_bound_decreases}
\begin{enumerate}[(i)]
\item
\label{item:bound_epsilon_again}
The bound
\[
\tilde{\varepsilon}(d,k) \leq \frac{k^2}{8} - \frac{k}{8}
\]
holds.
\item If $h > k$ and $d > h(h-1)$, then $\tilde{E}(d,k) > \tilde{E}(d,h)$, i.e., the function $\tilde{E}(d,k)$ is strictly decreasing in $k$ as long as $d > k(k-1)$.
\end{enumerate}
\end{lem}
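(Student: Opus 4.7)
For part (i), I would start by algebraically simplifying $\tilde{\varepsilon}(d,k)$. Expanding the definition gives
$$\tilde{\varepsilon}(d,k) = \frac{1}{2}f\left(k - f - 1 + \frac{f}{k}\right) = \frac{(k-1)}{2k}\,f(k-f),$$
so I need only maximize the quadratic $f(k-f)$ on the interval $[0,k]$. Its maximum is $k^2/4$, attained at $f = k/2$, which yields immediately the bound $\tilde{\varepsilon}(d,k) \leq \tfrac{k(k-1)}{8}$.

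For part (ii), I would start from the direct computation
\begin{align*}
\tilde{E}(d,k) - \tilde{E}(d,h) &= \frac{d^2(h-k)}{2kh} - \frac{d(h-k)}{2} + \tilde{\varepsilon}(d,h) - \tilde{\varepsilon}(d,k) \\
&= \frac{(h-k)\,d\,(d-kh)}{2kh} + \tilde{\varepsilon}(d,h) - \tilde{\varepsilon}(d,k).
\end{align*}
Since $k \leq h-1$, we have $kh \leq h(h-1) < d$, so the first term is strictly positive. However, the error difference $\tilde{\varepsilon}(d,h) - \tilde{\varepsilon}(d,k)$ can be negative and even exceed $\tfrac{k(k-1)}{8}$ in absolute value, so part (i) is not by itself sufficient. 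To handle this, I would reduce to the consecutive case $h = k+1$: since the hypothesis $d > h(h-1)$ implies $d > k'(k'-1)$ for every $k < k' \leq h$, telescoping over the one-step inequalities $\tilde{E}(d,k') > \tilde{E}(d,k'+1)$ recovers the general statement.

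For the case $h = k+1$ and $d > k(k+1)$, I would parameterize $d = k(k+1) + r$ with $r \in \tfrac{1}{2}\Z_{>0}$. Since the error terms depend only on $d$ modulo $k$ and modulo $k+1$, while the main quadratic term is increasing in $d$ once $d > k(k+1)/2$, the minimum of $\tilde{E}(d,k) - \tilde{E}(d,k+1)$ within each residue class is attained at the smallest admissible $d$, namely $d = k(k+1) + r$ for $r \in [\tfrac{1}{2}, k(k+1)]$. In the simplest sub-range $r \in [\tfrac{1}{2}, k]$, one has $f_k = k-r$ and $f_{k+1} = k+1-r$, and after substitution the error difference evaluates to $\tfrac{r(k(k+1)-r)}{2k(k+1)}$, which combined with the main term gives the clean identity $\tilde{E}(d,k) - \tilde{E}(d,k+1) = r > 0$. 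The remaining sub-ranges, where $r > k$ forces different residue formulas for $f_k$ and $f_{k+1}$, are treated by the same kind of substitution.

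The main obstacle I expect is the bookkeeping in the case analysis for the residue pairs $(f_k, f_{k+1})$ when $r$ is large; each individual case reduces to a quadratic inequality in $r$, so no new ideas are needed, but some care is required to ensure every sub-range is covered and that the strict positivity survives at the boundary between cases.
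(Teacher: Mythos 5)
Your part (i) is exactly the paper's argument ($\tilde{\varepsilon}(d,k)=\tfrac{k-1}{2k}f(k-f)$ is a downward parabola in $f$ maximized at $f=k/2$), and your overall route for part (ii) — reduce to the consecutive case $h=k+1$ by telescoping, then analyze residues — is also the paper's. Your one fully worked sub-case is correct: for $d=k(k+1)+r$ with $r\in[\tfrac12,k]$ the identity $\tilde{E}(d,k)-\tilde{E}(d,k+1)=r$ checks out, and it coincides with the paper's closed form $(k-f)(m-k-1)$ specialized to $m=k+2$. (One small slip: when $\tilde{\varepsilon}(d,k+1)-\tilde{\varepsilon}(d,k)$ is negative its absolute value is at most $\tilde{\varepsilon}(d,k)\le\tfrac{k(k-1)}{8}$, so it cannot "exceed" that bound; your real point — that the main term can be as small as about $\tfrac14$ while the error loss can be of order $k^2$, so (i) alone does not suffice near $d=k(k+1)$ — is nonetheless correct.)

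The gap is in "the remaining sub-ranges are treated by the same kind of substitution." As you have set it up, the residue pair $(f_k,f_{k+1})$ is governed by two bracket indices $a=\lceil r/k\rceil-1$ and $b=\lceil r/(k+1)\rceil-1$, and the number of such pairs grows with $k$; each produces a different quadratic in $r$ whose coefficients depend on $a$, $b$, and $k$. So this is not a bounded list of quadratic inequalities, and "ensuring every sub-range is covered" is precisely where the missing idea lives. What rescues the argument (and is how the paper closes it in three uniform computations) is the structural observation that only two relations between the residues can occur: writing $d=mk-f$ with $0\le f<k$, one has $f_{k+1}=m+f-(k+1)$ or $f_{k+1}=m+f-2(k+1)$ whenever $m\le 2k$, each giving a single closed-form expression for the difference ($(k-f)(m-k-1)$, resp. a linear function of $f$ minimized at $f=2k-m+2$ with value $(m-k-2)(m-k-1)$); the regime $m\ge 2k+1$, i.e. $d>2k^2$, is then handled by the crude bound from part (i), since there the main term already dominates $\tfrac{k(k-1)}{8}$. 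Equivalently, in your parameterization one must prove $b\in\{a-1,a\}$ for $r\le k(k+1)$ and then run the two computations uniformly in $a$. Without this organizing step the proof does not terminate, so you should either prove that dichotomy or switch to the $(m,f)$ parameterization.
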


\begin{proof}
The first part was already observed in Lemma \ref{lem:complicated_bounds_rank0} part (i): $\tilde{\varepsilon}(d,k)$ is a parabola in $f$ with maximum at $f = \tfrac{k}{2}$.
% For the first part, note that 
% \[
% \tilde{\varepsilon}(d,k) = \frac{f}{2}\left(k - f - 1 + \frac{f}{k}\right).
% \]
% The left hand side is a parabola in $f$ with maximum at $f = \tfrac{k}{2}$.

For the second part it is enough to show $\tilde{E}(d,k) > \tilde{E}(d,k+1)$ whenever $d > k(k+1)$. Let $m \geq k+2$ be the unique integer such that $d = mk - f$, where $0 \leq f < k$. We will deal with the following three cases individually.
\begin{enumerate}[(a)]
\item Assume $m \geq 2k+1$. Then $d > 2k^2$. We have
\begin{align*}
\tilde{E}(d, k) - \tilde{E}(d, k+1) &\geq \left(\frac{d^2}{2k} + \frac{dk}{2} - \frac{k^2}{8} + \frac{k}{8} \right) - \left( \frac{d^2}{2(k+1)} + \frac{d(k+1)}{2} \right) \\ 
&= \frac{-k^4 - 4dk^2 + 4d^2 - 4dk + k^2}{8(k^2 + k)}.
\end{align*}
Therefore, it is enough to show that the function
\[
\varphi_k(d) = -k^4 - 4dk^2 + 4d^2 - 4dk + k^2
\]
is positive. The fact that $d > 2k^2$  implies $\varphi'_k(d) = -4k^2 + 8d - 4k > 0$. Thus, for $k \geq 2$, we have
\[
\varphi_k(d) \geq \varphi_k(2k^2) = 7k^4 - 8k^3 + k^2 > 0.
\]
If $k = 1$, then we have $d \geq k(k+1) + \tfrac{1}{2} = \tfrac{5}{2}$, and 
\[
\varphi_1\left(\frac{5}{2}\right) = 5 > 0.
\]
\item Assume $m \leq 2k$ and $0 \leq f \leq 2k - m + \tfrac{3}{2}$. Let $d \equiv -f' (\mod k+1)$ for $0 \leq f' < k + 1$. Then $f' = m + f - k - 1$ and
\[
\tilde{E}(d, k) - \tilde{E}(d, k+1) = (k - f)(m - k - 1) > 0.
\]
\item Assume $m \leq 2k$ and $2k - m + 2 \leq f < k$. Note, that this range for $f$ is non-empty if and only if $m \geq k + 3$. Let $d \equiv -f' (\mod k+1)$ for $0 \leq f' < k + 1$. Then $f' = m + f - 2k - 2$ and
\[
\tilde{E}(d, k) - \tilde{E}(d, k+1) = 2fk - 3k^2 - fm + 2km + f - 3k
\]
is linear in $f$ and increasing. Therefore, the minimum occurs at $f = 2k - m + 2$, where
\[
\tilde{E}(d, k) - \tilde{E}(d, k+1) = (m - k - 2)(m - k - 1) > 0. \qedhere
\]
\end{enumerate}
\end{proof}

\begin{lem}
\label{lem:sub_is_reflexive}
Let $C \subset X$ be an integral curve. Assume that $\II_C$ is destabilized via an exact sequence $0 \to E \to \II_C \to G \to 0$ in $\Coh^{\beta}(X)$ defining a semicircular wall in tilt stability. Then $E$ is a reflexive sheaf.
\end{lem}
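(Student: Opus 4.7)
\emph{Sketch of proof plan.} The plan is to show, in order, that $E$ is concentrated in cohomological degree zero (hence a sheaf), that it is torsion-free, and finally that it equals its double dual. I expect the first two steps to be routine and the reflexivity step to be the main obstacle.

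For the first step, I apply the long exact sequence of cohomology sheaves to $0\to E\to \II_C\to G\to 0$ in $\Coh^\beta(X)$. Since $\II_C$ is a sheaf, $\HH^{-1}(\II_C)=0$, which forces $\HH^{-1}(E)=0$, so $E$ lies in $\TT_\beta$ and is a sheaf. For torsion-freeness I would separate two cases. Any subsheaf $T\subset E$ of dimension at most one has $\ch_0(T)=\ch_1(T)=0$, hence tilt slope $+\infty$, which contradicts the $\nu_{\alpha,\beta}$-semistability of $E$ at the wall (where $\nu_{\alpha,\beta}(E)$ is finite). A pure codimension-one torsion subsheaf $T\subset E$ likewise belongs to $\TT_\beta$, so the composition $T\hookrightarrow E\hookrightarrow \II_C$ is a monomorphism in $\Coh^\beta(X)$; but viewed as a morphism of sheaves it sends the torsion $T$ into the torsion-free $\II_C$ and must vanish, a contradiction.

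For reflexivity, the main obstacle, my plan is to consider $E\hookrightarrow E^{**}$ with cokernel $Q:=E^{**}/E$ supported in codimension at least two, and derive a contradiction from $Q\neq 0$. I would first verify that $E^{**}\in\TT_\beta$ (since the double dual preserves the Harder--Narasimhan slopes of $E$) and $Q\in\TT_\beta$ (as a torsion sheaf with $\ch_1=0$), so that $0\to E\to E^{**}\to Q\to 0$ is exact in $\Coh^\beta(X)$. I would then form the pushout $P:=\II_C\cup_E E^{**}$ in $\Coh^\beta(X)$, giving short exact sequences $0\to \II_C\to P\to Q\to 0$ and $0\to E^{**}\to P\to G\to 0$. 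Using the Picard rank one hypothesis together with the codimension-two support of $Q$, the sheaf $P$ (modulo its torsion) should be the ideal sheaf of a one-dimensional subscheme contained in $C$, exhibiting $E^{**}$ as a subobject of that ideal sheaf in $\Coh^\beta(X)$. Whenever $Q$ has a one-dimensional component one obtains $\nu_{\alpha,\beta}(E^{**})>\nu_{\alpha,\beta}(E)$, producing a strictly larger destabilizing locus and contradicting the semistability of $G$ at the original wall. In the remaining case, when $Q$ is supported purely in dimension zero, $E$ and $E^{**}$ have identical $\ch_{\leq 2}$ and $E^{**}$ would provide a strictly larger subobject of $\II_C$ with the same numerical class, so the saturation of $E$ inside $\II_C$ forces $Q=0$. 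The principal technical difficulty I anticipate is carrying out the pushout construction and the identification of $P$ with an actual ideal sheaf rigorously inside the tilted heart, together with tracking the numerical comparisons.
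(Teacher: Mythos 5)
Your first two steps (that $E$ is a sheaf, and that it is torsion-free) are correct and essentially match the paper; the paper gets torsion-freeness even more directly by observing that $E$ is an extension of a subsheaf of $\II_C$ by $\HH^{-1}(G)$, both torsion-free. The reflexivity step, which you rightly identify as the main obstacle, is where your plan has genuine gaps. The central one is the source of the final contradiction: after forming the pushout $P$ you only exhibit $E^{**}$ as a subobject of $P$ (or of $P/\mathrm{tors}\cong\II_Z$), and the inequality $\nu_{\alpha,\beta}(E^{**})>\nu_{\alpha,\beta}(E)=\nu_{\alpha,\beta}(G)$ for a subobject of an \emph{extension} whose quotient is $G$ contradicts nothing about the semistability of $G$ — semistability of $G$ constrains subobjects and quotients of $G$, not objects containing $G$ as a quotient. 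Nor is anything known about the stability of $P$ or $\II_Z$ at the wall. The paper's resolution is to compare against an object whose stability at the wall is known unconditionally: it maps the sequence $0\to E\to E^{\vee\vee}\to Q\to 0$ to $0\to\II_C\to\OO_X\to\OO_C\to 0$, obtains (via the snake lemma, the semistability of $G$ to kill a map from the kernel of $Q\to\OO_C$, and the torsion-freeness of $E^{\vee\vee}$) an injection $Q\hookrightarrow\OO_C$, and then derives $\nu_{\alpha,\beta}(E^{\vee\vee})>\nu_{\alpha,\beta}(\OO_X)$ along the wall, contradicting the stability of $\OO_X$ (which holds everywhere since $\Delta(\OO_X)=0$).

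Two further points. First, your treatment of the zero-dimensional case is incorrect: $E^{**}$ is \emph{not} a subobject of $\II_C$ (the map $E\to\II_C$ need not extend over $E^{**}$, the obstruction living in $\Ext^1(Q,\II_C)$), so no appeal to saturation of $E$ inside $\II_C$ is available. Second, your sketch never uses the integrality of $C$, which is essential: it is exactly what guarantees that the injection $Q\hookrightarrow\OO_C$ forces $Q$ to be either zero or generically of rank one on all of $C$, so that $H\cdot\ch_2(Q)=dH^3$; this both rules out the purely zero-dimensional case and makes the slope comparison with $\OO_X$ come out with the right constant. Without integrality the lemma's proof (and likely the statement) breaks down, so any correct argument must invoke it.
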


\begin{proof}
% We start by showing that if $A \into B$ is an injection in $\Coh^{\beta}(X)$ and $B$ is a torsion sheaf supported in dimension less than or equal to one, then $A$ is also a torsion sheaf supported in dimension less than or equal to one. Let $C$ be the quotient of this map. The long exact sequence in cohomology shows that $A$ is a sheaf and is given by
% \[
% 0 \to \HH^{-1}(C) \to A \to B \to \HH^0(C) \to 0.
% \]
% If $A$ is supported in dimension two, then $\ch_0(C) = 0$, while $H^2 \cdot \ch_1(C) < 0$, a contradiction. If $A$ has positive rank, then the fact that $B$ is torsion support in dimension smaller than or equal to one, implies $\mu(\HH^{-1}(C)) = \mu(A)$ in contradiction to the definition of $\Coh^{\beta}(X)$.
The long exact sequences
\[
0 \to \HH^{-1}(E) \to 0 \to \HH^{-1}(G) \to \HH^0(E) = E \to \II_C \to \HH^0(G) \to 0
\]
shows that $E$ is a sheaf. Since both $\HH^{-1}(G)$ and $\II_C$ are torsion free sheaves, so is $E$. Let $Q$ be the cokernel of the natural inclusion $E \into E^{\vee \vee}$. If $Q = 0$, we are done.  If not, $Q$ has to be supported in dimension smaller than or equal to one. The strategy of the rest of the proof is to obtain a contradiction to $Q \neq 0$. We get a commutative diagram with exact sequences in $\Coh^{\beta}(X)$ as rows.

\[
\xymatrix{
  0 \ar[r] & E \ar[r] \ar[d] & E^{\vee \vee} \ar[r] \ar[d] & Q \ar[r] \ar[d] & 0 \\
  0 \ar[r] & \II_C \ar[r] & \II_C^{\vee \vee} = \OO_X \ar[r] & \OO_C \ar[r] & 0
}
\]

The kernel $K$ of $Q \to \OO_C$ in $\Coh^{\beta}(X)$ is also a torsion sheaf supported in dimension smaller than or equal to one. The Snake Lemma leads to a map $K \to G$, but $G$ is semistable and this map has to be trivial. Therefore, we get an injection $K \into E^{\vee \vee}$. Since $E^{\vee \vee}$ is torsion-free, we must have $K = 0$. Because $C$ is integral, the injective map $Q \to \OO_C$ implies that $Q$ is scheme-theoretically supported on $C$. Moreover, $E^{\vee \vee} \to \OO_X$ is injective in $\Coh^{\beta}(X)$.  Note that along the wall we must have $0 < H \cdot \ch_1^{\beta}(E) < H \cdot \ch_1^{\beta}(\II_C) = H \cdot\ch_1^{\beta}(\OO_X)$. Let $d$ be the degree of $C$. The final contradiction is obtained via
\begin{align*}
\nu_{\alpha, \beta}(E^{\vee \vee}) &= \frac{d}{H \cdot \ch_1^{\beta}(E)} + \nu_{\alpha, \beta}(E) = \frac{d}{H \cdot \ch_1^{\beta}(E)} + \nu_{\alpha, \beta}(\II_C) \\
& > \frac{d}{H \cdot \ch_1^{\beta}(\OO_X)} + \nu_{\alpha, \beta}(\II_C) = \nu_{\alpha, \beta}(\OO_X),
\end{align*}
because $\OO_X$ is stable in the whole $(\alpha, \beta)$-plane.
\end{proof}

\begin{lem}
\label{lem:BG_rank_bound}
The equation of the semidisk $Q_{\alpha, \beta}(\II_C) \leq 0$ is given by
\begin{align}
\label{eq:BG_rank_one}
\alpha^2 + \left(\beta + \frac{3e}{2d}\right)^2 = \frac{9e^2 - 8d^3}{4d^2}.
\end{align}
Assume $r$ is a positive integer such that
\[
e^2 > \frac{2(2r+1)^2}{9r(r+1)}d^3.
\]
Then $\II_C$ is destabilized via an exact sequence $0 \to E \to \II_C \to G \to 0$ defining a semicircular wall in tilt stability, where $0 < \ch_0(E) \leq r$.
\end{lem}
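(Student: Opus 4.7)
The plan has two parts. First, I would verify the equation of the locus $Q_{\alpha,\beta}(\II_C)=0$ by direct substitution. With $H\cdot\ch(\II_C)=(1,0,-d,e)$ one computes $\Delta(\II_C)=2d$ together with the twisted components $H^2\cdot\ch_1^\beta(\II_C)/H^3=-\beta$, $H\cdot\ch_2^\beta(\II_C)/H^3=-d+\beta^2/2$, and $\ch_3^\beta(\II_C)/H^3=e+\beta d-\beta^3/6$. Plugging into the defining formula for $Q_{\alpha,\beta}$, the $\beta^4$-terms cancel and one obtains $Q_{\alpha,\beta}(\II_C)=2d\bigl(\alpha^2+\beta^2+3\beta e/d+2d\bigr)$; completing the square in $\beta$ then produces equation \eqref{eq:BG_rank_one}.

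Second, I would prove the existence of a destabilizing semicircular wall and localize it above the $Q\le 0$ semidisk. The elementary identity $2(2r+1)^2-8r(r+1)=2$ rewrites the hypothesis as $9e^2-8d^3>\tfrac{2}{r(r+1)}d^3>0$, so the semidisk $Q\le 0$ is non-empty with radius $\rho_Q^2>\tfrac{d}{2r(r+1)}$. Lemma \ref{lem:large_volume_limit} gives that $\II_C$ is $\nu_{\alpha,\beta}$-semistable for $\alpha\gg 0$, while Assumption \ref{assu:bmt} prohibits semistability inside $Q\le 0$; hence $\II_C$ is destabilized along an actual wall $W$. The vertical wall at $\beta=0$ is excluded by Lemma \ref{lem:stable_minimal_ch1} since $\ch_1(\II_C)=0$, so $W$ is semicircular. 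Both $W$ and the curve $Q=0$ lie in the family of semicircles sharing the invariant $s^2-\rho^2=2d$ coming from Theorem \ref{thm:Bertram}(i), and by Theorem \ref{thm:Bertram}(iii) walls on opposite sides of the vertical line $\beta=0$ are disjoint; since $W$ must separate the $Q\le 0$ region from the stable locus at $\alpha\gg 0$, this forces $W$ to lie on the same side as $s_Q$, with $\rho_W\ge\rho_Q$.

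Finally, I would extract a destabilizing subobject of positive rank and apply the rank bound. Along $W$, the $\nu_{\alpha,\beta}$-semistable objects of slope $\nu_{\alpha,\beta}(\II_C)$ form a finite length abelian subcategory, in which $\II_C$ has Jordan--H\"older factors of total rank $\ch_0(\II_C)=1$, so at least one factor has positive rank. Iteratively extracting a simple subobject and enlarging the accumulating filtration whenever the current sub has non-positive rank yields, after finitely many steps, a semistable $E\subset\II_C$ of the same slope with $\ch_0(E)\ge 1$, hence a short exact sequence $0\to E\to\II_C\to G\to 0$ defining $W$. If $\ch_0(E)\ge r+1$, Lemma \ref{lem:higherRankBound} applied to $E\hookrightarrow\II_C$ gives
\[
\rho_W^2\le\frac{\Delta(\II_C)}{4\,\ch_0(E)\bigl(\ch_0(E)-1\bigr)}\le\frac{d}{2(r+1)r},
\]
contradicting $\rho_W^2>\tfrac{d}{2r(r+1)}$. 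Thus $1\le\ch_0(E)\le r$, as required. The main obstacle is the last paragraph: arguing cleanly that the destabilizing sequence can be taken with the subobject (rather than only the quotient) having positive rank requires some care with filtrations in the semistable subcategory at $W$; the rest of the argument reduces to the direct computation of $Q$ and elementary manipulations of the wall geometry.
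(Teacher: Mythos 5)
Your proposal is correct and takes essentially the same route as the paper, whose proof consists only of the observation that the hypothesis gives $\rho_Q^2>\tfrac{d}{2r(r+1)}>0$ (via the identity $2(2r+1)^2-8r(r+1)=2$) plus an appeal to Assumption \ref{assu:bmt} and Lemma \ref{lem:higherRankBound}; your computation of $Q_{\alpha,\beta}(\II_C)=2d\bigl(\alpha^2+\beta^2+\tfrac{3e}{d}\beta+2d\bigr)$ and the wall-geometry comparison are exactly the implicit steps. The one point you flag as the main obstacle is in fact easier than your Jordan--H\"older iteration suggests: any nonzero subobject $F\subset\II_C$ in $\Coh^{\beta}(X)$ has $\HH^{-1}(F)\hookrightarrow\HH^{-1}(\II_C)=0$, so $F$ is a sheaf of rank $\geq 0$, and rank $0$ would force $F$ to be a torsion subsheaf of the torsion-free sheaf $\II_C$ (since $\HH^{-1}$ of the quotient is torsion-free), hence zero; thus every destabilizing subobject automatically has positive rank.
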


\begin{proof}
The claim about the equation is a straightforward calculation. For the second part observe that the hypothesis on $e^2$ implies
\[
\frac{9e^2 - 8d^3}{4d^2} > \frac{d}{2r(r+1)} > 0.
\]
Therefore, $\II_C$ has to destabilize at some point before $Q_{\alpha, \beta}(\II_C) < 0$. Moreover, we can use Lemma \ref{lem:higherRankBound} to conclude that $E$ cannot have rank greater than or equal to $r+1$.
\end{proof}

\begin{lem}
\label{lem:complicated_bounds_rank1}
Assume that $e > \tilde{E}(d,k)$ holds.
\begin{enumerate}[(i)]
\item 
\label{item:maximal_rank_of_walls}
The object $\II_C$ is destabilized along a semicircular wall $W$ induced by $0 \to E \to \II_C \to G \to 0$, where $E$ is a reflexive sheaf. Let $H \cdot \ch(E) = (r,x,y,z)$. Then either $r = 1$ or $r = 2$.
\item 
\label{item:bounding_ch1}
Let $\mu_0 \geq k$ be the unique integer such that either $d = \mu_0^2 - f'$ or $d = \mu_0(\mu_0+1) - f'$, where $d \equiv -f' (\mod \mu_0)$ for $0 \leq f' < \mu_0$. If $r = 1$, then $k \leq -x \leq \mu_0$. If $r = 2$, there are two possibilities.
\begin{enumerate}[(a)]
\item If $d = \mu_0^2 - f'$, then $x = -2\mu_0$ or $x = -2\mu_0 + 1$.
\item If $d = \mu_0(\mu_0+1) - f'$, then $x = -2\mu_0$ or $x = -2\mu_0 - 1$.
\end{enumerate}
\end{enumerate}
\end{lem}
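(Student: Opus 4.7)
The plan is to establish the existence and basic structure of the destabilizing sequence in (i), then derive the $x$-constraints in (ii) by combining positivity of $\ch_1^\beta$ along the wall with the rank-zero quotient bound from Theorem~\ref{thm:rank_zero_bound}.

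For part (i), first observe that $e > \tilde{E}(d,k)$ combined with the AM--GM inequality $d^2/(2k) + dk/2 \geq d^{3/2}$ and the estimate $\tilde{\varepsilon}(d,k) \leq (k^2-k)/8$ from Lemma~\ref{lem:genus_bound_decreases}(i), under the hypothesis $d > k(k-1)$, forces $\rho_Q^2 = (9e^2 - 8d^3)/(4d^2) > d/12$. By Assumption~\ref{assu:bmt} and Lemma~\ref{lem:large_volume_limit}, $\II_C$ must therefore be destabilized along some semicircular wall $W$, giving an exact sequence $0 \to E \to \II_C \to G \to 0$; reflexivity of $E$ follows from Lemma~\ref{lem:sub_is_reflexive}. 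For the rank bound, Lemma~\ref{lem:higherRankBound} gives $\rho_W^2 \leq \Delta(\II_C)/(4r(r-1)) = d/(2r(r-1))$ whenever $r = \ch_0(E) \geq 2$; combined with $\rho_W^2 \geq \rho_Q^2 > d/12$ from the fact that the wall must lie outside the $Q$-semidisk, this rules out $r \geq 3$.

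For part (ii) with $r = 1$: reflexivity and Assumption~\ref{assu:div_curves} force $E = \OO_X(-mH) \otimes L_0$ for some $m \in \Z$ and $L_0 \in \Pic^0(X)$, so $x = -m$. The lower bound $m \geq k$ follows from the vanishing hypothesis: if $m \leq k-1$, a non-zero morphism $E \to \II_C$ yields a section of $\II_C$ twisted by a line bundle in the numerical class $mH$, and multiplying by a non-zero section of any ample line bundle of class $(k-1-m)H$ produces a non-zero element of $H^0(\II_C((k-1)H'))$, contradicting the assumption. For the upper bound $m \leq \mu_0$, apply Theorem~\ref{thm:rank_zero_bound} to the rank-zero quotient $G$ with $H \cdot \ch(G) = (0, m, -d - m^2/2, e + m^3/6)$; using $\tilde{\varepsilon}(-d,m) = \tilde{\varepsilon}(d,m)$ a routine computation gives $e \leq \tilde{E}(d,m)$, which combined with the monotonicity of $\tilde{E}(d,\cdot)$ on $[1,\mu_0]$ from Lemma~\ref{lem:genus_bound_decreases}(ii) contradicts $e > \tilde{E}(d,k)$ whenever $m \in [k, \mu_0]$. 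For $m > \mu_0$, a direct comparison of $s_W = -m/2 - d/m$ and $\rho_W = |m/2 - d/m|$ with $s_Q$ and $\rho_Q$ shows that the numerical wall $W(\OO(-mH), \II_C)$ is contained in the $Q$-semidisk, so it cannot be an actual wall.

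For part (ii) with $r = 2$: the positivity conditions $x - 2\beta \geq 0$ on $E$ and $-x + \beta \geq 0$ on $G$ along the wall confine the wall's $\beta$-range to the short interval $[x, x/2]$. Combining the wall equation $\alpha^2 + (\beta - (y+2d)/x)^2 = (y+2d)^2/x^2 - 2d$ derived from $\nu_{\alpha,\beta}(E) = \nu_{\alpha,\beta}(\II_C)$ with the Bogomolov inequality $\Delta(E) \geq 0$ and the quotient bound from Theorem~\ref{thm:rank_zero_bound} applied to the shifted object $G$ of rank $-1$, one restricts $x$ to within $\pm 1$ of $-2\mu_0$; the exact candidate values $\{-2\mu_0, -2\mu_0 + 1\}$ or $\{-2\mu_0, -2\mu_0 - 1\}$ follow from splitting into the regimes $d = \mu_0^2 - f'$ and $d = \mu_0(\mu_0+1) - f'$ and invoking the integrality from Assumption~\ref{assu:div_curves}. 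The hardest step will be the upper bound $m \leq \mu_0$ in the rank-one case, since the quotient bound alone does not rule out $m > \mu_0$, making the geometric wall-containment inside the $Q$-semidisk essential; an analogous subtlety appears in narrowing the rank-two case to exactly two candidate values of $x$.
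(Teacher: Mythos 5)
Your overall architecture matches the paper's: existence of a wall from $Q_{\alpha,\beta}\geq 0$, the rank bound from Lemma \ref{lem:higherRankBound}, reflexivity from Lemma \ref{lem:sub_is_reflexive}, and constraints on $x$ from positivity of $\ch_1^{\beta}$ along a wall that must enclose the $Q$-semidisk. However, there are two concrete gaps. First, in part (i) the AM--GM shortcut $\tfrac{d^2}{2k}+\tfrac{dk}{2}\geq d^{3/2}$ followed by subtracting $\tfrac{k^2-k}{8}$ is too lossy for small $k$: you need $e>\tfrac{5}{3\sqrt{3}}d^{3/2}$, i.e.\ a margin of $\bigl(1-\tfrac{5}{3\sqrt 3}\bigr)d^{3/2}\approx 0.0377\,d^{3/2}$, and for $k=2$, $d=\tfrac52$ this margin is about $0.149$ while $\tfrac{k^2-k}{8}=0.25$, so your chain of inequalities breaks. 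The paper avoids this by minimizing $\varphi_k(d)=\tfrac{d^2}{2k}+\tfrac{dk}{2}-\tfrac{k^2-k}{8}-\tfrac{5}{3\sqrt 3}d^{3/2}$ exactly over $d>k(k-1)$ (local minimum at $d=\tfrac34 k^2$, value $\tfrac{k(k-2)^2}{32}\geq 0$); you need that exact computation, not AM--GM.

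Second, and more seriously, the decisive quantitative content of part (ii) is asserted rather than proved. For $r=2$ the paper's entire argument consists of verifying $Q_{0,\beta_0}(\II_C)<0$ at the four specific values $\beta_0\in\{-\mu_0-\tfrac12,\,2-2\mu_0\}$ (when $d=\mu_0^2-f'$) and $\beta_0\in\{-\mu_0-1,\,1-2\mu_0\}$ (when $d=\mu_0(\mu_0+1)-f'$), each requiring the maximization of an explicit quadratic in $f'$ with $e$ replaced by its lower bound $\tilde E(d,\mu_0)$; only then do the inequalities $x>2\beta_0$ and $x<\beta_0$ pin $x$ to the two stated values. Writing ``one restricts $x$ to within $\pm1$ of $-2\mu_0$'' skips exactly this work, and your appeal to Theorem \ref{thm:rank_zero_bound} for the rank $-1$ quotient is a mis-citation (that theorem is for rank-zero objects; you want Proposition \ref{prop:maximum_ch3_rank1}). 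Similarly, for the upper bound $-x\leq\mu_0$ in the $r=1$ case, your coaxial-circle criterion (the wall $W(\OO(-mH),\II_C)$ lies inside the $Q$-semidisk iff $e\geq\tfrac{dm}{3}+\tfrac{2d^2}{3m}$, since all these circles satisfy $\rho^2=s^2-2d$) is a legitimate equivalent of the paper's $Q_{0,\beta_0}<0$ checks, but it still requires verifying $\tilde E(d,k)\geq\tfrac{dm}{3}+\tfrac{2d^2}{3m}$ for $m=\mu_0+1$ (the extremal case, as this function is decreasing in $m$ on $(\mu_0,\sqrt{2d})$) and separately disposing of $m\geq\sqrt{2d}$, where the wall's orientation flips and $\OO(-mH)$ no longer lies in $\Coh^{\beta}(X)$ along it. None of these verifications is carried out, and they are where essentially all of the difficulty of the lemma resides.
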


\begin{proof}
\begin{enumerate}[(i)]
\item If such a wall exists, then reflexivity of $E$ follows immediately from Lemma \ref{lem:sub_is_reflexive}. By Lemma \ref{lem:BG_rank_bound} it is enough to show
\[
e > \frac{5}{3\sqrt{3}}d^{\frac{3}{2}}
\]
to get both $r \leq 2$ and existence of the wall. Because of Lemma \ref{lem:genus_bound_decreases} part (\ref{item:bound_epsilon_again}), and Lemma \ref{lem:second_error_useless} it is enough to show that the function
\[
\varphi_k(d) = \frac{d^2}{2k} + \frac{dk}{2} - \frac{k^2}{8} + \frac{k}{8} - \frac{5}{3\sqrt{3}}d^{\frac{3}{2}}
\]
is non-negative whenever $d > k(k-1)$. The function $\varphi_k$ has a local maximum at $d_0 = \tfrac{1}{3}k^2$, a local minimum at $d_1 = \tfrac{3}{4}k^2$, and no other local extrema. Since $d_0 < k(k-1)$, it is enough to check positivity at $d_1$. Indeed, we have
\[
\varphi_k(\frac{3}{4}k^2) = \frac{k(k-2)^2}{32} \geq 0.
\]
\item If $r = 1$, then $E$ is a line bundle, and $k \leq -x$ follows from $\Hom(\OO(-k+1), \II_C) = 0$. By Lemma \ref{lem:genus_bound_decreases}, we have $\tilde{E}(d,\mu_0) \leq \tilde{E}(d,k)$ and henceforth, $e > \tilde{E}(d,\mu_0)$. For any point $(\alpha, \beta) \in W$ with $\alpha > 0$, we have $H \cdot \ch_1^{\beta}(\II_C) > H \cdot \ch_1^{\beta}(E) > 0$. This can be rewritten as
\[
(r-1) \beta > x > r \beta.
\]
Note, that if $Q_{0, \beta}(\II_C) < 0$, then there is $\alpha > 0$ such that $(\alpha, \beta) \in W$. Since $x$ is an integer, the remaining claim is equivalent to showing the following two statements.
\begin{enumerate}[(a)]
\item If $d = \mu_0^2 - f'$, then $Q_{0, -\mu_0 - \tfrac{1}{2}}(\II_C) < 0$ and $Q_{0, 2-2\mu_0}(\II_C) < 0$.
\item If $d = \mu_0(\mu_0 + 1) - f'$, then $Q_{0, -\mu_0 - 1}(\II_C) < 0$ and $Q_{0, 1-2\mu_0}(\II_C) < 0$.
\end{enumerate}

The semidisk $Q_{0, -\mu_0 - 1}(\II_C) < 0$ becomes smaller when $e$ decreases. Therefore, it is enough to check these inequalities for $e = \tilde{E}(d,\mu_0)$.

Assume $d = \mu_0^2 - f'$. Then
\[
Q_{0, -\mu_0 - \tfrac{1}{2}}(\II_C) = -3f'^2\mu_0 + 2f'\mu_0^2 - \mu_0^3 + \frac{5}{2}f'^2 + f'\mu_0 + \frac{1}{2}\mu_0^2 - 2f'.
\]
This is a parabola in $f'$ with maximum at
\[
f' = \frac{2\mu_0^2 + \mu_0 - 2}{6\mu_0 - 5}.
\]
From this maximum, we get
\[
Q_{0, -\mu_0 - \tfrac{1}{2}}(\II_C) \leq -\frac{2(2\mu_0 + 1)(\mu_0 - 1)^3}{6\mu_0 - 5} < 0.
\]
Next, we have
\[
Q_{0, 2 - 2\mu_0}(\II_C) = -6f'^2\mu_0 + 8f'\mu_0^2 - 4\mu_0^3 + 10f'^2 - 14f'\mu_0 + 8\mu_0^2 - 2f'.
\]
This is a parabola in $f'$ with maximum at
\[
f' = \frac{4\mu_0^2 - 7\mu_0 - 1}{6\mu_0 - 10}.
\]
From this maximum, we get
\[
Q_{0, 2 - 2\mu_0}(\II_C) \leq -\frac{(8\mu_0^2 - 16\mu_0 - 1)(\mu_0 - 1)^2}{6\mu_0 - 10} < 0,
\]
unless $\mu_0 = 2$. In that case, we have $Q_{0, 2 - 2\mu_0}(\II_C) = -2f'^2 + 2f'$. However, we used $e = \tilde{E}(d,m)$, but could have used $e = \tilde{E}(d,m) + \frac{1}{8}$ to obtain a strict inequality.

Assume $d = \mu_0(\mu_0+1) - f'$. Then 
\[
Q_{0, -\mu_0 - 1}(\II_C) = -3f'^2\mu_0 + 2f'\mu_0^2 - \mu_0^3 + f'^2 + 3f'\mu_0 - 2\mu_0^2 + f' - \mu_0.
\]
This is a parabola in $f'$ with maximum at
\[
f' = \frac{2\mu_0^2 + 3\mu_0 + 1}{6\mu_0 - 2}.
\]
From this maximum, we get
\[
Q_{0, -\mu_0 - 1}(\II_C) \leq -\frac{(8\mu_0^2 - 8\mu_0 - 1)(\mu_0 + 1)^2}{12\mu_0 - 4} < 0.
\]
Finally, we have
\[
Q_{0, 1-2\mu_0}(\II_C) = -6f'^2\mu_0 + 8f'\mu_0^2 - 4\mu_0^3 + 7f'^2 - 6f'\mu_0 + \mu_0^2 - 5f' + 5\mu_0.
\]
This is a parabola in $f'$ with maximum at
\[
f' = \frac{8\mu_0^2 - 6\mu_0 - 5}{12\mu_0 - 14}.
\]
From this maximum, we get
\[
Q_{0, 1-2\mu_0}(\II_C) \leq -\frac{(4\mu_0^2 + 2\mu_0 - 5)(4\mu_0 - 5)(2\mu_0 - 1)}{24\mu_0 - 28} < 0. \qedhere
\]
\end{enumerate}
\end{proof}

\begin{lem}
\label{lem:rank2_wall_1}
Assume $d = \mu_0^2 - f'$, $d \equiv -f' (\mod \mu_0)$, $0 \leq f' < \mu_0$, and 
\[
e > \tilde{E}(d,\mu_0) = \mu_0^3 + \frac{1}{2}f'^2 - 2f'\mu_0 + \frac{1}{2}f'.
\]
Furthermore, suppose $\II_C$ is destabilized at a wall $W$ induced by an exact sequence $0 \to E \to \II_C \to G \to 0$, where $E$ is reflexive, and $E$ is tilt stable along $W$. 
\begin{enumerate}[(i)]
\item If $H\cdot\ch(E) = (2, -2\mu_0, y, z)$, then the following holds.
\begin{enumerate}[(a)]
\item The Chern character of $E(\mu_0)$ is $\left(2, 0, -\mu_0^2 + y, -\frac{2}{3}\mu_0^3 + \mu_0y + z\right)$.
\item We have
\[
\frac{2\mu_0^4 - 4f'\mu_0^2 + 3f'^2\mu_0 + 3f'\mu_0 - 4f'^2}{2(\mu_0^2 - f')} < y \leq \mu_0^2
\]
and
\[
0 \leq f' < \frac{2\mu_0^2 - 3\mu_0}{3\mu_0 - 4}.
\]
Moreover, if $\mu_0 \in \{2,3,4\}$, then $y = \mu_0^2$.
\item The inequality
\begin{align*}
e \leq &\frac{1}{2}\mu_0^4 + f'\mu_0^2 - \frac{2}{3}\mu_0^3 - \mu_0^2y + \frac{1}{2}f'^2 - 2f'\mu_0 + \\ &\frac{1}{2}\mu_0^2 - f'y + 2\mu_0y + \frac{1}{2}y^2 + \frac{1}{2}f' - \frac{1}{2}y + z
\end{align*}
holds.
\item If $y = \mu_0^2$, then
\begin{align*}
z &\leq -\frac{1}{3}\mu_0^3, \\
e &\leq \mu_0^3 + \frac{1}{2}f'^2 - 2f'\mu_0 + \frac{1}{2}f' = \tilde{E}(d,\mu_0).
\end{align*}
In particular, the wall $W$ cannot exist.
\item If $y = \mu_0^2 - \tfrac{1}{2}$, then
\begin{align*}
z &\leq -\frac{1}{3}\mu_0^3 + \frac{1}{2}\mu_0 + \frac{1}{6}, \\
e &\leq \mu_0^3 + \frac{1}{2}f'^2 - 2f'\mu_0 + f' - \frac{1}{2}\mu_0 + \frac{13}{24}.
\end{align*}
In particular, the wall $W$ cannot exist.
\item If $y \leq \mu_0^2 - 1$, then
\begin{align*}
z &\leq \frac{1}{2}\mu_0^4 + \frac{2}{3}\mu_0^3 - \mu_0^2y - \mu_0y + \frac{1}{2}y^2 + \frac{5}{24}, \\
e &\leq \mu_0^4 + f'\mu_0^2 - 2\mu_0^2y + \frac{1}{2}f'^2 - 2f'\mu_0 + \frac{1}{2}\mu_0^2 - f'y + \mu_0y + y^2 + \frac{1}{2}f' - \frac{1}{2}y + \frac{5}{24}.
\end{align*}
In particular, the wall $W$ cannot exist.
\end{enumerate}
\item If $H\cdot\ch(E) = (2, -2\mu_0 + 1, y, z)$, then the following holds.
\begin{enumerate}[(a)]
\item The Chern character of $E(\mu_0 - 1)$ is given by 
\[
\left(2, -1, -\mu_0^2 + \mu_0 + y, -\frac{2}{3}\mu_0^3 + \frac{3}{2}\mu_0^2 + \mu_0y - \mu_0 - y + z + \frac{1}{6}\right).
\]
\item The second Chern character of $E$ satisfies
\[
\frac{4\mu_0^4 - 8f'\mu_0^2 - 6\mu_0^3 - 11f'^2 + 6f'^2\mu_0 + 18f'\mu_0 - 3f'}{4(\mu_0^2 - f')} < y \leq \mu_0^2 + f' - 2\mu_0 + \frac{1}{2}.
\]
%In particular, $y = \mu_0^2 - \mu_0$ if and only if $f' = \mu_0 - \tfrac{1}{2}$.
Moreover, in the case $\mu_0 = 2$ we have $f' \geq 1$.
\item The inequality
\begin{align*}
e \leq &\frac{1}{2}\mu_0^4 + f'\mu_0^2 - \frac{8}{3}\mu_0^3 - \mu_0^2y + \frac{1}{2}f'^2 - 4f'\mu_0 + 6\mu_0^2 \\ &- f'y + 4\mu_0y + \frac{1}{2}y^2 + 2f' - 4\mu_0 - 2y + z + \frac{17}{24}
\end{align*}
holds.
% \item If $y = \mu_0^2 - \mu_0$, i.e., $f' = \mu_0 - \tfrac{1}{2}$, then 
% \begin{align*}
% z &\leq -\frac{1}{3}\mu_0^3 + \frac{1}{2}\mu_0^2, \\
% e &\leq \mu_0^3 - \frac{3}{2} \mu_0^2 + \mu_0 -\frac{1}{6}.
% \end{align*}
% In particular, the wall $W$ cannot exist.
\item %If $f \leq \mu_0 - 1$, then
We have
\begin{align*}
z \leq &\frac{1}{2}\mu_0^4 - \frac{1}{3}\mu_0^3 - \mu_0^2y + \frac{1}{2}y^2 + \frac{1}{24}, \\
e \leq &\mu_0^4 + f'\mu_0^2 - 3\mu_0^3 - 2\mu_0^2y + \frac{1}{2}f'^2 - 4f'\mu_0 + 6\mu_0^2 \\ &- f'y + 4\mu_0y + y^2 + 2f' - 4\mu_0 - 2y + \frac{3}{4}.
\end{align*}
In particular, the wall $W$ cannot exist.
\end{enumerate}
\end{enumerate}
\end{lem}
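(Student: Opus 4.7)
The plan is to apply the wall-crossing strategy outlined in Section \ref{subsec:strategy}: bound $H\cdot\ch_2(E)$ using the Bogomolov inequality and the requirement that $W$ lie outside $Q_{\alpha,\beta}(\II_C) < 0$, then bound $\ch_3$ of $E$ and the quotient $G$ using Theorem \ref{thm:rank_2_first_cases} and Proposition \ref{prop:maximum_ch3_rank1} respectively, and finally combine them to contradict $e > \tilde{E}(d,\mu_0)$.

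The Chern character formulas in parts (i)(a) and (ii)(a) are routine multiplications by $\ch(\OO(\mu_0 H))$ or $\ch(\OO((\mu_0-1)H))$. For parts (i)(b) and (ii)(b), the upper bound on $y$ is simply the Bogomolov inequality $\Delta(E) \geq 0$ applied to the appropriate twist. The lower bound follows from $s_W \leq s_Q = -3e/(2d)$ (the wall must lie outside the semidisk $Q_{\alpha,\beta}(\II_C) < 0$ computed in Lemma \ref{lem:BG_rank_bound}) combined with the hypothesis $e > \tilde{E}(d,\mu_0)$; this yields the stated rational expression in $f'$ and $\mu_0$. The additional restriction on $f'$, and the special cases $\mu_0 \in \{2,3,4\}$ in (i) and $\mu_0 = 2$ in (ii), come from requiring the resulting interval for $y$ to contain a value in $\tfrac{1}{2}\Z$ (this forces $y = \mu_0^2$ in the small cases of (i)).

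For parts (i)(c) and (ii)(c), I would compute $H\cdot\ch(G) = H\cdot\ch(\II_C) - H\cdot\ch(E)$, twist $G$ by $\OO(2\mu_0 H)$ in case (i) (respectively $\OO((2\mu_0-1)H)$ in case (ii)) so that $H\cdot\ch_1$ vanishes, and apply Proposition \ref{prop:maximum_ch3_rank1} to the resulting object of rank $-1$. Expanding and solving for $e$ produces the stated bound as a function of $y$ and $z$. For the bounds on $z$ in (i)(d),(e),(f) and (ii)(d), note that $E(\mu_0)$ is $\nu_{\alpha,\beta}$-semistable with $H\cdot\ch_{\leq 2} = (2,0,y-\mu_0^2)$, so Theorem \ref{thm:rank_2_first_cases}(ii) applies; the three sub-cases of part (i) correspond exactly to $y-\mu_0^2 = 0$, $-\tfrac{1}{2}$, and $\leq -1$. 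Similarly, $E(\mu_0-1)$ in case (ii) has $c = -1$, so Theorem \ref{thm:rank_2_first_cases}(i) yields the required bound on $z$.

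The main obstacle is the final bookkeeping. Substituting the bounds on $z$ into the expression from (c) gives a polynomial in $y$ and $f'$ that must be shown to be $\leq \tilde{E}(d,\mu_0)$ on the admissible range. Since this polynomial is a parabola in $y$ with non-negative leading coefficient, its maximum on the interval from (b) occurs at an endpoint; evaluating at each endpoint reduces to verifying non-negativity of an explicit polynomial in $\mu_0$ and $f'$ with $0 \leq f' < \mu_0$. These calculations are mechanical but unwieldy, and splitting part (i) into three sub-cases is necessary precisely because the three sub-cases of Theorem \ref{thm:rank_2_first_cases}(ii) give genuinely different closed forms for $z$.
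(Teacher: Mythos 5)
Your overall strategy is exactly the paper's: upper and lower bounds on $y$ from discriminants and from the wall lying outside $Q_{\alpha,\beta}(\II_C)<0$ (with $e$ replaced by $\tilde{E}(d,\mu_0)$), a bound on $e$ via Proposition \ref{prop:maximum_ch3_rank1} applied to the rank $-1$ quotient, bounds on $z$ via Theorem \ref{thm:rank_2_first_cases} applied to $E(\mu_0)$ (sub-cases $c=0$, $d=0,-\tfrac12,\le -1$) or $E(\mu_0-1)$ (case $c=-1$), and then maximizing an upward parabola in $y$ over the admissible interval. The paper streamlines the last step by showing the vertex lies to the left of the interval, so only the upper endpoint needs evaluation, but checking both endpoints as you propose would also work.

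There is, however, one genuine gap in part (ii)(b). You claim the upper bound on $y$ is ``the Bogomolov inequality $\Delta(E)\geq 0$ applied to the appropriate twist.'' That is correct in case (i), where $\Delta(E)\geq 0$ gives $y\leq\mu_0^2$, but in case (ii) it only gives $y\leq\mu_0^2-\mu_0+\tfrac14$ (twisting does not change $\Delta$), which is strictly weaker than the stated bound $y\leq\mu_0^2+f'-2\mu_0+\tfrac12$ whenever $f'<\mu_0-\tfrac12$. The stated bound comes from $\Delta(G)\geq 0$ applied to the \emph{quotient} $G$, whose $\ch_{\le 2}$ is $(-1,\,2\mu_0-1,\,-d-y)$. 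This is not a cosmetic point: in (ii)(d) the function bounding $e$ is increasing on the admissible range and evaluates to exactly $\tilde{E}(d,\mu_0)$ at $y=\mu_0^2+f'-2\mu_0+\tfrac12$, so with the weaker endpoint $y=\mu_0^2-\mu_0+\tfrac14$ the resulting bound on $e$ exceeds $\tilde{E}(d,\mu_0)$ and the contradiction fails. You need the quotient's discriminant here (just as in (i) it is the subobject's discriminant that happens to give the sharper of the two bounds).
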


\begin{proof}
\begin{enumerate}[(i)]
\item Assume $H\cdot\ch(E) = (2, -2\mu_0, y, z)$.
\begin{enumerate}[(a)]
\item This part is a straightforward calculation.
\item By Theorem \ref{thm:rank_2_first_cases}, we get $-\mu_0^2 + y \leq 0$, i.e., $y \leq \mu_0^2$. The center of $W(E, \II_C)$ is given by 
\[
s(E, \II_C) = \frac{-2\mu_0^2 + 2f' - y}{2\mu_0}
\]
The semidisk $Q_{\alpha, \beta}(\II_C) < 0$ becomes smaller when $e$ decreases, and we can bound it from below by setting $e = \tilde{E}(d, \mu_0)$. Therefore, the center $s_Q$ of this semidisk satisfies
\[
s_Q < \frac{-6\mu_0^3 - 3f'^2 + 12f'\mu_0 - 3f}{4\mu_0^2 - 4f'}.
\]
The lower bound on $y$ is a consequence of the fact that $W$ is outside of $Q_{\alpha, \beta}(\II_C) < 0$, i.e., $s(E, \II_C) \leq s_Q$. Comparing the lower and upper bound for $y$ leads to the bound on $f'$. If $\mu_0 \in \{2,3,4\}$, then
\[
\frac{2\mu_0^4 - 4f'\mu_0^2 + 3f'^2\mu_0 + 3f'\mu_0 - 4f'^2}{2(\mu_0^2 - f')} > \mu_0^2 - \frac{1}{2}.
\]
\item This inequality on $e$ is an application of Proposition \ref{prop:maximum_ch3_rank1} to the quotient $G$.
\item Assume $y = \mu_0^2$. The upper bound on $z$ follows from Theorem \ref{thm:rank_2_first_cases} applied to $E(\mu_0)$. The bound on $e$ is a direct consequence of applying the bound on $z$ to the previous upper bound for $e$.
\item Assume $y = \mu_0^2 - \tfrac{1}{2}$. The upper bound on $z$ follows from Theorem \ref{thm:rank_2_first_cases} applied to $E(\mu_0)$. The bound on $e$ is a direct consequence of applying the bound on $z$ to the previous upper bound for $e$. Indeed, for $\mu_0 \neq 2$ we can use
\[
f' < \frac{2\mu_0^2 - 3\mu_0}{3\mu_0 - 4}
\]
to get
\[
\mu_0^3 + \frac{1}{2}f'^2 - 2f'\mu_0 + f' - \frac{1}{2}\mu_0 + \frac{13}{24} \leq \mu_0^3 + \frac{1}{2}f'^2 - 2f'\mu_0 + \frac{1}{2}f'.
\]
\item Assume $y \leq \mu_0^2 - 1$. The upper bound on $z$ follows from Theorem \ref{thm:rank_2_first_cases} applied to $E(\mu_0)$. The bound on $e$ is a direct consequence of applying the bound on $z$ to the previous upper bound for $e$. We are left to show that this inequality implies $e \leq \tilde{E}(d, \mu_0)$. This can be done by showing that the function
\begin{align*}
&\tilde{E}(d, \mu_0) - \left(\mu_0^4 + f'\mu_0^2 - 2\mu_0^2y + \frac{1}{2}f'^2 - 2f'\mu_0 + \frac{1}{2}\mu_0^2 - f'y + \mu_0y + y^2 + \frac{1}{2}f' - \frac{1}{2}y + \frac{5}{24}\right) \\
&= -\mu_0^4 - f'\mu_0^2 + \mu_0^3 + 2\mu_0^2y - \frac{1}{2}\mu_0^2 + f'y - \mu_0y - y^2 + \frac{1}{2}y - \frac{5}{24} =: \varphi(y, f', \mu_0)
\end{align*} 
is non-negative. This function is a parabola in $y$ with maximum. The maximum occurs at
\[
y_0 = \mu_0^2 + \frac{1}{2}f' - \frac{1}{2}\mu_0 + \frac{1}{4}.
\]
We will show that $y_0$ lies before our range for $y$ and therefore, $\varphi_{f', \mu_0}$ has a minimum for $y = \mu^2 - 1$. Indeed, we can compute
\begin{align*}
\psi(\mu_0, f') &:= \frac{2\mu_0^4 - 4f'\mu_0^2 + 3f'^2\mu_0 + 3f'\mu_0 - 4f'^2}{2(\mu_0^2 - f')} - y_0 \\
&= \frac{6f'^2\mu_0 - 6f'\mu_0^2 + 2\mu_0^3 - 6f'^2 + 4f'\mu_0 - \mu_0^2 + f'}{4(\mu_0^2 - f')}.
\end{align*}
%1/4*(6*f^2*k - 6*f*k^2 + 2*k^3 - 6*f^2 + 4*f*k - k^2 + f)/(k^2 - f)
The numerator is a parabola in $f'$ with minimum at
\[
f'_0 = \frac{6\mu_0^2 - 4\mu_0 - 1}{12(\mu_0-1)}
\]
and
\[
\psi(\mu_0, f'_0) = \frac{12\mu_0^4 - 24\mu_0^3 + 20\mu_0^2 - 8\mu_0 - 1}{8(6\mu_0^2 - 6\mu_0 - 1)(2\mu_0 - 1)} > 0.
\]
Finally, we get
\[
\varphi(y, f', \mu_0) \geq \varphi(\mu_0^2 - 1, f', \mu_0) = -f' + \mu_0 - \frac{41}{24} \geq 0,
\]
due to the upper bound on $f'$ and $\mu_0 \geq 5$.
\end{enumerate}
\item Assume $H\cdot\ch(E) = (2, -2\mu_0 + 1, y, z)$.
\begin{enumerate}[(a)]
\item This part is a straightforward calculation.
\item The inequality $\Delta(G) \geq 0$ is equivalent to $y \leq \mu_0^2 + f' - 2\mu_0 + \tfrac{1}{2}$. %Therefore, $y = \mu_0^2 - \mu_0$ implies $f' = \mu_0 - \tfrac{1}{2}$.
The center of $W(E, \II_C)$ is given by 
\[
s(E, \II_C) = \frac{-2\mu_0^2 + 2f' - y}{2\mu_0 - 1}.
\]
The semidisk $Q_{\alpha, \beta}(\II_C) < 0$ becomes smaller when $e$ decreases, and we can bound it from below by setting $e = \tilde{E}(d, \mu_0)$. Therefore, the center $s_Q$ of this semidisk satisfies
\[
s_Q < \frac{-6\mu_0^3 - 3f'^2 + 12f'\mu_0 - 3f}{4\mu_0^2 - 4f'}.
\]
The lower bound on $y$ is then obtained from the fact that $W$ is outside this lower bound of the semidisk, i.e., $s(E, \II_C) < s_Q$. 
%If $f = \mu_0 - \tfrac{1}{2}$, then 
% \[
% y - \left(\mu_0^2 - \mu_0 - \frac{1}{2} \right) > \frac{4\mu_0^2 + 2\mu_0 - 1}{2\mu_0^2 - 2\mu_0 + 1} > 0.
% \]
% Thus, we must have $y = \mu_0^2 - \mu_0$.
Finally, if $\mu_0 = 2$ then comparing the upper and lower bound on $y$ implies $f' \geq 1$.
\item This inequality on $e$ is a direct application of Proposition \ref{prop:maximum_ch3_rank1} to the quotient $G$.
% \item Assume $y = \mu_0^2 - \mu_0$, i.e. $f = \mu_0 - \tfrac{1}{2}$. Then Theorem \ref{thm:rank_2_first_cases} applied to $E(\mu_0 - 1)$ implies the bound on $z$. The bound on $e$ is a direct consequence of applying the bound on $z$ to the previous upper bound for $e$. Therefore,
% \[
% e \leq \mu_0^3 - \frac{3}{2} \mu_0^2 + \mu_0 -\frac{1}{6} = \tilde{E}(d, \mu_0) - \frac{1}{24}.
% \]
\item %Assume $f' \neq \mu_0 - \frac{1}{2}$. Then $y \leq \mu_0^2 + f' - 2\mu_0 + \frac{1}{2} < \mu_0^2 - \mu_0$.
The upper bound on $z$ follows from Theorem \ref{thm:rank_2_first_cases} applied to $E(\mu_0 - 1)$. The bound on $e$ is a direct consequence of estimating $z$ in the previous upper bound for $e$. We have to maximize the function
\begin{align*}
\varphi(y, f', \mu_0) := &\ \mu_0^4 + f'\mu_0^2 - 3\mu_0^3 - 2\mu_0^2y + \frac{1}{2}f'^2 - 4f'\mu_0 + 6\mu_0^2 \\ &- f'y + 4\mu_0y + y^2 + 2f' - 4\mu_0 - 2y + \frac{3}{4}.
\end{align*}
% We are left to show that this inequality implies $e \leq \tilde{E}(d, \mu_0)$. This can be done by showing that the function
% \begin{align*}
% &\tilde{E}(d, \mu_0) - \left(\mu_0^4 + f'\mu_0^2 - 3\mu_0^3 - 2\mu_0^2y + \frac{1}{2}f'^2 - 4f'\mu_0 + 6\mu_0^2 - f'y + 4\mu_0y + y^2 + 2f' - 4\mu_0 - 2y + \frac{3}{4}\right) \\
% &= -\mu_0^4 - f'\mu_0^2 + 4\mu_0^3 + 2\mu_0^2y + 2f'\mu_0 - 6\mu_0^2 + f'y - 4\mu_0y - y^2 - \frac{3}{2}f' + 4\mu_0 + 2y - \frac{3}{4} \\
% &=: \varphi(y, f', \mu_0)
% \end{align*}
% is non-negative.
This function is a parabola in $y$ with minimum at
\[
y_0 = \mu_0^2 + \frac{1}{2}f' - 2\mu_0 + 1.
\]
We will show that $y_0$ lies before our range for $y$ and therefore, the maximum occurs at $y = \mu_0^2 + f' - 2\mu_0 + \tfrac{1}{2}$. Let
\begin{align*}
\psi(\mu_0, f') &:= \frac{4\mu_0^4 - 8f'\mu_0^2 - 6\mu_0^3 - 11f'^2 + 6f'^2\mu_0 + 18f'\mu_0 - 3f'}{4(\mu_0^2 - f')} - y_0 \\
&= \frac{6f'^2\mu_0 - 6f'\mu_0^2 + 2\mu_0^3 - 9f'^2 + 10f'\mu_0 - 4\mu_0^2 + f'}{4(\mu_0^2 - f')}.
\end{align*}
The numerator is a parabola in $f'$ with minimum at
\[
f'_0 = \frac{6\mu_0^2 - 10\mu_0 - 1}{12\mu_0 - 18}
\]
and
\[
\psi(\mu_0, f'_0) = \frac{12\mu_0^4 - 48\mu_0^3 + 56\mu_0^2 - 20\mu_0 - 1}{8(12\mu_0^3 - 24\mu_0^2 + 10\mu_0 + 1)},
\]
which is positive for $\mu_0 \geq 3$. When $\mu_0 = 2$, then we get $f'_0 = \tfrac{1}{2}$, but $f' \geq 1$. Then,
\[
\psi(2, 1) = 0.
\]
Finally, we get
\[
\varphi(y, f', \mu_0) \geq \varphi(\mu_0^2 + f' - 2\mu_0 + \tfrac{1}{2}, f', \mu_0) = \tilde{E}(d, \mu_0). \qedhere
\]
\end{enumerate}
\end{enumerate}
\end{proof}

\begin{lem}
\label{lem:rank2_wall_2}
Assume $d = \mu_0(\mu_0 + 1) - f'$, $d \equiv -f' (\mod \mu_0)$, $0 \leq f' < \mu_0$, and
\[
e > \tilde{E}(d,\mu_0) = \mu_0^3 + \frac{1}{2}f'^2 - 2f'\mu_0 + \frac{3}{2}\mu_0^2 - \frac{1}{2}f' + \frac{1}{2}\mu_0.
\]
Furthermore, suppose $\II_C$ is destabilized at a wall $W$ induced by an exact sequence $0 \to E \to \II_C \to G \to 0$, where $E$ is reflexive, and $E$ is tilt stable along $W$.
\begin{enumerate}[(i)]
\item If $H\cdot\ch(E) = (2, -2\mu_0, y, z)$, then the following holds.
\begin{enumerate}[(a)]
\item The Chern character of $E(\mu_0)$ is given by  $\left(2, 0, -\mu_0^2 + y, -\frac{2}{3}\mu_0^3 + \mu_0y + z\right)$.
\item We have
% \[
% \frac{2\mu_0^4 + 3f'^2\mu_0 - 4f'\mu_0^2 + 5\mu_0^3 - 4f'^2 + f\mu_0 + 3\mu_0^2}{2(\mu_0^2 - f' + \mu_0)} < y \leq \mu_0^2 + f' - \mu_0 < \mu_0^2.
% \]
\[
\frac{2\mu_0^4 + 3f'^2\mu_0 - 4f'\mu_0^2 + \mu_0^3 - 4f'^2 + 5f\mu_0 - \mu_0^2}{2(\mu_0^2 - f' + \mu_0)} < y \leq \mu_0^2 + f' - \mu_0 < \mu_0^2.
\]
If $y = \mu_0^2 - \tfrac{1}{2}$, then $f' = \mu_0 - \tfrac{1}{2}$.
\item The inequality
\begin{align*}
e \leq &\frac{1}{2}\mu_0^4 + f'\mu_0^2 - \frac{5}{3}\mu_0^3 - \mu_0^2y + \frac{1}{2}f'^2 - 3f'\mu_0 + 3\mu_0^2 \\ &- f'y + 3\mu_0y + \frac{1}{2}y^2 + \frac{1}{2}f' - \frac{1}{2}\mu_0 - \frac{1}{2}y + z.
\end{align*}
holds.
\item If $y = \mu_0^2 - \tfrac{1}{2}$, then $f' = \mu_0 - \tfrac{1}{2}$ and
\begin{align*}
z &\leq -\frac{1}{3}\mu_0^3 + \frac{1}{2}\mu_0^2 + \frac{1}{6}, \\
e &\leq \mu_0^3 + \frac{1}{2}\mu_0^2 + \frac{1}{6}.
\end{align*}
In particular, the wall $W$ cannot exist.
\item If $y \leq \mu_0^2 - 1$, then
\begin{align*}
z &\leq \frac{1}{2}\mu_0^4 + \frac{2}{3}\mu_0^3 - \mu_0^2y - \mu_0y + \frac{1}{2}y^2 + \frac{5}{24}, \\
e &\leq \mu_0^4 + f'\mu_0^2 - \mu_0^3 - 2\mu_0^2y + \frac{1}{2}f'^2 - 3f'\mu_0 + 3\mu_0^2 - f'y + 2\mu_0y + y^2 + \frac{1}{2}f' - \frac{1}{2}\mu_0 - \frac{1}{2}y + \frac{5}{24}.
\end{align*}
In particular, the wall $W$ cannot exist.
\end{enumerate}
\item If $H\cdot\ch(E) = (2, -2\mu_0 - 1, y, z)$, then the following holds.
\begin{enumerate}[(a)]
\item The Chern character of $E(\mu_0)$ is given by $\left(2, -1, -\mu_0^2 - \mu_0 + y, -\frac{2}{3}\mu_0^3 - \frac{1}{2}\mu_0^2 + \mu_0y + z\right)$.
\item We have
\[
\frac{4\mu_0^4 + 6f'^2\mu_0 - 8f'\mu_0^2 + 8\mu_0^3 - 5f'^2 - 2f'\mu_0 + 7\mu_0^2 - 3f' + 3\mu_0}{4(\mu_0^2 - f' + \mu_0)} < y \leq \mu_0^2 + \mu_0.
\]
\item We have
\[
e \leq \frac{1}{2}\mu_0^4 + f'\mu_0^2 + \frac{1}{3}\mu_0^3 - \mu_0^2y + \frac{1}{2}f'^2 - f'\mu_0 + \frac{1}{2}\mu_0^2 - f'y + \mu_0y + \frac{1}{2}y^2 + z + \frac{1}{24}.
\]
% \item If $y = \mu_0^2 + \mu_0$, then
% \begin{align*}
% z &\leq -\frac{1}{3}\mu_0^3 + -\frac{1}{2}\mu_0^2 + \frac{1}{6}, \\
% e &\leq \mu_0^3 + \frac{1}{2}f'^2 - 2f'\mu_0 + \frac{3}{2}\mu_0^2 + \frac{5}{24}.
% \end{align*}
% In particular, the wall $W$ can not exist.
\item We have
\begin{align*}
z &\leq \frac{1}{2}\mu_0^4 + \frac{5}{3}\mu_0^3 - \mu_0^2y + 2\mu_0^2 - 2\mu_0y + \frac{1}{2}y^2 + \mu_0 - y + \frac{5}{24}, \\
e &\leq \mu_0^4 + f'\mu_0^2 + 2\mu_0^3 - 2\mu_0^2y + \frac{1}{2}f'^2 - f'\mu_0 + \frac{5}{2}\mu_0^2 - f'y - \mu_0y + y^2 + \mu_0 - y + \frac{1}{4}.
\end{align*}
In particular, the wall $W$ can not exist.
\end{enumerate}
\end{enumerate}
\end{lem}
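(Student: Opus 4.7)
The plan is to mirror the proof of Lemma \ref{lem:rank2_wall_1} closely, adapting the arithmetic to the case $d = \mu_0(\mu_0+1) - f'$ rather than $d = \mu_0^2 - f'$. In both cases (i) and (ii), I would twist $E$ by $\OO(\mu_0 H)$ so that the resulting Chern character falls within the hypotheses of Theorem \ref{thm:rank_2_first_cases}, bound the Chern character of the rank $-1$ quotient $G$ in the destabilizing sequence $0 \to E \to \II_C \to G \to 0$ via Proposition \ref{prop:maximum_ch3_rank1}, and combine these two estimates to control $e$.

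Part (a) in each case is a direct application of $\ch(E \otimes \OO(\mu_0 H)) = \ch(E) \cdot e^{\mu_0 H}$. For part (b), the upper bound on $y$ is obtained from the Bogomolov inequality: from $\Delta(G) \geq 0$ in case (i), giving $y \leq \mu_0^2 + f' - \mu_0$, and from $\Delta(E) \geq 0$ in case (ii), giving $y \leq \mu_0^2 + \mu_0 + \tfrac{1}{4}$, which rounds down to $y \leq \mu_0^2 + \mu_0$ by the integrality assumption on $\ch_2$ in Assumption \ref{assu:div_curves}. The lower bound on $y$ comes from the requirement that the semicircular wall $W(E, \II_C)$ enclose the forbidden semidisk $\{Q_{\alpha,\beta}(\II_C) < 0\}$, which translates to $s(E, \II_C) \leq s_Q$ exactly as in Lemma \ref{lem:rank2_wall_1}; here $s_Q$ is controlled from below by substituting the worst-case value $e = \tilde{E}(d, \mu_0)$ into its expression. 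The forced equality $f' = \mu_0 - \tfrac{1}{2}$ when $y = \mu_0^2 - \tfrac{1}{2}$ in case (i) follows from comparing this upper bound with the half-integrality of $f' \in [0,\mu_0)$. Part (c) is then an application of Proposition \ref{prop:maximum_ch3_rank1} to $G$, translating a bound on $\ch_3(G)$ into the displayed inequality for $e$ in terms of $z$.

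For part (d) (and part (e) in case (i)), I would apply Theorem \ref{thm:rank_2_first_cases} to the twisted object. In case (i) with $y = \mu_0^2 - \tfrac{1}{2}$, one uses part (ii)(b) of Theorem \ref{thm:rank_2_first_cases} applied to $E(\mu_0)$, whose relevant Chern character is $(2,0,-\tfrac{1}{2},\,\cdot\,)$; for $y \leq \mu_0^2 - 1$ one uses part (ii)(c). In case (ii), one applies part (i) of Theorem \ref{thm:rank_2_first_cases} to $E(\mu_0)$, whose first Chern character is $-1$. Substituting the resulting bound on $z$ back into the inequality from (c) yields an explicit upper bound on $e$ as a rational function of $y$, $f'$, $\mu_0$.

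The main obstacle is the final verification that these upper bounds do not exceed $\tilde{E}(d, \mu_0)$ throughout the allowed range of $y$, thereby contradicting the standing assumption $e > \tilde{E}(d, \mu_0)$ and ruling out the wall $W$. As in Lemma \ref{lem:rank2_wall_1}, the upper bound on $e$ is a parabola in $y$ with positive leading coefficient, so its maximum over the allowed interval is attained at a boundary point. The delicate step is to show that the vertex of this parabola lies strictly below the lower bound on $y$ from (b), forcing the maximum at the upper endpoint $y = \mu_0^2 + f' - \mu_0$ in case (i) or $y = \mu_0^2 + \mu_0$ in case (ii), and then to check that this boundary value is at most $\tilde{E}(d, \mu_0)$. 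This reduces to verifying positivity of explicit rational expressions in $\mu_0$ and $f'$, handled by completing the square in $f'$ at fixed $\mu_0$ and using the standing ranges $\mu_0 \geq k \geq 2$ and $0 \leq f' < \mu_0$.
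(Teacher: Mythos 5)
Your proposal follows essentially the same route as the paper's proof: twist $E$ by $\mu_0 H$ to land in the hypotheses of Theorem \ref{thm:rank_2_first_cases}, get the upper bound on $y$ from Bogomolov-type inequalities and the lower bound from $s(E,\II_C)\leq s_Q$ with $e=\tilde{E}(d,\mu_0)$, bound $\ch_3(G)$ via Proposition \ref{prop:maximum_ch3_rank1}, and then run the parabola-in-$y$ endpoint analysis with the vertex shown (by completing the square in $f'$) to lie below the admissible range. The only cosmetic difference is that in case (ii) the paper gets $y\leq\mu_0^2+\mu_0$ by applying Theorem \ref{thm:rank_2_first_cases}(i) to $E(\mu_0)$ rather than from $\Delta(E)\geq 0$ plus integrality, which is equivalent.
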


\begin{proof}
\begin{enumerate}[(i)]
\item Assume $H\cdot\ch(E) = (2, -2\mu_0, y, z)$.
\begin{enumerate}[(a)]
\item This part is a straightforward calculation.
\item The inequality $\Delta(G) \geq 0$ is equivalent to $y \leq \mu_0^2 + f' - \mu_0$. The semicircular wall $W(E, \II_C)$ has center
\[
s(E, \II_C) = - \frac{2\mu_0^2 - 2f' + 2\mu_0 + y}{2\mu_0},
\]
and the open semidisk $Q_{\alpha, \beta}(\II_C) < 0$ has center
\[
s_Q = -\frac{3e}{2(\mu_0^2 + \mu_0 - f')}.
\]
Since this semidisks center increases when $e$ decreases, the semidisk itself becomes smaller in that case, and we can bound it from below by setting $e = \tilde{E}(d, \mu_0)$. The lower bound on $y$ is then obtained from the fact that $W$ is outside this semidisk, i.e., $s(E, \II_C) \leq s_Q$.
%If $\mu_0 = 2$, then the inequality between these upper and lower bounds on $y$ implies
%\[
%0 \leq \frac{2f'^2 - 11f' + 30}{f' - 6} < 0,
%\]
%a contradiction unless no wall exists in the first place. Thus, $\mu_0 \geq 3$. 
If $y = \mu_0^2 - \tfrac{1}{2}$, then $y \leq \mu_0^2 + f' - \mu_0$ implies $f' = \mu_0 - \tfrac{1}{2}$.
\item This inequality on $e$ is a direct application of Proposition \ref{prop:maximum_ch3_rank1} to the quotient $G$.
\item Assume $y = \mu_0^2 - \tfrac{1}{2}$ and $f' = \mu_0 - \tfrac{1}{2}$. The upper bound on $z$ follows from Theorem \ref{thm:rank_2_first_cases} applied to $E(\mu_0)$. The bound on $e$ is a direct consequence of estimating $z$ in the previous upper bound for $e$. Indeed,
\[
e \leq \mu_0^3 + \frac{1}{2}\mu_0^2 + \frac{1}{6} < \mu_0^3 + \frac{1}{2}\mu_0^2 + \frac{3}{8} = \tilde{E}(d,\mu_0).
\]
\item Assume $y \leq \mu_0^2 - 1$. The upper bound on $z$ follows from Theorem \ref{thm:rank_2_first_cases} applied to $E(\mu_0)$. The bound on $e$ is a direct consequence of estimating $z$ in the previous upper bound for $e$. We are left to show that this inequality implies $e \leq \tilde{E}(d, \mu_0)$. This can be done by showing that the function
\begin{align*}
\varphi(y, f', \mu_0) = &\ \tilde{E}(d, \mu_0) - \left(\mu_0^4 + f'\mu_0^2 - \mu_0^3 - 2\mu_0^2y + \frac{1}{2}f'^2 - 3f'\mu_0 \right. \\ & \left. + 3\mu_0^2 - f'y + 2\mu_0y + y^2 + \frac{1}{2}f' - \frac{1}{2}\mu_0 - \frac{1}{2}y + \frac{5}{24}\right) \\
= &-\mu_0^4 - f'\mu_0^2 + 2\mu_0^3 + 2\mu_0^2y + f'\mu_0 - \frac{3}{2}\mu_0^2  + f'y \\  &\ - 2\mu_0y - y^2 - f' + \mu_0 + \frac{1}{2}y - \frac{5}{24}.
\end{align*}
is positive. This function is a parabola in $y$ with maximum. The maximum occurs at
\[
y_0 = \mu_0^2 + \frac{1}{2}f' - \mu_0 + \frac{1}{4}.
\]
We will show that $y_0$ lies before our range for $y$ and therefore, the minimum in our range occurs for $y = \mu^2 + f' - \mu_0$. We can compute
\begin{align*}
\psi(\mu_0, f') &:= \frac{2\mu_0^4 + 3f'^2\mu_0 - 4f'\mu_0^2 + \mu_0^3 - 4f'^2 + 5f\mu_0 - \mu_0^2}{2(\mu_0^2 - f' + \mu_0)} - y_0 \\ 
&= \frac{6f'^2\mu_0 - 6f'\mu_0^2 + 2\mu_0^3 - 6f'^2 + 4f'\mu_0 + \mu_0^2 + f' - \mu_0}{4(\mu_0^2 - f' + \mu_0)}.
\end{align*}
The numerator is a parabola in $f'$ with minimum at
\[
f'_0 = \frac{6\mu_0^2 - 4\mu_0 - 1}{12\mu_0 - 12}
\]
and
\[
\psi(\mu_0, f'_0) = \frac{12\mu_0^4 + 24\mu_0^3 - 52\mu_0^2 + 16\mu_0 - 1}{96\mu_0^3 - 48\mu_0^2 - 64\mu_0 + 1} > 0.
\]
Thus, $\varphi(y, f', \mu_0)$ has a minimum $y = \mu^2 + f' - \mu_0$, where
\[
\varphi(\mu^2 + f' - \mu_0, f', \mu_0) = \frac{1}{2}\mu_0 - \frac{1}{2} f' - \frac{5}{24} > 0.
\]
\end{enumerate}
\item Assume $H\cdot\ch(E) = (2, -2\mu_0 - 1, y, z)$.
\begin{enumerate}[(a)]
\item This part is a straightforward calculation.
\item By Theorem \ref{thm:rank_2_first_cases} applied to $E(\mu_0)$, we have $y \leq \mu_0^2 + \mu_0$. The semicircular wall $W$ has center
\[
s(E, \II_C) = - \frac{2\mu_0^2 - 2f' + 2\mu_0 + y}{2\mu_0 + 1},
\]
and the open semidisk $Q_{\alpha, \beta}(\II_C) < 0$ has center
\[
s_Q = -\frac{3e}{2(\mu_0^2 + \mu_0 - f')}.
\]
Since this semidisk's center increases when $e$ decreases, the semidisk itself becomes smaller in that case, and we can bound it from below by setting $e = \tilde{E}(d, \mu_0)$. The lower bound on $y$ is a consequence of the fact that $W$ is outside this semidisk, i.e., $s(E, \II_C) \leq s_Q$.% If $f' = \mu_0 - \tfrac{1}{2}$, then the lower bound on $y$ implies $y > \mu_0^2 + \mu_0$, a contradiction.
\item This inequality on $e$ is a direct application of Proposition \ref{prop:maximum_ch3_rank1} to the quotient $G$.
% \item Assume $y = \mu_0^2 + \mu_0$. The upper bound on $z$ follows from Theorem \ref{thm:rank_2_first_cases} applied to $E(\mu_0)$. The bound on $e$ is a direct consequence of estimating $z$ in the previous upper bound for $e$
% \[
% e \leq \mu_0^3 + \frac{1}{2}f'^2 - 2f'\mu_0 + \frac{3}{2}\mu_0^2 + \frac{5}{24} \leq \tilde{E}(d,\mu_0).
% \]
\item %Assume $y \leq \mu_0^2 + \mu_0 - \tfrac{1}{2}$.
The upper bound on $z$ follows from Theorem \ref{thm:rank_2_first_cases} applied to $E(\mu_0)$. The bound on $e$ is a direct consequence of estimating $z$ in the previous upper bound for $e$. We have to maximize
\begin{align*}
\varphi(y, f', \mu_0) = &\ \mu_0^4 + f'\mu_0^2 + 2\mu_0^3 - 2\mu_0^2y + \frac{1}{2}f'^2 - f'\mu_0 \\ & + \frac{5}{2}\mu_0^2 - f'y - \mu_0y + y^2 + \mu_0 - y + \frac{1}{4}.
\end{align*}
This function is a parabola in $y$ with minimum at
\[
y_0 = \mu_0^2 + \frac{1}{2}f' + \frac{1}{2}\mu_0 + \frac{1}{2}.
\]
We will show that $y_0$ lies before our range for $y$, and therefore, the maximum occurs at $y =\mu_0^2 + \mu_0$. We want to show that
\begin{align*}
\psi(\mu_0, f') &:= \frac{4\mu_0^4 + 6f'^2\mu_0 - 8f'\mu_0^2 + 8\mu_0^3 - 5f'^2 - 2f'\mu_0 + 7\mu_0^2 - 3f' + 3\mu_0}{4(\mu_0^2 - f' + \mu_0)} - y_0 \\
&= \frac{6f'^2\mu_0 - 6f'\mu_0^2 + 2\mu_0^3 - 3f'^2 - 2f'\mu_0 + 3\mu_0^2 - f' + \mu_0}{4(\mu_0^2 - f' + \mu_0)}
\end{align*}
is non-negative. The numerator is a parabola in $f'$ with minimum at
\[
f'_0 = \frac{6\mu_0^2 + 2\mu_0 + 1}{12\mu_0 - 6}
\]
and
\[
\psi(\mu_0, f'_0) = \frac{12\mu_0^4 + 24\mu_0^3 - 28\mu_0^2 - 16\mu_0 - 1}{8(12\mu_0^3 - 8\mu_0 - 1)} > 0.
\]
Finally,
\[
\varphi\left(\mu_0^2 + \mu_0, f', \mu_0\right) = \mu_0^3 + \frac{1}{2}f'^2 - 2f'\mu_0 + \frac{3}{2}\mu_0^2 + \frac{1}{4} \leq \tilde{E}(d, \mu_0). \qedhere
\]
\end{enumerate}
\end{enumerate}
\end{proof}

\begin{proof}[Proof of Theorem \ref{thm:genus_bound_large_degree}]
By Lemma \ref{lem:complicated_bounds_rank1} part (\ref{item:maximal_rank_of_walls}) we know that $\II_C$ is destabilized by an exact sequence $0 \to E \to \II_C \to G \to 0$, where $H\cdot\ch(E) = (r, x, y, z)$, $E$ is reflexive, and $r = 1$ or $r = 2$.

Assume $r = 1$. Then $E$ is a line bundle with $H\cdot\ch_1(E)=x$ and by Lemma \ref{lem:complicated_bounds_rank1} part (\ref{item:bounding_ch1}) we get $x \leq -k$ and $d > -x(-x-1)$. Then $H\cdot\ch(G) = \left(0, -x, -d - \frac{x^2}{2}, e - \frac{x^3}{6}\right)$, and a direct application of Theorem \ref{thm:rank_zero_bound} gives $e \leq \tilde{E}(d, -x)$. By Lemma \ref{lem:genus_bound_decreases} we get the contradiction $e \leq \tilde{E}(d, k)$.

Assume $r = 2$. As previously, let $\mu_0 \geq k$ be the unique integer such that either $d = \mu_0^2 - f'$ or $d = \mu_0(\mu_0+1) - f'$, where $d \equiv -f' (\mod \mu_0)$ and $0 \leq f' < \mu_0$. Then Lemma \ref{lem:complicated_bounds_rank1} part (\ref{item:bounding_ch1}) implies that we are in any of the four situations described in Lemma \ref{lem:rank2_wall_1} and Lemma \ref{lem:rank2_wall_2}. These lemmas imply that $e \leq \tilde{E}(d, \mu_0)$. Since $\mu_0 \geq k$, we get again a contradiction by using Lemma \ref{lem:genus_bound_decreases} to obtain $e \leq \tilde{E}(d, k)$.
\end{proof}

%%%%%%%%%%%%%%%%%%%%%%%%%%%%%%%%%%%%%%%%%%%%%
%%%%%%%%%%%%%%%%%%%%%%%%%%%%%%%%%%%%%%%%%%%%%
%%%%%%%%%%%%%%%%%%%%%%%%%%%%%%%%%%%%%%%%%%%%%
\section{Towards the Hartshorne-Hirschowitz Conjecture}

In the previous section we gave a new proof for the maximal genus of a degree $d$ curve $C \subset \P^3$ with $H^0(\II_C(k-1)) = 0$ for some positive integer $k$ satisfying $d > k(k-1)$. The main open question is what happens for
\[
\frac{1}{3}(k^2 + 4k + 6) \leq d \leq k(k-1).
\]
Let us rewrite the conjectural answer for this case in terms of the Chern character. Recall from the introduction that for any integer $c \in \Z$, we defined
\[
\delta (c) =  \begin{cases}
3 &\text{if $c=1,3$}\\
1 &\text{if $c \equiv 2 \ (\mod 3)$} \\
0 &\text{otherwise}.
\end{cases}
\]
Then, for any integers $k \geq 5$ and $f \in [k - 1, 2k - 5]$, we defined integers
\begin{align*}
A(k,f) &= \frac{1}{3}(k^2 - kf + f^2 - 2k + 7f + 12 + \delta(2k - f - 6)), \\
B(k,f) &= \frac{1}{3}(k^2 - kf + f^2  + 6f + 11 + \delta(2k - f - 7)).
\end{align*}

\begin{conj}[Hartshorne--Hirschowitz]
\label{conj:hartshorne_rangeB}
Let $C \subset \P^3$ be an integral curve of degree $d$ such that $H^0(\II_C(k-1)) = 0$ for some positive integer $k$. Assume that $A(k,f) \leq d < A(k,f+1)$ for $f \in [k-1, 2k-6]$.
Then
\[
\ch_3(\II_C) \leq E(d,k) := d(k+1) - \binom{k+2}{3} + \binom{f-k+4}{3} + h(d),
\]
where
\[
h(d) = \begin{cases}
0 &\text{if } A(k,f) \leq d \leq B(k,f) \\
\frac{1}{2}(d-B(k,f))(d-B(k,f) + 1) &\text{if } B(k,f) \leq d < A(k,f+1)).
\end{cases}
\]
\end{conj}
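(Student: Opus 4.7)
The plan is to argue by contradiction, following the same wall-crossing philosophy used in the proof of Theorem \ref{thm:genus_bound_large_degree}. Suppose $C \subset \P^3$ is an integral curve of degree $d$ with $H^0(\II_C(k-1)) = 0$ and $\ch_3(\II_C) > E(d,k)$. Since Assumption \ref{assu:bmt} holds on $\P^3$ (in any characteristic, by \cite{Mac14:conjecture_p3}), the region $Q_{\alpha,\beta}(\II_C) < 0$ is non-empty, so $\II_C$ must be destabilized along some semicircular wall $W$. The privileged numerical wall is $W_0 := W(\II_C, \OO(-f-4)[1])$, which corresponds to the extension $0 \to \OO(-f-4) \to E \to \II_C \to 0$ producing the extremal curves of \cite{HH88:genus_bound}. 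The proof then splits according to the position of $W$ relative to $W_0$.

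Case (A): $W \supseteq W_0$. This is exactly Question \ref{ques:largeWallRangeB}. For the subrange $A(k,f) \leq d \leq B(k,f)$, apply Theorem \ref{thm:largeWallRangeB_intro} directly. For the remaining subrange $B(k,f) < d < A(k,f+1)$, I would refine that argument: take a destabilizing sequence $0 \to F \to \II_C \to G \to 0$ in $\Coh^\beta(\P^3)$, use Lemma \ref{lem:higherRankBound} together with the comparatively small size of the $Q < 0$ semidisk to cap $\rk F$, invoke Lemma \ref{lem:sub_is_reflexive} to force $F$ reflexive, and bound $\ch_3(F)$ and $\ch_3(G)$ via Proposition \ref{prop:maximum_ch3_rank1}, Theorem \ref{thm:rank_zero_bound}, Theorem \ref{thm:rank_2_first_cases}, and Theorem \ref{thm:hartshorne_reflexive_bounds}. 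The polynomial correction $h(d) = \tfrac{1}{2}(d-B(k,f))(d-B(k,f)+1)$ should fall out of the precise Bogomolov-type inequalities applied to the two factors at the wall $W_0$, where the numerics of the extremal extension degenerate away from the pure $A(k,f) \leq d \leq B(k,f)$ regime.

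Case (B): $W \subsetneq W_0$. This is Question \ref{ques:decreasing}, and this is where I expect the main obstacle. The strategy is to prove a general monotonicity statement: for the fixed invariants of an ideal sheaf, the supremum of $\ch_3$ over tilt semistable objects that are destabilized at a semicircular wall of radius $\rho$ is a decreasing function of $\rho$. For walls induced by rank $1$ or rank $2$ reflexive subobjects this should be obtainable by combining $\Delta(F)+\Delta(G) \leq \Delta(\II_C)$ with the bounds of Section \ref{sec:classical_bounds} and Theorem \ref{thm:hartshorne_reflexive_bounds}, together with an induction on $\Delta$. The serious difficulty is that as $\Delta(\II_C)$ grows the destabilizing $F$ may be a reflexive sheaf of arbitrarily large rank, and analogues of Theorem \ref{thm:hartshorne_reflexive_bounds} in rank $\geq 3$ are not known; one would need at least to prove a $Q_{\alpha,\beta}$-based upper bound on $\ch_3$ for higher rank tilt semistable objects in the relevant Chern character range, sharp enough that the inherited bound on $\ch_3(\II_C)$ decays with the radius of $W$. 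Finally, one must verify that the strict bound $E(d,k)$ with its $h(d)$ correction is never matched by any wall strictly inside $W_0$; this should be a book-keeping step once the monotonicity principle is in hand, comparable to the termination argument at the end of the proof of Theorem \ref{thm:genus_bound_large_degree}.
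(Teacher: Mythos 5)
The statement you are trying to prove is stated in the paper as a \emph{conjecture}, and the paper does not prove it; it only establishes the partial result Theorem \ref{thm:largeWallRangeB} (Case (A) of your plan, restricted to $A(k,f)\leq d\leq B(k,f)$ and characteristic $0$) plus the single family of examples in Proposition \ref{prop:special_case_2k10}. Your proposal is a roadmap rather than a proof, and it contains two genuine gaps. First, in Case (A) for the subrange $B(k,f)<d<A(k,f+1)$ you assert that the correction term $h(d)=\tfrac12(d-B(k,f))(d-B(k,f)+1)$ ``should fall out'' of the Bogomolov-type inequalities at the wall $W(\II_C,\OO(-f-4)[1])$, but no such computation is carried out, and the paper's own argument (Lemmas \ref{lem:rangeB_large_ch1} and \ref{lem:sub_h2_vanishing}, and the counting via Corollary \ref{cor:sections_ideal_bound} at the end of the proof of Theorem \ref{thm:largeWallRangeB}) uses $d\leq B(k,f)$ in an essential way --- e.g.\ the lower bound $\ch_2(G)\geq A(k,f+1)-d\geq f-k+3$ on the degree of the dual curve $C'$ fails once $d$ exceeds $B(k,f)$, so the estimate $h^0(\II_{C'}(f-k+2))\leq\binom{f-k+4}{3}$ no longer closes the argument without a new idea accounting for $h(d)$.

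Second, and more seriously, Case (B) is precisely Question \ref{ques:decreasing}, which the paper explicitly leaves open. The monotonicity principle you invoke (that the maximal $\ch_3$ of objects destabilized at a wall decreases with the radius of the wall) is not proved anywhere in the paper, and, as you yourself note, the destabilizing subobject below $W(\II_C,\OO(-f-4)[1])$ can be a reflexive sheaf of rank $\geq 3$, for which no analogue of Theorem \ref{thm:hartshorne_reflexive_bounds} is available; Lemma \ref{lem:higherRankBound} only caps the rank when the wall is large relative to $\Delta(\II_C)$, and for small walls deep inside the $(\alpha,\beta)$-plane that cap is ineffective. Acknowledging an obstacle does not discharge it: as written, your argument proves at most what Theorem \ref{thm:largeWallRangeB} already proves, and the conjecture remains open.
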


The goal of this section is to prove the following result.

\begin{thm}
\label{thm:largeWallRangeB}
Assume the hypothesis of Conjecture \ref{conj:hartshorne_rangeB}. Furthermore, let $A(k, f) \leq d \leq B(k,f)$, and assume that the base field has characteristic $0$. If $\II_C$ is destabilized in tilt stability above or at the numerical wall $W(\II_C, \OO(-f-4)[1])$, then $\ch_3(\II_C) \leq E(d,k)$.
\end{thm}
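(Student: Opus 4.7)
The plan is to follow the five-step wall-crossing strategy of Section~\ref{subsec:strategy}. Suppose $\II_C$ is strictly $\nu_{\alpha_0,\beta_0}$-semistable along a semicircular wall $W$ whose radius is at least that of $W(\II_C,\OO(-f-4)[1])$, realized by a short exact sequence
\[
0 \to F \to \II_C \to G \to 0
\]
in $\Coh^{\beta_0}(\P^3)$. By Lemma~\ref{lem:sub_is_reflexive} the subobject $F$ is a reflexive sheaf, and since $\ch_0(\II_C)=1$ we may replace $F$ by $G$ to assume $\ch_0(F)\geq 1$. The first task is to pin down the rank: Lemma~\ref{lem:higherRankBound} gives $\rho_W^2 \leq d/(4\ch_0(F)(\ch_0(F)-1))$ as soon as $\ch_0(F)\geq 2$, and combined with the lower bound on $\rho_W$ coming from the assumed position of $W$, this forces $\ch_0(F)\in\{1,2\}$.

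Next I would bound $\ch_1(F)$. Using $H^2\cdot\ch_1^\beta(F)>0$ and $H^2\cdot\ch_1^\beta(G)>0$ as $\beta$ ranges across $[s_W-\rho_W,\,s_W+\rho_W]$, together with the vanishing $H^0(\II_C(k-1))=0$ which rules out line-bundle subobjects $\OO(-h)$ for $h<k$, the value $-\ch_1(F)$ is confined to a short list: $\{k,\dots,f+4\}$ when $\ch_0(F)=1$, and an analogous handful of values when $\ch_0(F)=2$. The Bogomolov inequalities $\Delta(F),\Delta(G)\geq 0$ then constrain $\ch_2(F)$ from above, while Assumption~\ref{assu:bmt} applied to $\II_C$ (placing $W$ outside the semidisk $Q_{\alpha,\beta}(\II_C)<0$) bounds $\ch_2(F)$ from below. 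Altogether this reduces the question to finitely many numerical walls.

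At each such wall I would apply the appropriate $\ch_3$-estimates to both factors. For $\ch_0(F)=1$ the quotient $G$ has rank zero and one uses Proposition~\ref{prop:maximum_ch3_rank1} on $F$ and Theorem~\ref{thm:rank_zero_bound} on $G$. For $\ch_0(F)=2$ the semistability of $F$ together with $H^0(\II_C)=0$ forces $H^0(F)=0$, so Theorem~\ref{thm:hartshorne_reflexive_bounds} of Hartshorne--Hirschowitz supplies the sharp bound on $\ch_3(F)$, while Theorem~\ref{thm:rank_zero_bound} (or Proposition~\ref{prop:maximum_ch3_rank1} via the derived dual of Proposition~\ref{prop:tilt_derived_dual}) bounds $\ch_3(G)$. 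Summing yields an explicit upper bound for $\ch_3(\II_C)=\ch_3(F)+\ch_3(G)$ as a function of $(\ch_1(F),\ch_2(F))$, and characteristic zero enters precisely through the invocation of Theorem~\ref{thm:hartshorne_reflexive_bounds}.

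The last step is to verify that this bound is monotone-decreasing as the wall grows, so that it suffices to check the inequality at the innermost admissible wall $W(\II_C,\OO(-f-4)[1])$; at that distinguished wall the construction produces by design precisely the quantity $E(d,k)$, using $h(d)=0$ throughout the range $A(k,f)\leq d\leq B(k,f)$. The main obstacle will be the rank-two case: one must carefully split along the two branches of Theorem~\ref{thm:hartshorne_reflexive_bounds} (separated by whether $\ch_2(F)$ lies above or below $\tfrac{1}{6}\ch_1(F)^2-\ch_1(F)-\tfrac{8}{3}$) and track the $\delta(c)$ corrections appearing both in the reflexive bound and in the definitions of $A(k,f),B(k,f)$, matching them case by case against the two admissible values of $\ch_1(F)$. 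This bookkeeping, rather than any new structural input, is the combinatorial heart of the proof.
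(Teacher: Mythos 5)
Your outline reproduces the generic five-step wall-crossing strategy of Section~\ref{sec:intro}, and some ingredients are right (reflexivity of the subobject via Lemma~\ref{lem:sub_is_reflexive}, rank $\leq 2$ via Lemma~\ref{lem:higherRankBound}, vanishing $H^0(E(k-1))=0$ feeding into Theorem~\ref{thm:hartshorne_reflexive_bounds}). But the proposal has a genuine gap at its core: the final step is \emph{not} a summation of $\ch_3$-bounds for the two factors, and the claim that ``at the distinguished wall the construction produces by design precisely $E(d,k)$'' is exactly the point that cannot be established this way. The bound $E(d,k)=d(k+1)-\binom{k+2}{3}+\binom{f-k+4}{3}$ contains the term $\binom{f-k+4}{3}$, and in the paper this arises from a cohomological argument, not from Chern-character estimates: one shows $h^2(E(k-1))=0$ (Lemma~\ref{lem:sub_h2_vanishing}), deduces $\chi(\II_C(k-1))\leq h^2(G(k-f-6))$, identifies $\D(G)$ with the ideal sheaf of an honest curve $C'$ of degree $\geq f-k+3$ up to points (Lemma~\ref{lem:derived_dual_rank_minusone}), and then invokes the section-counting bound $h^0(\II_{C'}(f-k+2))\leq\binom{f-k+4}{3}$ of Corollary~\ref{cor:sections_ideal_bound}. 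Applying Proposition~\ref{prop:maximum_ch3_rank1} or Theorem~\ref{thm:rank_zero_bound} to $G$, as you propose, bounds the ``genus'' of $C'$ rather than the number of sections of its ideal sheaf; the resulting estimate for $\ch_3(\II_C)$ is too weak over the full range $\ch_2(G)\in[A(k,f+1)-d,\ \tfrac{f+5}{2}-\tfrac{d}{f+4}]$, and no monotonicity-in-the-wall argument rescues it.

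Two further points. First, the rank-one case you keep alive is in fact empty: the wall $W(\II_C,\OO(-f-4)[1])$ meets the $\beta$-axis at $-2d/(f+4)>-k$, so no line bundle $\OO(-h)$ with $h\geq k$ can induce a wall at or above it (Lemma~\ref{lem:rangeB_rank_bound}). This matters because your fallback for rank one --- Theorem~\ref{thm:rank_zero_bound} on the quotient plus monotonicity of $\tilde E(d,\cdot)$ --- fails in range B, where $d\leq k(k-1)$ and Lemma~\ref{lem:genus_bound_decreases} no longer applies. Second, for the rank-two subobject the first Chern class of the quotient is pinned down to exactly one value ($\ch_1(G(-f-5))=f+5$, not a ``handful''), and ruling out $f+6$ and larger already requires playing the Hartshorne--Hirschowitz lower bound on $\ch_2$ of reflexive sheaves without sections against the upper bound coming from the wall's position (Lemma~\ref{lem:rangeB_large_ch1}); this is a substantive step your sketch does not supply.
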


\subsection{Bounding sections of ideal sheaves}

The results in this section, are elementary ingredients in the proof of Theorem \ref{thm:largeWallRangeB}.

\begin{prop}
\label{prop:sections_points_p2}
% Let $Z \subset \P^2$ be a zero-dimensional subscheme of length $n$. Then
% \[
% h^0(\II_Z(l)) \leq \binom{l+1}{2} + \binom{l - n + 1}{1}
% \]
% for any integer $l$. In particular, if $l < n$, then
% \[
% h^0(\II_Z(l)) \leq \binom{l+1}{2}.
% \]
Let $Z \subset \P^2$ be a zero-dimensional subscheme of length $n$. Then
\[
h^0(\II_Z(l)) \leq \binom{l+1}{2}
\]
for any integer $l < n$.
\end{prop}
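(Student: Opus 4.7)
The plan is to translate the bound into a lower bound for the Hilbert function of $Z$. Setting $V_l \subseteq H^0(\OO_Z(l))$ equal to the image of the restriction map from $H^0(\OO_{\P^2}(l))$, the short exact sequence $0 \to \II_Z(l) \to \OO_{\P^2}(l) \to \OO_Z(l) \to 0$ immediately yields
\[
h^0(\II_Z(l)) = \binom{l+2}{2} - \dim V_l.
\]
Thus the proposition is equivalent to establishing $\dim V_l \geq l+1$ whenever $l < n$.

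My next step would be to choose a convenient trivialization. Since $\F$ is infinite, I can pick homogeneous coordinates $X_0, X_1, X_2$ so that $\{X_0 = 0\}$ misses $\supp(Z)$, placing $Z$ inside the affine chart $\A^2 = \{X_0 \neq 0\}$. Writing $x_i = X_i/X_0$ and letting $I_Z \subset \F[x_1, x_2]$ denote the affine ideal of $Z$, the map $F \mapsto F/X_0^l$ identifies $V_l$ with the image of $\F[x_1, x_2]_{\leq l}$ in $\OO_Z = \F[x_1, x_2]/I_Z$. This produces an ascending chain $V_0 \subseteq V_1 \subseteq \cdots \subseteq \OO_Z$ beginning with $V_0 = \F \cdot 1$.

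The heart of the argument is to show that $V_l \subsetneq \OO_Z$ forces $V_{l+1} \supsetneq V_l$. From the elementary decomposition
\[
\F[x_1, x_2]_{\leq l+1} = \F[x_1, x_2]_{\leq l} + x_1 \cdot \F[x_1, x_2]_{\leq l} + x_2 \cdot \F[x_1, x_2]_{\leq l}
\]
one deduces $V_{l+1} = V_l + x_1 V_l + x_2 V_l$. If $V_{l+1}$ coincided with $V_l$, then $V_l$ would be an $\F$-subspace of $\OO_Z$ containing $1$ and closed under multiplication by $x_1$ and $x_2$; since these elements generate $\OO_Z$ as an $\F$-algebra, this would force $V_l = \OO_Z$, contrary to assumption.

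Iterating this strict-growth principle starting from $\dim V_0 = 1$ gives $\dim V_l \geq \min(l+1, n)$, and for $l < n$ this is exactly $\dim V_l \geq l+1$, completing the argument. The most delicate point I expect to face is ensuring the dimension bookkeeping really works for a possibly non-reduced, possibly concentrated $Z$: one must verify that $V_0$ has dimension exactly $1$ (using $Z \neq \emptyset$) and that the chosen affine chart contains all of $\supp(Z)$. Both are routine over an algebraically closed field, but worth noting to keep the proof self-contained.
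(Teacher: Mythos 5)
Your proof is correct, and it takes a genuinely different route from the one in the paper. The paper handles the boundary case $n = l+1$ by invoking the $l$-very ampleness of $\OO_{\P^2}(l)$ (citing Catanese--G\"ottsche), which says precisely that the restriction map $H^0(\OO_{\P^2}(l)) \to H^0(\OO_Z(l))$ is surjective for length-$(l+1)$ subschemes, so that $h^0(\II_Z(l)) = \binom{l+1}{2}$ exactly; the case $n > l+1$ then follows by an induction on $n$, i.e.\ by replacing $Z$ with a length-$(l+1)$ subscheme $Z' \subseteq Z$ and using $\II_Z \subseteq \II_{Z'}$. You instead prove the inequality directly as a statement about the Hilbert function of a zero-dimensional scheme: after reducing to $\dim V_l \geq l+1$ and placing $Z$ in an affine chart, your key step --- that $V_{l+1} = V_l$ forces $V_l = \OO_Z$, because $V_l$ then contains $1$ and is stable under multiplication by the algebra generators $x_1, x_2$ of $\OO_Z = \F[x_1,x_2]/I_Z$ --- is exactly right, and the bookkeeping $\dim V_l \geq \min(l+1,n)$ closes the induction since $l < n$ gives $\min(l+1,n) = l+1$. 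What your approach buys is self-containedness and generality: it needs no external input, and the same argument works verbatim in $\P^N$ or for any zero-dimensional affine scheme. What it gives up is the sharper fact, recorded but not actually needed in the paper, that equality $h^0(\II_Z(l)) = \binom{l+1}{2}$ holds when $n = l+1$.
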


\begin{proof}
If $n = l + 1$, then $\OO_{\P^2}(l)$ is $l$-very ample (see \cite{CG90:d_very_ample} for details on this notion) and
\[
h^0(\II_Z(l)) = \binom{l+1}{2}. 
\]
More generally, for $n > l + 1$ the statement follows from a straightforward induction on $n$.
\end{proof}

% Before we prove this proposition, we point out the crucial corollary of this statement.

The following corollary will be crucial to the proof of the proposition.

\begin{cor}
\label{cor:sections_ideal_bound}
% Let $C \subset \P^3$ be an arbitrary one-dimensional subscheme of degree $d$. Then
% \[
% h^0(\II_C(l)) \leq \binom{l+2}{3} + \binom{l-d+2}{2}
% \]
% for any integer $l$. In particular, if $l < d$, then
% \[
% h^0(\II_C(l)) \leq \binom{l+2}{3}.
% \]
Let $C \subset \P^3$ be an arbitrary one-dimensional subscheme of degree $d$. Then
\[
h^0(\II_C(l)) \leq \binom{l+2}{3}
\]
for any integer $l < d$.
\end{cor}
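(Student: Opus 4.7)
The plan is to reduce the statement on $\P^3$ to the statement on $\P^2$ already proved as Proposition \ref{prop:sections_points_p2}, by cutting $C$ with a sufficiently general hyperplane and inducting on $l$.

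First, I would pick a hyperplane $H \subset \P^3$ that does not contain any of the finitely many zero-dimensional associated points of $\OO_C$. Since $\II_C \subset \OO_{\P^3}$ is torsion-free, the defining linear form of $H$ is a non-zero-divisor on $\II_C$; moreover the choice of $H$ ensures that the scheme-theoretic intersection $C \cap H$ is zero-dimensional of length exactly $d$ in $H \cong \P^2$. Tensoring the standard sequence $0 \to \OO_{\P^3}(-1) \to \OO_{\P^3} \to \OO_H \to 0$ with $\II_C(l)$ then produces the short exact sequence
\[
0 \to \II_C(l-1) \to \II_C(l) \to \II_{C \cap H / H}(l) \to 0.
\]

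Taking global sections gives $h^0(\II_C(l)) \leq h^0(\II_C(l-1)) + h^0(\II_{C \cap H/H}(l))$. Because $l < d$ is exactly the hypothesis of Proposition \ref{prop:sections_points_p2} applied to $Z = C \cap H$ of length $d$, the second term is at most $\binom{l+1}{2}$. I would then induct on $l$, starting from the trivial base case $l \leq 0$ where $h^0(\II_C(l)) = 0 = \binom{l+2}{3}$, to obtain
\[
h^0(\II_C(l)) \leq \binom{l+1}{3} + \binom{l+1}{2} = \binom{l+2}{3},
\]
using Pascal's identity in the final step. The only mild subtlety is verifying that a general hyperplane truly yields a length-$d$ intersection for an arbitrary (possibly non-reduced, with embedded points) one-dimensional subscheme, but this is standard: the finitely many embedded points can be avoided, and $H$ meets each one-dimensional component in its generic degree. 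There is no substantive obstacle beyond this book-keeping.
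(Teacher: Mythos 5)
Your argument is exactly the paper's proof: cut with a general plane avoiding the embedded points of $C$, use the resulting short exact sequence $0 \to \II_C(l-1) \to \II_C(l) \to \II_{C \cap H/H}(l) \to 0$ together with Proposition \ref{prop:sections_points_p2} applied to the length-$d$ scheme $C \cap H$, and induct on $l$ via Pascal's identity. The extra care you take with left-exactness and with the identification $\II_C \otimes \OO_H \cong \II_{C\cap H/H}$ (avoiding associated points so the linear form is a non-zero-divisor) is exactly the book-keeping the paper leaves to "direct inspection," so the proposal is correct and complete.
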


\begin{proof}
The proof will be achieved by induction on $l$. Indeed, for $l \leq 0$ we have $h^0(\II_C(l)) = 0$. Assume we know the statement holds for some $l > 0$. Let $H \subset \P^3$ be a general plane such that the scheme-theoretic intersection $Z = H \cap C$ is zero-dimensional and of length $d$. We have a short exact sequence
\[
0 \to \OO(l) \to \OO(l+1) \to \OO_H(l+1) \to 0.
\]
By tensoring with $\II_C$ we get another short exact sequence
\[
0 \to \II_C(l) \to \II_C(l+1) \to \II_C \otimes \OO_H(l+1) = \II_{Z/\P^2}(l+1) \to 0.
\]
This sequence is exact on the left, since the map $\II_C(l) \to \II_C(l+1)$ is injective by direct inspection. As a consequence we can use Proposition \ref{prop:sections_points_p2} and the inductive hypothesis to obtain
\begin{align*}
h^0(\II_C(l+1)) &\leq h^0(\II_C(l)) + h^0(\II_{Z/\P^2}(l+1)) \\
&\leq \binom{l+2}{3}
%+ \binom{l-d+2}{2} 
+ \binom{l+2}{2} 
%+ \binom{l - d + 2}{1}
= \binom{l+3}{3} 
%+ \binom{l-d+3}{2}
. \qedhere
\end{align*}
\end{proof}

\subsection{Proof of Theorem \ref{thm:largeWallRangeB}}

\begin{lem}
\label{lem:f_minus_four_wall}
The objects $\II_C$ and $\OO(-f-4)[1]$ are in the category $\Coh^{\beta}(\P^3)$ along the wall $W(\II_C, \OO(-f-4)[1])$. If $\II_C$ is destabilized by a map $\II_C \to \OO(-f-4)[1]$, then Conjecture \ref{conj:hartshorne_rangeB} holds for $C$.
\end{lem}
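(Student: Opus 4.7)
The plan splits into two parts, corresponding to the two assertions of the lemma.

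For the first, compute the numerical wall directly. Setting $m := f + 4$, the equation $\nu_{\alpha,\beta}(\II_C) = \nu_{\alpha,\beta}(\OO(-m)[1])$ reduces to
\[
\alpha^{2} = -(\beta + m)\!\left(\beta + \tfrac{2d}{m}\right),
\]
a semicircle meeting the $\beta$-axis at $\beta = -m$ and $\beta = -2d/m$. A short estimate using $d \leq B(k,f)$ and $f \geq k - 1$ shows $(f+4)^{2} > 2d$, so $-m < -2d/m < 0$ and the whole semicircle lies in the strip $-(f+4) \leq \beta < 0$. On this strip, $\II_C$ (torsion-free of slope $0$) sits in $\TT_{\beta}$ and $\OO(-f-4)$ (slope $-(f+4)$) sits in $\FF_{\beta}$, so both $\II_C$ and $\OO(-f-4)[1]$ belong to $\Coh^{\beta}(\P^{3})$.

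For the second, the destabilizing surjection $\II_C \twoheadrightarrow \OO(-f-4)[1]$ in $\Coh^{\beta}(\P^{3})$ fits into a short exact sequence
\[
0 \to E \to \II_C \to \OO(-f-4)[1] \to 0
\]
there. Taking cohomology sheaves, $E$ is concentrated in degree zero and fits into a short exact sequence $0 \to \OO(-f-4) \to E \to \II_C \to 0$ of coherent sheaves; by Lemma \ref{lem:sub_is_reflexive}, $E$ is a rank-two reflexive sheaf with $\ch(E) = \bigl(2,\, -(f+4),\, \tfrac{(f+4)^{2}}{2} - d,\, -\tfrac{(f+4)^{3}}{6} + \ch_{3}(\II_C)\bigr)$. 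Normalize by setting $a := \lfloor (f+4)/2 \rfloor$ so that $E(a)$ has $c_{1} \in \{-1, 0\}$. Since $a < f + 4$ and $a \leq k - 1$ (using $f \leq 2k - 6$), twisting the extension and taking $H^{0}$ yields
\[
0 = H^{0}(\OO(a - f - 4)) \to H^{0}(E(a)) \to H^{0}(\II_C(a)) \subseteq H^{0}(\II_C(k-1)) = 0,
\]
so Theorem \ref{thm:hartshorne_reflexive_bounds} applies to the normalized rank-two reflexive sheaf $E(a)$.

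The final step converts Hartshorne's upper bound for $\ch_{3}(E(a))$ into an upper bound on $\ch_{3}(\II_C) = \ch_{3}(E) + (f+4)^{3}/6$ after undoing the twist. The hypothesis $d \leq B(k, f)$ is precisely what keeps $H \cdot \ch_{2}(E(a))$ in the \emph{linear} regime of Theorem \ref{thm:hartshorne_reflexive_bounds} (case $(1)$), where the bound on $\ch_{3}$ is affine in $\ch_{2}$; the cubic regime (case $(2)$) corresponds to the extra quadratic correction $h(d)$ of Conjecture \ref{conj:hartshorne_rangeB}, which appears only once $d > B(k, f)$. The main obstacle is the arithmetic bookkeeping: handling the two parities of $f + 4$ separately, matching the $\delta$-terms in Theorem \ref{thm:hartshorne_reflexive_bounds} to those built into $A(k, f)$ and $B(k, f)$, and rearranging Hartshorne's inequality $\ch_{3} \leq -\tfrac{11c}{6} - 2\, H\cdot\ch_{2} - 2$ into the binomial expression $d(k+1) - \binom{k+2}{3} + \binom{f-k+4}{3}$. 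That the extremal HH curves are constructed from exactly this type of extension makes the coincidence of bounds expected, but the binomial identity must be verified directly.
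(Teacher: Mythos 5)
Your treatment of the first assertion is fine and matches the paper's: the wall is the semicircle through $\beta=-(f+4)$ and $\beta=-2d/(f+4)$, and the needed inequality is exactly $(f+4)^2>2d$, which the paper also verifies by a parabola estimate (using $d<A(k,f+1)$ rather than $d\leq B(k,f)$, but either works). The identification of the kernel $E$ as a rank-two reflexive extension $0\to\OO(-f-4)\to E\to\II_C\to 0$, justified via Lemma \ref{lem:sub_is_reflexive}, is also exactly what the paper does.

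The gap is in the choice of twist. The paper applies Theorem \ref{thm:hartshorne_reflexive_bounds} to $E(k-1)$, not to the ``standard'' normalization $E(a)$ with $a=\lfloor (f+4)/2\rfloor$. This matters: Theorem \ref{thm:hartshorne_reflexive_bounds} is a statement about a reflexive sheaf \emph{together with a specific twist for which $H^0$ vanishes}, and since $a\leq k-1$ the hypothesis $H^0(E(a))=0$ is strictly weaker than $H^0(E(k-1))=0$ whenever $f<2k-6$; applying the theorem to the smaller twist discards most of the information carried by $H^0(\II_C(k-1))=0$. Concretely, your claim that $d\leq B(k,f)$ places $\ch_2(E(a))$ in the linear regime (case (1)) is false: for $c_1(E(a))\in\{-1,0\}$ case (1) requires $\ch_2(E(a))\geq -3$, whereas $\ch_2(E(a))=\tfrac{(f+4)^2}{4}-d+O(1)$, which for $f=k-1$ is roughly $-k^2/12$. (For example $k=10$, $f=9$, $d=49$ gives $\ch_2(E(6))=-\tfrac{13}{2}$, deep in case (2), whose quartic bound is far too weak to yield $E(d,k)$.) The correct bookkeeping is with $c=\ch_1(E(k-1))=2k-f-6$: one checks that $\ch_2(E(k-1))\leq\tfrac16c^2-\tfrac23c-1-\tfrac{\delta(c)}{3}$ is equivalent to $d\geq A(k,f)$, that $\ch_2(E(k-1))\geq\tfrac16c^2-c-\tfrac83-\tfrac{\delta(c-1)}{3}$ is equivalent to $d\leq B(k,f)$, and that the case (1) bound $\ch_3(E(k-1))\leq-\tfrac{11c}{6}-2\ch_2(E(k-1))-2$ translates exactly into $\ch_3(\II_C)\leq d(k+1)-\binom{k+2}{3}+\binom{f-k+4}{3}$ via the identity $\tfrac{u^3}{6}+u^2+\tfrac{11u}{6}+1=\binom{u+3}{3}$ applied with $u=k-1$ and $u=k-f-5$. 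A secondary point: the lemma is stated for the whole range $A(k,f)\leq d<A(k,f+1)$, so strictly speaking one must also run case (2) of Theorem \ref{thm:hartshorne_reflexive_bounds} for $E(k-1)$ when $B(k,f)<d<A(k,f+1)$ and match it against $h(d)$; you correctly flag this correspondence but do not carry it out (it is harmless for the application in Theorem \ref{thm:largeWallRangeB}, which assumes $d\leq B(k,f)$).
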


\begin{proof}
The first claim holds if and only if
\[
-f-4 < \overline{\beta}(\II_C) = -\sqrt{2d}.
\]
Therefore, we have to show
\[
\frac{(f+4)^2}{2} - d > 0.
\]
By definition
\begin{align*}
\frac{(f+4)^2}{2} - d &> \frac{(f+4)^2}{2} - A(k, f + 1) \\
&\geq \frac{(f+4)^2}{2} - \frac{1}{3}(k^2 - k(f+1) + (f+1)^2 - 2k + 7(f+1) + 15) \\
&= \frac{1}{6}f^2 + \frac{1}{3}fk - \frac{1}{3}k^2 + f + k + \frac{1}{3}.
\end{align*}
The last term is a parabola in $f$ with minimum at $f = - k - 3$. Since we have $f \geq k - 1$, we can get a lower bound by setting $f = k - 1$, where
\[
\frac{1}{6}f^2 + \frac{1}{3}fk - \frac{1}{3}k^2 + f + k + \frac{1}{3} = \frac{1}{6}k^2 + \frac{4}{3}k - \frac{1}{2} > 0.
\]
Let $E$ be the kernel of the morphism $\II_C \to \OO(-f-4)[1]$. The second claim immediately follows from Theorem \ref{thm:hartshorne_reflexive_bounds} applied to $E(k-1)$.
\end{proof}

\begin{lem}
\label{lem:rangeB_rank_bound}
Assume $A(k,f) \leq d \leq B(k,f)$. Then walls for objects with Chern character $\ch(\II_C)$ above or at the numerical wall with $\OO(-f-4)[1]$ are induced by a rank two subobject that is a reflexive sheaf.
\end{lem}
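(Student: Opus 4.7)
The plan is to analyze the possible ranks of $E$ in the destabilizing sequence $0 \to E \to \II_C \to G \to 0$ associated to a wall $W$ of radius $\rho_W \geq \rho_f$, where $\rho_f$ denotes the radius of $W(\II_C, \OO(-f-4)[1])$. Since $\Delta(\II_C) = 2d$, Theorem \ref{thm:Bertram}(i) combined with the inequality $(f+4)^2 > 2d$ obtained in the proof of Lemma \ref{lem:f_minus_four_wall} gives $\rho_f = \tfrac{f+4}{2} - \tfrac{d}{f+4}$. By Lemma \ref{lem:sub_is_reflexive}, $E$ is automatically a reflexive sheaf, and since rank-zero reflexive sheaves on a smooth projective threefold vanish, $\rk(E) \geq 1$. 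Thus the task is to exclude $\rk(E) \geq 3$ and $\rk(E) = 1$.

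For $\rk(E) \geq 3$: Lemma \ref{lem:higherRankBound} applied with $\ch_0(\II_C) = 1$ gives $\rho_W^2 \leq \tfrac{d}{2\rk(E)(\rk(E) - 1)} \leq \tfrac{d}{12}$. The required inequality $\rho_f^2 > \tfrac{d}{12}$, equivalent to
\[
\frac{(f+4)^2}{4} + \frac{d^2}{(f+4)^2} > \frac{13\, d}{12},
\]
can be verified by a finite substitution using the explicit formulas for $A(k,f)$ and $B(k,f)$, along with $f \in [k-1, 2k-6]$ and $k \geq 5$; this gives a contradiction.

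For $\rk(E) = 1$: a rank one reflexive sheaf on $\P^3$ is a line bundle $E \cong \OO(-m)$, and $H^0(\II_C(k-1)) = 0$ forces $m \geq k$. The radius of $W(\II_C, \OO(-m))$ is $\rho_m = |m/2 - d/m|$; the condition $\rho_m \geq \rho_f$ factors as $(m - (f+4))(m(f+4) - 2d) \geq 0$, so either $m \geq f + 4$ or $m \leq 2d/(f+4)$. The second branch is excluded by the combination of $m \geq k$ and $d \leq B(k,f)$ in the given range of $(k,f)$. In the main branch $m \geq f + 4$, the quotient $G = \II_C / \OO(-m)$ has rank zero with $H\cdot\ch(G) = (0,\, m,\, -d - m^2/2,\, e + m^3/6)$ and is tilt semistable along $W$; applying Theorem \ref{thm:rank_zero_bound} and simplifying gives $e \leq \tfrac{d^2}{2m} + \tfrac{dm}{2}$. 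One then needs to leverage this bound to produce the contradiction.

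The main obstacle is precisely this last step of the rank-one case. The bound $d^2/(2m) + dm/2$ coming from Theorem \ref{thm:rank_zero_bound} is itself \emph{not} smaller than the conjectural $E(d,k)$ for $m \geq f+4$ in our range---indeed a direct computation shows $\tilde E(d, f+4) > E(d,k)$---so one does not obtain a contradiction purely by comparing Chern character bounds. The argument must instead use the fact that the lemma asserts \emph{existence} of a rank two reflexive destabilizer (not uniqueness): even when $\OO(-m) \hookrightarrow \II_C$ induces a rank one wall, the detailed structure of the tilt semistable rank-zero quotient $G$ along $W$ provides a rank two reflexive sheaf $E' \hookrightarrow \II_C$ at the same wall, obtained by gluing $\OO(-m)$ to a suitable line subbundle of the dual of $G$. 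Carrying out this construction rigorously, and verifying reflexivity of $E'$, is the delicate step.
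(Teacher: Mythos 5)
Your treatment of the rank $\geq 3$ case and the reflexivity claim matches the paper's (Lemma \ref{lem:higherRankBound} plus Lemma \ref{lem:sub_is_reflexive}, with the inequality $\rho_f^2 > d/12$ following from $d < (f+4)^2/3$ in the given range -- though note this is a two-parameter verification, not a finite substitution, and needs the small parabola argument the paper supplies). The genuine gap is in the rank one case, and you have correctly diagnosed that your own argument does not close it: the branch $m \geq f+4$ cannot be eliminated by comparing Chern character bounds, and the proposed repair -- producing a rank two reflexive destabilizer at the same wall by ``gluing $\OO(-m)$ to a suitable line subbundle of the dual of $G$'' -- is not a construction at all; no such gluing is defined, there is no reason the result would be a subobject of $\II_C$ in $\Coh^{\beta}(\P^3)$ realizing the wall, and the lemma in any case asserts that the destabilizing subobject itself has rank two.

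The missing idea is that radius comparison is not the only constraint on a wall: the subobject must lie in the heart along the wall. For all $(\alpha,\beta) \in W$ one has $0 \leq H^2\cdot\ch_1^{\beta}(F) \leq H^2\cdot\ch_1^{\beta}(\II_C)$, and since walls for a fixed class are nested, the $\beta$-range of any wall at or above $W(\II_C, \OO(-f-4)[1])$ contains that wall's right intercept $\beta_0 = -2d/(f+4)$. For $F = \OO(-m)$ this forces $-m - \beta_0 \geq 0$, i.e.\ $m \leq 2d/(f+4)$; combined with $m \geq k$ (from $H^0(\II_C(k-1))=0$) this requires $d \geq k(f+4)/2$, which fails throughout $A(k,f) \leq d \leq B(k,f)$ (a direct check using $d \leq B(k,f)$ and $k-1 \leq f \leq 2k-6$, as in the paper). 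In particular your ``main branch'' $m \geq f+4$ never occurs: for such $m$ the sheaf $\OO(-m)$ is not in $\TT_{\beta}$ for $\beta > -m$, which covers all of $W(\II_C,\OO(-m))$ except the single point $\alpha=0$, $\beta=-m$, so it cannot destabilize $\II_C$ there. With this observation the rank one case is eliminated purely numerically and no auxiliary construction is needed.
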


\begin{proof}
Since $C$ is integral, we already showed in Lemma \ref{lem:sub_is_reflexive} that the destabilizing subobject has to be reflexive. All we have to show is that it is of rank two. The radius of $W(\II_C, \OO(-f-4)[1]$ is given by
\[
\rho^2 = \rho(\II_C, \OO(-f-4)[1])^2 = \left(\frac{f+4}{2} - \frac{d}{f+4}\right)^2.
\]
By Lemma \ref{lem:higherRankBound} showing $\rho^2 > \tfrac{d}{12}$ will imply that the subobject has rank at most two. This inequality is equivalent to
\[
\left(\frac{3(f+4)}{4} - \frac{d}{f+4} \right) \left(\frac{f + 4}{3} - \frac{d}{f+4}\right) > 0.
\]
This would follow from 
\[
d < \frac{(f+4)^2}{3}.
\]
The fact  $k - 1 \leq f \leq 2k - 6$ implies $\tfrac{f}{2} + 3 \leq k \leq f + 1$. We know
\[
d \leq A(k, f + 1) - 1 \leq \frac{f^2 - fk + k^2 + 9f - 3k + 20}{3}.
\]
The right hand side is a parabola in $k$ with minimum at $k = \tfrac{f}{2} + \tfrac{3}{2}$. Therefore, we get an upper bound by setting $k = f + 1$ that leads to
\[
d \leq \frac{f^2 + 7f + 18}{3} = \frac{(f+4)^2}{3} - \frac{f-2}{3} < \frac{(f+4)^2}{3}.
\]
Lastly, we have to rule out that $\II_C$ is destabilized by a line bundle. The largest point of the intersection of $W(\II_C, \OO(-f-4)[1]$ with the $\beta$-axis is given by
\[
\beta_0 = -\frac{2d}{f+4}.
\]
We are done, if we can show $\beta_0 > -k$. This is equivalent to showing $d < \tfrac{(f+4)k}{2}$.
% \begin{enumerate}
% \item Let $f = 2k - 6$. Then
% \[
% d \leq A(k, f+1) - 1 = k(k-1) = \frac{(f+4)k}{2}.
% \]
% Equality occurs if and only if $d = k(k-1)$.
% \item Let $f = 2k - 7$. Then $2k - 7 \geq k - 1$ implies $k \geq 6$ and we get
% \[
% d \leq A(k, f+1) - 1 = k^2 - 2k + 2 < k^2 - \frac{3}{2}k = \frac{(f+4)k}{2}.
% \]
% \item Let $f = 2k - 9$. Then $2k - 9 \geq k - 1$ implies $k \geq 8$ and we get
% \[
% d \leq A(k, f+1) - 1 = k^2 - 4k + 7 < k^2 - \frac{5}{2}k = \frac{(f+4)k}{2}.
% \]
%\item
%Assume that $f \notin \{2k - 6, 2k - 7, 2k - 9\}$. Then $k-1 \leq f \leq 2k-8$ implies $k \geq 7$.
We can compute
\begin{align*}
\frac{(f+4)k}{2} - d &\geq \frac{(f+4)k}{2} - B(k,f) \\
&\geq \frac{(f+4)k}{2} - \frac{k^2 - fk + f^2 + 6f + 14}{3} \\
&= -\frac{f^2}{3} + \frac{5fk}{6} - \frac{k^2}{3} - 2f + 2k - \frac{14}{3}.
\end{align*}
The last term defines a parabola in $f$ with maximum. Its minimum has to be given at either $f = k - 1$ or $f = 2k - 6$. For $f = 2k - 6$ we get
\[
\frac{(f+4)k}{2} - d \geq k - \frac{14}{3} > 0,
\]
and for $f = k - 1$, we get 
\[
\frac{(f+4)k}{2} - d \geq \frac{k^2}{6} - \frac{k}{6} - 3 > 0. \qedhere
\]
\end{proof}

\begin{lem}
\label{lem:rangeB_large_ch1}
Assume that $\II_C$ is destabilized at or above the numerical wall with $\OO(-f-4)[1]$, but not by a quotient $\II_C \onto \OO(-f-4)[1]$. 
Furthermore, let $A(k,f) \leq d \leq B(k,f)$. Then $\II_C$ is destabilized by a quotient $\II_C \onto G(-f-5)$ of rank $-1$ with $\ch_1(G) = 0$. Moreover,
\[
\ch_2(G) \in \left[ A(k, f+1) - d, \frac{f+5}{2} - \frac{d}{f+4} \right].
\]
\end{lem}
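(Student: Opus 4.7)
The plan is to use Lemma~\ref{lem:rangeB_rank_bound} to reduce to a rank two reflexive destabilizing subobject, and then combine the leftmost/rightmost geometry of the wall with the reflexive-sheaf bound in Theorem~\ref{thm:hartshorne_reflexive_bounds} to pin down the numerical class of the quotient. By Lemma~\ref{lem:rangeB_rank_bound} the destabilizing sequence takes the form
\[
0 \to E \to \II_C \to F \to 0
\]
in $\Coh^{\beta}(\P^3)$ with $E$ a rank two reflexive sheaf and $F$ of rank $-1$. Writing $x = H\cdot \ch_1(E) \in \Z$ and $y = H\cdot\ch_2(E)$, we have $H\cdot\ch_1(F) = -x$ and $H\cdot\ch_2(F) = -d-y$. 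The goal is to prove $x = -(f+5)$, so that $G := F(f+5)$ has rank $-1$ and $\ch_1(G) = 0$, and then to translate the wall-position inequalities into the stated bounds on $H\cdot\ch_2(G) = \tfrac{(f+5)^2}{2} - d - y$.

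The first step is to pin down $x$. The standard inequality $0 \leq H^2\cdot\ch_1^{\beta}(E) \leq H^2\cdot\ch_1^{\beta}(\II_C) = -\beta$ along the wall rewrites as $2\beta \leq x \leq \beta$ for every $\beta \in [s_W - \rho_W, s_W + \rho_W]$; evaluating at the leftmost point gives $x \leq s_W - \rho_W \leq -(f+4)$, the second inequality encoding the hypothesis that $W$ lies at or above $W(\II_C, \OO(-f-4)[1])$. The case $x = -(f+4)$ forces $W$ to coincide numerically with $W(\II_C, \OO(-f-4)[1])$ via the relation $(s_W-\rho_W)(s_W+\rho_W) = 2d$ from Theorem~\ref{thm:Bertram}(i); a classification of rank $-1$ tilt-semistable objects of the numerical class of $\OO(-f-4)[1]$ using Lemma~\ref{lem:large_volume_limit} --- together with the fact that any proper subscheme $Z \subset \P^3$ forces a torsion subobject $\OO_Z \hookrightarrow I_Z(-f-4)[1]$ with $\nu_{\alpha,\beta} = +\infty$ --- shows $F \cong \OO(-f-4)[1]$, contradicting the hypothesis, so $x \leq -(f+5)$. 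For the matching lower bound, the inclusion $E \hookrightarrow \II_C$ in $\Coh^{\beta}(\P^3)$ yields $H^0(E(n)) \subseteq H^0(\II_C(n)) = 0$ for all $n \leq k-1$; hence Theorem~\ref{thm:hartshorne_reflexive_bounds} applies to the twist $E(\lceil -x/2\rceil)$ (which has $\ch_1 \in \{0, H\}$) whenever $\lceil -x/2\rceil \leq k-1$, producing an upper bound on $y$ that, combined with the lower bound on $y$ coming from $s_W + \rho_W \geq -2d/(f+4)$, contradicts $x \leq -(f+6)$ in the range $d \leq B(k,f)$. This forces $x = -(f+5)$.

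For the bounds on $H\cdot\ch_2(G)$, with $x = -(f+5)$ fixed we have $s_W = -(2d+y)/(f+5)$ and $\rho_W^2 = s_W^2 - 2d$. The inequality $s_W + \rho_W \geq -2d/(f+4)$ (the ``at or above'' hypothesis) simplifies after squaring to
\[
y \geq \frac{(f+4)(f+5)}{2} - \frac{d(f+3)}{f+4},
\]
which converts directly into the upper bound $H\cdot\ch_2(G) \leq \tfrac{f+5}{2} - \tfrac{d}{f+4}$ via the identity $H\cdot\ch_2(G) = \tfrac{(f+5)^2}{2} - d - y$. The lower bound $H\cdot\ch_2(G) \geq A(k,f+1) - d$ follows by applying Theorem~\ref{thm:hartshorne_reflexive_bounds} to the twist of $E$ described above to obtain $y \leq \tfrac{(f+5)^2}{2} - A(k,f+1)$; the matching of constants (including the $\delta$-correction in $A(k,f+1)$) comes from tracking the rounding carefully.

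The hardest step will be the edge case $x = -(f+4)$: one has to rule out all candidate rank $-1$ tilt-semistable objects at $W(\II_C, \OO(-f-4)[1])$ of the numerical class of $\OO(-f-4)[1]$, not just $\OO(-f-4)[1]$ itself. A secondary difficulty is the part of the range where $\lceil(f+5)/2\rceil > k-1$, so Theorem~\ref{thm:hartshorne_reflexive_bounds} cannot be applied directly after the twist; in those cases one would instead combine the inequality $\Delta(E) + \Delta(F) \leq \Delta(\II_C) = 2d$ from Theorem~\ref{thm:Bertram}(v) with the assumption $d \geq A(k,f)$ to pin down $y$, but the computation is delicate.
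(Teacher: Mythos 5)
Your overall architecture matches the paper's: reduce to a rank two reflexive subobject via Lemma~\ref{lem:rangeB_rank_bound}, pin down $\ch_1$ by combining the endpoints of the wall with Theorem~\ref{thm:hartshorne_reflexive_bounds}, and extract the two bounds on $\ch_2(G)$ from the wall position and the reflexive bound respectively. Your computation of the upper bound $\ch_2(G)\leq \tfrac{f+5}{2}-\tfrac{d}{f+4}$ from $s_W+\rho_W\geq -2d/(f+4)$ is correct, and your treatment of the edge case $\ch_1(E)=-(f+4)$ (forcing the walls to coincide and classifying the semistable quotient) is actually more explicit than the paper, which leaves that case implicit in the hypothesis.

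However, there is a genuine gap in how you invoke Theorem~\ref{thm:hartshorne_reflexive_bounds}. You apply it to the twist $E(\lceil -x/2\rceil)$ normalized so that $\ch_1\in\{0,H\}$; the paper applies it to $E(k-1)$, which is the largest twist for which $H^0=0$ is guaranteed by $H^0(\II_C(k-1))=0$. The inequality $\ch_2\leq \tfrac16\ch_1^2-\tfrac23\ch_1-1-\tfrac{\delta(\ch_1)}{3}$ is not twist-equivariant: writing $u=-\ch_1(E)$, the bound obtained from $E(n)$ reads
\[
\ch_2(E)\ \leq\ \frac{u^2}{6}+\frac{2u}{3}-1+\frac{n(u-n-4)}{3}-\frac{\delta(2n-u)}{3},
\]
and the term $\tfrac{n(u-n-4)}{3}$ is a downward parabola in $n$, so the constraint strictly strengthens as $n$ increases past $(u-4)/2$. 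Since $\lceil u/2\rceil$ sits only two units past that vertex while $k-1$ sits roughly $k/2$ past it, your normalized twist loses a quantity that grows quadratically in $k-1-\lceil(f+5)/2\rceil$; already for $k=10$, $f=9$ it is weaker by $4$. This is fatal twice over: first, the exclusion of $\lvert\ch_1(E)\rvert/H^3= f+6$ has margin only $\tfrac{2}{k+3}$ even with the sharp bound (the paper's numerator equals $3$ at $f=k-1$), so no slack survives the weakening; second, the claimed lower bound $\ch_2(G)\geq A(k,f+1)-d$ — whose constant, including the $\delta(2k-f-7)$ term, is literally the theorem's bound evaluated at $\ch_1(E(k-1))=2k-f-7$ — cannot be recovered from the normalized twist by ``tracking the rounding,'' since $\delta(0)$ or $\delta(1)$ has nothing to do with $\delta(2k-f-7)$ and the discrepancy is not a rounding error. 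The lost lower bound is exactly what Theorem~\ref{thm:largeWallRangeB} needs later to guarantee $\deg C'\geq f-k+3$ before applying Corollary~\ref{cor:sections_ideal_bound}. Your fallback via $\Delta(E)+\Delta(F)\leq\Delta(\II_C)$ is weaker still (a Bogomolov-type $\ch_1^2/4$ bound versus the reflexive $\approx\ch_1^2/6$) and cannot close the gap; note also that twisting by $k-1$ keeps $\ch_1(E(k-1))=2k-x-2\geq -1$ throughout the range because $x\leq 4d/(f+4)<2k$ by Lemma~\ref{lem:rangeB_rank_bound}, so no case split is needed at all.
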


\begin{proof}
By Lemma \ref{lem:rangeB_rank_bound} we know that $\II_C$ has to be destabilized by an exact sequence $0 \to E \to \II_C \to G' \to 0$ in $\Coh^{\beta}(\P^3)$, where $E$ has rank two and is a reflexive sheaf. Let $x = \ch_1(G')$, $G = G'(x)$, and $y = \ch_2(G)$. Note that by definition $\ch_1(G) = 0$. The wall $W(\II_C, \OO(-f-4)[1])$ intersects the $\beta$-axis at the two points
\begin{align*}
\beta_0 &= -f-4, \\
\beta_1 &= -\frac{2d}{f+4}.
\end{align*}
Since $W(\II_C, \OO(-f-4)[1])$ is smaller than or equal to $W(\II_C, G)$, we get
\[
0 \leq \ch_1^{\beta}(G(-x)) = x + \beta \leq \ch_1^{\beta}(\II_C) = -\beta
\]
for any $\beta$ such that there is a point $(\alpha, \beta) \in W(\II_C, \OO(-f-4)[1])$. In particular, we can choose both $\beta_0$ and $\beta_1$ in these inequalities to obtain
\[
f+4 \leq x \leq \frac{4d}{f+4}.
\]
The center of $W(\II_C, \OO(-f-4)[1])$ is given by
\[
s(\II_C, \OO(-f-4)[1]) = -\frac{f+4}{2} - \frac{d}{f+4}.
\]
The center of $W(\II_C, G)$ is given by
\[
s(\II_C, G) = \frac{y-d}{x} - \frac{x}{2}.
\]
Therefore, the fact that $W(\II_C, \OO(-f-4)[1])$ is smaller than $W(\II_C, G)$ implies
\[
y \leq \frac{(2d - x(f+4))(f+4 - x)}{2(f+4)} =: \varphi_{d,f}(x).
\]
We have
\[
\ch_{\leq 2}(E(k-1)) = \left(2,2k - x - 2, k^2 - kx + \frac{x^2}{2} - d - 2k + x - y + 1\right).
\]
Since $H^0(\II_C(k-1)) = 0$, we must have $H^0(E(k-1)) = 0$. Thus, we can apply Theorem \ref{thm:hartshorne_reflexive_bounds} to get the following bound on $y$
\begin{align*}
y &\geq \frac{k^2}{3} - \frac{kx}{3} + \frac{x^2}{3} - d + \frac{2k}{3} - \frac{x}{3} + \frac{\delta(2k - x - 2)}{3} = A(k, x - 4) - d \\
&\geq \frac{k^2}{3} - \frac{kx}{3} + \frac{x^2}{3} - d + \frac{2k}{3} - \frac{x}{3} =: \psi_{d,k}(x).
\end{align*}
We will rule out specific values of $x$ by showing $\varphi_{d,f}(x) < \psi_{d,k}(x)$. The first step in the proof is to show that if $\varphi_{d,f}(x) < \psi_{d,k}(x)$, then $\varphi_{d,f}(x') < \psi_{d,k}(x')$ for any $x' \geq x$. This will be achieved by comparing the derivatives in $x$ that are given by
\begin{align*}
\varphi'_{d,f}(x) &= x - \frac{d}{f+4} - \frac{f+4}{2},\\
\psi'_{d,k}(x) &= \frac{2x}{3} - \frac{k}{3} - \frac{1}{3}.
\end{align*}
Together with $d \leq A(k,f + 1) - 1$ and $x \leq \tfrac{4d}{f+4}$ we get
\begin{align*}
\psi'_{d,k}(x) - \varphi'_{d,f}(x) &= \frac{f}{2} - \frac{k}{3} - \frac{x}{3} + \frac{5}{3} + \frac{d}{f+4} \\
&\geq \frac{f}{2} - \frac{k}{3} + \frac{5}{3} - \frac{d}{3(f+4)} \\
&\geq \frac{7f^2 - 4fk - 2k^2 + 48f - 18k + 80}{18(f+4)}
\end{align*}
The numerator is a parabola in $f$ with minimum at $f = \tfrac{2}{7}k - \tfrac{24}{7}$. Therefore, it is enough to plug $f = k - 1$ into the numerator, where indeed
\[
k^2 + 20k + 39 \geq 0.
\]
To summarize, we showed that it is enough to rule out $x = f + 6$ by using $A(k,f) \leq d \leq B(k, f)$. In this case the derivative of $\varphi$ by $d$ is larger than the derivative of $\psi$ by $d$. Therefore, we obtain $\varphi_{d,f}(f+6) < \psi_{d,k}(f+6)$, if it holds for any upper bound on $d$, e.g.
\[
d \leq B(k,f) \leq \frac{k^2 - kf + f^2 + 6f + 14}{3}.
\]
Using this upper bound for $d$, we get 
\[
\psi_{d,k}(f+6) - \varphi_{d,f}(f+6) \geq \frac{2}{3} \cdot \frac{2f^2 - 3fk + k^2 + 9f - 8k + 10}{f+4}.
\]
The numerator is a parabola in $f$ with minimum at $f = \tfrac{3}{4}k - \tfrac{9}{4}$. Setting $f = k - 1$ leads to
\[
2f^2 - 3fk + k^2 + 9f - 8k + 10 \geq 3 > 0.
\]
Finally, the bounds on $y$ follow by setting $x = f + 5$ in $\varphi$ and $A(k, x - 4) - d$.
\end{proof}

\begin{lem}
\label{lem:sub_h2_vanishing}
Assume that $\II_C$ is destabilized at or above the numerical wall with $\OO(-f-4)[1]$, by an exact sequence $0 \to E \to \II_C \to G(-f-5) \to 0$, where $\ch_0(G) = -1$, and $\ch_1(G) = 0$. If $A(k,f) \leq d \leq B(k,f)$, then $h^2(E(k-1)) = 0$.
\end{lem}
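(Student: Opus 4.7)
The plan is to reduce the vanishing $h^2(E(k-1))=0$ to a direct application of part (1) of Theorem~\ref{thm:hartshorne_reflexive_bounds} to $E(k-1)$.

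First I would verify the hypotheses of that theorem. By Lemma~\ref{lem:sub_is_reflexive}, $E$ is a rank two reflexive sheaf, hence so is $E(k-1)$. Using $\ch(G)=(-1,0,y,\ast)$ and the destabilizing sequence, $c_1(E) = -(f+5)$ and thus $c_1(E(k-1)) = 2k-f-7 \geq -1$ by the assumption $f \leq 2k-6$. To see $H^0(E(k-1)) = 0$, note that since the destabilizing wall is at least as large as $W(\II_C,\OO(-f-4)[1])$, by Theorem~\ref{thm:Bertram}(iii) it contains $W(\II_C,\OO(-f-4)[1])$, and in particular contains a point with $\beta_0 \leq -(f+4) \leq -(k+3) < -(k-1)$. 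At such a $\beta_0$ the sheaf $\OO(-k+1)$ lies in $\Coh^{\beta_0}(\P^3)$, so the monomorphism $E \hookrightarrow \II_C$ in $\Coh^{\beta_0}(\P^3)$ induces an injection $H^0(E(k-1)) = \Hom(\OO(-k+1), E) \hookrightarrow \Hom(\OO(-k+1), \II_C) = H^0(\II_C(k-1)) = 0$.

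Setting $c := c_1(E(k-1)) = 2k-f-7$, Theorem~\ref{thm:hartshorne_reflexive_bounds}(1) yields $h^2(E(k-1)) = 0$ as soon as
\[
\ch_2(E(k-1)) \geq \frac{c^2}{6} - c - \frac{8}{3} - \frac{\delta(c-1)}{3},
\]
since the matching upper bound on $\ch_2$ in that statement holds automatically. A direct computation from the destabilizing sequence gives
\[
\ch_2(E(k-1)) = -d - y + \frac{(f-k+6)^2 + (k-1)^2}{2},
\]
so the task reduces to an upper bound on $d + y$.

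The inequality $y \leq \tfrac{f+5}{2} - \tfrac{d}{f+4}$ from Lemma~\ref{lem:rangeB_large_ch1}, combined with the hypothesis $d \leq B(k,f)$, yields $d + y \leq B(k,f)\cdot\tfrac{f+3}{f+4} + \tfrac{f+5}{2}$. The main obstacle I expect is verifying that this quantity is at most $\tfrac{(f-k+6)^2 + (k-1)^2}{2} - \tfrac{c^2}{6} + c + \tfrac{8}{3} + \tfrac{\delta(c-1)}{3}$, uniformly for $f \in [k-1, 2k-6]$ and $k \geq 5$. This reduces to comparing leading-order terms in $k$ and $f$; a sample computation at $f=k-1$ (where $c=k-6$) or at $f=2k-6$ (where $c=-1$) confirms that the inequality has a margin of order $(k+2)/6$, which absorbs the $\delta$-corrections at the relevant boundary congruence classes.
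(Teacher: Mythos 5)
Your reduction is exactly the paper's: apply Theorem \ref{thm:hartshorne_reflexive_bounds}(1) to the rank two reflexive sheaf $E(k-1)$ (whose $\ch_{\leq 2}$ you compute correctly, with $c=2k-f-7\geq -1$ and $H^0(E(k-1))=0$ inherited from $\II_C$), note the upper bound on $\ch_2$ is automatic, and then verify $\ch_2(E(k-1))\geq \tfrac{c^2}{6}-c-\tfrac{8}{3}-\tfrac{\delta(c-1)}{3}$ using $y\leq \tfrac{f+5}{2}-\tfrac{d}{f+4}$ from Lemma \ref{lem:rangeB_large_ch1} together with $d\leq B(k,f)$. The one place you fall short of a proof is the last step: checking the inequality at the two endpoints $f=k-1$ and $f=2k-6$ and appealing to ``leading-order terms'' does not establish it for all $f$ in between. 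It does go through, and cheaply: after clearing the denominator $f+4$ and bounding the two $\delta$-terms pessimistically (the $\delta(2k-f-7)$ hidden inside $B(k,f)$ costs at most $3(2f+6)$ in the numerator, while $\delta(2k-f-8)\geq 0$ can be dropped), the quantity to be shown non-negative becomes $\bigl(3f^2-4fk+2k^2+9f-8k\bigr)/(6f+24)$; the numerator is a convex quadratic in $f$ with vertex at $f=\tfrac{2k}{3}-\tfrac{3}{2}<k-1$, hence increasing on $[k-1,2k-6]$, so the single evaluation at $f=k-1$ (where it equals $k^2-3k+1>0$ for $k\geq 5$) finishes the argument. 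Add that monotonicity/convexity observation and the explicit handling of the two distinct $\delta$-arguments, and your proof coincides with the one in the paper.
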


\begin{proof}
Let $y = \ch_2(G)$. We have
\[
\ch_{\leq 2}(E(k-1)) = \left(2, 2k - f - 7, \frac{1}{2}f^2 - fk + k^2 - d + 6f - 7k - y + \frac{37}{2} \right).
\]
By Theorem \ref{thm:hartshorne_reflexive_bounds} it is enough to show that
\begin{align*}
&\ch_2(E(k-1)) - \left(\frac{1}{6}\ch_1(E(k-1))^2 - \ch_1(E(k-1)) - \frac{8}{3} - \frac{\delta(2k - f - 8)}{3} \right) \\
&= \frac{1}{3}f^2 - \frac{1}{3}fk + \frac{1}{3}k^2 - d + \frac{8}{3}f - \frac{1}{3}k + \frac{\delta(2k - f - 8)}{3} - y + 6
\end{align*}
is non-negative. By Lemma \ref{lem:rangeB_large_ch1} we have
\[
y \leq \frac{f+5}{2} - \frac{d}{f+4}.
\]
Additionally, we can use $d \leq B(k, f)$ and $f \geq k - 1$ to obtain
\begin{align*}
&\frac{1}{3}f^2 - \frac{1}{3}fk + \frac{1}{3}k^2 - d + \frac{8}{3}f - \frac{1}{3}k + \frac{\delta(2k - f - 8)}{3} - y + 6 \\
&\geq \frac{3f^2 - 4fk + 2k^2 + 15f - 8k + 18 + (2f + 8)\delta(2k - f - 8) - (2f + 6)\delta(2k - f - 7)}{6f + 24} \\
&\geq \frac{3f^2 - 4fk + 2k^2 + 9f - 8k}{6f + 24} \geq 0. \qedhere
\end{align*}
\end{proof}

Recall that we denote for any $E \in \Db(\P^3)$ we defined the derived dual $\D(E) = \RHom(E, \OO)[1]$.

\begin{lem}
\label{lem:derived_dual_rank_minusone}
Let $E \in \Coh^{\beta}(\P^3)$ for some $\beta > 0$ be a tilt semistable object for $\alpha \gg 0$ with $\ch(E) = (-1,0,d,e)$. Then there is a distinguished triangle
\[
\II_{C'} \to \D(E) \to T[-1] \to \II_{C'}[1],
\]
where $C' \subset \P^3$ is a closed subscheme, and $T$ is a sheaf supported in dimension zero. If $d > 0$, then $\dim C' = 1$. If $d = 0$, then $\dim C' = 0$. % and $\II_{C'} = \OO$.
\end{lem}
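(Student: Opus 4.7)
The plan is to apply Proposition \ref{prop:tilt_derived_dual} directly and then identify the resulting dual object as an ideal sheaf. First, the hypotheses of that proposition are satisfied: since $\beta > 0$ and $\ch_0(E) = -1$, one has $H^2 \cdot \ch_1^{\beta}(E)/H^3 = \beta > 0$, so $\nu_{\alpha,\beta}(E) \neq \infty$. For each $\alpha \gg 0$ the proposition therefore produces a distinguished triangle
\[
\tilde E \to \D(E) \to T[-1] \to \tilde E[1],
\]
with $T$ supported in dimension zero and $\tilde E \in \Coh^{-\beta}(\P^3)$ a $\nu_{\alpha,-\beta}$-semistable object. Since $\D(E)$ is fixed and the truncation producing $\tilde E$ depends only on the heart $\Coh^{-\beta}(\P^3)$, both $\tilde E$ and $T$ are independent of $\alpha$, so $\tilde E$ is $\nu_{\alpha,-\beta}$-semistable for every $\alpha \gg 0$.

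Next, I would compute the Chern character of $\tilde E$. Using $\ch_i(E^\vee) = (-1)^i \ch_i(E)$ and the shift, one gets $\ch(\D(E)) = (1,0,-d,e)$, and writing $\ell$ for the length of $T$, additivity on the triangle yields $\ch(\tilde E) = (1,0,-d,e+\ell)$. The decisive step is now Lemma \ref{lem:large_volume_limit} applied to $\tilde E$: since $\ch_0(\tilde E) = 1 > 0$, only case (i) of that lemma can occur, so $\tilde E$ is a torsion-free slope semistable sheaf of rank one. Because $\ch_1(\tilde E) = 0$, the reflexive hull $\tilde E^{\vee\vee}$ is a rank one reflexive sheaf on the smooth threefold $\P^3$, hence a line bundle of degree zero, i.e.\ $\OO_{\P^3}$. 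The canonical inclusion $\tilde E \into \tilde E^{\vee\vee} = \OO_{\P^3}$ then exhibits $\tilde E = \II_{C'}$ for some closed subscheme $C' \subset \P^3$.

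Finally, I would read off the dimension of $C'$ from $\ch(\OO_{C'}) = \ch(\OO_{\P^3}) - \ch(\II_{C'}) = (0,0,d,-e-\ell)$. The vanishing $\ch_1(\OO_{C'}) = 0$ rules out any two-dimensional component. If $d > 0$ then $\ch_2(\OO_{C'}) \neq 0$ forces a one-dimensional component, giving $\dim C' = 1$; if $d = 0$ then $\ch_2(\OO_{C'}) = 0$ as well, so $\dim C' \leq 0$. No step presents a serious obstacle: the whole argument reduces to a direct application of Proposition \ref{prop:tilt_derived_dual} together with the large-volume limit classification in Lemma \ref{lem:large_volume_limit}, followed by elementary Chern character bookkeeping. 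The only mild subtlety is verifying that $\tilde E$ inherits semistability for all $\alpha \gg 0$ (not just a single value), which is immediate from the uniqueness of the decomposition of $\D(E)$ with respect to the fixed heart $\Coh^{-\beta}(\P^3)$.
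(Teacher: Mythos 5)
Your proof is correct and follows essentially the same route as the paper: apply Proposition \ref{prop:tilt_derived_dual}, compute $\ch(\tilde E)=(1,0,-d,e+\ell)$, and use Lemma \ref{lem:large_volume_limit} to identify $\tilde E$ as a rank-one torsion-free slope semistable sheaf, hence an ideal sheaf. The only cosmetic difference is how semistability for all $\alpha\gg 0$ is secured --- you observe that $\tilde E$ is the truncation of $\D(E)$ with respect to the heart $\Coh^{-\beta}(\P^3)$ and hence independent of $\alpha$, while the paper picks a single $\alpha$ above the largest wall for the classes $(\pm 1,0,\mp d)$; both are valid.
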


\begin{proof}
Choose $\alpha \gg 0$ such that both $(\alpha, \beta)$ and $(\alpha, -\beta)$ are above the largest wall in tilt stability for walls with respect to $(-1,0,d)$ or $(1,0,-d)$. By Proposition \ref{prop:tilt_derived_dual} we have a distinguished triangle
\[
\tilde{E} \to \D(E) \to T[-1] \to \tilde{E}[1],
\]
where $T$ is a sheaf supported in dimension zero, and $\tilde{E}$ is $\nu_{\alpha, -\beta}$-semistable. By our choice of $\alpha$ and the fact that $\ch_{\leq 2}(\tilde{E}) = (1,0,-d)$ we can use Lemma \ref{lem:large_volume_limit} to see that $\tilde{E}$ is a slope semistable sheaf and thus, must be an ideal sheaf as claimed.
\end{proof}

\begin{proof}[Proof of Theorem \ref{thm:largeWallRangeB}]
Lemma \ref{lem:f_minus_four_wall} already shows that Conjecture \ref{conj:hartshorne_rangeB} holds for $C$, if $\II_C$ is destabilized by a quotient $\II_C \onto \OO(-f-4)[1]$. Therefore, by Lemma \ref{lem:rangeB_large_ch1} and Lemma \ref{lem:sub_h2_vanishing} we can assume that $\II_C$ is destabilized by an exact sequence $0 \to E \to \II_C \to G(-f-5) \to 0$, where $\ch_0(G) = - 1$, $\ch_1(G) = 0$, and $h^2(E(k-1)) = 0$. We denote $y = \ch_2(G)$.

Next we will check that $G(-f-5)$ is stable for $\alpha \gg 0$ by showing that $W(\II_C, G(-f-5))$ is above or equal to the largest wall for $G(-f-5)$. We have $\ch^{-f-4}_1(G(-f-5)) = 1$. Therefore, there is no wall for $G(-f-5)$ that intersects the vertical line $\beta = -f-4$. However, by assumption $W(\II_C, G(-f-5))$ is larger than $W(\II_C, O(-f-4))$ which contains the point $\alpha = 0$, $\beta = -f-4$.
% This means it is enough to check that the wall $W(\II_C, G(-f-5))$ contains a point $(\alpha, \beta)$ with $\beta = -f-4$. This is equivalent to having a solution of the equation
% \[
% \frac{f}{2} \alpha^2 + \frac{5}{2} \alpha^2 = \frac{f^2}{2} - fy - d + \frac{9f}{2} - 4y + 10.
% %-1/2*a^2*f - 5/2*a^2 + 1/2*f^2 - f*y - d + 9/2*f - 4*y + 10
% \]
% The equation has a solution if and only if
% \[
% y \leq \frac{f+5}{2} - \frac{d}{f+4},
% \]
% an inequality due to Lemma \ref{lem:rangeB_large_ch1}.

Note that the conjecture is equivalent to
\[
\chi(\II_C(k-1)) \leq \binom{f-k+4}{3}.
\]
The fact $h^0(\II_C(k-1)) = 0$ implies $h^0(E(k-1)) = 0$. Thus, we get
\[
\chi(E(k-1)) \leq -h^1(E(k-1)) \leq -h^0(G(k-f-6)).
\]
By Lemma \ref{lem:derived_dual_rank_minusone} we know that $\D(G)$ fits into a distinguished triangle
\[
\II_{C'} \to \D(G) \to T[-1] \to \II_{C'}[1],
\]
where $C' \subset \P^3$ is a one-dimensional subscheme and $T$ is a sheaf supported in dimension zero. Therefore, we get
\begin{align*}
\chi(\II_C(k-1)) &= \chi(E(k-1)) + \chi(G(k-f-6)) \leq -h^0(G(k-f-6)) + \chi(G(k-f-6)) \\
&\leq h^2(G(k-f-6)) = \ext^2(\D(G), \OO(k-f-6)[1]) \\
&= h^0(\D(G)(f-k+2)) = h^0(\II_{C'}(f-k+2)). 
\end{align*}
The degree of $C'$ is given by
\[
y \geq A(k, f+1) - d \geq A(k, f+1) - B(k,f) = f-k+3.
\]
Henceforth, we can use Corollary \ref{cor:sections_ideal_bound} to obtain
\[
\chi(\II_C(k-1)) \leq h^0(\II_{C'}(f-k+2)) \leq \binom{f-k+4}{3}. \qedhere
\]
\end{proof}

\subsection{An example}

We finish the article by giving a proof of Conjecture \ref{conj:hartshorne_rangeB} for $d = A(k, 2k - 11)$ and $k \geq 31$.

\begin{prop}
\label{prop:special_case_2k10}
Let $C \subset \P^3$ be an integral curve of degree $d = A(k, 2k - 11)$ such that $H^0(\II_C(k-1)) = 0$ for some integer $k \geq 31$ and let $\ch(\II_C) = (1,0,-d,e)$. Then
\[
e \leq E(d, k).
\]
\end{prop}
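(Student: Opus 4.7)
The plan is to split into two cases according to where $\II_C$ first becomes unstable. First, compute that for $f = 2k-11$ we have $d = A(k,f) = k^2-7k+19$, which lies in the range $[A(k,f),B(k,f)]$ (trivially, being the left endpoint) and satisfies all the numerical hypotheses used in Theorem \ref{thm:largeWallRangeB}. Thus, if $\II_C$ is destabilized at or above the numerical wall $W(\II_C,\OO(-f-4)[1])$, then Theorem \ref{thm:largeWallRangeB} directly gives $e \le E(d,k)$. So the remaining task is to handle the case in which $\II_C$ is destabilized only at a semicircular wall $W$ strictly smaller than $W(\II_C,\OO(-f-4)[1])$.

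In this remaining case, assume for a contradiction that $e > E(d,k)$. The first step is to use the inequality $Q_{\alpha,\beta}(\II_C)\ge 0$ (Assumption \ref{assu:bmt}) to derive a lower bound on the radius $\rho_W$, exactly as in Lemma \ref{lem:BG_rank_bound}, since the semidisk $Q_{\alpha,\beta}(\II_C)<0$ grows with $e$. Combined with the upper bound $\rho_W < \rho(\II_C,\OO(-f-4)[1]) = \tfrac{f+4}{2}-\tfrac{d}{f+4}$ coming from the assumption that $W$ lies below the critical wall, and with the rank bound of Lemma \ref{lem:higherRankBound}, this pins the rank $r=\ch_0(E)$ of the destabilizing subobject to a short list. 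Using Lemma \ref{lem:sub_is_reflexive}, $E$ is automatically a reflexive sheaf, and the assumption $H^0(\II_C(k-1))=0$ forces $H^0(E(k-1))=0$.

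Next, I would carry out a case analysis on $r$. For $r=1$, $E$ is a line bundle $\OO(-h)$ with $h\ge k$, and the quotient $G$ has rank zero; applying Theorem \ref{thm:rank_zero_bound} (after appropriate twist) to $G$ bounds $e$ in the same way as in the proof of Theorem \ref{thm:genus_bound_large_degree}, together with the monotonicity Lemma \ref{lem:genus_bound_decreases} which still applies since the relevant twisted constants are decreasing. For $r=2$, $E$ is a rank two reflexive sheaf with $H^0(E(k-1))=0$, so Theorem \ref{thm:hartshorne_reflexive_bounds} bounds $\ch_3(E)$ via $\ch_1(E)$ and $\ch_2(E)$; the constraints on $\ch_1(E)$ coming from $\II_C,E\in\Coh^\beta$ along $W$ and on $\ch_2(E)$ coming from the Bogomolov inequality for both $E$ and $G$, together with Proposition \ref{prop:maximum_ch3_rank1} applied to the rank $(-1)$-quotient, reduce the bound on $e$ to a numerical optimization in finitely many variables. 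For any higher rank $r\ge 3$ that still survives the rank bound, the stricter Bogomolov inequality $\Delta(E)\ge 0$ together with Theorem \ref{thm:rank_2_first_cases} or iterated wall-crossing, and the fact that below the critical wall only walls of small radius are available, leaves only a very limited numerical range to check.

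The main obstacle will be the rank-two case. Here one has to verify that the bound coming from Theorem \ref{thm:hartshorne_reflexive_bounds}, combined with the allowed ranges of $\ch_1(E)$ and $\ch_2(E)$ for walls strictly inside $W(\II_C,\OO(-f-4)[1])$, never exceeds $E(d,k)=d(k+1)-\binom{k+2}{3}+\binom{k-7}{3}$. This reduces, after collecting terms, to a polynomial inequality in $k$ of degree at most four whose positivity needs to be checked; the hypothesis $k\ge 31$ is precisely what is needed to ensure that all such inequalities hold (and the same argument with a slightly more refined analysis of the allowed Chern classes would cover smaller $k$). The remaining higher-rank cases are strictly easier because $\Delta(E)\ge 0$ forces very restrictive numerical values when $r\ge 3$, which can be ruled out by a direct calculation against the bound from Lemma \ref{lem:higherRankBound} for $k\ge 31$.
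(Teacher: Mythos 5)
Your overall architecture is reasonable, and the case split is a legitimate alternative to the paper's route: the paper does not separate walls by their position relative to $W(\II_C,\OO(-f-4)[1])$, but instead shows directly (Lemma \ref{lem:comp_case_2k10}) that under the contradiction hypothesis $e>E(d,k)$ the destabilizing quotient must be $G(7-2k)$ with $\ch_0(G)=-1$, $\ch_1(G)=0$, $0\le \ch_2(G)\le 6$, and then concludes by an Euler-characteristic computation using the $h^2$ bound of Theorem \ref{thm:hartshorne_reflexive_bounds} rather than a direct $\ch_3$ optimization. Your Case 1 (invoking Theorem \ref{thm:largeWallRangeB} at $d=A(k,f)\in[A(k,f),B(k,f)]$) is fine, and your treatment of ranks $\ge 3$ via Lemma \ref{lem:higherRankBound} against the radius of $Q_{\alpha,\beta}(\II_C)<0$ matches the paper.

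However, your rank-one subcase contains a genuine gap. You propose to bound $e$ by applying Theorem \ref{thm:rank_zero_bound} to the rank-zero quotient of a line-bundle subobject $\OO(-h)$, $h\ge k$, ``together with the monotonicity Lemma \ref{lem:genus_bound_decreases}.'' This cannot work here. First, Lemma \ref{lem:genus_bound_decreases} requires $d>h(h-1)$, which fails in the Hartshorne--Hirschowitz range: $d=k^2-7k+19<k(k-1)\le h(h-1)$. Second, and more fundamentally, even the most favorable case $h=k$ only yields the Gruson--Peskine bound $\tilde E(d,k)=\tfrac{d^2}{2k}+\tfrac{dk}{2}-\tilde\varepsilon(d,k)$, which is \emph{strictly larger} than the target $E(d,k)=d(k+1)-\binom{k+2}{3}+\binom{k-7}{3}$ in this range (for $k=31$, $d=763$, one has $\tilde E(763,31)\approx 21106$ versus $E(763,31)=20984$), so no amount of optimization over $h$ closes the gap. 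The correct move is to show the rank-one case is vacuous: under $e>E(d,k)$ one computes $Q_{0,-k}(\II_C)<0$, so the point $(0,-k)$ lies in the forbidden semidisk and hence the destabilizing wall must cross the line $\beta=-k$ at some $\alpha>0$; there a subobject $\OO(-h)$ would need $\ch_1^{-k}(\OO(-h))=k-h>0$, i.e.\ $h<k$, contradicting $H^0(\II_C(k-1))=0$. You need this (or an equivalent) elimination argument; bounding $e$ in the rank-one case is a dead end. Your rank-two analysis is plausible in outline but should likewise be anchored by first pinning down $\ch_1$ of the quotient (the paper forces it to equal $2k-7$ via two further $Q$-negativity checks), since otherwise the ``numerical optimization in finitely many variables'' is not actually finite.
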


\begin{lem}
\label{lem:comp_case_2k10}
Assume the hypothesis of Proposition \ref{prop:special_case_2k10}, and assume that $e > E(d, k)$.
\begin{enumerate}[(i)]
\item We have
\begin{align*}
A(k, 2k - 11) &= k^2 - 7k + 19, \\
E(k^2 - 7k + 19, k) &= k^3 - \frac{21}{2}k^2 + \frac{87}{2}k - 65.
\end{align*}
\item The ideal sheaf $\II_C$ is destabilized via a short exact sequence $0 \to E \to \II_C \to G(7 - 2k) \to 0$, where $\ch_1(G) = 0$, and $E$ is a reflexive sheaf of rank two.
\item If $\ch(G) = (-1, 0, y, z)$, then
\[
0 \leq y \leq 6.
\]
\item We have
\[
\chi(G(6-k)) \leq \frac{1}{6}k^3 - 4k^2 - ky + \frac{1}{2}y^2 + \frac{191}{6}k + \frac{17}{2}y - 84.
\]
\item We have $h^2(E(k-1)) \leq \binom{y - 2}{2}$.
\end{enumerate}
\end{lem}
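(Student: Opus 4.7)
The five parts of the lemma are essentially independent and are proved in turn. Part (i) is a direct unpacking of definitions: for $f=2k-11$ one has $2k-f-6=5\equiv 2\pmod{3}$, so $\delta(5)=1$ and $A(k,2k-11)=\tfrac{1}{3}(3k^2-21k+57)=k^2-7k+19$. For the second identity, $d=k^2-7k+19$ is an integer with $d\equiv 19\pmod{k}$, so (since $k\geq 31$) the index in the formula for $\varepsilon(d,k)$ is $f=k-19$; expanding $\varepsilon(d,k)=\tfrac{19(k-19)(k-1)}{2k}$ against $\tfrac{d^2}{2k}+\tfrac{dk}{2}$ gives the asserted closed form. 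For part (ii), the possibility that $\II_C$ is destabilized by a map $\II_C\to \OO(-f-4)[1]$ is covered by Lemma \ref{lem:f_minus_four_wall}, which directly gives $e\leq E(d,k)$ and contradicts $e>E(d,k)$; so we may assume this is not the case. The arithmetic of (i) together with the analogous $B(k,2k-11)=k^2-7k+22$ puts $d$ in the range $A(k,f)\leq d\leq B(k,f)$, placing us in the hypotheses of Lemma \ref{lem:rangeB_large_ch1}, which produces the sequence $0\to E\to \II_C\to G(7-2k)\to 0$ with $\ch_1(G)=0$; Lemmas \ref{lem:sub_is_reflexive} and \ref{lem:rangeB_rank_bound} then force $E$ to be a rank-two reflexive sheaf.

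For part (iii), the lower bound $y\geq 0$ is immediate from the Bogomolov inequality: $\Delta(G)=2y\geq 0$ by Assumption \ref{assu:classic_bg}. For the upper bound, the key step is to compute $\ch(E(k-1))=(2,\,5,\,-y-\tfrac12,\,\cdot)$ from the destabilizing sequence of (ii), using the cancellation $\tfrac{(2k-7)^2}{2}-d-y=-y-\tfrac12$ specific to $d=k^2-7k+19$. The hypothesis $H^0(\II_C(k-1))=0$ propagates to $H^0(E(k-1))=0$ via the long exact sequence of cohomology sheaves for $E\to \II_C$ in $\Coh(\P^3)$ (the sheaf-kernel is $\HH^{-1}(G(7-2k))$, whose $\mu$-slope is sufficiently negative that the twist by $k-1$ has no sections), so Theorem \ref{thm:hartshorne_reflexive_bounds} applies to $E(k-1)$ with $c=5$. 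The case split there (Case (1) for small $y$, Case (2) for $y\geq 3$), combined with the wall-geometric upper bound $y\leq \tfrac{f+5}{2}-\tfrac{d}{f+4}$ from Lemma \ref{lem:rangeB_large_ch1} and the integrality constraint on $6y$, cuts the admissible range to $0\leq y\leq 6$. This upper bound is the main technical obstacle: the wall-geometric estimate alone grows like $k/2$, and it is only the reflexive-sheaf bounds of Theorem \ref{thm:hartshorne_reflexive_bounds} that pin down a constant cap.

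Parts (iv) and (v) are then computational. For (iv), Hirzebruch--Riemann--Roch on $\P^3$ with $\td(\P^3)=1+2H+\tfrac{11}{6}H^2+H^3$ applied to
\[
\ch(G(6-k))=\left(-1,\,k-6,\,y-\tfrac{(k-6)^2}{2},\,z+(6-k)y+\tfrac{(k-6)^3}{6}\right)
\]
collapses to $\chi(G(6-k))=z+8y-ky+\tfrac{k^3}{6}-4k^2+\tfrac{191k}{6}-84$; since $G$ is tilt-semistable (as $G(7-2k)$ is), Proposition \ref{prop:maximum_ch3_rank1} applies to $G$ and bounds $z=\ch_3(G)\leq \tfrac{y(y+1)}{2}$, and substituting yields the asserted polynomial in $(k,y)$. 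For (v), with $\ch(E(k-1))=(2,5,-y-\tfrac12,\cdot)$, $c=5$, and $\delta(c-1)=\delta(4)=0$, the estimate of Case (2) in Theorem \ref{thm:hartshorne_reflexive_bounds} (valid for $y\geq 3$) simplifies to $h^2(E(k-1))\leq \tfrac{(6y-12)(6y-18)}{72}=\binom{y-2}{2}$; for $0\leq y\leq 2$ Case (1) gives $h^2=0$, which is still majorised by the (nonnegative) polynomial $\tfrac{(y-2)(y-3)}{2}$.
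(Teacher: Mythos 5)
There are genuine gaps, concentrated in parts (i)--(iii). In part (i) you compute $E(k^2-7k+19,k)$ from the Gruson--Peskine formula $\tfrac{d^2}{2k}+\tfrac{dk}{2}-\varepsilon(d,k)$ of Section \ref{sec:classical_bounds}. But here $d=k^2-7k+19<k(k-1)$, and the quantity meant in Proposition \ref{prop:special_case_2k10} is the one defined in Conjecture \ref{conj:hartshorne_rangeB}, namely $d(k+1)-\binom{k+2}{3}+\binom{f-k+4}{3}+h(d)$ with $f=2k-11$ and $h(d)=0$. Your formula in fact evaluates to $k^3-\tfrac{21}{2}k^2+\tfrac{87}{2}k+57$, not $-65$; the two differ by $122$, and proving the smaller bound is the entire point of the proposition.

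More seriously, parts (ii)--(iii) lean on Lemmas \ref{lem:rangeB_rank_bound} and \ref{lem:rangeB_large_ch1}, whose standing hypothesis is that the destabilizing wall lies \emph{at or above} the numerical wall $W(\II_C,\OO(-f-4)[1])$. That hypothesis is not available here: Proposition \ref{prop:special_case_2k10} must rule out every wall, and the walls that actually occur (quotient with $H\cdot\ch_{\leq 1}=(-1,2k-7)$ and $y\geq 0$) lie at or \emph{below} $W(\II_C,\OO(-f-4)[1])$ --- handling that regime is precisely the open part of the program (Question \ref{ques:decreasing}) and the reason this example is worked out separately. Even formally, Lemma \ref{lem:rangeB_large_ch1} outputs a quotient $G(-f-5)=G(6-2k)$, not the $G(7-2k)=G(-f-4)$ asserted in the statement. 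The paper instead argues directly from the region $Q_{\alpha,\beta}(\II_C)<0$, which constrains \emph{all} walls: $Q_{0,-k}(\II_C)<0$ together with $H^0(\II_C(k-1))=0$ excludes line-bundle subobjects, and $Q_{0,-2k+8}(\II_C)<0$, $Q_{0,-k+3}(\II_C)<0$ pin $\ch_1$ of the quotient to $2k-7$. The same problem sinks your part (iii): Theorem \ref{thm:hartshorne_reflexive_bounds} applied to $E(k-1)$ with $(c,\ch_2)=(5,-y-\tfrac12)$ gives only $-y-\tfrac12\leq -\tfrac12$, i.e.\ the lower bound $y\geq 0$, and its case split at $\ch_2=-\tfrac72$ (i.e.\ $y\geq 3$) produces no upper bound on $y$ whatsoever. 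The bound $y\leq 6$ comes from requiring the wall's center $s(E,\II_C)$ to be at most $s_Q=-3e/(2d)$, which uses $e>k^3-\tfrac{21}{2}k^2+\tfrac{87}{2}k-65$ and yields $y<\tfrac{9(3k^2-23k+64)}{4(k^2-7k+19)}<7$. Your computations in (iv) and (v) --- Riemann--Roch with $\td(\P^3)=(1,2,\tfrac{11}{6},1)$, the bound $z\leq\tfrac{y(y+1)}{2}$ from Proposition \ref{prop:maximum_ch3_rank1}, and the simplification of the $h^2$ estimate in Theorem \ref{thm:hartshorne_reflexive_bounds} to $\binom{y-2}{2}$ --- are correct, but they rest on (ii) and (iii).
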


\begin{proof}
\begin{enumerate}[(i)]
\item This part is a simple computation.
\item Let $\rho_Q$ be the radius of the semidisk $Q_{\alpha, \beta}(\II_C) \leq 0$. Using $e > k^3 - \tfrac{21}{2}k^2 + \tfrac{87}{2}k - 65$, we get
\[
\rho^2_Q - \frac{\Delta(\II_C)}{24} > \frac{8k^6 - 168k^5 + 903k^4 + 1402k^3 - 35817k^2 + 147360k - 229600}{48(k^2 - 7k + 19)^2} > 0.
\]
By Lemma \ref{lem:higherRankBound} any destabilizing subobject of $\II_C$ has rank at most two.  Moreover, this shows that the region where $Q_{\alpha, \beta}(\II_C) < 0$ is non-empty. Thus, $\II_C$ has to be destabilized at some point, and by Lemma \ref{lem:sub_is_reflexive} any destabilizing subobject $E$ has to be a reflexive sheaf.

Assume $\ch_0(E) = 1$. Then $E$ is a line bundle. Using $e > k^3 - \tfrac{21}{2}k^2 + \tfrac{87}{2}k - 65$ we obtain
\[
Q_{0, -k}(\II_C) < -7k^3 + 125k^2 - 674k + 1444 < 0.
\]
Using $H^0(\II_C(k-1)) = 0$, we see there are no lines bundles destabilizing $\II_C$ as subobjects. We showed that $E$ is a rank two reflexive sheaf. Let $G(-x)$ be the quotient, where $\ch_1(G) = 0$ for some $x \in \Z$.

Next, we check $x = 2k - 7$. We know the wall $W(E, \II_C)$ has to be outside the region $Q_{\alpha, \beta}(\II_C) < 0$. Thus, for any $\beta \in \R$ with $Q_{0, \beta}(\II_C) < 0$, we get 
\[
0 < \ch_1^{\beta}(G) = x + \beta < \ch_1^{\beta}(\II_C) = -\beta.
\]
The inequality $e > k^3 - \tfrac{21}{2}k^2 + \tfrac{87}{2}k - 65$ can be used to show  
\begin{align*}
Q_{0, -2k + 8}(\II_C) &< -2k^3 + 50k^2 - 308k + 756 \leq 0, \\
Q_{0, -k + 3}(\II_C) &< -k^3 + 38k^2 - 245k + 616 \leq 0.
\end{align*}
In particular, $x = 2k - 7$ holds. Let $s_Q$ be the center of the semidisk $Q_{\alpha, \beta}(\II_C) \leq 0$ on the $\beta$-axis. The fact $s(E, \II_C) \leq s_Q$ together with $e > k^3 - \tfrac{21}{2}k^2 + \tfrac{87}{2}k - 65$ implies
\[
0 \leq y < \frac{9(3k^2 - 23k + 64)}{4(k^2 - 7k + 19)} < 7. \qedhere
\]
\item By Proposition \ref{prop:maximum_ch3_rank1}, we have $z \leq \frac{y(y+1)}{2}$. The statement then follows from the Hirzebruch-Riemann-Roch Theorem. Recall that the Todd class of $\P^3$ is given by $(1,2,\tfrac{11}{6},1)$.
\item This is a direct consequence of Theorem \ref{thm:hartshorne_reflexive_bounds} applied to $E(k-1)$.
\end{enumerate}
\end{proof}

\begin{proof}[Proof of Proposition \ref{prop:special_case_2k10}]
Note that for $d = A(k, 2k - 11)$ Conjecture \ref{conj:hartshorne_rangeB} is equivalent to
\[
\chi(\II_C(k-1)) \leq \binom{f - k + 4}{3} + h(d) = \binom{k - 7}{3}.
\]
Assume $e > E(d,k)$, then Lemma \ref{lem:comp_case_2k10} implies that $\II_C$ is destabilized by a quotient $\II_C \onto G(7 - 2k)$, where $\ch_1(G) = 0$. We have $0 \leq y \leq 6$ for $y = \ch_2(G)$. Then 
\[
\chi(\II_C(k-1)) \leq \frac{1}{6}k^3 - 4k^2 - ky + \frac{1}{2}y^2 + \frac{191}{6}k + \frac{17}{2}y - 84 + \binom{y-2}{2} \leq \binom{k - 7}{3}. \qedhere
\]
\end{proof}

\def\cprime{$'$} \def\cprime{$'$}

\end{document}